\DeclareMathAlphabet{\mathpzc}{OT1}{pzc}{m}{it}
\newcommand{\mc}[1]{{}}
\theoremstyle{plain}
\newtheorem*{maintheorem*}{Main Theorem}
\newtheorem*{thm*}{Theorem}
\newtheorem*{thma*}{Theorem A}
\newtheorem*{thmaa*}{Theorem A'}
\newtheorem*{thmb*}{Theorem B}
\newtheorem*{thmo*}{Theorem 1.1}
\newtheorem*{thmc*}{Theorem C}
\newtheorem*{thmd*}{Theorem D}
\newtheorem*{thmf*}{Theorem 4.1}
\newtheorem*{remark*}{Remark}
\newtheorem*{conjecture*}{Conjecture}
\newtheorem*{prop*}{Proposition}
\newtheorem*{lem*}{Basic Lemma}
\def\bbr{\mathbb{R}}
\def\bbn{\mathbb{N}}
\def\acal{\mathcal{A}}
\def\hcal{\mathcal{H}}
\def\Bcal{\mathcal{B}}
\def\ical{\mathcal{I}}
\def\Rfrak{\mathfrak{R}}
\def\Ifrak{\mathfrak{I}}
\newcommand{\be}{\begin{equation}}
\newcommand{\ee}{\end{equation}}
\newcommand{\Ext}{\operatorname{Ext}}
\def\h{\hspace{1mm}}
\def\hh{\hspace{.5mm}}
\def\sl{{SL}(2,\bbr)}
\def\rhr{H^1(M,\Sigma,\bbr)}
\def\hr{H^1(M,\bbr)}
\def\strat1{\hcal_1(\alpha)}
\def\supp{{\rm{supp}}}
\newcommand{\bb}{\mathbb}
\newcommand{\gothic}{\mathfrak}
\newcommand{\cx}{{\bb C}}
\newcommand{\half}{{\bb H}}
\newcommand{\integers}{{\bb Z}}
\newcommand{\natls}{{\bb N}}
\newcommand{\reals}{{\bb R}}
\newlength{\figboxwidth}             
\renewcommand{\bold}[1]{\medskip \noindent {\bf #1 }\nopagebreak}
\newcommand{\dirsum}{\oplus}
\newcommand{\tensor}{\otimes}
\newcommand{\cross}{\times}
\newcommand{\st}{\;\: : \;\:}         
\newcommand{\zed}{\integers}
\renewcommand{\mod}{\operatorname{mod}}
\def\@ifundefined#1#2#3%
\theoremstyle{plain} 
\newtheorem{theorem}{Theorem}[section]
\newtheorem{prop}[theorem]{Proposition}
\newtheorem{proposition}[theorem]{Proposition}
\newtheorem{lemma}[theorem]{Lemma}
\newtheorem{cor}[theorem]{Corollary}
\newtheorem{corollary}[theorem]{Corollary}
\newtheorem{claim}[theorem]{Claim}
\theoremstyle{definition} 
\newtheorem{definition}[theorem]{Definition}
\newtheorem{remark}[theorem]{Remark}
\newcommand{\cA}{{\mathcal A}}
\newcommand{\cB}{{\mathcal B}}
\newcommand{\cC}{{\mathcal C}}
\newcommand{\cH}{{\mathcal H}}
\newcommand{\cI}{{\mathcal I}}
\newcommand{\cL}{{\mathcal L}}
\newcommand{\cM}{{\mathcal M}}
\newcommand{\cN}{{\mathcal N}}
\newcommand{\cS}{{\mathcal S}}
\mathchardef\GG="321D
\newcommand{\gp}{{\gothic p}}
\title[Isolation, equidistribution and orbit closures]{Isolation,
  equidistribution, and orbit closures for the 
  $SL(2,\reals)$ action on  Moduli space. }
\author{Alex Eskin}
\thanks{Research  of  the first author is partially supported  by
NSF grants DMS 0604251, DMS 0905912 and DMS 1201422}
\address{
Department of Mathematics,
University of Chicago,
Chicago, Illinois 60637, USA\\
}
\email{eskin@math.uchicago.edu}
\author{Maryam Mirzakhani}
\thanks{Research of the second author is 
partially supported by the Clay foundation and by NSF grant DMS 0804136}
\address{Department of Mathematics, 
Stanford University, Stanford CA 94305 USA \\}
\email{mmirzakh@math.stanford.edu}
\author{Amir Mohammadi}
\thanks{Research of the third author is 
partially supported by NSF grant DMS 1200388 and Alfred P.\ Sloan Research Fellowship.}
\address{Department of Mathematics, 
University of Texas, 
Austin TX 78712, USA}
\email{amir@math.utexas.edu}
\date{}
\newcommand{\noz}{n}
\begin{document}
\maketitle
\begin{abstract}
We prove results about orbit closures and equidistribution for the
$SL(2,\reals)$ action on the moduli space of compact Riemann surfaces,
which are analogous to the theory of unipotent flows. 
The proofs of the main theorems rely on the measure
classification theorem of \cite{EMir2} and a
certain isolation property of closed $SL(2,\reals)$ invariant
manifolds developed in this paper. 
\end{abstract}

\section{Introduction}
\label{sec;intro}
Suppose $g \ge 1$, and 
let $\alpha = (\alpha_1,\dots, \alpha_\noz)$ be a partition of $2g-2$,
and 
let $\cH(\alpha)$ be a stratum of Abelian differentials,
i.e. the space of pairs $(M,\omega)$ where $M$ is a Riemann surface
and $\omega$ is a holomorphic $1$-form on $M$ whose zeroes have
multiplicities $\alpha_1 \dots \alpha_\noz$. The form $\omega$ defines a
canonical flat metric on $M$ with conical singularities at the zeros
of $\omega$. Thus we refer to points of $\cH(\alpha)$ as
{\em flat surfaces} or {\em translation surfaces}. For an introduction
to this subject, see the survey \cite{Zorich:survey}. 

The space $\cH(\alpha)$
admits an action of the group $SL(2,\reals)$ which generalizes the
action of $SL(2,\reals)$ on the space $GL(2,\reals)/SL(2,\zed)$ of flat
tori.

\bold{Affine measures and manifolds.}
The area of a translation surface is given by 
\begin{displaymath}
a(M,\omega) = \frac{i}{2} \int_M \omega \wedge \bar{\omega}.
\end{displaymath}
A ``unit hyperboloid'' $\cH_1(\alpha)$
is defined as a subset of translation surfaces in $\cH(\alpha)$ of
area one. 
For a subset $\cN_1 \subset \cH_1(\alpha)$ we write
\begin{displaymath}
\reals \cN_1 = \{ (M, t \omega) \;|\; (M,\omega) \in \cN_1, \quad t \in
\reals \} \subset \cH(\alpha).
\end{displaymath}

\begin{definition}
\label{def:affine:measure}
An ergodic $SL(2,\reals)$-invariant probability measure $\nu_1$ on
$\cH_1(\alpha)$ is called {\em affine} if the following hold:
\begin{itemize}
\item[{\rm (i)}] The support $\cM_1$ of $\nu_1$ is an 
{\em immersed submanifold} of
  $\cH_1(\alpha)$, i.e.
there exists a manifold $\cN$ and a proper continuous
  map $f: \cN \to \cH_1(\alpha)$ so that $\cM_1 =
  f(\cN)$. The self-intersection set of $\cM_1$, i.e. the set of
  points of $\cM_1$ which do not have a 
  unique preimage under $f$, is a closed subset of $\cM_1$ of
  $\nu_1$-measure $0$.   Furthermore, each point in $\cN$ has a
  neigborhood $U$ such 
  that locally $\reals f(U)$ is given by a complex linear subspace defined
  over $\reals$ in the period coordinates.
\item[{\rm (ii)}] Let $\nu$ be the measure supported on $\cM = \reals
  \cM_1$ so that $d\nu = 
    d\nu_1 da$. Then each point in $\cN$ has a
      neighborhood $U$ such that the restriction of $\nu$ to $\reals
      f(U)$ is an affine  linear measure in the period
    coordinates on $\reals f(U)$, i.e. it is (up to normalization) the
    restriction of the Lebesgue measure $\lambda$ to the subspace
    $\reals f(U)$.
\end{itemize}
\end{definition}

\begin{definition}
\label{def:affine:invariant:submanifold}
We say that any suborbifold $\cM_1$ for which there exists a measure
$\nu_1$ such that the pair $(\cM_1, \nu_1)$ 
satisfies (i) and (ii) an {\em affine invariant submanifold}. 
\end{definition}
Note that in particular, any affine invariant submanifold is a closed
subset of $\cH_1(\alpha)$ which is invariant under the $SL(2,\reals)$
action, and which in period coordinates looks like an affine subspace.
We also consider the entire stratum {$\cH_1(\alpha)$} 
to be an (improper) affine invariant submanifold.  It follows from
Theorem~\ref{theorem:closed:P:invariant:set} below that the
self-intesection set of an affine invariant manifold is itself a
finite union of affine invariant manifolds of lower dimension.

\bold{Notational Conventions.}
In case there is no confusion, we will often drop the subscript $1$,
and denote an affine manifold by $\cN$. Also we will always denote the
affine probability measure supported on $\cN$ by
$\nu_\cN$. {}{(This measure is unique since it is
  ergodic for the $SL(2,\reals)$ action on $\cN$.}

Let $P \subset SL(2,\reals)$ denote the subgroup $\begin{pmatrix} \ast
  & \ast \\ 0 & \ast \end{pmatrix}$. In this paper we prove statements
about the action of $P$ and $SL(2,\reals)$ on $\strat1$ which are
analogous to the statements proved in the theory of unipotent flows on
homogeneous spaces. \mc{give references}
For some additional results in this direction, see
also \cite{Chaika:Eskin}.

The following theorem is the main result of \cite{EMir2}:
\begin{theorem}
\label{theorem:P:measures}
Let $\nu$ be any $P$-invariant probability measure on
$\cH_1(\alpha)$. Then $\nu$ is $SL(2,\reals)$-invariant and affine.  
\end{theorem}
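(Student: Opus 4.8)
The plan is to transplant to this non-homogeneous setting the circle of ideas behind Ratner's measure classification and, in the form closest to the present situation, the Benoist--Quint treatment of invariant and stationary measures for group actions on homogeneous spaces. By the ergodic decomposition for the amenable group $P$ we may assume $\nu$ is $P$-ergodic. Write $g_t=\diag(e^t,e^{-t})$, let $U$ be the upper-triangular unipotent one-parameter subgroup and $\bar U$ the opposite one, so that $P$ is generated by $\{g_t\}$ and $U$, while $U$ and $\bar U$ together generate $SL(2,\reals)$; note $g_t$ expands $U$ and contracts $\bar U$. The argument runs by induction on the dimension of the ambient affine invariant submanifold: assuming the theorem for all proper affine invariant submanifolds, I would show that a $P$-ergodic $\nu$ not supported on any proper one must be the affine (Lebesgue-class) measure on the whole stratum. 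The engine is an \emph{additional-invariance} principle: from $P$-invariance one manufactures invariance under motions transverse to $P$, and, since the relevant tangent directions are affine subspaces in period coordinates, any such extra invariance either confines $\supp\nu$ to a proper affine invariant submanifold (where induction applies) or forces $SL(2,\reals)$-invariance together with the statement that the conditional measures of $\nu$ along the unstable foliation of $g_t$ are Lebesgue on affine subspaces --- which is exactly affineness.

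First I would assemble the hyperbolic infrastructure. In period coordinates, with a suitably modified Hodge norm, $g_t$ is partially hyperbolic: the tangent bundle of $\cH_1(\alpha)$ splits into the $SL(2,\reals)$-orbit directions and the Kontsevich--Zorich cocycle directions, and on the latter Oseledets' theorem gives a Lyapunov decomposition with a spectral gap at the top exponent (Forni). Two further inputs are indispensable: the quantitative non-divergence and recurrence estimates of Eskin--Masur and Eskin--Masur--Mohammadi, ensuring that the forward $g_t$-orbit of a $\nu$-generic point returns, with positive frequency and controlled cocycle norm, to a fixed large compact set; and a \emph{factorization} of the cocycle on the unstable bundle, expressing it --- up to bounded multiplicative error and after a measurable change of basis --- as a sum of blocks on which it acts conformally. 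The factorization substitutes for the linear structure of horospherical subgroups available in the homogeneous case; without it the ``drift'' described below would live in a cocycle rather than a group and could not be iterated.

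The core is the exponential-drift construction. Pick two $\nu$-generic points $x$ and $y$ on a common unstable leaf of $g_t$, with $y$ extremely close to $x$, and record $y$ in period coordinates based at $x$. Flow both forward by $g_t$; the component of the displacement transverse to $Px$ grows according to the unstable cocycle, and one selects the stopping time $t$ so that this transverse component reaches a prescribed small size $\epsilon$ at a moment when $g_tx$ lies in the large compact set. Passing to a limit along such return times, and discarding the $U$-direction of the displacement (which only reflects the $U$-invariance one already has), produces a pair $x'$, $y'=h\cdot x'$ with $x'$ a $\nu$-generic point, $h$ a nontrivial element moving transversally to $P$, and $\nu$ invariant, along the unstable leaf, under the corresponding translation. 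The delicate point --- the step I expect to be the main obstacle --- is to guarantee this drift is \emph{genuinely new}, i.e.\ that the element produced is not always already in the stabiliser of $\nu$, equivalently that the unstable conditional measures cannot be forced to be atomic or carried by $P$-orbits. This is handled by an entropy computation, comparing the $g_t$-entropy of $\nu$ computed through the unstable foliation with the sum of the positive unstable Lyapunov exponents --- a defect in the equality yielding a direction in which the drift escapes $P$ --- together with a ``time-change'' synchronising the two trajectories (since $g_t$ is not literally isometric on transverse directions) and a maximal inequality bounding the measure of the bad set of return times. Essentially all of the new difficulty is concentrated here.

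Finally, once nontrivial extra invariance has been obtained, I would bootstrap: the group generated by $P$ and the new motions, at the level of tangent spaces in period coordinates, is an affine subspace, so it defines an affine invariant submanifold containing $\supp\nu$. If this submanifold is proper, the inductive hypothesis finishes the proof; if not, one checks that the only surviving possibility is that $\nu$ is $SL(2,\reals)$-invariant with conditional measures along the unstable foliation of $g_t$ equal to Lebesgue measure on affine subspaces, which is the asserted affineness. The implication ``$P$-invariant $\Rightarrow$ $SL(2,\reals)$-invariant'' drops out of the same analysis: with the unstable conditionals identified, an ergodic-decomposition argument for the diagonal flow $g_t$ forces $\bar U$-invariance as well, and $U$ and $\bar U$ generate $SL(2,\reals)$.
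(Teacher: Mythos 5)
This paper does not contain a proof of Theorem~\ref{theorem:P:measures}: the sentence immediately preceding the statement identifies it as the main result of \cite{EMir2}, and the present paper simply takes it as input (together with Proposition~\ref{prop:main:proposition} and Proposition~\ref{prop:countability}) for the theorems of \S\ref{sec:main:theorems}. So there is no internal argument to compare your sketch against; the ``proof'' appropriate to this paper is a one-line citation.

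As a description of what \cite{EMir2} actually does, your outline is broadly in the right territory --- Benoist--Quint style exponential drift, an entropy comparison in the spirit of Ledrappier--Young, Forni's spectral gap, the Eskin--Masur recurrence estimates, a factorization/block structure for the Kontsevich--Zorich cocycle, and a bootstrapping to an affine invariant submanifold. Two cautions if you intend this as more than a road-map. First, the logical order in \cite{EMir2} is to establish $SL(2,\reals)$-invariance of $\nu$ as a separate, earlier step (exploiting $P$-invariance, the time-reversal symmetry of the diagonal flow, and an entropy identity) and only afterwards to pin down the unstable conditionals as Lebesgue on affine subspaces; your write-up makes both drop out of a single additional-invariance loop, which compresses the argument misleadingly. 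Second, the ``factorization'' you invoke --- the measurable block-conformal structure on the unstable cocycle that makes the drift iterable --- is not an off-the-shelf input but occupies a large part of that paper and is where most of the genuinely new difficulty lies; stating it as an ``indispensable input'' without indicating how it is obtained leaves the hardest step unaccounted for. Neither issue affects the present paper, which only needs the statement.
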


Theorem~\ref{theorem:P:measures} is a partial analogue of Ratner's
celebrated measure classification theorem in the theory of unipotent flows,
see \cite{RatnerMeas}.


\section{The Main Theorems}
\label{sec:main:theorems}

\subsection{Orbit Closures}
\label{sec:subsec:orbit:closures}

\begin{theorem}
\label{theorem:closure:submanifold}
Suppose $x \in \cH_1(\alpha)$. Then, the orbit closure
$\overline{P x} = \overline{SL(2,\reals) x}$ is an affine invariant
submanifold of $\cH_1(\alpha)$. 
\end{theorem}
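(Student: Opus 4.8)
The strategy is to produce a $P$-invariant probability measure supported on $\overline{Px}$, apply Theorem~\ref{theorem:P:measures} to learn that this measure is affine and $SL(2,\reals)$-invariant, and then promote the statement ``$\overline{Px}$ contains an affine invariant submanifold'' to ``$\overline{Px}$ \emph{equals} an affine invariant submanifold.'' The first half is standard: $P$ is amenable, and for a suitable F\o{}lner sequence in $P$ (using the $AN$ structure, i.e.\ horocycle pieces pushed by the geodesic flow) one averages the point mass $\delta_x$ to obtain a $P$-invariant probability measure $\nu$ on the one-point compactification; the main point to check is that no mass escapes to infinity, which follows from the recurrence/non-divergence estimates for the $SL(2,\reals)$ action on strata (Masur--Minsky / Eskin--Masur style: on the ``thin part'' the geodesic flow pushes surfaces back toward the thick part). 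Thus $\nu$ is an honest probability measure, $\operatorname{supp}\nu \subseteq \overline{Px}$, and by Theorem~\ref{theorem:P:measures}, $\nu$ is $SL(2,\reals)$-invariant and affine; write $\cN := \operatorname{supp}\nu$, an affine invariant submanifold contained in $\overline{Px}$.

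It remains to show $\overline{Px} = \cN$, equivalently $x \in \cN$. Suppose not. The idea is an induction on dimension (or on the ``complexity'' of affine submanifolds, which is legitimate since, by the remark following Definition~\ref{def:affine:invariant:submanifold} together with Theorem~\ref{theorem:closed:P:invariant:set}, affine submanifolds cannot accumulate arbitrarily and dimension drops are controlled). If $x \notin \cN$ but $\overline{Px} \supsetneq \cN$, one wants to find a point $y \in \overline{Px}$ near $\cN$ but not on $\cN$ whose own orbit closure is strictly larger than $\cN$, and iterate; since the dimension is bounded by $\dim \cH_1(\alpha)$, this must terminate, forcing $\overline{Px}$ itself to be affine. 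Making this precise is exactly where the isolation results of the paper enter: one needs that a point close to an affine submanifold $\cN$ but not on it cannot have \emph{all} of its orbit-averages concentrate back on $\cN$ — there must be a definite amount of ``transverse drift,'' producing a new invariant measure whose support properly contains $\cN$. This is the role of the quantitative isolation property (the analogue of the linearization/$(C,\alpha)$-good estimates near invariant subvarieties in the Ratner/Dani--Margulis theory).

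\textbf{Main obstacle.} The hard part is not the amenable averaging — that is routine given non-divergence — but the \emph{isolation step}: showing that $\overline{Px}$ cannot ``spiral into'' the affine submanifold $\cN$ without being equal to it. Concretely, one must rule out the scenario where $x \notin \cN$, yet every $P$-invariant measure obtained as a weak-$*$ limit of orbit averages from $x$ is supported on $\cN$ (or on a finite union of affine submanifolds, none containing $x$). The resolution requires the isolation theorem developed in this paper: near an affine invariant submanifold $\cN$, period coordinates split into directions tangent to $\cN$ and transverse to it, and the $SL(2,\reals)$-dynamics in the transverse directions, analyzed via the Forni/Kontsevich--Zorich cocycle and the absence of invariant subbundles forced by Theorem~\ref{theorem:P:measures}, cannot keep trajectories trapped in an arbitrarily small neighborhood of $\cN$ unless they lie on $\cN$. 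Extracting from this ``transverse escape'' a genuinely larger invariant measure — rather than mass escaping to infinity or returning to a different lower-dimensional piece — is the delicate point, and is what the bulk of the paper is devoted to. Once the induction closes, $\overline{Px} = \cN = \overline{SL(2,\reals)x}$, completing the proof.
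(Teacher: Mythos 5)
Your outline captures the correct skeleton: amenable averaging over a $P$-F{\o}lner family (with non-divergence) to produce a $P$-invariant probability measure, measure classification (Theorem~\ref{theorem:P:measures}), and an isolation step to upgrade ``$\overline{Px}$ contains an affine submanifold'' to equality. But the mechanism you sketch has several real gaps. First, a circularity: you invoke Theorem~\ref{theorem:closed:P:invariant:set} in support of your induction, but that theorem is \emph{proved from} Theorem~\ref{theorem:closure:submanifold} in the paper, so it cannot be an input here. Second, setting $\cN := \supp\nu$ and calling it an affine invariant submanifold is unjustified: the limit measure $\nu$ need not be ergodic, so one must pass to the ergodic decomposition, and to make that usable one needs the decomposition to be a \emph{countable} sum over affine submanifolds. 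That is precisely the role of Proposition~\ref{prop:countability}, which you never invoke.

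Third, the paper does not run the induction you propose (``find a point $y\in\overline{Px}$ near $\cN$ but off it with strictly larger orbit closure, and iterate''). Instead it fixes \emph{in advance} an affine invariant submanifold $\cM$ of minimal dimension containing $x$ (this exists by countability) and proves that every weak-$\ast$ limit of F{\o}lner averages from $x$ equals $\nu_\cM$. The avoidance step --- that each proper affine $\cN\subsetneq\cM$ receives zero mass in the limit --- is not a soft ``transverse drift'' argument but the quantitative Margulis-function estimate of Proposition~\ref{prop:main:proposition}, applied through Proposition~\ref{prop:folner-avoid-singular-set}: since $x\notin\cN$ by minimality, the function $f_\cN$ is finite at $x$, and the inequality $A_t f_\cN \le c f_\cN + b$ forces the time-averaged orbit to spend a uniformly small fraction of its mass in any sublevel neighborhood of $\cN$, hence $\nu(\cN)=0$. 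Summing over the countably many $\cN\subsetneq\cM$ in the ergodic decomposition gives $\nu=\nu_\cM$, which is the equidistribution theorem (Theorem~\ref{theorem:folner:closure}). Once that is in hand, the orbit-closure theorem really is immediate, as the paper says: $\overline{Px}\supseteq\supp\nu_\cM=\cM$, and $\overline{Px}\subseteq\cM$ since $\cM$ is closed, $P$-invariant, and contains $x$.
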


The analogue of Theorem~\ref{theorem:closure:submanifold} in the
theory of unipotent flows is due in full generality to M.~Ratner
\cite{RatnerEqui}.  See also the discusion in
\S\ref{sec:subsec:histocial} below.

\begin{theorem}
\label{theorem:closed:P:invariant:set}
Any closed $P$-invariant subset of $\cH_1(\alpha)$ is a finite union
of affine invariant submanifolds. 
\end{theorem}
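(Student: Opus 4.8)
The plan is to deduce Theorem~\ref{theorem:closed:P:invariant:set} from Theorem~\ref{theorem:closure:submanifold} together with a compactness/countability argument on the family of affine invariant submanifolds. First I would record the structural input that makes this work: by Theorem~\ref{theorem:P:measures}, every $P$-invariant (hence every $P$-ergodic) probability measure on $\cH_1(\alpha)$ is affine, so its support is an affine invariant submanifold; and by Theorem~\ref{theorem:closure:submanifold}, for each $x$ the orbit closure $\overline{Px}$ is an affine invariant submanifold $\cN_x$. Thus a closed $P$-invariant set $K$ is automatically the union $\bigcup_{x\in K}\cN_x$ of the affine invariant submanifolds it contains, and the whole problem is to show only finitely many distinct $\cN_x$ occur. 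For this one wants (a) that the collection of \emph{all} affine invariant submanifolds of $\cH_1(\alpha)$ is countable, or at least that any chain/antichain is controlled, and (b) a local finiteness statement: an affine invariant submanifold cannot be approximated by a sequence of distinct larger-or-incomparable ones in a way that would force infinitely many inside a fixed closed set.

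The key step (a), countability of the set of affine invariant submanifolds, I would obtain from the isolation property advertised in the abstract and from the fact that, in period coordinates, an affine invariant submanifold $\cN$ is cut out by a linear subspace defined over $\reals$ (indeed, one expects it to be defined over $\closure{\ratls}$ by the algebraicity results that accompany this circle of ideas). A linear subspace defined over a fixed number field, together with the discrete data of the stratum and the connected component, ranges over a countable set; combined with the fact that $\cN$ is determined by its tangent space at a single point together with this rational structure, this gives a countable list $\cN^{(1)}, \cN^{(2)}, \dots$ of all affine invariant submanifolds. Then for the closed $P$-invariant set $K$, write $K = \bigcup_i (\cN^{(i)}\cap K)$ over those $i$ with $\cN^{(i)}\subseteq K$; one shows that $K$ equals the union of the \emph{maximal} such $\cN^{(i)}$, and that there are finitely many maximal ones, by the following induction on dimension.

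The inductive/compactness argument runs as follows. Suppose $K$ is closed and $P$-invariant. If $K$ itself is an affine invariant submanifold we are done. Otherwise, let $d$ be the maximal dimension of an affine invariant submanifold contained in $K$. I claim the union $\cN_1\cup\dots\cup\cN_m\cup\dots$ of all $d$-dimensional affine invariant submanifolds inside $K$ is in fact finite: if infinitely many distinct $d$-dimensional $\cN_i\subseteq K$ existed, pick points $x_i\in\cN_i$ converging (by compactness of a suitable sublevel set of the ``complexity'' or ``area-ratio'' function, or after passing to a compact piece using that the $\cN_i$ are not escaping to the boundary) to some $x\in K$; then $\overline{Px}=\cN_x\subseteq K$ has dimension $\le d$, while the isolation theorem forces $\cN_x$ to contain $\cN_i$ for all large $i$, hence $\dim\cN_x\ge d$ with $\cN_x$ strictly larger than each $\cN_i$ — contradicting $d$ maximal unless $\cN_x=\cN_i$ eventually, which is impossible as the $\cN_i$ are distinct. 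So there are finitely many maximal affine submanifolds $\cN_1,\dots,\cN_k$ in $K$. Now $K' = \closure{K\setminus(\cN_1\cup\dots\cup\cN_k)}$ — more precisely the closed $P$-invariant set obtained by removing the "interiors" and taking closure — is again closed and $P$-invariant, with strictly smaller maximal dimension of contained affine submanifolds (one must check $K'$ still meets only submanifolds of dimension $<d$, using that any affine submanifold of dimension $d$ in $K$ was among the $\cN_j$); by downward induction on this dimension, $K'$ is a finite union of affine invariant submanifolds, and hence so is $K = \cN_1\cup\dots\cup\cN_k\cup K'$.

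\textbf{Main obstacle.} The crux is the local finiteness / isolation input used in step (a)--(c): namely that an affine invariant submanifold $\cN_x = \overline{Px}$ arising as a limit of points $x_i$ lying in affine invariant submanifolds $\cN_i$ must actually \emph{contain} $\cN_i$ for all sufficiently large $i$ (equivalently, that affine invariant submanifolds do not accumulate on one another except by containment, and that a convergent sequence of them stabilizes). This is precisely the "isolation property of closed $SL(2,\reals)$-invariant manifolds" the abstract says is developed in the paper, and it is where the real analytic work lies — proving it presumably requires the measure classification Theorem~\ref{theorem:P:measures}, quantitative recurrence to the thick part, and an argument showing that extra invariance picked up in a limit cannot be "thin". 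Granting that isolation statement and the countability of the rational-linear data, the passage to Theorem~\ref{theorem:closed:P:invariant:set} is the soft Noetherian-induction argument sketched above.
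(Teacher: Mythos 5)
Your high-level strategy is the right one and matches the paper's: combine Theorem~\ref{theorem:closure:submanifold} (orbit closures are affine) with the Mozes--Shah-type compactness/isolation statement (Theorem~\ref{theorem:mozes-shah}) to get finiteness. But the implementation has two real gaps.

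First, the key finiteness step is not justified as you've written it. You take points $x_i\in\cN_i$ converging to some $x\in K$ and assert that ``the isolation theorem forces $\cN_x=\overline{Px}$ to contain $\cN_i$ for all large $i$.'' Theorem~\ref{theorem:mozes-shah} is a statement about the affine \emph{measures} $\nu_{\cN_i}$, not about arbitrary converging sequences of points: it says that after passing to a subsequence, $\nu_{\cN_{i_j}}\to\nu_\cN$ with $\cN$ the smallest affine invariant submanifold eventually containing the $\cN_{i_j}$. There is no corresponding statement that a limit $x$ of points $x_i\in\cN_i$ must have orbit closure containing $\cN_i$ for large $i$; indeed $\overline{Px}$ can easily be much smaller than $\cN_i$ (e.g., $x$ might land on a proper affine invariant submanifold of $\cN$). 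The fix, and what the paper actually does, is to apply Theorem~\ref{theorem:mozes-shah} directly to the sequence of measures $\nu_{\cN_i}$: one gets $\cN\supseteq\cN_{i_j}$ eventually, and since $\nu_{\cN_{i_j}}\to\nu_\cN$ the union $\bigcup_j\cN_{i_j}$ is dense in $\cN$, so $\cN\subseteq\overline{K}=K$ — contradicting maximality of the $\cN_{i_j}$ (with no dimension bookkeeping needed).

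Second, the Noetherian induction you sketch does not obviously terminate: $K'=\overline{K\setminus(\cN_1\cup\dots\cup\cN_k)}$ can equal $K$ again, since removing finitely many proper submanifolds from a closed set and taking the closure need not lose anything (the $\cN_j$ could be re-captured as limits of lower-dimensional pieces accumulating on them). The dimension-stratified recursion and the appeal to countability (Proposition~\ref{prop:countability}) are both unnecessary detours here; the paper's proof avoids them entirely by looking at the collection $Y$ of \emph{all} affine invariant submanifolds in $K$, passing to its maximal elements $Z$, and showing $Z$ is finite directly by the measure argument above, with $K=\bigcup_{\cN\in Z}\cN$ already following from Theorem~\ref{theorem:closure:submanifold}.
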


\subsection{The space of ergodic $P$-invariant measures}

\begin{theorem}
\label{theorem:mozes-shah}
Let $\cN_n$ be a sequence of affine manifolds, and suppose
$\nu_{\cN_n} \to \nu$.  Then $\nu$ is a probability measure.
Furthermore, $\nu$ is the affine measure $\nu_\cN$, where $\cN$ is the
smallest submanifold with the following property: there exists some
$n_0 \in \natls$ such that $\cN_{n}\subset\cN$ for all $n>n_0$.

In particular, the space of ergodic $P$-invariant probability measures on
  $\cH_1(\alpha)$ is compact in the weak-$\ast$ topology. 
\end{theorem}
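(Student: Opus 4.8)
The plan is to combine the measure classification theorem (Theorem~\ref{theorem:P:measures}) with a Mozes--Shah type argument adapted to the moduli space setting, the crux of which is the isolation property that the paper develops for affine invariant submanifolds. First I would establish that $\nu$ is a probability measure (i.e. that no mass escapes to infinity). This is the analogue of the non-divergence results for unipotent flows; in the present setting it should follow from a quantitative non-divergence estimate on $\cH_1(\alpha)$ (in the spirit of the results used in \cite{EMir2}), applied uniformly to the sequence $\nu_{\cN_n}$: one shows that for any $\vare > 0$ there is a fixed compact set $K \subset \cH_1(\alpha)$ with $\nu_{\cN_n}(K) > 1 - \vare$ for all $n$, using that each $\nu_{\cN_n}$ is $P$-invariant (indeed $SL(2,\reals)$-invariant) and affine. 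Passing to the limit, $\nu(K) \ge 1 - \vare$, so $\nu$ is a probability measure. Since each $\nu_{\cN_n}$ is $P$-invariant, so is the weak-$\ast$ limit $\nu$, and hence by Theorem~\ref{theorem:P:measures}, $\nu$ is $SL(2,\reals)$-invariant and affine; write $\nu = \nu_\cN$ for the affine manifold $\cN = \supp \nu$. By ergodic decomposition it suffices to treat the case where $\nu$ is ergodic, which it is since it is affine.

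Next I would identify $\cN$ and prove the stabilization statement. The key input is the isolation theorem for affine invariant submanifolds proved in this paper: roughly, an affine invariant submanifold $\cN$ cannot be approximated arbitrarily well (in the appropriate sense, e.g. by points whose orbit closures are not contained in $\cN$) without the approximating objects actually being contained in a fixed neighborhood structure forcing containment. Concretely, one argues that since $\nu_{\cN_n} \to \nu_\cN$, for large $n$ a large-measure portion of $\cN_n$ lies in a small neighborhood of $\cN$; the isolation property then upgrades this to $\cN_n \subset \cN$. This gives the existence of $n_0$ with $\cN_n \subset \cN$ for $n > n_0$. To see that $\cN$ is the \emph{smallest} such submanifold: if $\cN'$ is another affine invariant submanifold with $\cN_n \subset \cN'$ for all large $n$, then $\nu_{\cN_n}$ is supported on $\cN'$, so the limit $\nu$ is supported on $\cN'$ (as $\cN'$ is closed), whence $\cN = \supp\nu \subset \cN'$. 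Uniqueness of the limit affine measure supported on $\cN$ follows from ergodicity, as noted in the notational conventions.

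For the final assertion, compactness of the space of ergodic $P$-invariant probability measures in the weak-$\ast$ topology: by Theorem~\ref{theorem:P:measures} every ergodic $P$-invariant probability measure is of the form $\nu_\cN$ for an affine invariant submanifold $\cN$, so the space in question is precisely $\{\nu_\cN : \cN \text{ affine invariant}\}$. Given any sequence in this space, the first part shows that any weak-$\ast$ limit point is again a probability measure (no escape of mass) and is again of the form $\nu_\cN$ — that is, it lies in the same space. Combined with the fact that the space of all probability measures on the (locally compact, second countable) space $\cH_1(\alpha)$ is weak-$\ast$ sequentially compact once no-escape-of-mass is known, this yields compactness.

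The main obstacle is the second step — deducing $\cN_n \subset \cN$ for large $n$ from the weak-$\ast$ convergence $\nu_{\cN_n} \to \nu_\cN$. This is exactly where the isolation results of the paper are essential: without them one only knows that $\cN_n$ comes close to $\cN$ on a set of large measure, which a priori does not preclude $\cN_n$ from oscillating in and out of neighborhoods of $\cN$ or from being a strictly larger manifold that happens to concentrate most of its mass near $\cN$. Ruling this out requires the fine structure of affine submanifolds in period coordinates together with the isolation/avoidance estimates; this is the technical heart of the argument. The non-divergence input in the first step is also non-trivial but is, by comparison, a more standard adaptation of known techniques.
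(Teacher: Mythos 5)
Your first step (non-divergence) is sound in spirit: in fact the paper has Theorem~\ref{theorem:all:measures:return}, which directly gives a fixed compact set $K_\rho$ with $\nu_{\cN_n}(K_\rho) > 1-\rho$ for all $n$, so this part goes through (the paper itself instead routes through generic points and Proposition~\ref{prop:rw-avoid-singular-set}, but your route is also legitimate). The serious problem is in your second step.

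You write that the weak-$\ast$ limit $\nu$ is $P$-invariant, hence ``$SL(2,\reals)$-invariant and affine'' by Theorem~\ref{theorem:P:measures}, and then assert ``$\nu$ is ergodic, since it is affine.'' This is circular: by Definition~\ref{def:affine:measure}, being affine \emph{presupposes} ergodicity, and Theorem~\ref{theorem:P:measures} (as it is actually used) is a classification of the \emph{ergodic} $P$-invariant measures — it cannot force a weak-$\ast$ limit to be ergodic, since a convex combination of two affine measures on disjoint submanifolds is $P$-invariant but not ergodic. The limit $\nu$ could a priori be a nontrivial mixture. The paper deals with this by invoking Proposition~\ref{prop:countability} to write $\nu = \sum_{\cN' \subseteq \strat1} a_{\cN'}\, \nu_{\cN'}$ as a \emph{countable} sum over affine invariant submanifolds, and then the entire remaining argument is devoted to proving that exactly one coefficient is nonzero and equals $1$. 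Your proposal simply skips this, and with it skips the need for Proposition~\ref{prop:countability} entirely.

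Relatedly, your appeal to an ``isolation theorem'' as a black box is not available in that form. What the paper actually has is Proposition~\ref{prop:rw-avoid-singular-set}: random-walk averages $\bar\mu^{(m)} \ast \delta_x$ started from points $x$ in a compact set disjoint from $\cN$ put little mass near $\cN$ for large $m$. Turning this into the statement ``if $\nu_{\cN_n}$ concentrates near $\cN$ then $\cN_n \subset \cN$'' requires picking $\nu_{\cN_n}$-generic points $x_n \in \cN_n$ via the random ergodic theorem, then showing the random walk from $x_n$ does place a definite amount of mass in $\Omega_{\cN,\epsilon}$ (from the weak-$\ast$ convergence, via a suitable test function), contradicting Proposition~\ref{prop:rw-avoid-singular-set} unless $x_n \in \cN$; genericity of $x_n$ then forces $\cN_n \subset \cN$. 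That chain of steps — and the careful choice of $\cN$ as a submanifold with $\nu(\cN) > 0$ and $\nu(X(\cN)) = 0$ in the (possibly nonergodic) decomposition — is the technical heart of the proof, and it is not recoverable from the vague statement of an ``isolation property.'' You correctly identify this as where the work is, but the sketch you give would not carry it out.
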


\begin{remark}
\label{remark:mozes:shah}
In the setting of unipotent flows, {}{the analogue of} 
Theorem~\ref{theorem:mozes-shah} is
due to Mozes and Shah \cite{MS}.
\end{remark}


We state a direct corollary of Theorem~\ref{theorem:mozes-shah}:
\begin{cor}
\label{cor:mozes-shah}
Let $\cM$ be an affine invariant submanifold, and let $\cN_n$ be a
sequence of affine invariant submanifolds of $\cM$  
such that no infinite subsequence is contained in any 
proper affine invariant submanifold of $\cM$. Then the sequence of
affine measures $\nu_{\cN_n}$ converges to $\nu_\cM$.  
\end{cor}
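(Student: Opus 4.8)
The plan is to deduce this directly from Theorem~\ref{theorem:mozes-shah} by a soft subsequence argument. Since $\cH_1(\alpha)$ is locally compact and second countable, the space of Borel probability measures on it, with the weak-$\ast$ topology, is metrizable; and by the last assertion of Theorem~\ref{theorem:mozes-shah} the set of ergodic $P$-invariant probability measures is a \emph{compact} subset of this space. Each $\nu_{\cN_n}$ lies in this compact metrizable set, so in order to conclude $\nu_{\cN_n}\to\nu_\cM$ it suffices to show that every weak-$\ast$ convergent subsequence of $(\nu_{\cN_n})$ has limit $\nu_\cM$.

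So fix an arbitrary subsequence; by compactness I may pass to a further subsequence $(\nu_{\cN_{n_k}})$ converging to some measure $\nu$. By Theorem~\ref{theorem:mozes-shah}, $\nu$ is a probability measure and in fact $\nu=\nu_\cN$, where $\cN$ is the smallest affine invariant submanifold for which there is an index beyond which $\cN_{n_k}\subset\cN$. Since $\cN_{n_k}\subset\cM$ for every $k$, and $\cM$ itself trivially has the defining property of $\cN$, minimality gives $\cN\subset\cM$.

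It remains to exclude the possibility $\cN\subsetneq\cM$. If this occurred, then $\cN$ would be a proper affine invariant submanifold of $\cM$ containing $\cN_{n_k}$ for all large $k$; but the indices $n_k$ (with $k$ large) form an infinite set, so $(\cN_{n_k})$ would be an infinite subsequence of $(\cN_n)$ contained in a proper affine invariant submanifold of $\cM$, contrary to hypothesis. Hence $\cN=\cM$, so $\nu=\nu_\cM$. As the initial subsequence was arbitrary, $\nu_{\cN_n}\to\nu_\cM$.

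The argument is entirely formal; the only points requiring external input are the absence of escape of mass in a limit and the compactness of the space of ergodic $P$-invariant measures, both of which are supplied by Theorem~\ref{theorem:mozes-shah}. So there is no real obstacle here beyond invoking that theorem correctly, keeping track of the fact that $\cM$ is always an admissible choice in its minimality clause.
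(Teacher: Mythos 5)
Your argument is correct and is exactly the standard subsequence argument the paper intends when it calls this ``a direct corollary'' of Theorem~\ref{theorem:mozes-shah} (the paper gives no explicit proof). The key observations---that $\cM$ is always an admissible choice in the minimality clause so $\cN\subset\cM$, and that $\cN\subsetneq\cM$ would produce an infinite subsequence of $(\cN_n)$ trapped in a proper affine submanifold, contrary to hypothesis---are precisely what is needed.
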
 

\subsection{Equidistribution for sectors.}
Let $a_t = \begin{pmatrix} e^t & 0 \\ 0 & e^{-t}  \end{pmatrix}$,
$r_\theta = \begin{pmatrix} \cos \theta & -\sin \theta \\ \sin \theta
  & \cos \theta \end{pmatrix}$. 

\begin{theorem}
\label{theorem:sector:closure}
Suppose $x\in\strat1$ and let $\cM$ be {}{an} 
affine invariant submanifold
of minimum dimension which contains $x$. Then for any $\varphi \in
C_c(\strat1)$, and any interval $I \subset [0,2\pi)$, 
\begin{displaymath}
\lim_{T \to \infty} \frac{1}{T} \int_0^T \frac{1}{|I|} \int_I \varphi
(a_t r_\theta x) \, d\theta \, dt = \int_{\cM} \varphi \, d\nu_{\cM}.   
\end{displaymath}
\end{theorem}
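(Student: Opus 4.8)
The plan is to deduce Theorem~\ref{theorem:sector:closure} from the orbit closure statement (Theorem~\ref{theorem:closure:submanifold}), the measure classification (Theorem~\ref{theorem:P:measures}), and the compactness of the space of ergodic $P$-invariant measures (Theorem~\ref{theorem:mozes-shah}), by a standard Cesàro-averaging / weak-$*$ limit argument adapted to the circular averages $\frac{1}{|I|}\int_I(\cdot)\,d\theta$. First I would fix $x$ and $\cM$ as in the statement, and for each $T$ define a probability measure $\mu_T$ on $\strat1$ by $\int\varphi\,d\mu_T = \frac{1}{T}\int_0^T \frac{1}{|I|}\int_I \varphi(a_t r_\theta x)\,d\theta\,dt$. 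The first task is to show the family $\{\mu_T\}$ is tight, so that no mass escapes to infinity; this follows from a non-divergence estimate for the $SL(2,\reals)$ action on strata (in the spirit of the quantitative recurrence results used in \cite{EMir2}), guaranteeing that for all large $T$ a definite proportion of the time $a_t r_\theta x$ stays in a fixed compact set, uniformly in $\theta\in I$. Granting tightness, any weak-$*$ limit $\mu$ of a subsequence $\mu_{T_k}$ is a probability measure.

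Next I would show any such limit $\mu$ is $P$-invariant, hence by Theorem~\ref{theorem:P:measures} is $SL(2,\reals)$-invariant and affine. Invariance under $a_s$ is immediate from the $\frac{1}{T}\int_0^T$ average (the usual telescoping/Cesàro argument: the difference $\mu_T - (a_s)_*\mu_T$ has total variation $O(s/T)\to 0$). For invariance under the unipotent upper-triangular subgroup, I would use the $a_t$-average together with the fact that conjugation by $a_t$ expands the unipotent direction: an approximate-invariance argument shows that for $u$ in the unipotent subgroup, $(u)_*\mu = \mu$. Alternatively, since $P=AN$ with $A=\{a_t\}$ and the rotation average smooths in the $\theta$ direction, one packages this as the standard "$a_t$ pushes the $r_\theta$-arc to a long horocyclic-like piece" argument; in either case $\mu$ is $P$-invariant. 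Then decompose $\mu$ into ergodic $P$-invariant components; each is affine by Theorem~\ref{theorem:P:measures}.

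Now I would identify the limit. Since $x\in\cM$ and $\cM$ is $SL(2,\reals)$-invariant and closed, every $\mu_T$ is supported on $\cM$, so $\mu$ is supported on $\cM$; by minimality of $\dim\cM$ among affine submanifolds containing $x$, and by Theorem~\ref{theorem:closure:submanifold} applied to $x$ (which gives $\overline{SL(2,\reals)x}=\cM$ up to the minimality choice), any proper affine invariant submanifold of $\cM$ does not contain $x$. The ergodic affine components of $\mu$ are affine submanifolds $\cN_i\subseteq\cM$; one then argues that $x$ must lie in the support of $\mu$ in a way forcing one component to be $\nu_\cM$ itself. Concretely, I would suppose for contradiction that along some subsequence $\mu_{T_k}\to\mu\neq\nu_\cM$; then $\mu$ gives positive mass to a countable union of proper affine submanifolds $\cN_i\subsetneq\cM$, and using Corollary~\ref{cor:mozes-shah} / Theorem~\ref{theorem:mozes-shah} together with the fact that the orbit $SL(2,\reals)x$ equidistributes relative to $\cM$ (not to any proper piece), I derive a contradiction: the circular averages cannot concentrate on the union of the $\cN_i$ because $x$ escapes every proper $\cN_i$ and the non-divergence estimate forbids loss of mass. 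Hence every subsequential limit equals $\nu_\cM$, and therefore $\mu_T\to\nu_\cM$, which is exactly the assertion.

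The main obstacle I anticipate is the last step — ruling out that the limit $\mu$ is supported on a countable union of proper affine invariant submanifolds of $\cM$. This is precisely where the "isolation property" advertised in the abstract must enter: one needs a uniform statement that an orbit $SL(2,\reals)x$ which is not trapped in any proper affine submanifold spends almost all of its time away from a neighborhood of the (lower-dimensional) affine submanifolds it could a priori accumulate on. In the unipotent-flow setting this is the Dani--Margulis linearization/avoidance technique; here the analogue rests on the isolation theorems of this paper. A secondary technical point is upgrading $a_t$- and $a_t$-plus-rotation invariance to full $N$-invariance of $\mu$; this requires care because, unlike the pure horocycle average, the object being averaged is a circular arc pushed by the geodesic flow, so one should verify that the pushed arcs genuinely fill out unipotent orbits in the limit rather than some other one-parameter family. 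Both difficulties are, however, exactly the kind of input that Theorems~\ref{theorem:P:measures}, \ref{theorem:closure:submanifold}, and \ref{theorem:mozes-shah} are designed to supply, so the proof should be a relatively short deduction once those are in hand.
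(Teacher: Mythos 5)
Your outline follows the same skeleton as the paper's proof of Theorem~\ref{theorem:sector:closure}: form the Ces\`aro averages $\mu_T=\vartheta_{T,I}\ast\delta_x$, establish tightness, show any weak-$*$ limit is $P$-invariant and hence $SL(2,\reals)$-invariant and affine by Theorem~\ref{theorem:P:measures}, decompose using countability (Proposition~\ref{prop:countability}), and rule out proper ergodic components. But there is a real gap at the last step, and you have mis-identified what fills it. You write that ruling out mass on proper affine submanifolds is ``exactly the kind of input that Theorems~\ref{theorem:P:measures}, \ref{theorem:closure:submanifold}, and \ref{theorem:mozes-shah} are designed to supply'' --- none of them supply it. Theorem~\ref{theorem:closure:submanifold} is itself a consequence of the equidistribution you are proving (the paper derives it from the F{\o}lner version, Theorem~\ref{theorem:folner:closure}), so invoking it here is circular. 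Theorem~\ref{theorem:mozes-shah} is used only for the uniform versions (via Lemma~\ref{lemma:finite-collection}), not here. And your phrase ``using \ldots\ the fact that the orbit $SL(2,\reals)x$ equidistributes relative to $\cM$'' assumes the conclusion outright.

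What actually closes the argument is the paper's main technical construction, Proposition~\ref{prop:main:proposition}: for each proper affine invariant $\cN\subset\strat1$ (including $\cN=\emptyset$) there is an $SO(2)$-invariant Margulis function $f_\cN$, infinite exactly on $\cN$ with compact sublevel sets off $\cN$, satisfying $(A_t f_\cN)(x)\le c\,f_\cN(x)+b$ for $t$ large. Fed through Proposition~\ref{prop:sector-avoid-singular-set}, this yields a neighborhood $\Omega_{\cN,\epsilon}$ of $\cN$ with compact complement such that $(\vartheta_{t,I}\ast\delta_x)(\Omega_{\cN,\epsilon})<\epsilon$ for all large $t$, uniformly over compact $F\subset\strat1\setminus\cN$. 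With $\cN=\emptyset$ this gives tightness; with $\cN\subsetneq\cM$ and $F=\{x\}$ (using $x\notin\cN$ by minimality of $\dim\cM$) it forces $\nu(\cN)=0$, hence $a_\cN=0$ in the decomposition (\ref{eq:ergodic-decomp}), hence $\nu=\nu_\cM$. This is the ``isolation property'' of the title; its proof (\S\ref{sec:recurrence}--\S\ref{sec:function}) is the bulk of the paper and does not follow from the listed theorems. Separately, your $N$-invariance sketch is the right idea, but the precise computation --- writing $r_\theta=g_\theta u_{\tan\theta}$, observing $a_\tau g_\theta a_\tau^{-1}\to\id$, and controlling the $\theta$-shift in the $I$-average --- is exactly Lemma~\ref{lemma:sector:P:invariance} and needs to be made explicit rather than asserted.
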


\bold{Remark.} {}{
It follows from Theorem~\ref{theorem:sector:closure}
that for any $x \in \strat1$ there exists a unique affine invariant
manifold of minimal dimension which contains $x$. 
}

We also have the following uniform version:
(cf. \cite[Theorem~3]{Dani:Margulis:distribution})
\begin{theorem}
\label{theorem:sector:uniformity}
Let $\cM$ be an affine invariant submanifold. 
Then for any $\varphi\in C_c(\strat1)$ and any $\epsilon>0$
there are affine invariant submanifolds $\cN_1,\ldots,\cN_\ell$
properly contained in $\cM$ such that 
for any compact subset $F\subset \cM \setminus(\cup_{j=1}^\ell \cN_j)$ 
there exists $T_0$ so that for all $T>T_0$ and any $x\in F$,
\begin{displaymath}
\left|\frac{1}{T} \int_0^T \frac{1}{|I|} \int_I \varphi
(a_t r_\theta x) \, d\theta \, dt -\int_{\cM} \varphi \,
d\nu_{\cM} \right| < \epsilon. 
\end{displaymath}
\end{theorem}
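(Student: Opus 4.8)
The plan is to deduce Theorem~\ref{theorem:sector:uniformity} from Theorem~\ref{theorem:sector:closure} together with the compactness statement in Theorem~\ref{theorem:mozes-shah}, by a standard contradiction-and-extraction argument of the type used by Dani--Margulis. Suppose the statement fails for some $\cM$, some $\varphi \in C_c(\strat1)$ and some $\epsilon > 0$. The negation needs to be set up carefully: for \emph{every} finite collection $\cN_1,\dots,\cN_\ell$ of proper affine invariant submanifolds of $\cM$ there is a compact $F \subset \cM \setminus \bigcup_j \cN_j$, a sequence $T_k \to \infty$, and points $x_k \in F$ with
\begin{displaymath}
\left|\frac{1}{T_k} \int_0^{T_k} \frac{1}{|I|} \int_I \varphi(a_t r_\theta x_k)\, d\theta\, dt - \int_{\cM} \varphi\, d\nu_\cM \right| \ge \epsilon.
\end{displaymath}
The first key step is to convert this into a single bad sequence: enumerate a countable exhausting family of proper affine invariant submanifolds of $\cM$ (such a countable family exists, e.g. because affine invariant submanifolds are cut out by rational linear conditions in period coordinates, so there are only countably many), and use the failure against larger and larger finite sub-families to produce one sequence $x_k \to x_\infty \in \cM$, with $T_k \to \infty$, such that $x_k$ escapes every fixed proper affine invariant submanifold of $\cM$ eventually, yet the averages stay $\epsilon$-far from $\int_\cM \varphi\, d\nu_\cM$.

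The second step is to understand the limit point $x_\infty$. Let $\cM'$ be an affine invariant submanifold of minimal dimension containing $x_\infty$; by Theorem~\ref{theorem:sector:closure} applied at $x_\infty$, the sector averages along $r_\theta x_\infty$ equidistribute toward $\nu_{\cM'}$. If $\cM' = \cM$ we are almost done; the trouble is that $\cM'$ could be a proper affine invariant submanifold of $\cM$. However, $x_k \to x_\infty \in \cM'$, and $x_k$ eventually avoids $\cM'$ (since $\cM'$ is one of the proper submanifolds in our enumeration, provided $\cM' \subsetneq \cM$) — this is the contradiction with the other half of the negated statement only if we know something uniform near $\cM'$. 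So instead the right move is: the minimal affine invariant submanifold through $x_\infty$ is $\cM$ itself. Indeed, suppose not; then $x_\infty$ lies in some maximal proper affine invariant submanifold $\cM'' \subsetneq \cM$, and we may choose our finite collection $\{\cN_j\}$ in the negated statement to include $\cM''$, forcing the compact set $F$ (hence the $x_k$, hence $x_\infty = \lim x_k$) to stay in $\cM \setminus \cM''$, which is closed away from $\cM''$ — contradiction with $x_\infty \in \cM''$. Hence $\cM$ is the minimal affine invariant submanifold containing $x_\infty$.

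The third step closes the loop: since $\cM$ is minimal for $x_\infty$, Theorem~\ref{theorem:sector:closure} gives
\begin{displaymath}
\lim_{T \to \infty} \frac{1}{T}\int_0^T \frac{1}{|I|}\int_I \varphi(a_t r_\theta x_\infty)\, d\theta\, dt = \int_\cM \varphi\, d\nu_\cM,
\end{displaymath}
so for large fixed $T$ the average over $x_\infty$ is within $\epsilon/3$ of the target. The remaining point is a continuity/stability estimate: the quantity $x \mapsto \frac{1}{T}\int_0^T \frac{1}{|I|}\int_I \varphi(a_t r_\theta x)\, d\theta\, dt$ is continuous in $x$ for each fixed $T$ (as $\varphi$ is continuous with compact support and the $SL(2,\reals)$ action is continuous), so for $k$ large with $T_k \ge T$ one wants to compare the $T_k$-average at $x_k$ with the $T$-average at $x_\infty$. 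This comparison is exactly the standard two-parameter argument: the $T_k$-average at $x_k$ is, up to a vanishing error, an average of $T$-averages at points $a_s r_\theta x_k$ for $s \le T_k$; but one has no control on those points individually. The cleaner route, which I expect is the one used, is to instead invoke the measure-level statement: pass to a weak-$\ast$ limit $\bar\nu$ of the measures $\mu_k = \frac{1}{T_k}\int_0^{T_k} \frac{1}{|I|}\int_I (a_t r_\theta)_* \delta_{x_k}\, d\theta\, dt$. Any such limit is $P$-invariant (by the usual averaging argument, $a_t$-invariance in the limit plus the $r_\theta$-integration giving enough to run the $P$-invariance argument as in \cite{EMir2}), hence by Theorem~\ref{theorem:P:measures} is a (possibly sub-probability) combination of affine measures; non-escape of mass must be argued (this is where an a priori recurrence/non-divergence input is needed, presumably available from the companion results), giving $\bar\nu$ a probability measure and a limit of affine measures $\nu_{\cN_{n}}$ with $\cN_n \subset \cM$. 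By Theorem~\ref{theorem:mozes-shah}, $\bar\nu = \nu_{\cN}$ where $\cN \subset \cM$ is the eventual common submanifold; and since $x_k$ escapes every proper affine invariant submanifold of $\cM$, the $\cN_n$ cannot all lie in a single proper submanifold, forcing $\cN = \cM$ by Corollary~\ref{cor:mozes-shah}. Then $\int \varphi\, d\bar\nu = \int_\cM \varphi\, d\nu_\cM$, contradicting $\left| \int \varphi\, d\mu_k - \int_\cM \varphi\, d\nu_\cM\right| \ge \epsilon$ along the sequence.

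The main obstacle, and the place requiring the most care, is the non-escape of mass in this last step: a priori the averages $\mu_k$ could lose mass to infinity in the non-compact space $\strat1$, in which case the weak-$\ast$ limit would not be a probability measure and Theorem~\ref{theorem:mozes-shah} would not directly apply. Handling this needs a quantitative non-divergence estimate for the $a_t r_\theta$ averages (an $SL(2,\reals)$-analogue of the Dani--Margulis linearization / non-divergence results), uniform over the starting compact set — this is the technical heart and is presumably established elsewhere in the paper or in \cite{EMir2}. A secondary subtlety is the bookkeeping in Step 1 to extract a single sequence from the family-indexed failure, and making sure that "$x_k$ escapes every proper affine invariant submanifold eventually" is genuinely what the negation yields; this relies on there being only countably many affine invariant submanifolds, which follows from their description as rational subspaces in period coordinates (itself a consequence of Theorem~\ref{theorem:P:measures} / the structure theory).
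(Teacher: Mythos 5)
Your proposal correctly identifies the broad shape of the argument (pass to weak-$\ast$ limits of the sector averages, use $P$-invariance via an analogue of Lemma~\ref{lemma:sector:P:invariance}, handle non-escape of mass via the $f_\cN$-based avoidance estimate of Proposition~\ref{prop:sector-avoid-singular-set}), and you are right that the non-divergence input is where the hard analysis lives. But there is a genuine gap at the center of your reduction. The paper does \emph{not} try to diagonalize over the countable family of proper submanifolds: it first proves Lemma~\ref{lemma:finite-collection}, which, given $\varphi$ and $\epsilon$, produces \emph{in advance} a finite collection $\cC = \{\cN_1,\dots,\cN_\ell\}$ with the property that any affine invariant submanifold $\cN' \subsetneq \cM$ satisfying $|\nu_{\cN'}(\varphi)-\nu_\cM(\varphi)|\ge\epsilon$ is contained in some $\cN\in\cC$. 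This lemma is itself a nontrivial induction on codimension using Theorem~\ref{theorem:mozes-shah} and Corollary~\ref{cor:mozes-shah}, and it is the key idea you are missing. Once $\cC$ is fixed, one takes a compact $F \subset \cM \setminus \bigcup\cN_j$, a bad sequence $(x_n,T_n)$ in $F$, passes to a limit $\nu$, uses the avoidance proposition to show $\nu$ is a probability measure with $\nu(\cN)=0$ for all $\cN\in\cC$, and then Lemma~\ref{lemma:finite-collection} forces $|\nu(\varphi)-\nu_\cM(\varphi)|<\epsilon$ from the ergodic decomposition --- a contradiction.

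Your attempt to avoid Lemma~\ref{lemma:finite-collection} by extracting a single ``escaping'' sequence is circular. In the negated statement, the compact set $F$ and the bad sequence depend on the chosen finite collection $\{\cN_j\}$; you cannot fix a bad sequence $x_k\to x_\infty$ and then retroactively ``choose the collection to include $\cM''$'' to conclude $x_\infty\notin\cM''$, because that changes $F$ and hence the sequence. Moreover, even with a successful diagonalization, $x_k$ eventually leaving each fixed proper submanifold does not prevent $x_\infty=\lim x_k$ from lying in one. A second, separate error: Theorem~\ref{theorem:mozes-shah} is a statement about weak-$\ast$ limits of \emph{affine} measures $\nu_{\cN_n}$; you apply it to $\bar\nu=\lim\mu_k$, but the $\mu_k$ are Ces\`aro averages of orbit measures, not affine measures, so Theorem~\ref{theorem:mozes-shah} does not apply directly. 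What one actually uses is the ergodic decomposition $\nu=\sum_\cN a_\cN\nu_\cN$ together with the avoidance estimate to kill the terms in $\cC$, and then Lemma~\ref{lemma:finite-collection} to control the remaining terms. Finally, a small misattribution: the countability of affine invariant submanifolds in this paper is Proposition~\ref{prop:countability}, proved via $f_\cM$ and Lemma~\ref{lemma:in:L1}, not via rationality in period coordinates (the number-field description is Wright's alternative proof, cited but not used here).
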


We remark that the analogue of
Theorem~\ref{theorem:sector:uniformity} for unipotent flows, 
due to Dani and Margulis \cite{Dani:Margulis:distribution}
plays a key role in the applications of the theory.

\subsection{Equidistribution for Random Walks}
Let $\mu$ be a probability measure on $SL(2,\reals)$ which is
compactly supported and is absolutely continuous with respect to the
Haar measure. Even though it is not necessary, for clarity of
presentation, we will also assume that 
$\mu$ is $SO(2)$-bi-invariant. Let $\mu^{(k)}$ denote the $k$-fold
convolution of $\mu$ with itself. 

We now state ``random walk'' analogues of
Theorem~\ref{theorem:sector:closure} and
Theorem~\ref{theorem:sector:uniformity}. 

\begin{theorem}
\label{theorem:rw-closure}
Suppose $x\in\strat1,$ and let $\cM$ be the affine invariant submanifold
of minimum dimension which contains $x$. Then for any $\varphi \in
C_c(\strat1)$, 
\begin{displaymath}
\lim_{n \to \infty} \frac{1}{n} \sum_{k=1}^n \int_{SL(2,\reals)} \varphi
(g x) \,
d\mu^{(k)}(g) = \int_{\cM} \varphi \, d\nu_{\cM}.   
\end{displaymath}
\end{theorem}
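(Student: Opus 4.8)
\textbf{Proof proposal for Theorem~\ref{theorem:rw-closure}.}

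The plan is to reduce this random-walk equidistribution statement to the equidistribution-for-sectors statement of Theorem~\ref{theorem:sector:closure}, combined with the measure classification of Theorem~\ref{theorem:P:measures} and the Mozes--Shah-type compactness of Theorem~\ref{theorem:mozes-shah}. First I would set up the standard dynamical framework: let $\beta = \mu^{\bbn}$ be the Bernoulli measure on the space $B = SL(2,\reals)^{\bbn}$ of increments, and consider the random walk $g_n\cdots g_1 x$. Since $\mu$ is $SO(2)$-bi-invariant, absolutely continuous and compactly supported, the $k$-fold convolutions $\mu^{(k)}$ can be compared to the averages $\frac{1}{|I|}\int_I a_{t_k} r_\theta\, d\theta$ appearing in Theorem~\ref{theorem:sector:closure}: writing elements of $SL(2,\reals)$ in $KAK$ coordinates, an $SO(2)$-bi-invariant measure is determined by its projection to the $A$-component, and the Cartan projection of a $\mu$-random walk of length $k$ grows linearly in $k$ with a concentration (large-deviations) estimate. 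This is exactly the mechanism by which random walks on $SL(2,\reals)$ ``see'' the $a_t r_\theta$-averages.

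The key steps, in order, would be: (1) Show that any weak-$\ast$ limit point $\nu$ of the Ces\`aro averages $\frac{1}{n}\sum_{k=1}^n \mu^{(k)}\ast \delta_x$ is a probability measure — i.e.\ no escape of mass. This requires a non-divergence estimate for the random walk, analogous to the Margulis-type non-divergence in the homogeneous setting; one can either invoke such an estimate proved elsewhere in the paper (it is needed for Theorem~\ref{theorem:sector:closure} as well) or derive it from the recurrence properties established there. (2) Show that any such limit $\nu$ is $P$-invariant, hence by Theorem~\ref{theorem:P:measures} is $SL(2,\reals)$-invariant and affine, and is a convex combination of ergodic affine measures $\nu_{\cN}$. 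Stationarity of $\nu$ under $\mu$ is automatic from the Ces\`aro construction; upgrading $\mu$-stationarity to $P$-invariance uses the absolute continuity of $\mu$ together with the $SO(2)$-bi-invariance — a stationary measure for an absolutely continuous $SO(2)$-bi-invariant $\mu$ on $SL(2,\reals)$ is $SL(2,\reals)$-invariant (this is a soft argument: stationarity plus absolute continuity forces invariance under a generating neighborhood of the identity, using that $\mu\ast\mu$ has a density bounded below near $1$). (3) Identify which affine measures occur. Here I would use that $x$ lies in $\cM$, the minimal affine invariant submanifold containing it, so every limit measure is supported in $\cM$; and then use Theorem~\ref{theorem:sector:closure} (or its proof) to rule out any proper affine submanifold $\cN \subsetneq \cM$ carrying positive mass — precisely because the sector-average of $\varphi$ converges to $\int_\cM \varphi\, d\nu_\cM$ and the random-walk average is, via the $KAK$-comparison of step (1)'s setup, sandwiched by sector-averages. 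By Corollary~\ref{cor:mozes-shah} / Theorem~\ref{theorem:mozes-shah} the only possibility left is $\nu = \nu_\cM$. (4) Since every limit point of the bounded sequence of Ces\`aro averages equals $\nu_\cM$, the sequence itself converges to $\nu_\cM$, which is the assertion.

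The main obstacle I expect is step (3): transferring the equidistribution information cleanly between the sector averages $\frac{1}{T}\int_0^T \frac{1}{|I|}\int_I \varphi(a_t r_\theta x)\,d\theta\,dt$ and the convolution averages $\frac{1}{n}\sum_{k\le n}\int \varphi(gx)\,d\mu^{(k)}(g)$. Doing this requires a quantitative comparison: one must know that the push-forward of $\mu^{(k)}$ under $g \mapsto$ (Cartan projection) is, after averaging in $k$, comparable up to small error to a mixture of the measures $\frac{1}{|I|}\int_I \delta_{a_t r_\theta}\,d\theta$ with $t \sim \lambda k$ for the top Lyapunov exponent $\lambda$ of $\mu$, with the $K$-parts on the left absorbed by the $SO(2)$-bi-invariance. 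The large-deviation control on the radial part of the walk, and uniform continuity of $\varphi$, are what make this comparison work; but making it rigorous — especially handling the fluctuations of the Cartan projection and the fact that the left $K$-component of $g_n\cdots g_1$ is itself equidistributing — is the delicate part. An alternative, possibly cleaner route that avoids the explicit $KAK$-comparison is to prove directly that limit measures are $P$-invariant and supported in $\cM$ with no proper-submanifold component, by running a random-walk analogue of the argument for Theorem~\ref{theorem:sector:closure} inside this paper's framework; but either way the crux is excluding mass on proper affine submanifolds of $\cM$, which is where the isolation property developed in the paper enters.
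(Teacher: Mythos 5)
Your high-level skeleton matches the paper: pass to a weak-$\ast$ limit $\nu$ of the Ces\`aro averages, show no escape of mass, show $\nu$ is $SL(2,\reals)$-invariant and affine, write its ergodic decomposition over affine submanifolds of $\cM$, and rule out any proper submanifold carrying mass. But two of your steps are off in ways that matter.

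\textbf{The ``soft argument'' in step (2) is false.} A $\mu$-stationary measure for a compactly supported, absolutely continuous, $SO(2)$-bi-invariant $\mu$ is \emph{not} automatically $SL(2,\reals)$-invariant; the harmonic measure on $\proj^1(\reals)$ (the Furstenberg boundary) is stationary for any such $\mu$ and is certainly not $SL(2,\reals)$-invariant — there is no invariant measure on $\proj^1$. What is true, and what the paper uses, is the Furstenberg correspondence (cited as [F1], [F2], restated in [NZ, Theorem 1.4]) placing $\mu$-stationary measures in bijection with $P$-invariant measures, after which the full strength of the measure classification Theorem~\ref{theorem:P:measures} gives $SL(2,\reals)$-invariance and affineness. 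This is exactly where the depth of [EMir2] enters; it cannot be replaced by a density-bounded-below-near-the-identity argument. You do cite Theorem~\ref{theorem:P:measures} in the same sentence, so this may be a mis-statement rather than a misunderstanding, but the claim as written would mislead you about where the hard work is. Also note that writing $\nu=\sum_{\cN\subseteq\cM}a_\cN\nu_\cN$ as a \emph{countable} sum (rather than an integral over an uncountable family) requires Proposition~\ref{prop:countability}, which you do not mention.

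\textbf{Step (3) reduces in the wrong direction, and the KAK comparison is not needed in the form you propose.} In the paper the logical order is opposite: Theorem~\ref{theorem:rw-closure} is proved first (\S 3.1), and Theorem~\ref{theorem:sector:closure} is derived afterward from the abstract Theorem~\ref{theorem:general:measures}, so invoking the sector theorem to prove the random-walk theorem would be circular. The paper's mechanism for excluding proper submanifolds is direct: Proposition~\ref{prop:rw-avoid-singular-set}, built from the isolation function $f_\cN$ of Proposition~\ref{prop:main:proposition}, shows $(\bar\mu^{(n)}\ast\delta_x)(\Omega_{\cN,\epsilon})<\epsilon$ for $n$ large, for any proper affine $\cN$ not containing $x$ (including $\cN=\emptyset$, which simultaneously gives no escape of mass). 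The $KAK$/top-Lyapunov-exponent mechanism you correctly identify does appear, but only inside Lemma~\ref{lemma:ineq:random:walk}, where $SO(2)$-bi-invariance reduces $\mathbb{A}_\mu^n$ to an integral of $A_t$ against a radial kernel $K_n(t)$ and positivity of the top exponent shows $\int_0^{t_0}K_n\to 0$, so that $\mathbb{A}_\mu^n f_\cN\le c f_\cN + b$ follows from $A_t f_\cN\le c f_\cN + b$. This is a much lighter use of the Cartan projection than the ``sandwiching by sector averages'' transfer you describe, and it avoids entirely the fluctuation and $K$-component issues you flag as the main obstacle. Your instinct that the crux is excluding mass on proper affine submanifolds and that the isolation property is the right tool is correct; the fix is to apply $f_\cN$ directly to the convolution powers rather than trying to compare convolution averages to sector averages at the level of $\varphi$.
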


We also have the following uniform version, similar in spirit to
\cite[Theorem~3]{Dani:Margulis:distribution}: 
\begin{theorem}
\label{theorem:rw-uniformity}
Let $\cM$ be an affine invariant submanifold. 
Then for any $\varphi\in C_c(\strat1)$ and any $\epsilon>0$
there are affine invariant submanifolds $\cN_1,\ldots,\cN_\ell$
properly contained in $\cM$ such that 
for any compact subset $F\subset \cM \setminus(\cup_{j=1}^\ell \cN_j)$ 
there exists $n_0$ so that for all $n>n_0$ and any $x\in F$,
\begin{displaymath}
\left| \frac{1}{n} \sum_{k=1}^n \int_{SL(2,\reals)} \varphi
(g x) \,
d\mu^{(k)}(g) - \int_{\cM} \varphi \, d\nu_{\cM} \right| < \epsilon.
\end{displaymath}
\end{theorem}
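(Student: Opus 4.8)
The plan is to deduce Theorem~\ref{theorem:rw-uniformity} from Theorem~\ref{theorem:rw-closure} together with Theorem~\ref{theorem:mozes-shah} by a compactness/contradiction argument, exactly parallel to how the Dani--Margulis uniform equidistribution theorem is derived from pointwise equidistribution plus the Mozes--Shah theorem. First I would fix $\cM$, $\varphi \in C_c(\strat1)$ and $\epsilon > 0$, and suppose for contradiction that no finite collection $\cN_1,\dots,\cN_\ell$ of proper affine invariant submanifolds of $\cM$ works. Negating the statement: for every finite family of proper affine invariant submanifolds there is a compact set $F$ in the complement on which equidistribution fails at arbitrarily large times; a diagonal argument over an exhaustion of $\cM$ by compact sets then produces a sequence $x_j \in \cM$ and times $n_j \to \infty$ with
\begin{displaymath}
\left| \frac{1}{n_j} \sum_{k=1}^{n_j} \int_{SL(2,\reals)} \varphi(g x_j)\, d\mu^{(k)}(g) - \int_{\cM} \varphi\, d\nu_{\cM} \right| \ge \epsilon,
\end{displaymath}
where moreover the $x_j$ escape every proper affine invariant submanifold (i.e.\ for each such $\cN$, only finitely many $x_j$ lie in $\cN$); this last clause is what the negation buys us, and it is the crux of setting up the argument correctly.

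Next I would consider the sequence of measures $\sigma_j := \frac{1}{n_j} \sum_{k=1}^{n_j} \mu^{(k)} * \delta_{x_j}$ on $\strat1$. The key point is to show that any weak-$\ast$ limit $\sigma$ of (a subsequence of) the $\sigma_j$ is the affine measure $\nu_{\cM}$; this immediately contradicts the displayed inequality since $\sigma_j(\varphi) \to \sigma(\varphi) = \nu_{\cM}(\varphi)$. To identify $\sigma$: first, passing to a subsequence, one shows $\sigma$ is a probability measure (no escape of mass) — this requires a non-divergence estimate for the random walk averages, which should follow from the quantitative recurrence already available in this circle of ideas; then one checks $\sigma$ is $P$-invariant. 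The $P$-invariance is where the random-walk structure enters: because $\mu$ is $SO(2)$-bi-invariant and absolutely continuous, the Cesàro-averaged convolutions $\frac{1}{n}\sum_{k\le n}\mu^{(k)}$ become asymptotically invariant under convolution by $\mu$, and a standard argument (as in the proof of Theorem~\ref{theorem:rw-closure}) upgrades $\mu$-stationarity of the limit to $SL(2,\reals)$-invariance, hence $P$-invariance. By Theorem~\ref{theorem:P:measures}, $\sigma$ is then affine and $SL(2,\reals)$-invariant, so $\sigma = \sum_i c_i \nu_{\cM_i}$ is a convex combination of ergodic affine measures supported on affine invariant submanifolds $\cM_i \subseteq \cM$.

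It then remains to rule out any contribution from proper $\cM_i \subsetneq \cM$. Here is where Theorem~\ref{theorem:mozes-shah} (equivalently Corollary~\ref{cor:mozes-shah}) does the work: since the $x_j$ eventually leave every proper affine invariant submanifold of $\cM$, the point masses $\delta_{x_j}$ — and hence the averages $\sigma_j$ — cannot concentrate positive mass near any proper $\cM_i$; more precisely, if $\sigma$ gave positive weight to some proper affine invariant $\cM'$, one argues that a definite proportion of the random-walk mass starting from $x_j$ stays near $\cM'$ for large $j$, and then an application of the isolation property of affine invariant submanifolds (the main technical input of the paper, underlying Theorem~\ref{theorem:closed:P:invariant:set} and Theorem~\ref{theorem:mozes-shah}) forces $x_j$ itself to lie in a proper affine invariant submanifold for infinitely many $j$, contradicting the choice of the sequence. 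Therefore $\sigma = \nu_{\cM}$, completing the contradiction. The main obstacle, and the place where care is genuinely needed, is this last step — quantitatively transferring ``the limiting measure charges a proper submanifold'' back to ``the orbit points lived in a proper submanifold,'' uniformly in the starting point — which is precisely the role played by the isolation results developed in this paper; the rest is a routine, if slightly lengthy, compactness packaging of Theorem~\ref{theorem:rw-closure} and Theorem~\ref{theorem:mozes-shah}.
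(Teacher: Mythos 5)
Your overall strategy (compactness plus contradiction plus the isolation property) is in the right spirit, but it departs from the paper's proof in a way that introduces real gaps, and the key enabling step the paper uses is not present in your argument.

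The paper's proof does \emph{not} negate the statement for all finite families and then run a diagonal argument. Instead, it first \emph{constructs} an explicit finite family $\cC = \{\cN_1,\dots,\cN_\ell\}$ via Lemma~\ref{lemma:finite-collection}: the proper affine invariant submanifolds $\cN' \subset \cM$ with $|\nu_{\cN'}(\varphi) - \nu_\cM(\varphi)| \ge \epsilon$ are all contained in one of finitely many proper submanifolds $\cN \in \cC$. (That lemma is proved by induction on codimension using Theorem~\ref{theorem:mozes-shah} / Corollary~\ref{cor:mozes-shah}.) Having fixed $\cC$ in advance, one supposes the theorem fails for this family, which produces a \emph{single fixed} compact set $F$ disjoint from $\bigcup_j \cN_j$, a sequence $x_n \in F$, and $m_n \to \infty$ with the bad inequality. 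Then one takes a weak-$\ast$ limit $\nu$ of $\bar\mu^{(m_n)}*\delta_{x_n}$ and applies Proposition~\ref{prop:rw-avoid-singular-set} with this fixed compact $F$ and each $\cN \in \cC$ (plus $\cN = \emptyset$) to get that $\nu$ is a probability measure with $\nu(\cN)=0$ for all $\cN \in \cC$. By the defining property of $\cC$, every ergodic component of $\nu$ then satisfies $|\nu_{\cN'}(\varphi)-\nu_\cM(\varphi)| < \epsilon$, hence $|\nu(\varphi)-\nu_\cM(\varphi)| < \epsilon$, contradicting the choice of $x_n, m_n$. The crucial efficiency here is that one never needs $\nu = \nu_\cM$; one only needs $\nu$ to avoid the finitely many ``bad'' submanifolds in $\cC$.

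Your version instead tries to produce a sequence $x_j$ escaping \emph{every} proper affine invariant submanifold via a diagonal argument and then to prove $\sigma = \nu_\cM$ exactly. This runs into two concrete problems. First, the $x_j$ in your diagonal argument do not lie in a single fixed compact set, so the non-divergence input (Proposition~\ref{prop:rw-avoid-singular-set}) does not apply uniformly: the $n_0$ in that proposition depends on the compact set $F$, and you would have to arrange $m_j$ to grow faster than all the relevant $n_0(F_j, \cN_i, 1/m)$ thresholds, which you do not address and which makes the ``no escape of mass'' step genuinely nontrivial rather than a routine consequence of the available recurrence estimates. Second, the inference ``if $\sigma(\cM') > 0$ then $x_j$ lies in a proper affine invariant submanifold for infinitely many $j$'' is not what the isolation property gives. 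The contrapositive of Proposition~\ref{prop:rw-avoid-singular-set} only says $x_j$ cannot belong to a compact set disjoint from $\cM'$ for which the time threshold has been exceeded; since your $x_j$ may escape all compact sets, this does not force $x_j \in \cM'$ or indeed $x_j$ to lie in any proper submanifold. (In the proof of Theorem~\ref{theorem:mozes-shah} a step of this flavor does appear, but there the $x_n$ are additionally generic points for the measures $\nu_{\cN_n}$ and $m \to \infty$ is taken via the random ergodic theorem, which is a crucially different setup.) The missing idea in your write-up is precisely Lemma~\ref{lemma:finite-collection}: it converts the potentially infinite set of ``dangerous'' submanifolds into a finite family ahead of time, which is what keeps $F$ fixed and the subsequent limit argument clean.
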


\subsection{Equidistribution for some F{\o}lner sets}
Let $u_s = \begin{pmatrix} 1 & s \\ 0 & 1  \end{pmatrix}$.

\begin{theorem}
\label{theorem:folner:closure}
Suppose $x\in\strat1$ and let $\cM$ be the affine invariant submanifold
of minimum dimension which contains $x$. Then for any $\varphi \in
C_c(\strat1)$, and any $r > 0$, 
\begin{displaymath}
\lim_{T \to \infty} \frac{1}{T} \int_0^T \frac{1}{r} \int_0^r \varphi
(a_t u_s x) \, ds \, dt = \int_{\cM} \varphi \, d\nu_{\cM}.   
\end{displaymath}
\end{theorem}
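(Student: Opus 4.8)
The plan is to run the scheme of the proof of Theorem~\ref{theorem:sector:closure}, with the sectors $\{a_tr_\theta : 0\le t\le T,\ \theta\in I\}$ replaced by the F{\o}lner boxes $B_{T,r}=\{a_tu_s : 0\le t\le T,\ 0\le s\le r\}$. Write $m_{T,r}=\frac{1}{Tr}\int_0^T\int_0^r (a_tu_s)_\ast\delta_x\,ds\,dt$; the assertion is that $m_{T,r}\to\nu_{\cM}$ weak-$\ast$, where $\cM$ is the minimal affine invariant submanifold containing $x$, which by Theorem~\ref{theorem:closure:submanifold} equals $\overline{Px}$, so that $a_tu_sx\in\cM$ for all $t,s$. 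First I would invoke the quantitative non-divergence estimates for the $SL(2,\reals)$-action on $\cH_1(\alpha)$ (of Eskin--Masur type, as used throughout the paper) to conclude that $\{m_{T,r}\}_T$ has no escape of mass, so that every weak-$\ast$ subsequential limit $m$ is a probability measure, supported on the closed set $\cM$.

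Next I would show that any such $m$ is $P$-invariant, using the commutation relations $a_{t_0}(a_tu_s)=a_{t+t_0}u_s$ and $u_{v_0}(a_tu_s)=a_tu_{s+v_0e^{-2t}}$. For $p=a_{t_0}$ these give $\bigl|\int(\varphi\circ p)\,dm_{T,r}-\int\varphi\,dm_{T,r}\bigr|\le 2t_0\|\varphi\|_\infty/T$, and for $p=u_{v_0}$, after the substitution $s\mapsto s+v_0e^{-2t}$ and integrating the exponential in $t$, a bound of size $O_{\varphi,v_0}(1/(rT))$; letting $T\to\infty$ shows $m$ is invariant under $A$ and under $N$, hence under $P=AN$. (Equivalently, the averaging scheme $\frac{1}{Tr}\int_0^T\int_0^r(\cdot)\,ds\,dt$ over $B_{T,r}$ is asymptotically invariant under left translation by $P$, as is transparent from $a_tu_s=u_{se^{2t}}a_t$.) By Theorem~\ref{theorem:P:measures}, $m$ is then $SL(2,\reals)$-invariant and affine, hence, by ergodic decomposition, a convex combination $m=\int\nu_{\cN}\,d\rho(\cN)$ of ergodic affine measures; and since $m$ is supported on $\cM$, every $\cN$ occurring here satisfies $\cN\subseteq\cM$.

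It remains to exclude any contribution from the proper affine invariant submanifolds of $\cM$, which forces $m=\nu_{\cM}$ and hence, the subsequential limit now being unique, $m_{T,r}\to\nu_{\cM}$. For this I would argue as in the proof of Theorem~\ref{theorem:sector:closure}: by the isolation property of closed $SL(2,\reals)$-invariant manifolds developed in this paper, the proper affine invariant submanifolds of $\cM$ meeting a fixed compact set are suitably controlled, and a Dani--Margulis-style linearization near such a submanifold shows that for each fixed $s$ the trajectory $t\mapsto a_tu_sx$ spends an asymptotically negligible fraction of $[0,T]$ in any small neighborhood of it; this gives every weak-$\ast$ limit $m$ zero mass near each such submanifold, which together with the decomposition above forces $m=\nu_{\cM}$. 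Alternatively, this step can be reduced to Theorem~\ref{theorem:sector:closure} itself: $a_tu_s=u_{se^{2t}}a_t$ rewrites $\frac{1}{Tr}\int_0^T\int_0^r\varphi(a_tu_sx)\,ds\,dt$ as $\frac1T\int_0^T\bigl(\frac{1}{re^{2t}}\int_0^{re^{2t}}\varphi(u_va_tx)\,dv\bigr)\,dt$, a $t$-average of expanding-horocycle averages based at $a_tx$, while conjugating $a_tr_\theta=(a_tr_\theta a_{-t})a_t$ exhibits the inner sector average in Theorem~\ref{theorem:sector:closure} as the same type of object up to a correction from the contracting directions of size $O(e^{-2t})$; comparing the two then reduces to controlling the part of the $t$-range on which $a_tx$ is close to a proper affine invariant submanifold.

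The step I expect to be the main obstacle is exactly this last one --- ruling out escape of mass onto proper affine invariant submanifolds of $\cM$ --- which is where the isolation results and the linearization technique of the paper are essential; in the comparison approach the same difficulty resurfaces as the need to know that long horocycle averages of different lengths along the orbit agree in the limit, for which the uniform statement Theorem~\ref{theorem:sector:uniformity} is the natural tool. By contrast, the non-divergence input and the (asymptotic) left-invariance of the averages over $B_{T,r}$ are routine.
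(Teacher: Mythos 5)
Your proposal follows essentially the same route as the paper: establish non-escape of mass, show that weak-$\ast$ limits of the F{\o}lner box averages are $P$-invariant (hence $SL(2,\reals)$-invariant and affine by Theorem~\ref{theorem:P:measures}), and then rule out mass on proper affine invariant submanifolds via the isolation machinery, so that every subsequential limit must be $\nu_{\cM}$. The paper packages exactly these three ingredients into Theorem~\ref{theorem:general:measures}, applied with the F{\o}lner measures $\lambda_{t,r}$, and even records the observation you make in your last paragraph, namely that one could instead reduce to Theorem~\ref{theorem:sector:closure}/\ref{theorem:sector:uniformity}.

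The one place where your description drifts from the paper's actual mechanism is the exclusion step. The paper does not use a Dani--Margulis-style linearization of orbits near a proper submanifold; the isolation is encoded entirely in the Margulis function $f_{\cN}$ of Proposition~\ref{prop:main:proposition}, and the F{\o}lner avoidance statement (Proposition~\ref{prop:folner-avoid-singular-set}) is obtained by a pointwise comparison of the F{\o}lner average of $f_{\cN}$ with the sector average of $f_{\cN}$. The identity the paper uses for this is not $a_t u_s = u_{se^{2t}} a_t$ but rather $a_t u_{\tan\theta} = (a_t g_\theta^{-1} a_t^{-1})\, a_t r_\theta$ (with $r_\theta = g_\theta u_{\tan\theta}$), restricted to a small sector $I=(0,0.01)$; the correction $a_t g_\theta^{-1} a_t^{-1}$ stays bounded as $t\to\infty$, and property (c) of Proposition~\ref{prop:main:proposition} converts this into the estimate $(\lambda_{t,r}\ast\delta_x)(f_{\cN}) \le C\,(\vartheta_{t,I}\ast\delta_x)(f_{\cN})$, from which one inherits the sector avoidance already proved. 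So the ``second alternative'' you sketch (expanding-horocycle averages based at $a_t x$) is morally adjacent but not what the paper does, and it would require a separate argument controlling unipotent averages that the paper deliberately avoids; the correct reduction is to the sector averages, where the Margulis-function estimate is already in hand.
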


We also have the following uniform version (cf. \cite[Theorem~3]{Dani:Margulis:distribution}):
\begin{theorem}
\label{theorem:folner:uniformity}
Let $\cM$ be an affine invariant submanifold. 
Then for any $\varphi\in C_c(\strat1)$ and any $\epsilon>0$
there are affine invariant submanifolds $\cN_1,\ldots,\cN_\ell$
properly contained in $\cM$ such that 
for any compact subset $F\subset \cM \setminus(\cup_{j=1}^\ell \cN_j)$ 
there exists $T_0$ so that for all $T>T_0$, {}{for all $r >
  0$} and for any $x\in F$,
\begin{displaymath}
\left| \frac{1}{T} \int_0^T \frac{1}{r} \int_0^r \varphi
(a_t u_s x) \, ds \, dt - \int_{\cM} \varphi \, d\nu_{\cM} \right|
< \epsilon.
\end{displaymath}
\end{theorem}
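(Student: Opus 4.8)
Write $C_{T,r}\varphi(x)=\tfrac1T\int_0^T\tfrac1r\int_0^r\varphi(a_tu_sx)\,ds\,dt$. The plan is to adapt the proof of Theorem~\ref{theorem:sector:uniformity} (a Dani--Margulis-type argument), with $C_{T,r}$ in place of the sector averages, and to absorb the extra parameter $r$ through the identity $a_tu_s=u_{e^{2t}s}a_t$. Argue by contradiction: if the conclusion fails there are $\cM$, $\varphi\in C_c(\cH_1(\alpha))$ and $\epsilon>0$ such that no finite list of proper affine invariant submanifolds of $\cM$ works. Fix an enumeration $\cM_1,\cM_2,\dots$ of the (countably many) proper affine invariant submanifolds of $\cM$; applying the failure to $\{\cM_1,\dots,\cM_m\}$ for each $m$, and using the non-divergence estimates for the $SL(2,\reals)$-action (as already used for the earlier equidistribution theorems) to stay in a fixed compact set $K_0\subset\cM$, we obtain $x_m\in K_0$ with $x_m\notin\cM_1\cup\cdots\cup\cM_m$, together with $T_m>m$ and $r_m>0$, with $|C_{T_m,r_m}\varphi(x_m)-\int_\cM\varphi\,d\nu_\cM|\ge\epsilon$. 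After passing to a subsequence $x_m\to x_\infty\in K_0$; since each $\cM_i$ is closed and omits $x_m$ for $m\ge i$, the point $x_\infty$ lies in no proper affine invariant submanifold of $\cM$, so by Theorem~\ref{theorem:closure:submanifold} its smallest affine invariant submanifold is $\cM$ itself.

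The heart of the matter is to show that this forces $C_{T_m,r_m}\varphi(x_m)\to\int_\cM\varphi\,d\nu_\cM$. Let $\bar\nu$ be any weak-$\ast$ limit of $\bar\nu_m:=\tfrac1{T_m}\int_0^{T_m}\tfrac1{r_m}\int_0^{r_m}\delta_{a_tu_sx_m}\,ds\,dt$. From $a_tu_s=u_{e^{2t}s}a_t$, left translation of $\bar\nu_m$ by $u_{s_0}$ amounts to an $s$-shift damped by a factor $e^{-2t}$ inside the inner integral, while left translation by $a_{t_0}$ only moves the endpoints of the outer integral; so $\bar\nu$ is $A$-invariant, and $P$-invariant provided the scales $r_m$ do not collapse too fast relative to $T_m$. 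The non-divergence estimates make $\bar\nu$ a probability measure, supported on the closed $P$-invariant set $\cM$, so by Theorem~\ref{theorem:P:measures} its ergodic components are affine measures $\nu_\cN$ with $\cN\subseteq\cM$. To kill the components with $\cN\subsetneq\cM$ one invokes the isolation property: for each such $\cN$ it supplies a proper function $f_\cN$, infinite exactly on $\cN$, contracted on average by the $P$-action near $\cN$, whence $\bar\nu_m(\{f_\cN>R\})\le R^{-1}(c\,\kappa^{T_m}f_\cN(x_m)+b)$ with $\kappa<1$; since $x_m\to x_\infty\notin\cN$ keeps $f_\cN(x_m)$ bounded, letting $m\to\infty$ and then $R\to\infty$ yields $\bar\nu(\cN)=0$. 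Because there are only countably many proper $\cN$, their union is $\bar\nu$-null, so $\bar\nu=\nu_\cM$ for every subsequential limit, and $C_{T_m,r_m}\varphi(x_m)=\int\varphi\,d\bar\nu_m\to\int_\cM\varphi\,d\nu_\cM$ --- the required contradiction. (This argument in particular reproves Theorem~\ref{theorem:folner:closure}.)

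As for the dependence on $r$: using $a_tu_sx=u_{e^{2t}s}a_tx$ once more, the inner average is the horocycle average of $a_tx$ over a segment of length $re^{2t}$, so the approximate $P$-invariance, the non-divergence input, and the contraction estimates from the isolation property are all uniform for $r$ in a fixed compact subset of $(0,\infty)$; the endpoint regimes $r\to0$ and $r\to\infty$, where the average degenerates respectively towards a pure geodesic average and a long-horocycle average, require a separate and more delicate argument. I expect the genuine obstacle to be exactly the isolation-property input used above: extracting from the isolation property a bound on the fraction of the parameters $(t,s)$ for which $a_tu_sx$ lies near a given proper affine invariant submanifold, sharp and uniform enough in $x$ (and in $r$) to drive the descending induction on affine submanifolds which, via Theorem~\ref{theorem:mozes-shah}, reduces everything to a finite list $\cN_1,\dots,\cN_\ell$. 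This is the same mechanism underlying Theorems~\ref{theorem:sector:uniformity} and~\ref{theorem:rw-uniformity}; by comparison the passage between the sector, random-walk and F{\o}lner averaging schemes is routine, and can alternatively be carried out through the relation $a_tr_\theta=\bar u_{e^{-2t}\tan\theta}\,a_{t+\log\cos\theta}\,u_{-\tan\theta}$, which exhibits $a_tr_\theta x$ and $a_tu_{-\theta}x$ as differing by a lower unipotent of size $O(e^{-2t})$, negligible after averaging in $t$.
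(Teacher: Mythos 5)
Your overall strategy --- extract a weak limit of the F{\o}lner averages, show it is an $SL(2,\reals)$-invariant probability measure whose ergodic decomposition avoids every proper affine invariant submanifold, and then run a Dani--Margulis-type contradiction combined with Theorem~\ref{theorem:mozes-shah} and countability --- is exactly the strategy of the paper, which however packages the limit-measure argument once and for all in Theorem~\ref{theorem:general:measures} and then only needs to verify its two hypotheses for $\lambda_{t,r}$. For hypothesis (b) ($SL(2,\reals)$-invariance of the limit) the paper simply observes that the supports of $\lambda_{t,r}$ form a F{\o}lner family in $P$ for each fixed $r$; your hedge that $P$-invariance holds ``provided the scales $r_m$ do not collapse too fast relative to $T_m$'' is a red herring once one normalizes $r$ as the paper does.

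The genuine gap is the avoidance estimate, which you yourself flag as ``the genuine obstacle'' but do not carry out. The isolation function $f_\cN$ of Proposition~\ref{prop:main:proposition} comes with the inequality $(A_t f_\cN)(x)\le c\,f_\cN(x)+b$ for the $SO(2)$-averaged geodesic flow $A_t$, not for the F{\o}lner operator $\frac{1}{T}\int_0^T\frac1r\int_0^r(\cdot)(a_tu_sx)\,ds\,dt$, and the bound you write down, $\bar\nu_m(\{f_\cN>R\})\le R^{-1}(c\,\kappa^{T_m}f_\cN(x_m)+b)$, is not a consequence of anything you quote (even the form is off: Ces\`aro-averaging the contraction yields $\bar\nu_m(f_\cN)\le c'f_\cN(x_m)+b'$ with the dependence on $T_m$ entering only through the threshold, not an exponential factor). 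What is actually needed is the paper's Proposition~\ref{prop:folner-avoid-singular-set}: after normalizing to $r=\tan(0.01)$, write $a_tu_{\tan\theta}=(a_tg_\theta^{-1}a_t^{-1})\,a_tr_\theta$ with $g_\theta$ lower triangular, observe that $a_tg_\theta^{-1}a_t^{-1}$ stays in a fixed compact neighborhood of the identity for $\theta\in(0,0.01)$ uniformly in $t$, and use Proposition~\ref{prop:main:proposition}(c) together with the boundedness of $(\tan)'$ on that interval to conclude $(\lambda_{t,r}\ast\delta_x)(f_\cN)\le C\,(\vartheta_{t,I}\ast\delta_x)(f_\cN)$, thereby transferring the already-established sector avoidance estimate of Proposition~\ref{prop:sector-avoid-singular-set} to the F{\o}lner averages. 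You do name the right identity ($a_tr_\theta=\bar u_{e^{-2t}\tan\theta}\,a_{t+\log\cos\theta}\,u_{-\tan\theta}$) at the very end, but only as a remark; the argument as written is missing precisely this transfer, which is the only content of the present theorem beyond what was already proved for sectors.
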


\subsection{Counting periodic trajectories in rational billiards.}
\label{sec:subsec:billiards}
Let $Q$ be a rational polygon, and let $N(Q,T)$ denote the number of
cylinders of periodic trajectories of length at most $T$ 
for the billiard flow on $Q$. By a theorem of H. Masur
\cite{Masur:upper} \cite{Masur:lower}, there exist
$c_1>0$ and $c_2>0$ depending on $Q$ such that for all  $t >1$, 
\begin{displaymath}
c_1 e^{2t} \le N(Q,e^t) \le c_2 e^{2t}. 
\end{displaymath}
As a consequence of Theorem~\ref{theorem:sector:uniformity} we get 
the following ``weak asymptotic
formula'' (cf. \cite{Athreya:Eskin:Zorich}):
\begin{theorem}
\label{theorem:weak:asymptotics}
For any rational polygon $Q$, the exists a constant $c = c(Q)>0$ such
that
\begin{displaymath}
\lim_{t \to \infty} \frac{1}{t} \int_0^{t} N(Q, e^{s}) e^{-2s} \, ds =
c. 
\end{displaymath}
\end{theorem}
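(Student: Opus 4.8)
\textbf{Proof proposal for Theorem~\ref{theorem:weak:asymptotics}.}
The plan is to translate the billiard counting problem into a statement about the $SL(2,\reals)$ action on a stratum, and then apply the uniform equidistribution result Theorem~\ref{theorem:sector:uniformity}. First I would recall the standard unfolding construction (Katok--Zemlyakov): to the rational polygon $Q$ one associates a translation surface $(M_0,\omega_0) \in \cH_1(\alpha)$ for an appropriate partition $\alpha$, so that cylinders of periodic billiard trajectories in $Q$ correspond (up to a bounded combinatorial factor coming from the reflection group) to cylinders of closed geodesics on $(M_0,\omega_0)$. Counting closed geodesics of bounded length is in turn governed by the Siegel--Veech transform: for $f \in C_c(\reals^2)$ define $\hat f(M,\omega) = \sum_{v} f(v)$, the sum over holonomy vectors of cylinders (or of saddle connections) on $(M,\omega)$. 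The Siegel--Veech formula, together with the basic fact that $\hat f$ is integrable on any affine invariant submanifold $\cM$ with respect to $\nu_\cM$, gives
\begin{displaymath}
\int_0^{2\pi} \hat f(a_t r_\theta x)\, d\theta \; \sim \; (\text{const})\cdot e^{2t} \int_{\reals^2} f
\end{displaymath}
in an averaged sense; more precisely, choosing $f$ to be the indicator of the unit ball, $\frac{1}{2\pi}\int_0^{2\pi} \hat f(a_t r_\theta x)\,d\theta$ is comparable to $e^{-2t} N(x, e^t)$ up to the reflection-group factor, where $N(x,\cdot)$ counts cylinders on the translation surface. The point of the Cesàro average in $t$ is exactly to be able to invoke equidistribution.

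The main step is then as follows. Let $\cM$ be the affine invariant submanifold of minimal dimension containing $x_0 = (M_0,\omega_0)$. If $\hat f$ were continuous and compactly supported, Theorem~\ref{theorem:sector:closure} would immediately give
\begin{displaymath}
\lim_{T\to\infty} \frac{1}{T}\int_0^T \frac{1}{2\pi}\int_0^{2\pi} \hat f(a_t r_\theta x_0)\, d\theta\, dt = \int_\cM \hat f\, d\nu_\cM,
\end{displaymath}
and unwinding the normalization (the factor $e^{-2t}$ is absorbed by rescaling $f$, i.e. by replacing the ball of radius $1$ with the ball of radius $e^{-t}$ and using homogeneity of $\hat f$ under $a_t$) would produce the claimed limit with $c = c(Q)$ a positive multiple of the Siegel--Veech constant $c_\cM$ of $\cM$. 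Positivity of $c$ follows from Masur's lower bound $c_1 e^{2t}\le N(Q,e^t)$, which forces $c_\cM > 0$.

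The hard part will be that $\hat f$ is \emph{not} bounded and not compactly supported on $\strat1$: it blows up in the part of the stratum where the surface has very short saddle connections. This is exactly where one needs the uniform statement Theorem~\ref{theorem:sector:uniformity} rather than just pointwise equidistribution, together with quantitative non-divergence / integrability estimates for the Siegel--Veech transform (of the type in \cite{Athreya:Eskin:Zorich}, ultimately going back to Eskin--Masur). Concretely I would: (a) fix a large compact $K \subset \strat1$ and write $\hat f = \hat f \cdot \mathbf{1}_K + \hat f\cdot \mathbf{1}_{K^c}$; (b) for the truncated continuous piece apply Theorem~\ref{theorem:sector:uniformity} on $\cM$ to get equidistribution uniformly, noting that the relevant $x_0$ can be arranged to avoid the finitely many proper affine submanifolds $\cN_j$ (or handle those separately by induction on $\dim\cM$); (c) bound the tail piece by showing $\int_0^T \int_0^{2\pi} \hat f \,\mathbf{1}_{K^c}(a_t r_\theta x_0)\, d\theta\, dt = o(T)$ uniformly, using that the average over a circle of the Siegel--Veech transform of the tail is small when $K$ is large, which is precisely the integrability of $\hat f$ against the limiting measures plus the fact that the circle averages do not escape to infinity on average (non-divergence). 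Combining (b) and (c) and letting $K$ exhaust $\strat1$ yields the theorem; the constant is $c = c(Q)\, c_\cM$ where $c_\cM$ is the cylinder Siegel--Veech constant of $\cM = \overline{SL(2,\reals) x_0}$.
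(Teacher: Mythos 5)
Your proposal takes essentially the same route as the paper: unfold $Q$ to a translation surface $x_0$, pass to the Siegel--Veech transform $\hat f$, apply the Cesàro-averaged sector equidistribution theorem, and handle the non-compact support of $\hat f$ via the Eskin--Masur integrability/non-divergence estimates. The paper's proof is in fact just a two-sentence pointer to \cite[\S{3}-\S{5}]{EMas} and to the axiomatic formulation in \cite[\S{8}]{Eskin:Marklof:Morris}, noting that while one does not know convergence of the pure circle averages $\frac{1}{2\pi}\int_0^{2\pi}\varphi(a_tr_\theta x)\,d\theta$, the extra Cesàro average in $t$ on both sides makes the Eskin--Masur argument go through verbatim; that is exactly the structure you reproduce.

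Two small remarks. First, since $x_0$ is a single fixed surface, Theorem~\ref{theorem:sector:closure} already gives the needed equidistribution; the uniform Theorem~\ref{theorem:sector:uniformity} is not required (your worry about $x_0$ meeting the exceptional $\cN_j$ is moot because $\cM$ is by definition the \emph{smallest} affine invariant manifold containing $x_0$, so $x_0\notin\cN_j$ automatically). Second, your heuristic $\frac{1}{2\pi}\int_0^{2\pi}\hat f(a_t r_\theta x)\,d\theta\approx e^{-2t}N(x,e^t)$ is not quite right at the level of a single $t$: the circle average produces a weighted count $\sum_{|v|\le e^t}(e^t|v|)^{-1}$, and it is precisely the additional average in $t$ that converts this into the unweighted count $N(x,e^t)e^{-2t}$. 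This is the content of \cite[\S{3}-\S{5}]{EMas} and is why the Cesàro average appears in the statement; your sketch elides it but the idea is right.
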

The constant $c$ in Theorem~\ref{theorem:weak:asymptotics} 
is the Siegel-Veech constant (see \cite{Veech:Siegel},
\cite{Eskin:Masur:Zorich}) associated to the affine invariant
submanifold $\cM = \overline{SL(2,\reals)S}$ where $S$ is the flat
surface obtained by unfolding $Q$. 

It is natural to conjecture that the extra averaging on
Theorem~\ref{theorem:weak:asymptotics} is not necessary, and one has
$\lim_{t \to \infty} N(Q,e^t) e^{-2t}
= c$. This can be shown if one obtains a classification of the
measures invariant under the subgroup $U = \begin{pmatrix} 1 & \ast \\
  0 & 1 \end{pmatrix}$ of $SL(2,\reals)$. Such a
result is in general beyond the reach of the current methods. However
it is known in a few very special cases, 
see \cite{Eskin:Masur:Schmoll}, \cite{Eskin:Marklof:Morris},
\cite{Calta:Wortman} and \cite{Bainbridge:L:shaped}. 
\medskip

\subsection{The Main Proposition and Countability}
For a function $f: \cH_1(\alpha) \to \reals$, let
\begin{displaymath}
(A_t f)(x) = \frac{1}{2\pi} \int_0^{2\pi} f(a_t r_\theta x). 
\end{displaymath}
Following the general idea of Margulis {}{introduced
  in\cite{EMM1}}, 
the strategy of the proof is
to define a function which will satisfy a certain inequality involving
$A_t$. In fact,
the main technical result of this paper is the following:
\begin{proposition}
\label{prop:main:proposition}
Let $\cM \subset \cH_1(\alpha)$ be an affine invariant submanifold. (In this
proposition $\cM = \emptyset$ is allowed). Then there
exists an $SO(2)$-invariant 
function $f_\cM: \cH_1(\alpha) \to [1,\infty]$ with the following
properties:
\begin{itemize}
\item[{\rm (a)}] $f_\cM(x) = \infty$ if and only if $x \in \cM$, and
  $f_\cM$ is bounded on compact subsets of $\strat1\setminus \cM$. 
   For any $\ell > 0$, the set $\overline{\{ x \st f_\cM(x) \le \ell \}}$ is a
  compact subset of $\cH_1(\alpha)\setminus \cM$. 
\item[{\rm (b)}] There exists  $b > 0$ (depending on $\cM$) and for
  every $0 < c < 1$   there exists $t_0 > 0$ (depending on $\cM$ and
  $c$) such that for all $x \in \cH_1(\alpha)\setminus \cM$ and all $t > t_0$,
\begin{displaymath}
(A_t f_\cM)(x) \le c f_\cM(x) + b.
\end{displaymath}
\item[{\rm (c)}] There exists $\sigma > 1$ such that for all $g \in
  SL(2,\reals)$ {}{in some neighborhood of the identity}
and all $x \in \cH_1(\alpha)$, 
\begin{displaymath}
\sigma^{-1} f_\cM(x) \le f_\cM(g x) \le \sigma f_\cM(x).
\end{displaymath}
\end{itemize}
\end{proposition}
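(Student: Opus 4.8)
\textbf{Proof proposal for Proposition \ref{prop:main:proposition}.}

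The plan is to construct $f_\cM$ as a limit of the Margulis-style height functions built from the recurrence estimates for the $SL(2,\reals)$ action, modified so as to ``see'' the submanifold $\cM$. The starting point is the function $d(x,\cM)^{-\delta}$ measuring the reciprocal distance to $\cM$ in period coordinates, where $\delta > 0$ is a small exponent to be chosen; call this the \emph{local model} near $\cM$. Away from $\cM$ the relevant obstruction to averaging is escape to the cusp of $\strat1$, so I would combine the local model with the standard contraction function of Eskin--Masur and Eskin--Masur--Zorich that controls excursions to infinity (the ``$\alpha$-function'' on the stratum). First I would establish the key inequality \emph{locally near $\cM$}: because $\reals\cM$ is a linear subspace in period coordinates and $a_t r_\theta$ acts linearly there, the quantity $\|v\|^{-\delta}$, where $v$ is the component of the period vector transverse to $\cM$, satisfies an inequality of the form $(A_t(\|v\|^{-\delta}))(x) \le C(t,\delta)\,\|v\|^{-\delta}$ with $C(t,\delta) \to 0$ as $t\to\infty$ once $\delta$ is small enough; this is exactly the computation that in $SL(2,\reals)/SL(2,\zed)$ shows $\int_0^{2\pi}\|a_t r_\theta v\|^{-\delta}\,d\theta \le c\,\|v\|^{-\delta}$ for $\delta < 1$, $t$ large, and it passes to the stratum since the transverse action is a bona fide linear $SL(2,\reals)$-representation.

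Next I would patch the local model to a global function. Away from a fixed neighborhood of $\cM$ one uses the Eskin--Masur--Zorich cusp-contraction function $u$, which satisfies $(A_t u)(x) \le c\, u(x) + b$ for $t$ large; property (a) for that function (properness, boundedness on compacta) is standard. The global $f_\cM$ is then taken to be a sum (or maximum) of a cutoff multiple of the local transverse-distance function and a large multiple of $u$, with the relative weights chosen so that (i) $f_\cM = \infty$ exactly on $\cM$, since the transverse distance vanishes there and $u$ is finite off the cusp; (ii) sublevel sets $\{f_\cM \le \ell\}$ have compact closure disjoint from $\cM$, because bounding $f_\cM$ bounds both the cusp-function (keeping us in a compact part of the stratum) and bounds $\|v\|^{-\delta}$ from above (keeping us a definite distance from $\cM$); this gives (a). Property (c), the quasi-invariance under a neighborhood of the identity in $SL(2,\reals)$, is immediate for each ingredient: a bounded group element distorts the transverse norm and the cusp function by bounded multiplicative factors (the cusp function already has this property in the literature, and the linear action of a compact neighborhood on the transverse space has bounded operator norm), and a sum of functions each quasi-invariant with constant $\sigma_i$ is quasi-invariant with constant $\max_i \sigma_i$.

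The real content is property (b), and this is where I expect the main obstacle. Establishing $(A_t f_\cM)(x) \le c f_\cM(x) + b$ uniformly on \emph{all} of $\strat1\setminus\cM$ requires controlling the interaction between the two regimes --- points that are close to $\cM$ but also high in the cusp, where neither the purely local linear computation nor the purely cuspidal estimate applies directly. The standard device is to prove the inequality separately on a neighborhood of $\cM$ (using the transverse linear estimate, absorbing the bounded cusp-function contribution into the additive constant $b$) and on the complement of that neighborhood (using the cusp estimate, absorbing the then-bounded transverse term into $b$), and to check that on the overlap the two estimates are compatible after rechoosing constants; the subtlety is that ``close to $\cM$'' and ``deep in the cusp'' are not mutually exclusive, so one needs that near $\cM$ the cusp geometry of $\strat1$ is controlled by the cusp geometry of $\cM$ together with the transverse directions, i.e. a product-like structure of the neighborhood. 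I would handle this by working in period coordinates on a cover, where $\reals\cM$ is genuinely a linear subspace and its orthogonal complement carries a linear $SL(2,\reals)$-action, decomposing the integrand accordingly, and using that $\cM$ itself, being $SL(2,\reals)$-invariant and closed, admits its own EMZ-type contraction function which I can pull back. A technical but routine point is the choice of $\delta$: it must be small enough for the transverse averaging contraction to beat $1$ and simultaneously small enough that $\|v\|^{-\delta}$ is locally integrable against $\nu_\cM$-transverse Lebesgue measure so that the construction is not vacuous; both are open conditions, so a single $\delta$ works. Once (b) holds on a neighborhood of $\cM$ and on its complement with possibly different $(c,t_0)$, taking the larger $t_0$ and noting $c<1$ can be arranged on both pieces (the cusp estimate already gives any $c<1$ at the cost of increasing $t_0$) completes the proof.
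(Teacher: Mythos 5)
Your broad strategy — combine a reciprocal transverse-distance function to $\cM$ in period coordinates with the Eskin--Masur cuspidal function $u$, and prove the averaging inequality by decoupling the two regimes — is the same strategy the paper follows. But several of the steps you treat as routine are in fact where all the work is, and one of your central claims is wrong.

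\textbf{The transverse direction is not a linear $SL(2,\reals)$-module.} You write that ``the transverse action is a bona fide linear $SL(2,\reals)$-representation'' and so the elementary estimate from $SL(2,\reals)/SL(2,\zed)$ ``passes to the stratum.'' This is false, and it is the crux of the whole proof. In period coordinates the $SL(2,\reals)$ action on the transverse directions is governed by the Kontsevich--Zorich cocycle, not by a fixed representation; equivalently, even on the Teichm\"uller cover where the group action is literally matrix multiplication, the only dynamically meaningful norm is the Hodge norm $\|\cdot\|_x$, which varies with the base point. Contraction of this moving norm under $a_t$ on the part of $H^1(M,\reals)$ orthogonal to the standard $SL(2,\reals)$-piece $p(E(x))$ is the content of Forni's theorem (Lemma~\ref{lemma:forni}), and it only gives a gain when the $a_t$-trajectory spends a definite fraction of its time in a compact set, so it must be combined with the quantitative recurrence estimates (Theorems~\ref{theorem:fast:return}, \ref{theorem:exponential:return}) before it produces the factor $\kappa(x,t)$ of Lemma~\ref{lemma:ineq:absolute:thick:part}. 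The elementary EMM-style integral estimate (Lemma~\ref{lemma:EMM51}) applies only to the two- and three-dimensional standard pieces; the rest of the absolute cohomology, and all of pure relative cohomology, needs a different argument, and pure relative cohomology in turn requires controlling the off-diagonal block $U(x,t)$ of the cocycle (Lemma~\ref{lemma:unip-cont}).

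\textbf{Taking a sum or max of the two ingredients does not close the estimate.} The paper's function is
\begin{displaymath}
f_\cM(x) = s_{\cM,\epsilon}(x)\,u(x)^{1/2} + \lambda\, u(x),
\end{displaymath}
with $s_{\cM,\epsilon}(x)=\sum_{\cL\in J_{k,\cM}(x)} d'(x,\cL)^{-\epsilon\delta}$, and the \emph{product} $s_{\cM,\epsilon}\, u^{1/2}$ is not a cosmetic choice. When you average $A_t$ of the transverse term alone you get a cross term (the $\kappa_1(x,t)^\epsilon \tilde b^{1/2} s_{\cM,\epsilon}(x)$ term in Claim~\ref{claim:fN}) that is not controlled uniformly in $x$; it is absorbed precisely because $u(x)^{1/2}$ sits alongside it and $u$ itself satisfies the contraction inequality with a factor you can push past $\kappa_1$ via the case split on whether $\log u(x)$ is small or large. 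A naive sum $d'(x,\cM)^{-\delta} + \lambda u(x)$ cannot absorb that cross term, because the transverse part of the estimate degrades as $x$ goes out the cusp, which is exactly the regime you correctly flag as problematic but then do not actually resolve.

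\textbf{Two further gaps.} First, ``the component of the period vector transverse to $\cM$'' is not a single vector: near a given point, $\cM$ may be a union of several affine sheets, and one has to sum over the finitely many nearby sheets; bounding their number by a compactness argument is what produces the complexity $n(\cM)$ (Lemma~\ref{lemma:def:J:Jprime}), which then enters the constants $b$ and $\lambda$. Second, the distance $d'(x,\cL)$ the paper uses is not a naive transverse norm: it is $\max\bigl(\|c'_L\|_x,(\|p_L(c)\|'_x)^{1/2}\bigr)$, with a square root on the absolute part, and this asymmetry is essential to decouple absolute from pure relative cohomology (see the discussion before and after (\ref{eq:separate:abs:rel})). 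Your ``locally integrable against transverse Lebesgue measure'' criterion for $\delta$ is not the relevant constraint; what matters is that $\epsilon\delta$ be small relative to the exponents $k$, $m$, $m'$, $\tilde\eta$ so that the various contraction factors beat the distortion from (\ref{eq:log-unif}) and (\ref{eq:growth:dprime}).

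So: right general shape, but as written the proof would fail at the transverse contraction step (you cannot replace Forni's theorem plus recurrence by a representation-theoretic computation), and the sum/max form of $f_\cM$ would leave the cusp-transverse cross term uncontrolled.
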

The proof of Proposition~\ref{prop:main:proposition} consists of
\S\ref{sec:recurrence}-\S\ref{sec:function}. It is based on 
the recurrence properties of the $SL(2,\reals)$-action proved by
Athreya in \cite{A}, and also the fundamental result of Forni on the
uniform hyperbolicity in compact sets of the Teichm\"uller geodesic
flow \cite[Corollary~2.1]{Forni}.

\begin{remark}
\label{remark:M:empty}
In the case $\cM$ is empty, a function satisfying the conditions of 
Proposition~\ref{prop:main:proposition} has been constructed in \cite{EMas}
and used in \cite{A}. 
\end{remark}

\begin{remark}
\label{remark:complexity}
In fact, we show that the constant $b$ in
Proposition~\ref{prop:main:proposition} (b) depends only on the
``complexity'' of $\cM$ (defined in \S\ref{sec:regions}). This is used in
\S\ref{sec:countability} for the proof of the following:
\end{remark}

\begin{proposition}
\label{prop:countability}
There are at most countably many affine invariant submanifolds in each stratum.
\end{proposition}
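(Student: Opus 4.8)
The plan is to deduce Proposition~\ref{prop:countability} from the ``complexity-uniform'' strengthening of Proposition~\ref{prop:main:proposition}(b) mentioned in Remark~\ref{remark:complexity}, by running an induction on dimension together with a covering/compactness argument. Concretely, I would argue by induction on $d = \dim_{\reals}\cM$. For the base case, affine invariant submanifolds of dimension $2$ are closed $SL(2,\reals)$-orbits, and one can parametrize (or at least bound the count of) these directly. For the inductive step, suppose that there are only countably many affine invariant submanifolds of dimension $< d$, and suppose toward a contradiction that there are uncountably many of dimension $d$.

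The key device is the function $f_{\cM}$ of Proposition~\ref{prop:main:proposition}, applied with $\cM = \emptyset$: by Remark~\ref{remark:M:empty} this is the function of \cite{EMas}, \cite{A}, and by Remark~\ref{remark:complexity} the constant $b$ in the inequality $(A_t f_{\emptyset})(x) \le c f_{\emptyset}(x) + b$ can be taken to depend only on the complexity of $\emptyset$, i.e. it is an absolute constant. The standard Margulis-type argument (integrating the inequality along the $a_t$-flow, or applying it iteratively to pass from $A_t$ to $A_{nt}$) then shows there is a compact set $K \subset \strat1$ and, for every affine invariant submanifold $\cN$, a point $x_{\cN} \in \cN \cap K$: indeed, since $\nu_{\cN}$ is $SL(2,\reals)$-invariant and $f_{\emptyset}$ is finite and bounded on compacta away from $\emptyset$ (hence everywhere, as $\emptyset$ is empty), one has $\int f_{\emptyset}\, d\nu_{\cN} < \infty$, and the inequality forces this integral to be bounded by $b/(1-c)$ uniformly in $\cN$; Chebyshev then gives a uniform compact set $K = \{f_{\emptyset} \le 2b/(1-c)\}$ meeting every $\cN$ in positive measure. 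So it suffices to bound, for each fixed compact $K$, the number of affine invariant submanifolds of dimension $d$ that meet $K$.

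Now fix $K$. Around each point of $K$ we have period coordinates, in which a $d$-dimensional affine invariant submanifold through that point is (a piece of) a $\reals$-linear subspace defined over... well, at least a fixed linear subspace. Cover $K$ by finitely many period-coordinate charts. If there were uncountably many $d$-dimensional affine invariant submanifolds meeting $K$, then uncountably many of them would meet a single chart $U$, and since the Grassmannian of $d$-planes is second countable, uncountably many of them would have tangent planes at their chosen base points lying in an arbitrarily small neighborhood of a single $d$-plane $V$; passing to a subsequence, we get a sequence $\cN_n$ of \emph{distinct} $d$-dimensional affine invariant submanifolds with base points $x_n \to x_\infty \in K$ and local linear pictures converging to the affine subspace $x_\infty + V$. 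The point is that a $d$-dimensional affine invariant submanifold, being locally linear in period coordinates, is \emph{locally determined} by a base point and a linear subspace, so the $\cN_n$ must eventually coincide near $x_\infty$ — contradicting that they are distinct closed invariant sets (two affine invariant submanifolds that agree on an open set agree, since each is the closure of an $SL(2,\reals)$-orbit of any of its generic points, or one can invoke Theorem~\ref{theorem:closed:P:invariant:set}). This contradiction finishes the induction; actually, with this argument the induction on dimension is not even needed — the local-linearity/local-rigidity plus compactness handles all dimensions at once — but I would keep the dimension bookkeeping in case the self-intersection loci need separate treatment.

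The main obstacle I anticipate is making the ``local rigidity'' step fully rigorous: an affine invariant submanifold is only an \emph{immersed} suborbifold (Definition~\ref{def:affine:measure}(i)), with self-intersections, and the linear subspace attached to it in period coordinates may vary from sheet to sheet; moreover the passage from ``tangent planes converge'' to ``the manifolds coincide locally'' needs the fact that near a non-self-intersection point the manifold genuinely is an open piece of a linear subspace and that nearby affine invariant submanifolds cannot sneak in extra sheets. Handling this cleanly is exactly where one needs the complexity bound — so that the relevant affine subspace at $x_n$ is cut out using a bounded amount of data — and where one uses that $\nu_{\cN_n}$-almost every point is a non-self-intersection point of full local linear dimension. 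A secondary technical point is verifying that $b$ is genuinely complexity-independent for $\cM = \emptyset$ (it should be, since $\emptyset$ has minimal complexity), and that the Margulis iteration argument producing the uniform compact set $K$ goes through with $A_t$ rather than the honest unipotent flow — but this is by now standard given Proposition~\ref{prop:main:proposition}.
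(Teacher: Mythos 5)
Your first step is sound and is, in spirit, what the paper also uses: the drift inequality for $f_{\emptyset}$ together with the Chebyshev/Birkhoff argument (this is Lemma~\ref{lemma:in:L1} applied with $\cM=\emptyset$, or equivalently Theorem~\ref{theorem:all:measures:return}) produces a compact $K$ such that every affine invariant submanifold $\cN$ satisfies $\nu_\cN(\cN\cap K)\ge 1/2$; and it is correct that it then suffices to show only finitely (or countably) many $d$-dimensional affine invariant submanifolds meet $K$.

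The genuine gap is the ``local rigidity'' step, and you cannot repair it by the compactness-of-Grassmannian covering argument. After extracting a sequence of distinct $d$-dimensional affine invariant submanifolds $\cN_n$ with base points $x_n\to x_\infty$ and tangent planes $V_n\to V$, you conclude that ``the $\cN_n$ must eventually coincide near $x_\infty$.'' This does not follow: each $\cN_n$ is locally the piece of $x_n+V_n$, and the $V_n$ are pairwise distinct $d$-planes converging to $V$; nothing in local linearity forces two of them to be equal. The set of $d$-dimensional affine subspaces through a neighborhood of $x_\infty$ is a positive-dimensional manifold, so there is no a priori dichotomy between ``coincide'' and ``stay uniformly apart.'' Ruling out a continuum of such invariant subspaces is exactly the nontrivial content of Proposition~\ref{prop:countability}; assuming it as local rigidity begs the question. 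The paper itself warns that in the analogous unipotent setting there \emph{are} continuous families of invariant manifolds, so a proof that uses only local linearity and compactness would prove something false there, which is a red flag that an extra mechanism is needed.

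What the paper actually does at this step is quite different and is where the ``complexity-uniform'' constant in Proposition~\ref{prop:main:proposition}(b) is essential in a way you do not exploit. Having reduced to the classes $X_{d,R}$ of affine manifolds with complexity $n(\cM)\le R$ (and with a further bound $H(\cM)\le R$ on the slope between absolute and pure-relative parts), the paper takes an infinite sequence of distinct $\cM_j\in X_{d,R}$, uses compactness of the space of $\le R$-tuples of $d$-planes meeting $K'$ to force $\mathrm{hd}(\cM_j\cap K', \cM_{j+1}\cap K')\to 0$, and then pairs the drift function $f_{\cM_{j+1}}$ against the \emph{other} measure $\nu_{\cM_j}$. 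On one hand, Lemma~\ref{lemma:in:L1} and the complexity-uniform constant $b=b(\alpha,R)$ give $\int f_{\cM_{j+1}}\,d\nu_{\cM_j}\le B(R)$ uniformly in $j$; on the other hand, Hausdorff proximity and the blow-up of $f_{\cM_{j+1}}$ near $\cM_{j+1}$ force $f_{\cM_{j+1}}\ge T_j\to\infty$ on $\cM_j\cap K'$, a set of $\nu_{\cM_j}$-measure $\ge 1/2$. This is the contradiction. So you should replace your local-rigidity step with this quantitative argument: the whole point of Remark~\ref{remark:complexity} is that $b$ is uniform over all $\cM$ in a class $X_{d,R}$, not merely for $\cM=\emptyset$, and it is this uniformity across a family of candidate $\cM$'s (used as ``obstacles'' for each other's measures) that makes finiteness of each $X_{d,R}$ provable.
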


Another proof of Proposition~\ref{prop:countability} is given in
\cite{Wright:numberfield}, where it is shown that any affine invariant
submanifold
is defined over a number field.

\subsection{Analogy with unipotent flows and historical remarks}
\label{sec:subsec:histocial}
In the context of unipotent flows, i.e. the left-multiplication 
action of a unipotent
subgroup $U$ of a Lie group $G$ on the space $G/\Gamma$ where $\Gamma$
is a lattice in $G$, the analogue of
Theorem~\ref{theorem:closure:submanifold} was conjectured by
Raghunathan. 
In the literature the conjecture was first
stated in the paper \cite{Dani:horospherical:1981} and in a more general
form in \cite{Margulis:number} (when the subgroup $U$ is not
necessarily unipotent but generated by unipotent elements). 
Raghunathan's conjecture was eventually proved in full generality
by M.~Ratner (see \cite{RatnerSolv},
\cite{RatnerSS},
\cite{RatnerMeas} and \cite{RatnerEqui}).  
Earlier it was known in the following cases:
(a) $G$ is reductive and $U$ is horospherical (see
\cite{Dani:horospherical:1981}); (b) $G = SL(3,\reals)$ and $U = \{ u(t)\}$
is a one-parameter unipotent subgroup of $G$ such that $u(t) - I$ has
rank 2 for all $t \ne 0$, where $I$ is the identity matrix (see
\cite{Dani:Margulis:unipotent}); (c) $G$ is solvable (see
\cite{Starkov:one} and \cite{Starkov:two}). 
We remark that 
the proof given in \cite{Dani:horospherical:1981} is restricted to
horospherical $U$ and  the proof given in \cite{Starkov:one} and
\cite{Starkov:two} cannot be applied for nonsolvable $G$.

However the proof in \cite{Dani:Margulis:unipotent}
together with the methods developed in \cite{Mar2}, \cite{Mar3},
\cite{Mar4} and \cite{Dani:Margulis:values}
suggest an approach for
proving the Raghunathan conjecture in general by studying the minimal
invariant sets, and the limits of orbits of sequences of points tending
to a minimal invariant set.  This program was being
actively pursued at the time Ratner's results were announced
(cf. \cite{Shah:thesis}).

\section{Proofs of the Main Theorems}

In this section we derive all the results of
\S\ref{sec:subsec:orbit:closures}-\S\ref{sec:subsec:billiards} 
from Theorem~\ref{theorem:P:measures},
Proposition~\ref{prop:main:proposition} and
Proposition~\ref{prop:countability}. 

The proofs are much simpler then the proofs of the analogous results
in the theory of unipotent flows. This is related to
Proposition~\ref{prop:countability}. In the setting of unipotent flows
there may be continuous families of invariant manifolds (which involve
the centralizer and normalizer of the acting group).

\subsection{Random Walks}
\label{sec:subsec:proof:random:walks}
Many of the arguments work most naturally in the random walk
setting. But first we need to convert Theorem~\ref{theorem:P:measures}
and Proposition~\ref{prop:main:proposition} to the random walk setup. 

\bold{Stationary measures.}
Recall that $\mu$ is a compactly supported probability measure on $SL(2,\reals)$
which is $SO(2)$-bi-invariant and is absolutely continuous with
respect to Haar measure.  A measure $\nu$ on $\cH_1(\alpha)$ is called 
{\em $\mu$-stationary} if $\mu * \nu = \nu$, where
\begin{displaymath}
\mu * \nu = \int_{SL(2,\reals)} (g_* \nu) \, d\mu(g). 
\end{displaymath}

Recall that by a theorem of Furstenberg \cite{F1}, \cite{F2}, restated
as \cite[Theorem 1.4]{Nevo:Zimmer}, $\mu$-stationary measures are in
one-to-one correspondence with $P$-invariant measures. Therefore, 
Theorem~\ref{theorem:P:measures} can be reformulated as
the following:
\begin{theorem}
\label{theorem:stationary:invariant}
Any $\mu$-stationary measure on $\cH_1(\alpha)$ is
$SL(2,\reals)$ invariant and affine. 
\end{theorem}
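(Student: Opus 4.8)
The plan is to derive Theorem~\ref{theorem:stationary:invariant} directly from Theorem~\ref{theorem:P:measures} (equivalently Theorem~\ref{theorem:stationary:invariant} is merely a restatement) using the Furstenberg correspondence between $\mu$-stationary measures and $P$-invariant measures, so the real content is to verify that this correspondence applies in the present setting and that it transports the conclusion correctly. First I would recall the precise form of the correspondence as in \cite[Theorem~1.4]{Nevo:Zimmer}: since $\mu$ is absolutely continuous with respect to Haar measure on $SL(2,\reals)$, admissible, and $SO(2)$-bi-invariant, every $\mu$-stationary probability measure $\nu$ on the (Borel, $G$-) space $\cH_1(\alpha)$ is obtained by integrating a measurable $G$-equivariant map from the Furstenberg--Poisson boundary $B = G/P$ into the space of probability measures on $\cH_1(\alpha)$; concretely, the conditional measures $\nu_b$ along $\mu$-random trajectories exist almost surely and $\nu = \int_B \nu_b \, dm(b)$, where $m$ is the unique $\mu$-stationary (in fact $K$-invariant) measure on $B$. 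The key point is that, for $G = SL(2,\reals)$, $B$ identifies with $G/P$, and the equivariance of $b \mapsto \nu_b$ makes the measure $\nu_{P}$ (the conditional measure attached to the base point $eP \in G/P$) a $P$-invariant probability measure on $\cH_1(\alpha)$.

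Next I would run the correspondence in both directions to pin down that it is a genuine bijection and that it interacts well with the $SL(2,\reals)$-action: given a $P$-invariant probability measure $\lambda$ on $\cH_1(\alpha)$, the measure $\nu := \int_{G/P} g_*\lambda \, dm(gP)$ (well-defined because $\lambda$ is $P$-invariant, so $g_*\lambda$ depends only on the coset $gP$) is $\mu$-stationary, using $\mu * m = m$ on $G/P$; conversely the stationary measure $\nu$ produces, via the martingale convergence giving the boundary map, a $P$-invariant $\lambda$, and the two constructions are mutually inverse. Then I would invoke Theorem~\ref{theorem:P:measures}: the $P$-invariant measure $\lambda$ attached to $\nu$ is $SL(2,\reals)$-invariant and affine. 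But if $\lambda$ is already $G$-invariant, then $\nu = \int_{G/P} g_*\lambda \, dm = \int_{G/P} \lambda \, dm = \lambda$, so $\nu$ itself equals $\lambda$ and is therefore $SL(2,\reals)$-invariant and affine. This closes the argument.

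One technical wrinkle worth addressing explicitly is that Theorem~\ref{theorem:P:measures} is stated for $P$-invariant \emph{probability} measures, so I need the $P$-invariant measure $\lambda$ produced by the correspondence to be a probability measure; this is immediate since $\nu$ is a probability measure and the boundary decomposition $\nu = \int_B \nu_b\,dm(b)$ has $\nu_b$ probability measures for $m$-a.e.\ $b$, so $\lambda = \nu_{eP}$ has total mass $1$. A second point is measurability/ergodicity bookkeeping: a general $\mu$-stationary measure need not be ergodic, but one can either pass to the ergodic decomposition of stationary measures (each component is again stationary, hence $SL(2,\reals)$-invariant and affine by the above, hence so is the original by integrating) or simply note that the statement as phrased does not require ergodicity and the argument above never used it.

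The main obstacle, such as it is, is not a hard step but a matter of correctly citing and applying the Furstenberg/Nevo--Zimmer machinery: one must make sure the hypotheses on $\mu$ (absolute continuity, compact support, $SO(2)$-bi-invariance) are exactly those under which \cite[Theorem~1.4]{Nevo:Zimmer} gives the clean bijection between $\mu$-stationary and $P$-invariant measures on a second-countable $G$-space, and that $\cH_1(\alpha)$ (which is not compact) still qualifies — it is a standard Borel $G$-space, which is all that is needed, since the correspondence is purely measure-theoretic and does not require compactness of the phase space. Once that is in place, the deduction is a two-line formal consequence of Theorem~\ref{theorem:P:measures}. I would therefore keep this proof very short, essentially: ``By \cite{F1}, \cite{F2}, \cite[Theorem~1.4]{Nevo:Zimmer}, $\mu$-stationary measures on $\cH_1(\alpha)$ correspond bijectively to $P$-invariant measures; apply Theorem~\ref{theorem:P:measures}.''
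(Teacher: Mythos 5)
Your proposal is correct and is essentially the paper's own argument: the authors simply observe that, by Furstenberg's theorem as formulated in \cite[Theorem~1.4]{Nevo:Zimmer}, $\mu$-stationary measures on $\cH_1(\alpha)$ are in bijection with $P$-invariant measures, so Theorem~\ref{theorem:stationary:invariant} is a direct reformulation of Theorem~\ref{theorem:P:measures}. You have merely filled in the mechanics of the correspondence (the boundary decomposition, the equivariance giving $P$-invariance of $\nu_{eP}$, and the observation that a $G$-invariant measure is its own stationary counterpart), which the paper takes as given by the cited reference.
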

\bold{The operator $\mathbb{A}_\mu$.}
Let $\mathbb{A}_\mu: C_c(\strat1) \to C_c(\strat1)$ denote the linear
operator 
\begin{displaymath}
(\mathbb{A}_\mu f)(x) = \int_{SL(2,\reals)} f(g x) \, d\mu(g)
\end{displaymath}

\begin{lemma}
\label{lemma:ineq:random:walk}
Let $f_\cM$ be as in Proposition~\ref{prop:main:proposition}. Then 
there exists $b > 0$ and for any $c > 0$ there exists $n_0 > 0$ such
that for $n > n_0$, and any $x \in \strat1$, 
\begin{displaymath}
(\mathbb{A}_\mu^n f_\cM)(x) \le c f_\cM(x) + b.
\end{displaymath}
\end{lemma}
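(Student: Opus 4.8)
\textbf{Proof proposal for Lemma~\ref{lemma:ineq:random:walk}.}
The plan is to derive the discrete-time inequality for $\mathbb{A}_\mu$ from the continuous-time inequality for $A_t$ in Proposition~\ref{prop:main:proposition}(b), using (c) to control the discrepancy between the $SO(2)$-bi-invariant measure $\mu$ and the ``circular average'' operator $A_t$. First I would observe that since $f_\cM$ is $SO(2)$-invariant and $\mu$ is $SO(2)$-bi-invariant, the Cartan decomposition $g = k_1 a_t k_2$ lets us write $(\mathbb{A}_\mu f_\cM)(x) = \int_0^\infty (A_t f_\cM)(x) \, d\tilde\mu(t)$ for a suitable compactly supported pushforward measure $\tilde\mu$ on $[0,\infty)$; more generally, iterating, $(\mathbb{A}_\mu^n f_\cM)(x) = \int_0^\infty (A_t f_\cM)(x) \, d\tilde\mu^{(n)}(t)$ where, because $\mu$ is absolutely continuous and compactly supported, the ``radial parts'' spread out: for any $t_0$, the mass $\tilde\mu^{(n)}([0,t_0])$ that $\tilde\mu^{(n)}$ assigns to $[0,t_0]$ tends to $0$ as $n\to\infty$ (the radial part of the random walk escapes to infinity linearly). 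This last point is the key probabilistic input and should follow from standard facts about random walks on $SL(2,\reals)$, or directly from a crude large-deviation/Chernoff estimate for the compactly supported step distribution.

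Next, fix the target constant $c > 0$. Apply Proposition~\ref{prop:main:proposition}(b) with, say, $c/2$ in place of $c$, obtaining $b_0 > 0$ and $t_0 > 0$ so that $(A_t f_\cM)(x) \le (c/2) f_\cM(x) + b_0$ for all $x \in \cH_1(\alpha)\setminus\cM$ and all $t > t_0$. (On $\cM$ itself $f_\cM \equiv \infty$ and the inequality is vacuous; note also the $A_t$-inequality persists on all of $\strat1$ in the relevant sense since both sides are $+\infty$ there.) For the ``bad'' range $t \le t_0$, use part (c): iterating the one-step distortion bound $\sigma^{-1} f_\cM \le f_\cM(g\cdot) \le \sigma f_\cM$ over the compactly supported $\mu$ gives a constant $\sigma_0 = \sigma_0(t_0) \ge 1$ (depending only on $t_0$ and the size of $\supp\mu$, hence ultimately only on $t_0$) with $(A_t f_\cM)(x) \le \sigma_0 f_\cM(x)$ for all $t \le t_0$ and all $x$. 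Combining, for every $n$,
\begin{displaymath}
(\mathbb{A}_\mu^n f_\cM)(x) \;\le\; \sigma_0 \,\tilde\mu^{(n)}([0,t_0])\, f_\cM(x) \;+\; \Big(\tfrac{c}{2}\Big) f_\cM(x) \;+\; b_0.
\end{displaymath}
Now choose $n_0$ so large that $\sigma_0\,\tilde\mu^{(n)}([0,t_0]) \le c/2$ for all $n > n_0$; this is possible by the escape-of-mass statement above. Then $(\mathbb{A}_\mu^n f_\cM)(x) \le c\, f_\cM(x) + b$ with $b := b_0$, for all $n > n_0$ and all $x \in \strat1$, as desired.

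The one delicate point — and the main obstacle — is making the decomposition $(\mathbb{A}_\mu^n f_\cM)(x) = \int (A_t f_\cM)(x)\, d\tilde\mu^{(n)}(t)$ genuinely correct and then getting a clean, $n$-uniform handle on $\tilde\mu^{(n)}([0,t_0])$. The decomposition itself is just Fubini together with the Cartan decomposition and the $SO(2)$-bi-invariance of $\mu$ and $SO(2)$-invariance of $f_\cM$, but one must be slightly careful because $f_\cM$ can take the value $+\infty$ (everything is non-negative, so monotone convergence/Tonelli applies and this causes no real trouble). For the escape estimate, the cleanest route is to note that under the Cartan projection the step distribution of the walk has compact support in the radial coordinate, so the radial part of the $n$-step walk is bounded below by $n$ times a positive constant minus an error that is $O(\sqrt n)$ with overwhelming probability (Azuma/Hoeffding for the bounded increments of the Busemann/horospherical cocycle, or the subadditive ergodic theorem combined with positivity of the top Lyapunov exponent of the walk on $SL(2,\reals)$, which is classical since $\mu$ is non-degenerate and compactly supported). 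Either way, $\tilde\mu^{(n)}([0,t_0]) \to 0$, which is all that is needed. An alternative, even more elementary, route avoids Cartan coordinates altogether: apply Proposition~\ref{prop:main:proposition}(b) once with parameter $\sqrt c$ at a single fixed time $s > t_0$ to get $A_s f_\cM \le \sqrt c\, f_\cM + b'$, observe $\mathbb{A}_\mu$ and $A_s$ commute (both come from convolution with $SO(2)$-bi-invariant kernels, hence lie in a commutative algebra), and then iterate $\mathbb{A}_\mu^m$ applied to this inequality together with part (c) to absorb the finitely many ``small $t$'' contributions — but the radial-escape formulation above is the most transparent.
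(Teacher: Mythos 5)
Your proof is correct and follows essentially the same route as the paper: the KAK decomposition together with $SO(2)$-bi-invariance reduces $\mathbb{A}_\mu^n$ to an integral of $A_t$ against the radial pushforward of $\mu^{(n)}$, the positive top Lyapunov exponent of the walk pushes the radial mass out past any fixed $t_0$, and Proposition~\ref{prop:main:proposition}(b) and (c) handle the tails $t>t_0$ and $t\le t_0$ respectively. The only cosmetic difference is that the paper works with an explicit radial density $K_n(t)\,dt$ rather than a pushforward measure, and your aside about $\sigma_0$ ``depending on $\supp\mu$'' is spurious (you correctly retract it in the same breath — $\sigma_0$ depends only on $t_0$, via iterating (c) over the compact set $\{a_t r_\theta : 0\le t\le t_0\}$).
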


\begin{proof} 
Recall the $KAK$ decomposition:
\begin{displaymath}
g = k_1 a_t k_2, \qquad \text{ $g \in SL(2,\reals)$, $k_1, k_2 \in
  SO(2)$, $t \in \reals^+$}.
\end{displaymath}
We may think of $k_1,t,k_2$ as coordinates on $SL(2,\reals)$. 
Since $\mu^{(n)}$ is $SO(2)$-bi-invariant and absolutely continuous
with respect to the Haar measure on $SL(2,\reals)$, we have
\begin{displaymath}
d\mu^{(n)}(g) = K_n(t) \,dm(k_1) \,dm(k_2) \, dt,
\end{displaymath}
where $m$ is the Haar measure on $SO(2)$, and $K_n: \reals^+ \to
\reals$ is a compactly supported function satisfying $K_n(t) \ge 0$,
$\int_0^\infty K_n(t) \, dt = 1$. Also, 
{}{since the top Lyapunov exponent of the random walk on
  $SL(2,\reals)$ given by $\mu$ is positive,} 
for any $t_0 > 0$ and any
$\epsilon > 0$ there exists
$n_0$ such that for $n > n_0$, 
\begin{equation}
\label{eq:Kn:kernel}
\int_0^{t_0} K_n(t) \, dt < \epsilon. 
\end{equation}
We have, since $f_\cM$ is $SO(2)$-invariant,
\begin{equation}
\label{eq:def:Kn}
(\mathbb{A}_\mu^n f_\cM)(x) = \int_0^\infty K_n(t) (A_t f_\cM)(x) \, dt,
\end{equation}
Now let $t_0$ be as in Proposition~\ref{prop:main:proposition} (b) for
$c/2$ instead of $c$. By Proposition~\ref{prop:main:proposition} (c),
there exists $R > 0$ such that 
\begin{equation}
\label{eq:tmp:R}
f_\cM(a_t r_\theta x) < R f_\cM(x) \quad \text{ when $t <
t_0$.}
\end{equation}
Then let $n_0$ be such that (\ref{eq:Kn:kernel}) holds with
$\epsilon = c/(2R)$. Then, for $n > n_0$, 
\begin{align*}
(\mathbb{A}_\mu^n f_\cM)(x) & = \int_0^{t_0} K_n(t) (A_t f_\cM)(x) \, dt +
\int_{t_0}^\infty K_n(t) (A_t f_\cM)(x) \, dt && \text{by (\ref{eq:def:Kn})} \\
& \le \int_0^{t_0} K_n(t) (R f_\cM(x)) \, dt + \int_{t_0}^\infty ((c/2) f_\cM(x) +
b) \, dt && \text{by (\ref{eq:tmp:R}) and
  Proposition~\ref{prop:main:proposition} (b)}\\
& \le (c/2R) R f_\cM(x) + ((c/2)f_\cM(x) + b)  && \text{by
  (\ref{eq:Kn:kernel})} \\
& = c f_\cM(x) + b. 
\end{align*}
\end{proof}

\bold{Notational conventions.}
Let
\begin{displaymath}
\bar{\mu}^{(n)} = \frac{1}{n} \sum_{k=1}^n \mu^{(k)}. 
\end{displaymath}
For $x \in \cH_1(\alpha)$ let $\delta_x$ denote the Dirac measure at
$x$, and let $\ast$ denote convolution of measures.

We have the following: 
\begin{prop}
\label{prop:rw-avoid-singular-set}
Let $\cN$ be a (possibly empty) proper affine invariant submanifold. 
Then for any $\epsilon>0$, 
there exists an open set $\Omega_{\cN,\epsilon}$ containing $\cN$ with
$(\Omega_{\cN,\epsilon})^c$ compact such that for any
compact $F\subset\strat1\setminus\cN$ there exists $n_0 \in \natls$
so that for all $n>n_0$ and all $x\in F$, we have
\begin{displaymath}
(\bar{\mu}^{(n)} \ast \delta_x)(\Omega_{\cN,\epsilon})<\epsilon.
\end{displaymath}
\end{prop}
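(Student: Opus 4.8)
The plan is to derive this from the drift inequality in Proposition~\ref{prop:main:proposition}, transferred to the random-walk setting in Lemma~\ref{lemma:ineq:random:walk}, via a standard maximal-type argument for supermartingales. Apply Proposition~\ref{prop:main:proposition} with $\cM = \cN$ to obtain the $SO(2)$-invariant function $f_{\cN}$, together with the constants $b$ and $\sigma$. By Lemma~\ref{lemma:ineq:random:walk}, there is $b > 0$ (which we may enlarge and keep fixed) so that for every $c > 0$ there exists $n_1$ with $(\mathbb{A}_\mu^n f_{\cN})(x) \le c\, f_{\cN}(x) + b$ for all $n > n_1$ and all $x \in \strat1$. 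Fix $c = 1/2$, say, and let $N$ be the corresponding threshold $n_1 + 1$. The idea is to run the random walk in steps of size $N$: the sequence $Y_j = f_{\cN}(g_{jN} \cdots g_1 x)$ is, after the transient start, a supermartingale up to an additive constant, so it spends most of its time at a bounded level.

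First I would define $\Omega_{\cN,\epsilon}$. Since $f_{\cN}$ is finite and bounded on compact subsets of $\strat1 \setminus \cN$, and since for every $\ell$ the set $\overline{\{x : f_{\cN}(x) \le \ell\}}$ is compact in $\strat1 \setminus \cN$, we may set
\begin{displaymath}
\Omega_{\cN,\epsilon} = \{ x \in \strat1 : f_{\cN}(x) > \ell_\epsilon \},
\end{displaymath}
where $\ell_\epsilon$ is chosen below (depending on $\epsilon$, $b$, and the contraction rate). This is open, contains $\cN$ (where $f_{\cN} = \infty$), and its complement $\{f_{\cN} \le \ell_\epsilon\}$ is contained in the compact set $\overline{\{f_{\cN} \le \ell_\epsilon\}}$, hence compact. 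Next, given a compact $F \subset \strat1 \setminus \cN$, let $R_0 = \sup_{x \in F} f_{\cN}(x) < \infty$. The key estimate is: for $n$ a multiple of $N$, iterating the inequality $\mathbb{A}_\mu^N h \le \tfrac12 h + b$ gives
\begin{displaymath}
(\mathbb{A}_\mu^{kN} f_{\cN})(x) \le 2^{-k} f_{\cN}(x) + 2b \le 2^{-k} R_0 + 2b
\end{displaymath}
for all $x \in F$. Thus for $k$ large, $(\mathbb{A}_\mu^{kN} f_{\cN})(x) \le 2b + 1 =: C_0$ uniformly on $F$. For general $m$ between two multiples of $N$, property (c) of Proposition~\ref{prop:main:proposition} — more precisely its random-walk consequence, since $\mu$ is compactly supported, that $f_{\cN}(gx) \le \sigma' f_{\cN}(x)$ for $g$ in the support of $\mu^{(r)}$, $0 \le r < N$, for some $\sigma' = \sigma'(N)$ — shows $(\mathbb{A}_\mu^m f_{\cN})(x) \le \sigma' C_0$ for all $m$ past the threshold. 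Averaging over $k = 1, \dots, n$, we get $\int f_{\cN}\, d(\bar\mu^{(n)} * \delta_x) \le \sigma' C_0 + o(1)$, hence $\le 2\sigma' C_0 =: C_1$ for $n$ larger than some $n_0$ depending only on $F$ (through $R_0$) and $N$.

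Finally, Chebyshev's inequality finishes it: choosing $\ell_\epsilon = C_1 / \epsilon$ in the definition of $\Omega_{\cN,\epsilon}$,
\begin{displaymath}
(\bar\mu^{(n)} * \delta_x)(\Omega_{\cN,\epsilon}) = (\bar\mu^{(n)} * \delta_x)\bigl(\{f_{\cN} > \ell_\epsilon\}\bigr) \le \frac{1}{\ell_\epsilon} \int f_{\cN}\, d(\bar\mu^{(n)} * \delta_x) \le \frac{C_1}{\ell_\epsilon} = \epsilon
\end{displaymath}
for all $n > n_0$ and all $x \in F$. Note $\Omega_{\cN,\epsilon}$ depends only on $\cN$ and $\epsilon$ (via $b$, $\sigma$, $N$, which are intrinsic to $\cN$ and $\mu$), not on $F$, as required; the dependence on $F$ enters only through the threshold $n_0$. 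The one point requiring a little care — and the main obstacle — is handling the iteration cleanly when $n$ is not a multiple of $N$ and controlling the transient initial segment uniformly in $x \in F$; this is exactly where property (c) (boundedness of $f_{\cN}$ under a bounded amount of the flow) is used, and it is routine given the compact support of $\mu$. A secondary subtlety is that $f_{\cN}$ is only finite off $\cN$, so one should observe that the random walk starting in $F$ never reaches $\cN$ in finitely many steps where the inequality is applied — but since the inequalities above are stated for all $x \in \strat1$ and only involve the value $f_{\cN}(x) < \infty$ for $x \in F$ together with the bound $b$, no issue arises.
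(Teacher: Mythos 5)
Your proof is correct, but it takes a slightly different route from the paper's. You fix the contraction constant $c=1/2$ once and for all, then iterate the drift inequality from Lemma~\ref{lemma:ineq:random:walk} in blocks of size $N$ to make the transient term $2^{-k}R_0$ decay, handling the residue $m\bmod N$ by property (c) of Proposition~\ref{prop:main:proposition}. The paper sidesteps the iteration entirely by choosing the contraction constant to depend on the compact set: with $m_F=\sup_{x\in F}f_\cN(x)$, it applies Lemma~\ref{lemma:ineq:random:walk} with $c=0.5/m_F$, so that a single application already gives $(\mathbb{A}_\mu^n f_\cN)(x)\le 0.5+b$ uniformly on $F$ for all $n>n_0$; the averaging over $k\le n$ then only has to absorb a finite number of transient terms. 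Both routes terminate with the same Chebyshev step and, importantly, both yield a set $\Omega_{\cN,\epsilon}$ whose definition does not depend on $F$ (it is a superlevel set of $f_\cN$ with threshold determined only by $b$, $\epsilon$, and — in your version — $\sigma$), while the threshold $n_0$ depends on $F$ only through $m_F$. Your version is a bit longer because of the block decomposition and the remainder term, but it is the more direct Foster--Lyapunov iteration; the paper's $F$-dependent choice of $c$ is the sharper shortcut when, as here, one only needs a uniform-in-$n$ bound over a compact set of initial points. Both arguments are sound.
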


\begin{proof}
Let $f_\cN$ 
be the function of Proposition~\ref{prop:main:proposition}.  Let $b >
0$ be as in Lemma~\ref{lemma:ineq:random:walk}, and let
\begin{displaymath}
\Omega_{\cN,\epsilon}=\left\{p: f_\cN( p)>(b+1)/\epsilon \right\}^0,
\end{displaymath} 
where $E^0$ denotes the interior of $E$. 

Suppose $F$ is a compact subset of $\strat1 \setminus \cN$. 
Let $m_F=\sup\{f_\cN(x): x\in F\}$. 
Let $n_0 \in \natls$ be as in Lemma~\ref{lemma:ineq:random:walk} for $c =
{}{0.5}/m_F$. Then, by Lemma~\ref{lemma:ineq:random:walk},
\begin{displaymath}
(\mathbb{A}_{\mu}^{n} f_\cN)(x)<\frac{{}{0.5}}{m_F}
f_\cN(x)+ b\leq {}{0.5} + b, 
\qquad \text{for all $n > n_0$ and all $x\in F.$}
\end{displaymath}
{}{It follows that for $n_0$ sufficiently large,}
for all $x \in F$ and all $n > n_0$, 
\begin{displaymath}
(\bar{\mu}^{(n)} \ast \delta_x)(f_\cN) \le 1+b. 
\end{displaymath}
Thus for any $x\in F$ and $L>0$ we have 
\begin{equation}
\label{eq:eq:avoid}
(\bar{\mu}^{(n)} \ast \delta_x)(\{p: f_\cN( p)>L\})<\frac{b+1}{L}. 
\end{equation}
Then (\ref{eq:eq:avoid}) implies that
$
(\bar{\mu}^{(n)} \ast \delta_x)(\Omega_{\cN,\epsilon})<\epsilon.
$
Also, Proposition~\ref{prop:main:proposition} (a) 
implies that $\Omega_{\cN,\epsilon}$ is a neighborhood of $\cN$ and
\begin{displaymath}
(\Omega_{\cN,\epsilon})^c = \overline{\{ p \st f_\cN(p) \le
(b+1)/\epsilon \}} 
\end{displaymath}
is compact.
\end{proof}

\begin{proof}[Proof of Theorem~\ref{theorem:rw-closure}]
Let $\cM$ be an affine manifold contaning $x$ of
minimal dimension. (At this point we do not yet know that $\cM$ is
unique). Suppose the assertion of the theorem does not hold.
Then there exist a 
$\varphi\in C_c(\strat1)$, $\epsilon>0$,
$x \in \cM$ and a sequence $n_k \to\infty$ such that 
\begin{displaymath}
|(\bar{\mu}^{(n_k)} \ast \delta_{x})(\varphi)-\nu_\cM(\varphi)|\geq\epsilon.
\end{displaymath}
Recall that the space of measures on $\strat1$ of total mass at most $1$ is
compact in the weak star topology. Therefore, 
after passing to a subsequence if necessary, 
we may and will assume that $\bar{\mu}^{(n_k)} \ast \delta_{x} \to
\nu$ where $\nu$ is some measure on $\strat1$ (which could a priori be 
the zero measure). Below, we will show that in fact $\nu$ is
the probability measure $\nu_\cM$, which leads to a contradiction.

First note that it follows from the definition that $\nu$ 
is an $\mu$-stationary measure. Therefore, by
Theorem~\ref{theorem:stationary:invariant}, $\nu$ is
$SL(2,\reals)$-invariant. 
Also since $\cM$ is $\sl$-invariant we get $\supp(\nu)\subset\cM$. The
measure $\nu$ need not be ergodic, but by
Theorem~\ref{theorem:P:measures}, all of its ergodic components are
affine measures supported on affine invariant submanifolds of $\cM$. 
By Proposition~\ref{prop:countability} there are only countably many 
affine invariant submanifolds of $\cM$. Therefore, we have the ergodic
decomposition:
\begin{equation}
\label{eq:ergodic-decomp}
\nu=\sum_{\cN\subseteq\cM}a_\cN \nu_\cN,
\end{equation}
where the sum is over all the 
affine invariant submanifolds $\cN \subset \cM$ and $a_\cN \in [0,1]$. 
To finish the proof we will show that $\nu$ is a probability measure,
and that  $a_\cN=0$ for all $\cN\subsetneq\cM$. 

Suppose $\cN\subsetneq\cM$. {}{(Here we allow $\cN =
\emptyset$)}. 
Note that $x \not\in \cN$ (since $\dim \cN < \dim \cM$ and $\cM$ is
assumed to be an affine manifold containg $x$ of minimal dimension). 
We now apply 
Proposition~\ref{prop:rw-avoid-singular-set}
with $\cN$ and the compact set $F = \{x\}$. We get for any $\epsilon>0,$ 
there exists some $n_0$ so that if $n>n_0$,
then $(\bar{\mu}^{(n)} \ast \delta_x)((\Omega_{\cN,\epsilon})^c) \ge
1-\epsilon$.  Therefore, passing to the limit, we get
\begin{displaymath}
\nu((\Omega_{\cN,\epsilon})^c) \ge 1-\epsilon.
\end{displaymath}
{}{Note that $\epsilon > 0$ is arbitrary. From the case $\cN =
  \emptyset$ we get that $\nu$ is a probability measure. Also for any
  $\cN \subsetneq \cM$ this implies that $\nu(\cN) = 0$. Hence $a_\cN \le
  \nu(\cN) = 0$. 
}
\end{proof}

\begin{proof}[Proof of Theorem~\ref{theorem:mozes-shah}]

Since the space of measures of mass at most $1$ on $\strat1$ is
compact in the weak-$\ast$ topology, 
the {}{statement about weak-$\ast$ compactness in
  Theorem~\ref{theorem:mozes-shah} follows from the others.} 

Suppose that $\nu_{\cN_n} \to \nu$. We first prove that
$\nu$ is a probability measure. Let
$\Omega_{\emptyset,\epsilon}$ be as in
Proposition~\ref{prop:rw-avoid-singular-set} with $\cM = \emptyset$.
By the random ergodic theorem \cite[Theorem~3.1]{Furman:Survey}, 
for a.e $x_n \in \cN_n$,  
\begin{equation}
\label{eq:tmp:random:ergodic}
\lim_{m\to\infty}(\bar{\mu}^{(m)} \ast
\delta_{x_n})((\Omega_{\emptyset,\epsilon})^c) =
\nu_{\cN_n}((\Omega_{\emptyset,\epsilon})^c). 
\end{equation}
Choose $x_n$ such that (\ref{eq:tmp:random:ergodic}) holds. 
By Proposition~\ref{prop:rw-avoid-singular-set}, for all $m$ large
enough (depending on $x_n$), 
\begin{displaymath}
(\bar{\mu}^{(m)} \ast
\delta_{x_n})((\Omega_{\emptyset,\epsilon})^c) \ge 1-\epsilon. 
\end{displaymath}
Passing to the limit as $n \to \infty$, we get
\begin{displaymath}
\nu((\Omega_{\emptyset,\epsilon})^c) \ge 1-\epsilon. 
\end{displaymath}
Since $\epsilon$ is arbitrary, this shows that $\nu$ is a probability
measure. 

In view of the fact that the $\nu_n$ are invariant under
$\sl$, the same is true of $\nu$. As in (\ref{eq:ergodic-decomp}), let
\begin{displaymath}
\nu=\sum_{\cN\subseteq\strat1} a_\cN \nu_{\cN}
\end{displaymath}
be the ergodic decomposition of $\nu$, where
$a_\cN \in [0,1]$. By Theorem~\ref{theorem:P:measures}, all the
measures $\nu_\cN$ are affine and by
Proposition~\ref{prop:countability}, the number of terms in the
ergodic decomposition is countable.

For any affine invariant submanifold $\cN$ let 
\begin{displaymath}
X(\cN)=\bigcup\left\{\cN'\subsetneq\cN:\cN'\h
\mbox{is an affine invariant submanifold}\right\}.
\end{displaymath}
Let $\cN\subseteq\strat1$ be a submanifold  such that
$\nu(X(\cN))=0$ and $\nu(\cN) > 0$.
This implies $a_\cN=\nu(\cN)$. 

Let $K$ be a large compact set, such that $\nu(K) >
(1-a_\cN/4)$. Then, $\nu(K \cap \cN) > (3/4)a_\cN$. 
Let $\epsilon = a_\cN/4$, and let $\Omega_{\cN,\epsilon}$ be as in 
Proposition~\ref{prop:rw-avoid-singular-set}. Since $K \cap \cN$ and
$(\Omega_{\cN,\epsilon})^c$ are both compact sets, we can choose a
continuous compactly supported function $\varphi$ such that $0 \le
\varphi \le 1$, $\varphi = 1$ on $K \cap \cN$ and $\varphi = 0$ on
$(\Omega_{\cN,\epsilon})^c$. Then, 
\begin{displaymath}
\nu(\varphi) \ge \nu(K \cap \cN) > (3/4)a_\cN.
\end{displaymath}
Since $\nu_{\cN_n}(\varphi) \to \nu(\varphi)$, there exists $n_0 \in
\natls$ such that for $n  > n_0$, 
\begin{displaymath}
\nu_{\cN_n}(\varphi) > a_\cN/2. 
\end{displaymath}
For each $n$ let $x_n\in\cN_n$ be a generic point for $\nu_{\cN_n}$ 
for the random ergodic theorem \cite[Theorem~3.1]{Furman:Survey} i.e.
\begin{equation}
\label{eq:random-ergodic-average}
\text{$\lim_{m\to\infty}(\bar{\mu}^{(m)} \ast \delta_{x_n})(\varphi)=
\nu_{\cN_n}(\varphi)$ 
for all $\varphi\in C_c(\strat1).$
}
\end{equation}
Suppose  $n>n_0$. Then, by~\eqref{eq:random-ergodic-average}, 
we get 
\begin{displaymath}
\mbox{if $m$ is large enough, then  
$(\bar{\mu}^{(m)} \ast \delta_{x_n})(\varphi)>a_\cN/4.$}
\end{displaymath} 
Therefore, since $0 \le \varphi \le 1$ and $\varphi = 0$ outside of
$\Omega_{\cN,\epsilon}$, we get
\begin{displaymath}
\mbox{if $m$ is large enough, then  
$(\bar{\mu}^{(m)} \ast \delta_{x_n})(\Omega_{\cN,\epsilon})>a_\cN/4.$}
\end{displaymath} 

Proposition~\ref{prop:rw-avoid-singular-set}, applied with $\epsilon=
a_\cN/4$  now implies that
$x_n\in\cN,$ which, in view of the genericity of $x_n$ implies 
that $\cN_n\subset\cN$ for all $n>n_0.$ This implies $\nu(\cN)=1$,
and since $\nu(X(\cN))=0,$ we get $\nu=\nu_\cN$. Also, since
$\nu(X(\cN))=0$, $\cN$ is the minimal affine invariant manifold which
eventually contains the $\cN_n$. 
\end{proof}

\begin{lemma}
\label{lemma:finite-collection}
Given any $\varphi\in C_c(\strat1)$, any affine invariant submanifold
$\cM$ and any $\epsilon>0$,
there exists a finite collection  $\cC$ of proper affine invariant
submanifolds of $\cM$ with the following property:
if $\cN' \subset \cM$ is such that 
$|\nu_{\cN'}(\varphi)-\nu_\cM(\varphi)| \geq \epsilon,$ then
there exists some $\cN\in\cC$ such that $\cN'\subset\cN.$
\end{lemma}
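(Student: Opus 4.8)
The plan is to argue by contradiction using the compactness of the space of ergodic $P$-invariant measures, i.e.\ Theorem~\ref{theorem:mozes-shah}. Suppose no such finite collection $\cC$ exists. Then for every finite collection of proper affine invariant submanifolds of $\cM$ there is a witnessing $\cN'$ with $|\nu_{\cN'}(\varphi) - \nu_\cM(\varphi)| \ge \epsilon$ that is not contained in any member of the collection. Using Proposition~\ref{prop:countability}, the set of all proper affine invariant submanifolds of $\cM$ is countable; enumerate it as $\cN^{(1)}, \cN^{(2)}, \dots$. Applying the failure of the lemma to the collections $\{\cN^{(1)}\}$, $\{\cN^{(1)},\cN^{(2)}\}$, \dots, we extract a sequence $\cN_j'$ of proper affine invariant submanifolds of $\cM$ with $|\nu_{\cN_j'}(\varphi) - \nu_\cM(\varphi)| \ge \epsilon$ for all $j$, such that $\cN_j'$ is not contained in any of $\cN^{(1)}, \dots, \cN^{(j)}$.

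The key claim is then that no infinite subsequence of $(\cN_j')$ can be contained in a single proper affine invariant submanifold $\cN^{(i)}$ of $\cM$: indeed if infinitely many $\cN_j'$ were contained in $\cN^{(i)}$, then for $j \ge i$ this would contradict the construction (since $\cN_j'$ is chosen to avoid containment in $\cN^{(i)}$ once $j \ge i$). Hence Corollary~\ref{cor:mozes-shah} applies to the sequence $(\cN_j')$ inside $\cM$, and we conclude that $\nu_{\cN_j'} \to \nu_\cM$ in the weak-$\ast$ topology. In particular $\nu_{\cN_j'}(\varphi) \to \nu_\cM(\varphi)$, which contradicts $|\nu_{\cN_j'}(\varphi) - \nu_\cM(\varphi)| \ge \epsilon$ for all $j$. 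This contradiction proves the lemma.

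I expect the main obstacle to be the bookkeeping in the extraction step — ensuring that one really can produce a \emph{single} sequence $(\cN_j')$ that simultaneously witnesses the $\epsilon$-discrepancy and escapes every proper affine invariant submanifold in the sense required by Corollary~\ref{cor:mozes-shah}. The countability of affine invariant submanifolds (Proposition~\ref{prop:countability}) is exactly what makes this diagonal extraction possible; without it one could not enumerate the "bad" submanifolds to avoid. Once the sequence is in hand, the rest is an immediate application of Corollary~\ref{cor:mozes-shah} and the definition of weak-$\ast$ convergence.
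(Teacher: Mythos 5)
Your proof is correct, but it takes a genuinely different route from the paper's. The paper builds $\cC$ by stratifying the proper affine invariant submanifolds according to their codimension in $\cM$: it defines $\cA_k$ to be the codimension-$k$ submanifolds witnessing the $\epsilon$-discrepancy, $\cB_k \subset \cA_k$ to be those not already covered by a lower-codimension submanifold, and proves inductively (via Theorem~\ref{theorem:mozes-shah} and Corollary~\ref{cor:mozes-shah}) that each $\cB_k$ is finite; since $k$ ranges over $\{1,\dots,\dim\cM\}$ the union $\cC = \bigcup_k \cB_k$ is finite. That argument does not explicitly invoke Proposition~\ref{prop:countability}. You instead run a direct contradiction argument by a diagonal extraction across a countable enumeration of all proper affine invariant submanifolds of $\cM$, and then apply Corollary~\ref{cor:mozes-shah} once to the extracted sequence. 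Your version is shorter and more transparent, at the cost of needing countability as a prerequisite (which is in any case available at this point in the paper, as the section explicitly cites Proposition~\ref{prop:countability} as an input). The paper's version is slightly more structural and effectively produces $\cC$ by a finite descent in codimension, which may be what one prefers if one wishes to track which submanifolds enter $\cC$. One trivial point you should add for completeness: if $\cM$ has only finitely many proper affine invariant submanifolds, take $\cC$ to be that whole finite set; your enumeration argument presumes an infinite list.
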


\proof
Let $\varphi$ and $\epsilon>0$ be given.
We will prove this by inductively choosing $\cN_j$'s as follows.
Suppose $k>0,$ and put 
\begin{displaymath}
\acal_k=\{\cN \subseteq\cM:\cN\text{ has codimension } 
k\text{ in }\cM\text{ and }
|\nu_\cN(\varphi)-\nu_\cM(\varphi)|\geq\epsilon\}.
\end{displaymath}
Let $\Bcal_1=\acal_1,$ and define 
\begin{displaymath}
\Bcal_k=\{\cN\in\acal_k: \text{such that }\cN 
\text{ is not contained in any }\cN'\in\acal_\ell
\text{ with }\ell<k\}.
\end{displaymath}
{\it Claim.} $\Bcal_k$ is a finite set for each $k.$ 

We will show this inductively. Note that by 
Corollary~\ref{cor:mozes-shah} we have $\acal_1,$
and hence $\Bcal_1,$ is a finite set. Suppose we have shown 
$\{\Bcal_j:1\leq j\leq k-1\}$ is a finite set.
Let $\{\cN_j\}$ be an infinite collection
of elements in $\Bcal_k.$ 
By Theorem~\ref{theorem:mozes-shah}
we may pass to subsequence (which we continue to denote by $\cN_j$)  
such that $\nu_{\cN_j}\rightarrow\nu$. 
Theorem~\ref{theorem:mozes-shah} also implies that $\nu=\nu_\cN$ for
some affine invariant submanifold $\cN$, and that
there exists some $j_0$ such that $\cN_j\subset\cN$
for all $j>j_0$. Note that $\cN$ has codimension $\ell\leq k-1$.

Since $\nu_{\cN_j} \to \nu_\cN$, and $\cN_j \in \cB_k
\subset \cA_k$, we have $|\nu_\cN(\varphi) - \nu_\cM(\varphi)| \ge
\epsilon$. Therefore $\cN \in \cA_\ell$. But this is a contradiction
to the definition of $\cB_k$ since $\cN_j \subset \cN$ and $\cN_j \in
\cB_k$. This completes the proof of the claim. 

Now let
\begin{displaymath}
\cC=\{\cN:\cN\in\Bcal_k,\text{ for }0<k\leq \dim\cM\}.
\end{displaymath}
This is a finite set which satisfies the conclusion of the lemma.
\qed

\begin{proof}[Proof of Theorem~\ref{theorem:rw-uniformity}]
Let $\varphi$ and $\epsilon>0$ be given,
and let $\cC$ be given by Lemma~\ref{lemma:finite-collection}.
Write $\cC = \{\cN_1, \dots,\cN_\ell\}$. 
We will show the theorem holds with this choice of the $\cN_j$. 

Suppose not, then there exists a compact subset 
$F\subset {}{\cM}\setminus\bigcup_{j=1}^\ell \cN_j$ 
such that for all $m_0\geq 0$,
\begin{displaymath}
\{x\in F:|(\bar{\mu}^{(m)} \ast
\delta_x)(\varphi)-\nu_\cM(\varphi)|>\epsilon,\text{ for some
}m>m_0\}\neq\emptyset. 
\end{displaymath}
Let $m_n\rightarrow\infty$ and $\{x_n\}\subset F$ 
be a sequence such that 
$|(\bar{\mu}^{(m_n)} \ast
\delta_{x_n})(\varphi)-\nu_\cM(\varphi)|>\epsilon$. 

Since the space of measures on $\strat1$ of total mass at most $1$ is
compact in the weak star topology,  
after passing to a subsequence if necessary, 
we may and will assume that $\bar{\mu}^{(m_n)} \ast \delta_{x_n} \to
\nu$ where $\nu$ is some measure on $\cM$ (which could a priori be 
the zero measure). We will also assume that $x_n\rightarrow x$
for some $x \in F$.

Note that $\nu$ is $\sl$-invariant. Let
\begin{displaymath}
\nu=\sum_{\cN\subseteq\cM}a_\cN \nu_\cN
\end{displaymath}
be the ergodic decomposition of $\nu$, as in
(\ref{eq:ergodic-decomp}). 

We claim that $\nu$ is a probability measure and
$\nu(\cN)=0$ for all
$\cN\in\cC.$ To see this, suppose $\cN\in\cC$ {}{or $\cN
  = \emptyset$} and apply 
Proposition~\ref{prop:rw-avoid-singular-set}
with $\cN$ and $F$. We get for any $\epsilon'>0$ 
there exists some $n_0$ so that if $n>n_0$,
then $(\bar{\mu}^{(n)} \ast \delta_y)((\Omega_{\cN,\epsilon'})^c) \ge
1-\epsilon'$ for all $y \in F$.  Therefore, passing to the limit, we get
\begin{displaymath}
\nu((\Omega_{\cN,\epsilon})^c) \ge 1-\epsilon'.
\end{displaymath}
Since $\epsilon' > 0$ is arbitrary, this implies that $\nu$ is a
probability measure and $\nu(\cN)=0$.
The claim now follows since $\cC$ is a finite family. 

The claim \mc{give number} and Lemma~\ref{lemma:finite-collection}
imply that
$|\nu(\varphi)-\nu_\cM(\varphi)|<\epsilon$. This and the definition
of $\nu$ imply that
$|(\bar{\mu}^{(m_n)} \ast \delta_{x_n})(\varphi)-\nu_\cM(\varphi)|<\epsilon$ 
for all large enough $n$. This contradicts the choice of $x_n$ and $m_n$ 
and completes the proof. 
\end{proof}

The only properties of the measures $\bar{\mu}^{(n)}$ which
were used in this subsection were
Proposition~\ref{prop:rw-avoid-singular-set}  and the fact that any
limit of the measures $\bar{\mu}^{(n)} \ast \delta_x$ is
$SL(2,\reals)$ invariant. In fact, we proved the following theorem, 
which we will record for future use:
\begin{theorem}
\label{theorem:general:measures}
Suppose $\{\eta_t \st t \in \reals\}$ is a family of probability 
measures on $SL(2,\reals)$ with the following properties:
\begin{itemize}
\item[{(a)}] Proposition~\ref{prop:rw-avoid-singular-set} holds for
  $\eta_t$ instead of $\bar{\mu}^{(n)}$ (and $t$ instead of
  $n$). 
\item[{(b)}] Any weak-$\ast$ limit of measures of the form $\eta_{t_i} \ast
  \delta_{x_i}$ as $t_i \to \infty$ is $SL(2,\reals)$-invariant. 
\end{itemize}
Then,
\begin{itemize}
\item[{\rm (i)}] (cf. Theorem~\ref{theorem:rw-closure}) 
Suppose $x \in \strat1$, and let $\cM$ be the smallest affine
invariant submanifold containing $x$. 
Then for any $\varphi\in C_c(\strat1)$, 
\begin{displaymath}
\lim_{t \to \infty} (\eta_t \ast \delta_x)(\varphi) = \nu_\cM(\varphi)
\end{displaymath}

\item[{\rm (ii)}] (cf. Theorem~\ref{theorem:rw-uniformity})
Let $\cM$ be an affine invariant submanifold. 
Then for any $\varphi\in C_c(\strat1)$ and any $\epsilon>0$
there are affine invariant submanifolds $\cN_1,\ldots,\cN_\ell$
properly contained in $\cM$ such that 
for any compact subset $F\subset \cM \setminus(\cup_{j=1}^\ell \cN_j)$ 
there exists $T_0$ so that for all $T>T_0$ and any $x\in F$,
\begin{displaymath}
\left|(\eta_t \ast \delta_x) (\varphi) -\nu_{\cM} (\varphi)\right| < \epsilon. 
\end{displaymath}
\end{itemize}
\end{theorem}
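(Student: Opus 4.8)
\emph{Proof idea.} The plan is to observe that the proofs of Theorem~\ref{theorem:rw-closure} and Theorem~\ref{theorem:rw-uniformity} given above have already isolated exactly the two properties of the measures $\bar\mu^{(n)}$ that they use: Proposition~\ref{prop:rw-avoid-singular-set}, and the fact that every weak-$\ast$ limit of measures $\bar\mu^{(n_k)}\ast\delta_{x_k}$ is $SL(2,\reals)$-invariant. Under hypotheses (a) and (b) both properties hold for the family $\{\eta_t\}$, so I would simply rerun the two arguments with $\bar\mu^{(n)}$ replaced by $\eta_t$ and $n$ by $t$. No new ideas are needed; the content of the theorem is precisely that the two preceding proofs never used stationarity of $\bar\mu^{(n)}$ for anything except producing $SL(2,\reals)$-invariance of limits (via Theorem~\ref{theorem:stationary:invariant}).

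For part (i): let $\cM$ be an affine invariant submanifold of minimal dimension containing $x$ (as in the proof of Theorem~\ref{theorem:rw-closure}, its uniqueness will follow a posteriori). If the conclusion fails, choose $\varphi\in C_c(\strat1)$, $\epsilon>0$ and $t_k\to\infty$ with $|(\eta_{t_k}\ast\delta_x)(\varphi)-\nu_\cM(\varphi)|\ge\epsilon$, and pass to a weak-$\ast$ limit $\eta_{t_k}\ast\delta_x\to\nu$ (possible since the space of measures of total mass at most $1$ is weak-$\ast$ compact). By hypothesis (b), $\nu$ is $SL(2,\reals)$-invariant, hence $P$-invariant, with $\supp\nu\subseteq\cM$. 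By Theorem~\ref{theorem:P:measures} every ergodic component of $\nu$ is affine, and by Proposition~\ref{prop:countability} there are only countably many affine invariant submanifolds, so $\nu=\sum_{\cN\subseteq\cM}a_\cN\nu_\cN$ as in \eqref{eq:ergodic-decomp}. Now apply Proposition~\ref{prop:rw-avoid-singular-set} (available by hypothesis (a)) with the compact set $F=\{x\}$: taking $\cN=\emptyset$ and passing to the limit gives $\nu((\Omega_{\emptyset,\epsilon'})^c)\ge 1-\epsilon'$ for all $\epsilon'>0$, so $\nu$ is a probability measure; taking any $\cN\subsetneq\cM$ (note $x\notin\cN$ by minimality of $\dim\cM$) gives $\nu(\cN)=0$, hence $a_\cN=0$. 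Thus $\nu=\nu_\cM$, contradicting the choice of the $t_k$; the usual argument (as in the remark after Theorem~\ref{theorem:sector:closure}) then also yields uniqueness of the minimal $\cM$.

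For part (ii): take $\cN_1,\dots,\cN_\ell$ to be the finite collection $\cC$ produced by Lemma~\ref{lemma:finite-collection} for the given $\varphi$, $\epsilon$ and $\cM$ --- that lemma is already proved and makes no reference to the family $\{\eta_t\}$. If the uniform estimate failed for this choice, there would be a compact $F\subseteq\cM\setminus\bigcup_j\cN_j$, times $t_n\to\infty$ and points $x_n\in F$ with $|(\eta_{t_n}\ast\delta_{x_n})(\varphi)-\nu_\cM(\varphi)|>\epsilon$; pass to subsequences with $x_n\to x\in F$ and $\eta_{t_n}\ast\delta_{x_n}\to\nu$. By (b) the measure $\nu$ is $SL(2,\reals)$-invariant with $\supp\nu\subseteq\cM$, and it decomposes as above. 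Applying Proposition~\ref{prop:rw-avoid-singular-set} (hypothesis (a)) with the compact set $F$ and with each $\cN\in\cC$ as well as with $\cN=\emptyset$ shows that $\nu$ is a probability measure and $\nu(\cN)=0$ for every $\cN\in\cC$; since $\cC$ is finite, this forces $a_{\cN'}=0$ for every ergodic component indexed by an $\cN'$ contained in some member of $\cC$. By the defining property of $\cC$ in Lemma~\ref{lemma:finite-collection}, the remaining components satisfy $|\nu_{\cN'}(\varphi)-\nu_\cM(\varphi)|<\epsilon$, and since $\nu$ is a probability measure this yields $|\nu(\varphi)-\nu_\cM(\varphi)|<\epsilon$, hence $|(\eta_{t_n}\ast\delta_{x_n})(\varphi)-\nu_\cM(\varphi)|<\epsilon$ for large $n$ --- a contradiction.

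The main point to verify --- and essentially the only thing that is not a mechanical substitution --- is that stationarity of $\bar\mu^{(n)}$ genuinely entered the two proofs only through Theorem~\ref{theorem:stationary:invariant}, i.e.\ only to guarantee $SL(2,\reals)$-invariance of weak-$\ast$ limits of $\bar\mu^{(n)}\ast\delta_x$. In particular one should double-check that the random ergodic theorem, invoked in the proof of Theorem~\ref{theorem:mozes-shah} and in Lemma~\ref{lemma:finite-collection}, is used there only in statements about convergent sequences of affine measures $\nu_{\cN_n}$ --- statements already established and independent of the family $\{\eta_t\}$. Granting this, replacing $\bar\mu^{(n)}$ by $\eta_t$ throughout goes through and proves (i) and (ii). I expect no substantive obstacle here: the theorem is really a consolidation of the two preceding proofs into an axiomatic form.
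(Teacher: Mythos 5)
Your proposal is correct and matches the paper's own reasoning exactly: the paper introduces Theorem~\ref{theorem:general:measures} with the one-line observation that the preceding proofs of Theorems~\ref{theorem:rw-closure} and~\ref{theorem:rw-uniformity} used only Proposition~\ref{prop:rw-avoid-singular-set} and $SL(2,\reals)$-invariance of weak-$\ast$ limits, so the same arguments run verbatim for $\eta_t$. Your additional check that the random ergodic theorem and Lemma~\ref{lemma:finite-collection} are internal to the already-established results (and hence independent of the family $\{\eta_t\}$) is a sound and worthwhile sanity check, but does not change the route.
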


\subsection{Equidistribution for sectors}
We define a sequence of probability measures $\vartheta_t$ on $SL(2,\reals)$ 
by
\begin{displaymath}
\vartheta_t(\varphi) = \frac{1}{t} \int_{0}^t \frac{1}{2\pi} \int_{0}^{2\pi}
\varphi(a_s r_\theta) \, d\theta \, ds.
\end{displaymath}
More generally, if $I \subset [0,2\pi]$ is an interval, then we define
\begin{displaymath}
\vartheta_{t,I}(\varphi) = \frac{1}{t} \int_{0}^t \frac{1}{|I|} \int_I
\varphi(a_s r_\theta) \, d\theta \, ds.
\end{displaymath}

We have the following: 
\begin{prop}
\label{prop:sector-avoid-singular-set}
Let $\cN$ be a (possibly empty) proper affine invariant submanifold. 
Then for any $\epsilon>0$, 
there exists an open set $\Omega_{\cN,\epsilon}$ containing $\cN$ with
$(\Omega_{\cN,\epsilon})^c$ compact such that for any
compact $F\subset\strat1\setminus\cN$ there exists $t_0 \in \reals$
so that for all $t>t_0$ and all $x\in F$, we have
\begin{displaymath}
(\vartheta_{t,I} \ast \delta_x)(\Omega_{\cN,\epsilon})<\epsilon.
\end{displaymath}
\end{prop}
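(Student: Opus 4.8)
The plan is to imitate the proof of Proposition~\ref{prop:rw-avoid-singular-set}, replacing the operator $\mathbb{A}_\mu^n$ by the operator associated with averaging over the sector, i.e. $A_t$ and its iterates. First I would set $f_\cN$ to be the function provided by Proposition~\ref{prop:main:proposition} for the submanifold $\cN$, and define
\begin{displaymath}
\Omega_{\cN,\epsilon} = \left\{ p \st f_\cN(p) > (b+1)/\epsilon \right\}^0,
\end{displaymath}
where $b>0$ is the constant from Proposition~\ref{prop:main:proposition}(b). By part (a) of that proposition this is an open neighborhood of $\cN$ whose complement $\overline{\{p \st f_\cN(p) \le (b+1)/\epsilon\}}$ is a compact subset of $\strat1 \setminus \cN$, which is the first assertion. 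The substance is the averaging bound: given a compact $F \subset \strat1 \setminus \cN$, one sets $m_F = \sup_{x \in F} f_\cN(x) < \infty$ and wants to show $(\vartheta_{t,I} \ast \delta_x)(f_\cN) \le 1+b$ for all $x \in F$ and all large $t$, uniformly; Chebyshev then gives $(\vartheta_{t,I}\ast\delta_x)(\Omega_{\cN,\epsilon}) < \epsilon$ exactly as in the earlier proof.

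The key technical step is to convert the single-step inequality $(A_t f_\cN)(x) \le c f_\cN(x) + b$ (for $t > t_0$) into a bound for the time average $\frac{1}{T}\int_0^T (A_t f_\cN)(x)\,dt$, and then to handle the fact that $\vartheta_{t,I}$ averages over a sub-interval $I$ rather than all of $[0,2\pi)$. For the first point I would iterate: writing $T = N t_1$ for a fixed $t_1 > t_0$ and using Proposition~\ref{prop:main:proposition}(c) to control $f_\cN$ over the bounded time intervals of length $t_1$, one gets that $(A_{t_1}^k f_\cN)(x) \le c^k f_\cN(x) + b/(1-c)$, and averaging over $k = 1,\dots,N$ (together with the uniform boundedness from (c) over the residual interval of length $< t_1$, and over the initial rotation $r_\theta$) yields $\frac{1}{T}\int_0^T (A_t f_\cN)(x)\,dt \le c' f_\cN(x) + b'$ for constants $c'$ small (once $T$ is large) and $b'$ depending only on $\cN$; choosing $c' = 0.5/m_F$ finishes it as before. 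For the second point, note that integrating $(A_t f_\cN)(x)$ over $\theta \in [0,2\pi)$ already includes the contribution of $\theta \in I$, so $\frac{1}{|I|}\int_I$ differs from $\frac{1}{2\pi}\int_0^{2\pi}$ only by replacing the $SO(2)$-average by a partial average; since $f_\cN \ge 1 > 0$ and $A_t f_\cN$ is obtained from $A_{t/2}(\,\cdot\,)$ composed with a further $SO(2)$-average, one can absorb the restriction to $I$ at the cost of a bounded multiplicative constant depending only on $|I|$ — concretely, $\frac{1}{|I|}\int_I (\cdots)\,d\theta \le \frac{2\pi}{|I|} \cdot \frac{1}{2\pi}\int_0^{2\pi}(\cdots)\,d\theta$ applied after one application of $A_{t_0}$ smooths the angular dependence using part (c).

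I expect the main obstacle to be precisely this last manipulation: Proposition~\ref{prop:main:proposition}(b) is a statement about the full circular average $A_t$, and $\vartheta_{t,I}$ is a mix of a \emph{geodesic} time-average and a \emph{partial} angular average. The clean way around it, which I would adopt, is to first apply one ``burn-in'' step $a_{t_0}r_\theta$ for a fixed large $t_0$: by Proposition~\ref{prop:main:proposition}(c) the push-forward $\frac{1}{|I|}\int_I (a_{t_0}r_\theta)_*\delta_x\,d\theta$ has bounded Radon--Nikodym derivative with respect to $\frac{1}{2\pi}\int_0^{2\pi}(a_{t_0}r_\theta)_*\delta_x\,d\theta$ — both are probability measures, and the density is bounded by a constant depending only on $|I|$ and on $\sigma$ from (c) — and from there one runs the $A_t$-iteration honestly. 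The rest (Chebyshev, compactness of $(\Omega_{\cN,\epsilon})^c$, uniformity over $x \in F$ via $m_F < \infty$) is routine and parallels Proposition~\ref{prop:rw-avoid-singular-set} verbatim.
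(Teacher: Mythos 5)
Your overall structure — same choice of $\Omega_{\cN,\epsilon} = \{f_\cN > (b+1)/\epsilon\}^0$, Chebyshev, and uniformity over $F$ via $m_F$ — matches the paper's, but you take a more complicated route on the two technical points, and one of them has a real gap. For the time average, there is no need to iterate. Proposition~\ref{prop:main:proposition}(b) already gives $(A_s f_\cN)(x) \le c f_\cN(x) + b$ for \emph{every} $s > t_1$ (not just $s = t_1$), so the bound on $\frac{1}{t}\int_0^t (A_s f_\cN)(x)\,ds$ is immediate: split the integral at $t_1$, use Proposition~\ref{prop:main:proposition}(c) to bound the finite piece $\frac{1}{t}\int_0^{t_1}(A_s f_\cN)(x)\,ds \le \frac{t_1 R}{t} f_\cN(x)$, and use the pointwise inequality directly on $\frac{1}{t}\int_{t_1}^t$. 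That is what the paper does. Your iteration scheme bounds $(A_{t_1}^k f_\cN)(x)$, but $A_{t_1}^k$ is \emph{not} $A_{k t_1}$, and converting one into the other requires a hyperbolic-geometry lemma (the analogue of $A_{t+s} \le \sigma' A_t A_s$, Corollary~\ref{cor:AtAs:inequality} in the paper). Passing from a bound on $A_{t_1}^k f_\cN$ to a bound on $\frac{1}{T}\int_0^T (A_t f_\cN)\,dt$ is exactly the content of Lemma~\ref{lemma:abstract:recurrence:properties}, which is already invoked inside the proof of Proposition~\ref{prop:main:proposition}(b); redoing it here is both superfluous and, in the form you sketched, incomplete. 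For the sector, the clean observation is that $\frac{1}{|I|}\int_I \le \frac{2\pi}{|I|}\cdot\frac{1}{2\pi}\int_0^{2\pi}$ for nonnegative integrands, so $\vartheta_{t,I} \le \frac{2\pi}{|I|}\vartheta_{t}$ as measures; you can therefore reduce to $I = [0,2\pi]$ at the outset (just absorb the $\frac{2\pi}{|I|}$ into the choice of $\epsilon$), which the paper does in one sentence. Your ``burn-in'' step and Radon--Nikodym comparison do work, but the density bound you need is simply $\frac{2\pi}{|I|}$ — it has nothing to do with $\sigma$ from part (c), and the burn-in is unnecessary. In short: your proof reaches the right conclusion, but you are re-proving machinery the paper has already packaged into Proposition~\ref{prop:main:proposition}(b), and your iteration step as stated conflates $A_{t_1}^k$ with $A_{kt_1}$.
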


\begin{proof} This proof is virtually identical to the proof of
  Proposition~\ref{prop:rw-avoid-singular-set}. 
It is enough to prove the statement for the case $I =
  [0,2\pi]$. Let $f_\cN$ 
be the function of Proposition~\ref{prop:main:proposition}.  Let $b >
0$ be as in Proposition~\ref{prop:main:proposition} (b), 
and let
\begin{displaymath}
\Omega_{\cN,\epsilon}=\left\{p: f_\cN( p)>(b+1)/\epsilon\right\}^0,
\end{displaymath} 
where $E^0$ denotes the interior of $E$. 

Suppose $F$ is a compact subset of $\strat1 \setminus \cN$. 
Let $m_F=\sup\{f_\cN(x): x\in F\}$. 
By Proposition~\ref{prop:main:proposition} (b) with $c = \frac{1}{2m_F}$, 
there exists $t_1 > 0$ such that 
\begin{displaymath}
(A_t f_\cN)(x)<\frac{1}{m_F} f_\cN(x)+ b\leq 1+
b, \qquad \text{for all $t > t_1$ and all $x\in F$.}
\end{displaymath}
By Proposition~\ref{prop:main:proposition} (a) there exists $R > 0$
such that $f_\cN(a_t x) \le R f_\cN(x)$ for $0 \le t \le t_1$. Now
choose $t_0$ so that $t_1 R/t_0  < m_F/2$. Then, for $t > t_0$, 
\begin{multline*}
(\vartheta_t \ast \delta_x)(f_\cN) = \frac{1}{t}\int_0^t (A_s
f_\cN)(x) \, ds 
= \frac{1}{t} \int_{0}^{t_1} (A_s f_\cN)(x) \, ds + \frac{1}{t}
\int_{t_1}^t (A_s f_\cN)(x) \, ds \\
\le \frac{t_1 R}{t} f_\cN(x) + (\frac{m_F}{2} f_\cN(x) + b) 
\le m_F f_\cN(x) + b \le 1+b. 
\end{multline*}
Thus for any $x\in F$, $t > t_0$ and $L>0$ we have 
\begin{equation}
\label{eq:eq:avoid2}
(\vartheta_{t} \ast \delta_x)(\{p: f_\cN( p)>L\})<(b+1)/L. 
\end{equation}
Then (\ref{eq:eq:avoid2}) implies that
$
(\vartheta_t \ast \delta_x)(\Omega_{\cN,\epsilon})<\epsilon.
$
Also, Proposition~\ref{prop:main:proposition} (a) 
implies that $\Omega_{\cN,\epsilon}$ is a neighborhood of $\cN$ and
\begin{displaymath}
(\Omega_{\cN,\epsilon})^c = \overline{\{ p \st f_\cN(p) \le
(b+1)/\epsilon \}} 
\end{displaymath}
is compact.
\end{proof}

\begin{lemma}
\label{lemma:sector:P:invariance} 
Suppose $t_i \to \infty$, $x_i \in \strat1$,   
and $\vartheta_{t_i,I} \ast \delta_{x_i} \to \nu$. Then
$\nu$ is invariant under $P$ (and then by
Theorem~\ref{theorem:P:measures} also invariant under
$SL(2,\reals)$). 
\end{lemma}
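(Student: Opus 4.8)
The plan is to verify that $\nu$ is invariant under the diagonal subgroup $A=\{a_r\}$ and under the expanding horocyclic subgroup $U=\{u_s\}$; these, together with the central element $-I$, generate $P$, so this yields $P$-invariance, and then $SL(2,\reals)$-invariance and affineness follow at once from Theorem~\ref{theorem:P:measures}. Since $A$ and $U$ are one-parameter groups it in fact suffices to treat $a_r,u_s$ for $r,s$ near $0$, and the $-I$ component causes no difficulty.

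First I would dispose of $A$. Unwinding the definitions and substituting $s\mapsto s-r$,
\[
(a_r\vartheta_{t,I})\ast\delta_x(\varphi)=\tfrac1t\int_0^t\tfrac1{|I|}\int_I\varphi(a_{r+s}r_\theta x)\,d\theta\,ds=\tfrac1t\int_r^{t+r}\tfrac1{|I|}\int_I\varphi(a_s r_\theta x)\,d\theta\,ds,
\]
which differs from $(\vartheta_{t,I}\ast\delta_x)(\varphi)$ only by the contributions of two intervals of length $|r|$ at the ends of $[0,t]$; hence the difference is at most $\tfrac{2|r|}{t}\|\varphi\|_\infty\longrightarrow 0$. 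Applying this along $(t_i,x_i)$ gives $(a_r)_*\nu=\nu$.

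The substantive point is $U$-invariance, where the tool is the shearing relation $a_s^{-1}u_\tau a_s=u_{\tau e^{-2s}}$, i.e. $u_\tau a_s r_\theta=a_s\,u_{\tau e^{-2s}}\,r_\theta$, so that the conjugate of $u_\tau$ tends to the identity as $s\to\infty$. Concretely I would use the Iwasawa-type identity $r_\theta=u_{-\tan\theta}\,\bar u_{\sin\theta\cos\theta}\,a_{-\log\cos\theta}$, push $a_s$ through it to get $a_s r_\theta=u_{-e^{2s}\tan\theta}\,\bar u_{e^{-2s}\sin\theta\cos\theta}\,a_{s-\log\cos\theta}$, and (for $\theta$ in a fixed subinterval of $(-\tfrac\pi2,\tfrac\pi2)$) substitute $\sigma=-e^{2s}\tan\theta$. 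This exhibits $(\vartheta_{t,I}\ast\delta_x)(\varphi)$, up to a slowly varying weight, as an average over $s\in[0,t]$ of genuine $U$-orbit averages $\tfrac1{|J_s|}\int_{J_s}\varphi(u_\sigma w(\sigma,s))\,d\sigma$ over intervals $J_s$ of length $\asymp e^{2s}$, in which the remainder $w(\sigma,s)$ drifts only in the contracting horocyclic direction and only at rate $\asymp e^{-2s}$. Left-translation by $u_\tau$ shifts $\sigma\mapsto\sigma+\tau$, and since $|J_s\,\triangle\,(J_s+\tau)|\le 2|\tau|$ is negligible next to $|J_s|$ once $s$ is moderately large (the bounded-$s$ part carrying only an $O(1/t)$ fraction of the mass), the discrepancy between $(u_\tau\vartheta_{t_i,I})\ast\delta_{x_i}$ and $\vartheta_{t_i,I}\ast\delta_{x_i}$ tends to $0$, whence $(u_\tau)_*\nu=\nu$.

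I expect the main obstacle to lie in making this last step rigorous: the orbit map $g\mapsto gx$ is far from uniformly continuous on the noncompact space $\strat1$, so one cannot simply declare the remainder terms negligible. The comparison must be carried out via the exact substitution $\sigma\mapsto\sigma+\tau$ in the $U$-parameter (so that $u_\tau$ acts by an honest translation of the integration range, reducing the principal contribution to the harmless symmetric-difference estimate), while the error coming from the drift of $w(\sigma,s)$ is controlled using both the contraction of the stable horocyclic direction under $a_s$ and the no-escape-of-mass bound (the preceding proposition with $\cN=\emptyset$), which keeps the relevant orbit points in a fixed compact set on which the action is uniformly continuous. A final bookkeeping point: the substitution degenerates near $\theta=\pm\tfrac\pi2$, so one first replaces $I$ by short subintervals $I_j$, extracts along the given subsequence a limit $\nu_j$ of $\vartheta_{t_i,I_j}\ast\delta_{x_i}$ for each piece (so $\nu$ is a convex combination of the $\nu_j$, and it suffices to treat each piece), and for a piece near $\pm\tfrac\pi2$ conjugates by the Weyl element $r_{\pi/2}$ to reduce to a piece near $0$.
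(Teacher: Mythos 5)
Your treatment of $A$-invariance matches the paper's, and your overall strategy for $U$-invariance (use the shearing relation $u_\tau a_s = a_s u_{\tau e^{-2s}}$ and a change of variables in the circle parameter) is the same. But the specific change of variables you propose has a genuine gap, and the remedy you sketch does not close it.

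You write $a_s r_\theta x = u_\sigma\, w(\sigma,s)$ with $\sigma = -e^{2s}\tan\theta$, so $u_\tau a_s r_\theta x = u_{\sigma+\tau}\,w(\sigma,s)$, and after the substitution $\sigma\mapsto\sigma+\tau$ you need $\varphi\bigl(u_\sigma w(\sigma-\tau,s)\bigr)$ to be close to $\varphi\bigl(u_\sigma w(\sigma,s)\bigr)$. The points $w(\sigma-\tau,s)$ and $w(\sigma,s)$ are indeed close: the connecting group element has the form $\bar{u}_\mu a_\lambda$ with $\mu = O(\tau e^{-4s})$ and $\lambda = O(\tau e^{-2s})$. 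But you then act on the left by $u_\sigma$ with $|\sigma|\asymp e^{2s}$, and this \emph{undoes} the contraction. A direct computation gives
\[
u_\sigma\,\bar u_\mu a_\lambda\,u_{-\sigma}
= \begin{pmatrix} * & \sigma(e^{-\lambda}-e^{\lambda})-\sigma^2\mu e^{\lambda} \\ * & * \end{pmatrix},
\]
and substituting $\sigma=-e^{2s}\tan\theta$, $\lambda\approx\tau e^{-2s}\sin\theta\cos\theta$, $\mu\approx\tau e^{-4s}\cos^2\theta(1-4\sin^2\theta)$ yields an upper-right entry $\approx\tau\sin^2\theta\,(1+4\sin^2\theta)$, \emph{uniformly in $s$}. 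Thus for $\theta$ bounded away from $0$ the two orbit points $u_\sigma w(\sigma-\tau,s)$ and $u_\sigma w(\sigma,s)$ differ by a $U$-element of size $\Theta(\tau)$. No compactness or uniform-continuity argument can make $\varphi$ of two points related by a fixed-size group element close; the slogan ``$w$ drifts at rate $e^{-2s}$'' is true but is exactly cancelled by the factor $u_\sigma$. Put differently, your $\sigma$-shift replaces $\theta$ by $\theta^{\sharp}$ with $\tan\theta^{\sharp}=\tan\theta-\tau e^{-2s}$, whereas the correct match is $\theta'=\theta-\arctan(\tau e^{-2s})$; these differ by $\approx\tau e^{-2s}\sin^2\theta$ in $\theta$, i.e.\ by $\Theta(\tau)$ in the $\sigma$-coordinate.

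The paper's proof sidesteps this by shifting $\theta$ by the \emph{constant} $\arctan(\tau e^{-2s})$ (independently of $\theta$): writing $r_\theta=g_\theta u_{\tan\theta}$ with $g_\theta$ lower-triangular, one obtains $u_s a_\tau r_\theta x = (a_\tau g_{s_\tau}a_\tau^{-1})^{-1}\,a_\tau r_{\theta+s_\tau}x$ with $e^{2\tau}\tan s_\tau=s$. The perturbation $(a_\tau g_{s_\tau}a_\tau^{-1})^{-1}$ sits on the far left — it is never conjugated by anything large — is independent of $\theta$, and tends to the identity. Uniform continuity of $\varphi$ then handles that factor, and the constant translation of the interval $I$ is controlled by the trivial symmetric-difference estimate. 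That constant (rather than $\theta$-dependent, $\sigma$-affine) shift is the ingredient missing from your argument.
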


\begin{proof} 
Let $A$ denote the diagonal subgroup of $SL(2,\reals)$,
  and let $U = \begin{pmatrix} 1 & \ast \\ 0 & 1
  \end{pmatrix}$. 
From the definition it is clear that $\nu$ is $A$-invariant. 
We will show it is also $U$-invariant; indeed it suffices to show this for
$u_s=\begin{pmatrix} 1 & s \\ 0 & 1
  \end{pmatrix}$ with $0\leq s\leq 1.$  
  
First note that for any $0<\theta<\pi/2$ we have
\be\label{eq:uni-circle}
\mbox{$r_\theta = g_\theta\hh u_{\tan\theta},$ where
$\h g_\theta=\left(\begin{array}{cc}\cos\theta & 0\\ \sin\theta & 1/\cos\theta\end{array}\right).$}
\ee
Therefore, for all $\tau>0$ we have
$a_\tau g_\theta a_\tau^{-1}=\begin{pmatrix} \cos\theta & 0\\ e^{-2\tau}\sin\theta & 1/\cos\theta
  \end{pmatrix}.$
We have
\begin{equation}
\label{eq:matrix-multiplication}
a_\tau r_\theta=a_\tau g_\theta u_{\tan\theta}=
a_\tau g_\theta a_\tau^{-1}u_{e^{2\tau}\tan\theta}\; a_\tau. 
\end{equation}
Fix some $0 < s < 1$, and define $s_\tau$ by $e^{2\tau}\tan
s_{\tau}=s$. Then, (\ref{eq:matrix-multiplication}) becomes
\begin{equation}
\label{eq:matrix-mult2}
a_\tau r_{s_\tau} = (a_\tau g_{s_\tau} a_\tau^{-1}) u_s a_\tau.
\end{equation}
For any $\varphi\in C_c(\strat1)$ and all $x$ we have
\be\label{eq:tri-cricle-close}
\varphi(u_sa_\tau r_{\theta}x)-\varphi(a_\tau r_{\theta}x)= (\varphi(u_sa_\tau r_{\theta}x)-\varphi(a_\tau r_{\theta+s_{\tau}}x))+(\varphi(a_\tau r_{\theta+s_{\tau}}x)-\varphi(a_\tau r_{\theta}x)).
\ee
We compute the contribution from the two parentheses separately. Note that
terms in the first parenthesis are close to each other thanks to
(\ref{eq:matrix-mult2}) and the definition of $s_\tau.$
The contribution from the second is controlled as the 
integral over $I$ and a ``small''
translate of $I$ are close to each other. 

We carry out the computation here.  First note that $s_\tau\to0$ as
$\tau\to\infty.$ Furthermore, this and~\eqref{eq:uni-circle} imply
that $a_\tau g_{s_\tau}a_{\tau}^{-1}$ tends to the identity matrix as
$\tau\to\infty.$ Therefore, given $\epsilon>0$, thanks to
(\ref{eq:matrix-mult2}) and the uniform
continuity of $\varphi$ we have 
\begin{displaymath}
|\varphi(u_sa_\tau r_{\theta}x)-\varphi(a_\tau
r_{\theta+s_{\tau}}x)|\leq\epsilon
\end{displaymath}
for all large enough $\tau$ and all $x\in\strat1$. Thus, for large enough
$n$ (depending on $\epsilon$ and $\varphi$), we get
\begin{equation}
\label{eq:first-parenth}
\frac{1}{t_n}\int_0^{t_n}\frac{1}{|I|}\int_I |\varphi(u_sa_\tau
r_{\theta}x_n)-\varphi(a_\tau r_{\theta+s_{\tau}}x_n)|\, d\theta
\, d\tau\leq 2 \epsilon.  
\end{equation}
As for the second parentheses on the right
side of~\eqref{eq:tri-cricle-close}, we have
\begin{align*}
  & \left| \frac{1}{t_n}\int_0^{t_n}\frac{1}{|I|}\int_I (\varphi(a_\tau
    r_{\theta+s_{\tau}}x_n)-\varphi(a_\tau r_{\theta}x_n))\, d\theta
    \, d\tau\right|\leq\\
  &\le \frac{1}{t_n}\int_0^{t_n}\left|\frac{1}{|I|}\int_{I+s_{\tau}}
    \varphi(a_\tau
    r_{\theta}x_n)d\theta-\frac{1}{|I|}\int_I\varphi(a_\tau
    r_{\theta}x_n) \, d\theta\right|d\tau\leq
  \frac{C_\varphi}{t_n}\int_0^{t_n} s_\tau \, d\tau \\
& \leq\frac{C'_{\varphi}}{t_n}, \qquad\text{since $s_\tau =
  O(e^{-2\tau})$ and thus the integral converges.}
\end{align*}
This, together with ~\eqref{eq:first-parenth}
and~\eqref{eq:tri-cricle-close},  implies
$
|\nu(u_s\varphi)-\nu(\varphi)|\leq 2\epsilon;
$ 
the lemma follows.   
\end{proof}

Now in view of Proposition~\ref{prop:sector-avoid-singular-set} and
Lemma~\ref{lemma:sector:P:invariance},
Theorem~\ref{theorem:sector:closure} and
Theorem~\ref{theorem:sector:uniformity} hold by
Theorem~\ref{theorem:general:measures}.

\subsection{Equidistribution for some F{\o}lner sets}
In this subsection, we prove Theorem~\ref{theorem:folner:closure} and
Theorem~\ref{theorem:folner:uniformity}. These theorems can be easily derived
from Theorem~\ref{theorem:sector:closure} and
Theorem~\ref{theorem:sector:uniformity}, but we choose to derive them
directly from Theorem~\ref{theorem:general:measures}. 

Fix $r > 0$, and define a family of probability measures
$\lambda_{t,r}$ on $SL(2,\reals)$ by
\begin{displaymath}
\lambda_{t, r}(\varphi)=\frac{1}{rt}\int_0^t\int_0^r\varphi(a_\tau
u_s) \, ds \, d\tau.
\end{displaymath}

The supports of the measures $\lambda_{t,r}$ form a F{\o}lner family as
$t \to \infty$ (and $r$ is fixed). Thus, any limit measure of the
measures $\lambda_{t_i,r} \ast \delta_{x_i}$ is $P$-invariant (and
thus $SL(2,\reals)$-invariant by Theorem~\ref{theorem:P:measures}). 
Therefore it remains to prove:
\begin{prop}
\label{prop:folner-avoid-singular-set}
Let $\cN$ be a (possibly empty) proper affine invariant submanifold. 
Then for any $\epsilon>0$, 
there exists an open set $\Omega_{\cN,\epsilon}$ containing $\cN$ with
$(\Omega_{\cN,\epsilon})^c$ compact such that for any
compact $F\subset\strat1\setminus\cN$ there exists $t_0 \in \reals$
so that for all $t>t_0$ and all $x\in F$, we have
\begin{displaymath}
(\lambda_{t,r} \ast \delta_x)(\Omega_{\cN,\epsilon})<\epsilon.
\end{displaymath}
\end{prop}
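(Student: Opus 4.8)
The plan is to imitate the proofs of Proposition~\ref{prop:rw-avoid-singular-set} and Proposition~\ref{prop:sector-avoid-singular-set} essentially verbatim, after reducing everything to the circle averages $A_\tau f_\cN$ around which those proofs are organized. Fix $\cN$ and let $f_\cN$ be the function supplied by Proposition~\ref{prop:main:proposition}. The one new ingredient I need is the pointwise comparison
\begin{displaymath}
\frac{1}{r}\int_0^r f_\cN(a_\tau u_s x)\,ds \;\le\; C_r\,(A_\tau f_\cN)(x),\qquad \text{for all }\tau\ge 0,\ x\in\strat1,
\end{displaymath}
with a constant $C_r>0$ depending only on $r$ (and in particular not on $\tau$, $x$, or $\cN$). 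Granting this and averaging over $\tau\in[0,t]$ gives $(\lambda_{t,r}\ast\delta_x)(f_\cN)\le C_r\,(\vartheta_t\ast\delta_x)(f_\cN)$, so the argument of the proof of Proposition~\ref{prop:sector-avoid-singular-set}, applied with $c$ taken to be $\tfrac{1}{2C_r m_F}$ rather than $\tfrac{1}{2m_F}$ in Proposition~\ref{prop:main:proposition}(b), where $m_F=\sup_{x\in F}f_\cN(x)$, yields $(\lambda_{t,r}\ast\delta_x)(f_\cN)\le 1+C_r b$ for all $x$ in a given compact $F\subset\strat1\setminus\cN$ and all $t$ past some $t_0=t_0(\cN,F,r)$. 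Markov's inequality together with Proposition~\ref{prop:main:proposition}(a) then finishes the proof, with $\Omega_{\cN,\epsilon}=\{p:f_\cN(p)>(C_r b+1)/\epsilon\}^0$ and compact complement $(\Omega_{\cN,\epsilon})^c=\overline{\{p:f_\cN(p)\le (C_r b+1)/\epsilon\}}$.

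To establish the comparison I would substitute $s=\tan\theta$ and invoke the identity \eqref{eq:uni-circle}, which rearranges to $u_{\tan\theta}=g_\theta^{-1}r_\theta$ for $0<\theta<\pi/2$. Writing $a_\tau g_\theta^{-1}=(a_\tau g_\theta^{-1}a_\tau^{-1})\,a_\tau$ and computing as in the line following \eqref{eq:uni-circle},
\begin{displaymath}
a_\tau g_\theta^{-1}a_\tau^{-1}=\begin{pmatrix} 1/\cos\theta & 0\\ -e^{-2\tau}\sin\theta & \cos\theta \end{pmatrix},
\end{displaymath}
whose entries are bounded for all $\tau\ge 0$ and $\theta\in[0,\arctan r]$; hence this element ranges over a fixed compact set $Q_r\subset SL(2,\reals)$ depending only on $r$. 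Covering $Q_r$ by finitely many translates of the neighborhood of the identity on which Proposition~\ref{prop:main:proposition}(c) holds and using the connectedness of $SL(2,\reals)$, one obtains $R_r>1$ (depending only on $r$) with $f_\cN(gy)\le R_r f_\cN(y)$ for every $g\in Q_r$ and every $y\in\strat1$. Therefore $f_\cN(a_\tau u_s x)\le R_r f_\cN(a_\tau r_\theta x)$ with $\theta=\arctan s$; since $\sec^2\theta\le 1+r^2$ on $[0,\arctan r]$ and $f_\cN\ge 1>0$, enlarging the $\theta$-domain to $[0,2\pi]$ yields the claim with $C_r=2\pi R_r(1+r^2)/r$.

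The main obstacle is precisely this comparison step, and it has two facets: first, that $a_\tau g_\theta^{-1}a_\tau^{-1}$ stays in a fixed compact set as $\tau\to\infty$ --- this relies on $g_\theta$ being lower triangular, so that conjugation by $a_\tau$ only contracts its single off-diagonal entry, and this is exactly where the ``$P$-flavor'' of the F{\o}lner family enters; and second, upgrading the purely local statement of Proposition~\ref{prop:main:proposition}(c) into a bound over all of $Q_r$ with a constant depending on $r$ alone. Once $C_r$ is seen to be uniform in $\tau$ and $x$, everything else is a routine transcription of the earlier arguments, and one checks along the way that no constant secretly depends on the base point. The dependence of the final $\Omega_{\cN,\epsilon}$ (and of $t_0$) on $r$ is harmless, since $r$ is fixed throughout this subsection.
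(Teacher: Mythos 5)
Your proof is correct and follows essentially the same route as the paper's: write $u_{\tan\theta} = g_\theta^{-1}r_\theta$ so that $a_\tau u_{\tan\theta} = (a_\tau g_\theta^{-1}a_\tau^{-1})\,a_\tau r_\theta$ with $a_\tau g_\theta^{-1}a_\tau^{-1}$ lying in a fixed compact set, then use Proposition~\ref{prop:main:proposition}(c) (iterated over a chain of near-identity elements, as you correctly note) to bound $(\lambda_{t,r}\ast\delta_x)(f_\cN)$ by a constant times $(\vartheta_t\ast\delta_x)(f_\cN)$, and finish by Markov as in Proposition~\ref{prop:sector-avoid-singular-set}. The only cosmetic difference is that the paper first reduces to $r=\tan 0.01$ to keep the sector $I$ small, whereas you carry a constant $C_r$ through directly; both are valid since $r$ is fixed in the proposition.
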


\begin{proof}
It is enough to prove the statements for $r = \tan 0.01$. We may write as
in the proof of Lemma~\ref{lemma:sector:P:invariance}
\begin{displaymath}
r_\theta = g_\theta u_{\tan \theta}
\end{displaymath}
and thus
\begin{displaymath}
a_t u_{\tan \theta} = a_t g_\theta^{-1} r_\theta = (a_t g_\theta^{-1}
a_t^{-1}) a_t r_\theta
\end{displaymath}
Let $I = (0,0.01)$.
Note that $a_t g_\theta^{-1} a_t^{-1}$ remains bounded for $\theta \in
I$ as $t \to \infty$. Also, the derivative of $\tan \theta$ is bounded
between two non-zero constants for $\theta \in I$. Therefore, by
Proposition~\ref{prop:main:proposition} (c), for all $t$ and $x$, 
\begin{displaymath}
(\lambda_{t,r}  \ast \delta_x)(f_\cN) \le C (\vartheta_{t,I} \ast
\delta_x)(f_\cN),
\end{displaymath}
where $C$ depends only on the constant $\sigma$ in
Proposition~\ref{prop:main:proposition} (c). Therefore, for all $t$
and $x$, 
\begin{displaymath}
(\lambda_{t,r}  \ast \delta_x)(f_\cN) \le C' (\vartheta_{t} \ast
\delta_x)(f_\cN),
\end{displaymath}
where $C' = C/|I|$. Now let
\begin{displaymath}
\Omega_{\cN,\epsilon}=\left\{p: f_\cN( p)>C(b+1)/\epsilon\right\}^0.
\end{displaymath} 
The rest of the proof is exactly as in
Proposition~\ref{prop:sector-avoid-singular-set}. 
\end{proof}

Now Theorem~\ref{theorem:folner:closure} and
Theorem~\ref{theorem:folner:uniformity} follow from
Theorem~\ref{theorem:general:measures}.

\subsection{Proofs of Theorem~\ref{theorem:closure:submanifold}, 
  Theorem~\ref{theorem:closed:P:invariant:set} and
  Theorem~\ref{theorem:weak:asymptotics}}

\begin{proof}[Proof of Theorem~\ref{theorem:closure:submanifold}]
This is an immediate consequence of
Theorem~\ref{theorem:folner:closure}. 
\end{proof}

\begin{proof}[Proof of Theorem~\ref{theorem:closed:P:invariant:set}]
Suppose $\cA \subset \strat1$ is a closed $P$-invariant subset. Let
$Y$ denote the set of affine invariant manifolds contained in $\cA$,
and let $Z$ consist of the set of maximal elements of $Y$ (i.e. elements
of $Y$ which are not properly contained in another element of
$Y$). By Theorem~\ref{theorem:closure:submanifold}, 
\begin{displaymath}
\cA = \bigcup_{\cN \in Y} \cN = \bigcup_{\cN \in Z} \cN. 
\end{displaymath}
We now claim that $Z$ is finite. Suppose not, then there exists an
infinite sequence $\cN_n$ of distinct submanifolds in $Z$. Then by
Theorem~\ref{theorem:mozes-shah} there exists a subsequence
$\cN_{n_j}$ such that $\nu_{\cN_{n_j}} \to \nu_\cN$ where $\cN$ is another
affine invariant manifold which contains all but finitely many
$\cN_{n_j}$. Without loss of generality, we may assume that $\cN_{n_j}
\subset \cN$ for all $j$. 

Since $\nu_{\cN_{n_j}} \to \nu_{\cN}$, the union of the $\cN_{n_j}$ is
dense in $\cN$. Since $\cN_{n_j} \subset \cA$ and $\cA$ is closed,
$\cN \subset \cA$. Therefore $\cN \in Y$. But $\cN_{n_j} \subset \cN$,
therefore $\cN_{n_j} \not\in Z$. This is a contradiction. 
\end{proof}

\begin{proof}[Proof of Theorem~\ref{theorem:weak:asymptotics}]
This is a consequence of Theorem~\ref{theorem:sector:closure}; see
\cite[\S{3}-\S{5}]{EMas} for the details. See also
\cite[\S{8}]{Eskin:Marklof:Morris} 
for an axiomatic formulation and an outline of the argument. 

We note that since we do not have a convergence theorem for averages
of the form
\begin{displaymath}
\lim_{t \to \infty} \frac{1}{2\pi} \int_0^{2\pi} \varphi(a_t r_\theta
x) \, d\theta
\end{displaymath}
and therefore we do not know that e.g. assumption (C) of
\cite[Theorem~8.2]{Eskin:Marklof:Morris} is satisfied. But by
Theorem~\ref{theorem:sector:closure} we do have convergence for the
averages
\begin{displaymath}
\lim_{t \to \infty} \frac{1}{t} \int_0^t \frac{1}{2\pi} \int_0^{2\pi}
\varphi(a_s r_\theta x) \, d\theta \, ds. 
\end{displaymath}
Since we also have an extra average on the right-hand side of
Theorem~\ref{theorem:weak:asymptotics}, the proof goes through
virtually without modifications. 
\end{proof}

\section{Recurrence Properties}
\label{sec:recurrence}

Recall that for a function $f: \cH_1(\alpha) \to \reals$,
\begin{displaymath}
(A_t f)(x) = \frac{1}{2\pi} \int_0^{2\pi} f(a_t r_\theta x). 
\end{displaymath}
\begin{theorem}[\cite{EMas}, \cite{A}]
\label{theorem:alpha-fun}
  There exists a continuous, proper, $SO(2)$-invariant function
  $u:\strat1\to [2,\infty)$ such that
\begin{itemize}
\item[(i)] There exists $m \in \reals$ such that 
for all $x \in \cH_1(\alpha)$ and all $t > 0$, 
\begin{equation}
\label{eq:log-unif}
e^{-mt} u(x) \le u(a_t x) \le e^{m t} u(x)
\end{equation}
\item[(ii)] 
There exists constants $t_0
  > 0$, $\tilde{\eta} > 0$ and $\tilde{b} > 0$ such that
  for all $t\geq t_0$ and all $x\in\strat1$ we have
\begin{equation}
\label{eq:sup-harm-u}
A_t u(x)\leq \tilde{c}u(x)+\tilde{b}, \quad \text{ with $\tilde{c} =
  e^{-\tilde{\eta} t}$} 
\end{equation}
\end{itemize}
\end{theorem}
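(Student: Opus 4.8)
The plan is to take $u$ to be a regularized ``reciprocal systole'': a function equal to a fixed constant $\ge 2$ on the $\epsilon_0$-thick part of $\strat1$ and comparable to the reciprocal of the length of the shortest saddle connection as the surface degenerates, with continuity arranged by interpolating near the threshold $\ell=\epsilon_0$ at which ``short'' saddle connections appear. For such a $u$, properness is exactly Mumford's compactness criterion (a sequence leaves every compact set iff its systole tends to $0$), and the range $[2,\infty)$ is achieved by truncation. The real content is (i) and (ii): I will treat (i) by a soft argument, reduce (ii) to an elementary circle-average estimate together with a combinatorial bound on short saddle connections, and flag at the end why the naive choice $u\asymp 1/\ell_{\mathrm{sys}}$ must be upgraded to the more intricate function of \cite{EMas} in order to get (ii) with $\tilde c=e^{-\tilde\eta t}$.

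\textbf{Property (i).} Along the flow $a_t=\mathrm{diag}(e^t,e^{-t})$ the holonomy of every saddle connection is multiplied by $a_t$, whose operator norm and whose inverse's operator norm are both $e^t$; hence every saddle-connection length, the systole, and every individual length entering the definition of $u$ is multiplied by a factor in $[e^{-t},e^t]$. Since $u$ is built from boundedly many such reciprocal lengths by maxima, minima, and bounded products, $u(a_tx)$ and $u(x)$ differ by a multiplicative factor at most $e^{mt}$ for some $m$ depending only on the stratum, which gives \eqref{eq:log-unif}. The only subtlety is that the \emph{set} of saddle connections of length $<\epsilon_0$ changes along the flow; but a saddle connection that is short at time $t$ has length $<\epsilon_0 e^{t}$ at time $0$, so it is already ``seen'' by $u(x)$ up to a bounded distortion, and this is absorbed into $m$.

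\textbf{Property (ii).} The elementary input is the circle-average bound: there is an absolute $C_0$ with
\[
\frac{1}{2\pi}\int_0^{2\pi}\frac{d\theta}{|a_t r_\theta v|}\ \le\ \frac{C_0\,(1+t)\,e^{-t}}{|v|}\qquad\text{for all }v\in\reals^2\setminus\{0\}.
\]
This follows by writing $v=|v|(\cos\psi,\sin\psi)$, $\phi=\psi+\theta$, so that $|a_tr_\theta v|^2=|v|^2(e^{2t}\cos^2\phi+e^{-2t}\sin^2\phi)$, and bounding the resulting elementary integral — the integrand is $O(e^{-t})$ away from $\phi=\pm\pi/2$ and contributes an extra $O(t e^{-t})$ from the two logarithmic spikes at $\pm\pi/2$. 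One then bounds $u(a_tr_\theta x)$ above by a sum of $1/|a_tr_\theta v|$ over the at most $N=N(\alpha)$ saddle connections $v$ that are short on $a_t r_\theta x$, integrates in $\theta$ using the circle-average bound, and splits the sum: the saddle connections already short on $x$ contribute at most $N C_0(1+t)e^{-t}\,u(x)\le e^{-\tilde\eta t}u(x)$ once $t$ is large (any fixed $\tilde\eta<1$ works), while the saddle connections that are long on $x$ but become short under $a_tr_\theta$ must be shown to contribute only a bounded amount $\tilde b$. For the naive $u\asymp 1/\ell_{\mathrm{sys}}$ this last term in fact diverges as $t\to\infty$, since a surface with a long thin region carries many moderately short saddle connections whose reciprocal lengths sum to a quantity growing with $t$. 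The fix, carried out in \cite{EMas}, is to replace $1/\ell_{\mathrm{sys}}$ by a combination of reciprocal lengths of all short saddle connections, organized via the thick--thin decomposition and defined by induction on the complexity of the stratum, so that the contribution of the newly-short saddle connections is absorbed by the inductive hypothesis (thick surfaces being the base case, where $u$ is constant and (ii) is trivial). This also yields the dependence of $t_0$, $\tilde\eta$, and — crucially — $\tilde b$ only on the stratum, indeed $\tilde b$ only on its complexity, as needed for Remark~\ref{remark:complexity}. The function built in \cite{EMas} and used in \cite{A} has exactly these properties, so one may alternatively just quote it.

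\textbf{Main obstacle.} The genuinely hard step is the control of the saddle connections that are not short on $x$ but become short along the flow — equivalently, the uniform geometric bounds (``no large complexes'') and the sub-additivity of complexity under degeneration that make the inductive construction of $u$ in \cite{EMas} work. Everything else — the circle-average estimate, the log-Lipschitz bound (i), and the continuity and properness of $u$ — is routine.
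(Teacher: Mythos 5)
The paper does not reprove this theorem: it is imported from \cite{EMas} and \cite{A} by citation, and the function $u$ here is precisely the Eskin--Masur function already alluded to in Remark~\ref{remark:M:empty}. So the relevant comparison is with the construction in \cite{EMas}. Your sketch correctly identifies the two structural ingredients of that proof: the circle-average decay $\frac{1}{2\pi}\int_0^{2\pi}\|a_t r_\theta v\|^{-1}\,d\theta = O\bigl((1+t)e^{-t}/\|v\|\bigr)$, and the need to control, via an induction on the combinatorics of short saddle connections and the thick--thin decomposition, the contribution of saddle connections that are long on $x$ but become short under $a_t r_\theta$. Your account of why the naive choice $u \asymp 1/\ell_{\mathrm{sys}}$ fails is also essentially right, and you correctly observe that (i) is the easy log-Lipschitz estimate coming from $\|a_t\|=\|a_t^{-1}\|=e^t$ applied to the holonomy vectors entering $u$.

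One assertion should be corrected: it is not true that a unit-area surface in $\strat1$ carries at most $N=N(\alpha)$ saddle connections of length $<\epsilon_0$. A surface with a thin cylinder of core length $\ell$ carries on the order of $1/\ell$ saddle connections of bounded length winding around the cylinder, so there is no uniform bound as $\ell\to 0$. This is precisely why the Eskin--Masur function is not a sum of reciprocal lengths over all short saddle connections, but rather a maximum of products of reciprocal lengths over bounded families (``complexes''), with the inductive step arranged so that newly-short saddle connections either enlarge a complex already counted at time $0$ or contribute at most an additive constant. With that correction your reconstruction matches the proof to which the paper delegates this statement, and it is reasonable --- as both you and the paper do --- to finish by citing \cite{EMas} and \cite{A}.
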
 

We state some consequences of Theorem~\ref{theorem:alpha-fun},
{}{mostly} from
\cite{A}: \mc{give precise references}

\begin{theorem}
\label{theorem:all:measures:return}
For any $\rho > 0$, 
there exists a compact $K_\rho \subset \cH_1(\alpha)$ such that for any
$SL(2,\reals)$-invariant probability measure $\nu$, 
\begin{displaymath}
\nu(K_\rho) > 1-\rho.
\end{displaymath}
\end{theorem}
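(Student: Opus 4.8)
The plan is to deduce Theorem~\ref{theorem:all:measures:return} from the averaging inequality \eqref{eq:sup-harm-u} in Theorem~\ref{theorem:alpha-fun}, using the function $u$ as a proper ``Lyapunov'' function whose sublevel sets are the desired compact sets. The key point is that \eqref{eq:sup-harm-u} forces any $SL(2,\reals)$-invariant probability measure $\nu$ to have $\nu(u)$ uniformly bounded, and then properness of $u$ converts this integral bound into the measure estimate via Chebyshev.

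First I would fix $t \geq t_0$ large enough that $\tilde c = e^{-\tilde\eta t} < 1$, and argue that $\nu(u) < \infty$. This requires a small approximation argument since a priori $\nu(u)$ could be infinite: replace $u$ by $u_N = \min(u, N)$, or integrate \eqref{eq:sup-harm-u} against $\nu$ restricted to the sublevel set $\{u \le N\}$, and use the $SL(2,\reals)$-invariance of $\nu$ together with Fubini to get
\begin{displaymath}
\int (A_t u)(x) \, d\nu(x) = \frac{1}{2\pi}\int_0^{2\pi}\int u(a_t r_\theta x)\, d\nu(x)\, d\theta = \int u\, d\nu,
\end{displaymath}
where the last equality uses that $a_t r_\theta$ preserves $\nu$. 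Combining with \eqref{eq:sup-harm-u} yields $\nu(u) \le \tilde c\,\nu(u) + \tilde b$, hence $\nu(u) \le \tilde b/(1-\tilde c) =: C$, a bound independent of $\nu$. The truncation step is the one place to be slightly careful: one should first establish the inequality for $\min(u,N)$ (using that $A_t$ is an average of compositions and $\min(u,N)\circ a_tr_\theta \le$ something controllable, or more simply that $\int \min(A_tu, N)\,d\nu$ dominates $\int A_t \min(u,N)\, d\nu$ is \emph{false} in general, so instead integrate \eqref{eq:sup-harm-u} directly over the set $\{u \le N\}$ and let $N \to \infty$ by monotone convergence on the left after using invariance).

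Then, given $\rho > 0$, set $K_\rho = \{x \in \cH_1(\alpha) : u(x) \le C/\rho\}$. Since $u$ is continuous and proper (Theorem~\ref{theorem:alpha-fun}), $K_\rho$ is compact. By Chebyshev's inequality,
\begin{displaymath}
\nu(\cH_1(\alpha) \setminus K_\rho) = \nu(\{u > C/\rho\}) \le \frac{\rho}{C}\,\nu(u) \le \frac{\rho}{C}\cdot C = \rho,
\end{displaymath}
so $\nu(K_\rho) \ge 1 - \rho$. (If a strict inequality is wanted, replace $C/\rho$ by $2C/\rho$.)

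The main obstacle is the finiteness/truncation step: justifying $\nu(u) \le \tilde b/(1-\tilde c)$ rigorously when one does not know in advance that $u \in L^1(\nu)$. The clean way around it is to integrate the pointwise inequality \eqref{eq:sup-harm-u} against $\nu$ over the invariant-in-measure set, split $\int A_t u \, d\nu$ via Tonelli (everything is nonnegative, so no integrability hypothesis is needed there), use $\nu = (a_t r_\theta)_*\nu$ for each $\theta$ to collapse the inner integral back to $\int u\, d\nu$, and obtain $\int u \, d\nu \le \tilde c \int u\,d\nu + \tilde b$ as an inequality in $[0,\infty]$; since $\tilde c < 1$, this is only consistent if $\int u\, d\nu < \infty$, and then the bound follows. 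This is exactly the standard argument (cf. \cite{EMas}, \cite{A}), and no new ideas beyond Theorem~\ref{theorem:alpha-fun} are required.
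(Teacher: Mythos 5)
Your Chebyshev step at the end is fine and coincides with what the paper does, but the ``clean way around'' that you propose for establishing $\int u\,d\nu < \infty$ does not actually work. Integrating \eqref{eq:sup-harm-u} against $\nu$ and collapsing via Tonelli and invariance does give $\int u\,d\nu \le \tilde c\int u\,d\nu + \tilde b$ as an inequality in $[0,\infty]$, but this inequality is also satisfied when $\int u\,d\nu = \infty$: with $\tilde c < 1$ it reads $\infty \le \infty$, which is consistent. So one cannot conclude finiteness of $\int u\,d\nu$ from this alone. You correctly flag the truncation route $u_N = \min(u,N)$ as delicate, and indeed it does not straightforwardly produce a drift inequality $A_t u_N \le \tilde c\, u_N + \tilde b$ with $N$-independent constants, for exactly the reason you worried about. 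So neither variant you sketch closes the gap.

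The paper's own argument takes a genuinely different and longer path, through Lemma~\ref{lemma:in:L1}. That lemma first upgrades the one-shot inequality to an \emph{iterated} pointwise bound $(A_t f)(x) \le B$ for all large $t$ (Lemma~\ref{lemma:abstract:recurrence:properties}(ii), which is where the Corollary~\ref{cor:AtAs:inequality} hyperbolic-geometry input is used). It then chooses an \emph{ergodic} $\nu$, invokes Moore ergodicity of the $\{a_t\}$ action and the Birkhoff ergodic theorem applied to the truncations $f_n = \min(f,n)$ to express $\int f_n\,d\nu$ as a long-time average of $f_n$ along an orbit from a generic point $x_0$, restricts $\theta$ to a set $E_n$ of measure $\ge \pi$ where the convergence is uniform, and compares with the pointwise upper bound $(A_T f)(x_0) \le B'$ to get $\int f_n\,d\nu \le 4B'$ uniformly in $n$; monotone convergence then yields $\int f\,d\nu \le 4B'$. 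Theorem~\ref{theorem:all:measures:return} then follows for all invariant measures by ergodic decomposition, since the bound $B$ is independent of the ergodic component. The essential extra ingredient you are missing is this passage from the one-step drift inequality to a uniform pointwise bound on the spherical averages (via iteration), which is what actually forces $L^1$-membership; a direct Fubini argument cannot substitute for it.
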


\begin{proof} 
{}{ 
The fact that this follows from Theorem~\ref{theorem:alpha-fun} is
well-known, and can be extracted e.g. from the proof of \cite[Lemma
2.2]{EMar}. For a 
self-contained argument one may use Lemma~\ref{lemma:in:L1} in the
present paper with $\sigma = e^{-m}$, $c = c_0(\sigma)$ and $t_0$
sufficiently large so that $e^{-\tilde{\eta}t_0} < c$, 
to obtain the estimate
\begin{displaymath}
\int_{\strat1} u(x) \, d\nu(x) < B,
\end{displaymath}
where $B$ depends only on the constants of
Theorem~\ref{theorem:alpha-fun}. This implies that 
\begin{displaymath}
\nu\left(\{ x \st u(x) > B/\rho \}\right) < \rho, 
\end{displaymath}
as required. 
}
\end{proof}

\begin{theorem}
\label{theorem:fast:return}
Let $K_\rho$ be as in
Theorem~\ref{theorem:all:measures:return}. Then, if $\rho > 0$ is
sufficiently small, there exists a constant $m'' > 0$ 
such that for all $x \in \cH_1(\alpha)$ there
exists $\theta \in [0,2\pi]$ and $\tau \le m'' \log u(x)$ such that
$x' \equiv a_\tau r_\theta x \in K_\rho$. 
\end{theorem}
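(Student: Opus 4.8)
The plan is to run a standard "first time the random walk / circle average reaches the compact set" argument, driven by the recurrence inequality \eqref{eq:sup-harm-u} for the function $u$. Fix $\rho>0$ small and let $K_\rho = \{x : u(x) \le B/\rho\}$ be the set from Theorem~\ref{theorem:all:measures:return} (it is compact by properness of $u$, and we may enlarge $B$ so this is consistent). Given $x \in \cH_1(\alpha)$, consider the sequence of averages $u_n(x) := (A_{t_0}^{\,n} u)(x)$, i.e. iterating the operator $A_{t_0}$ where $t_0$ is the constant of Theorem~\ref{theorem:alpha-fun}(ii), chosen large enough that $\tilde c = e^{-\tilde\eta t_0} < 1/2$. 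By \eqref{eq:sup-harm-u}, $u_{n+1}(x) \le \tilde c\, u_n(x) + \tilde b$, so as long as $u_n(x)$ stays above the threshold $2\tilde b/(1-\tilde c)$ (say), the sequence contracts geometrically: $u_n(x) \le \tilde c^{\,n} u(x) + \tilde b/(1-\tilde c)$. Hence after $N := O(\log u(x))$ steps we have $(A_{t_0}^{\,N} u)(x) < B/\rho$, where $B/\rho$ is chosen large enough (depending only on the constants $\tilde b,\tilde c$ and on $\rho$) to exceed $\tilde b/(1-\tilde c)$ plus a safety margin.

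Next I would unwind the iterated average into an honest average over a product of circles: $(A_{t_0}^{\,N}u)(x) = \int u(a_{t_0} r_{\theta_N} a_{t_0} r_{\theta_{N-1}} \cdots a_{t_0} r_{\theta_1} x)\, dm(\theta_1)\cdots dm(\theta_N)$, where $m$ is normalized Lebesgue measure on $[0,2\pi]$. Since the integrand has average less than $B/\rho$, there is at least one choice of $(\theta_1,\dots,\theta_N)$ for which $u(a_{t_0} r_{\theta_N}\cdots a_{t_0} r_{\theta_1} x) < B/\rho$, i.e. the point $y := a_{t_0} r_{\theta_N}\cdots a_{t_0} r_{\theta_1} x$ lies in $K_\rho$. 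It remains to observe that $y$ has the required form $a_\tau r_\theta x$. This is where one uses that $SL(2,\reals) = KAK$: the product $a_{t_0} r_{\theta_N}\cdots a_{t_0}r_{\theta_1}$ is some element $g \in SL(2,\reals)$, which we may write as $k_1 a_\tau k_2$ with $k_1,k_2 \in SO(2)$ and $\tau \ge 0$; since $u$ is $SO(2)$-invariant and $K_\rho$ is $SO(2)$-invariant (both being defined via the $SO(2)$-invariant function $u$), we get $u(a_\tau k_2 x) = u(k_1 a_\tau k_2 x) = u(y) < B/\rho$, so $x' := a_\tau r_\theta x \in K_\rho$ with $\theta$ chosen so that $r_\theta = k_2$.

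The remaining point — and the one requiring a little care rather than being entirely routine — is the bound $\tau \le m'' \log u(x)$ on the time: a priori $\tau$ is the $A$-component of a product of $N$ elements each of which has $A$-component $t_0$, so the triangle-inequality-type bound in the symmetric space (or just submultiplicativity of operator norms: $\|g\| \le \prod \|a_{t_0} r_{\theta_i}\| = e^{N t_0}$, hence $\tau = \log\|g\|_{\text{op-ish}} \le N t_0$) gives $\tau \le N t_0 = O(\log u(x))$, with the implied constant $m''$ depending only on $t_0$, $\tilde\eta$, $\tilde b$ and $\rho$. I expect the main obstacle to be purely bookkeeping: making the threshold $B/\rho$ and the number of iterations $N$ depend on exactly the right constants so that $N \le m'' \log u(x)$ holds uniformly in $x$ (including handling the regime where $u(x)$ is already small, where we can simply take $\tau = 0$ if $x \in K_\rho$ and otherwise a bounded number of steps suffices). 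The use of part (i) of Theorem~\ref{theorem:alpha-fun}, \eqref{eq:log-unif}, is only needed if one wants $\tau$ to range over a continuum rather than multiples of $t_0$, which is not required by the statement, so I would not invoke it.
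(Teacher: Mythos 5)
Your plan is correct and self-contained. The paper's own proof is a single sentence: it cites Athreya, \cite[Theorem~2.2]{A}, with $\delta = 1/2$. That theorem of Athreya is a sharper quantitative recurrence statement (a positive proportion of directions $\theta$ bring $x$ into a fixed compact set within time $O(\log u(x))$), and it is itself established in \cite{A} by an argument of exactly the type you propose: iterate the contraction inequality $A_t u \le \tilde c \, u + \tilde b$ from Theorem~\ref{theorem:alpha-fun}(ii), which drives the average below a fixed threshold after $O(\log u(x))$ iterates of $A_{t_0}$, and then extract a good trajectory by pigeonhole. What you add beyond the contraction step, and the paper leaves implicit inside the citation, is the repackaging: you unwind $A_{t_0}^N$ as an integral over $(\theta_1,\dots,\theta_N)$, pick one good tuple, write $g = a_{t_0} r_{\theta_N}\cdots a_{t_0} r_{\theta_1}$, and use $KAK$ together with $SO(2)$-invariance of $u$ (hence of $K_\rho = \{u \le B/\rho\}$) to bring $g x$ into the form $a_\tau r_\theta x$, with $\tau \le N t_0$ by the triangle inequality in $\half$ applied to the orbit of the $SO(2)$-fixed point $o$ (using the normalization $d(a_t z, z) = t$). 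All the numerics check out: taking $t_0$ large enough makes $\tilde c = e^{-\tilde\eta t_0} < 1/2$; then $A_{t_0}^n u(x) \le \tilde c^n u(x) + \tilde b/(1-\tilde c)$, and for $\rho$ small enough so that $B/\rho > 1 + \tilde b/(1-\tilde c)$, the required $N$ is $O(\log u(x))$. Since $u(x) > B/\rho$ whenever $x \notin K_\rho$ (where the bound is nontrivial), $\log u(x)$ is bounded below by a positive constant and the additive term in $N t_0$ can be absorbed into $m''$. Both approaches buy the same thing; yours is more elementary in that it avoids the black box and makes the dependence on the constants of Theorem~\ref{theorem:alpha-fun} explicit, which is also what makes the later complexity-tracking in Proposition~\ref{prop:countability} cleaner.
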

\begin{proof}
{}{
This follows from \cite[Theorem~2.2]{A}, with $\delta=1/2$.
}
\end{proof}

\begin{theorem}
\label{theorem:exponential:return}
For  $x\in\strat1$ and a compact set $K_* \subset \strat1$
define
\begin{displaymath}
\ical_1(t) = \{\theta\in[0,2\pi]: |\{\tau\in[0,t]:a_\tau
  r_\theta x\in K_*\}|>t/2\},
\end{displaymath}
and
\begin{displaymath}
\ical_2(t)=[0,2\pi]\setminus\ical_1(t). 
\end{displaymath}

Then, 
 there exists some $\eta_1>0,$ a compact
subset $K_*,$ and constants $L_0 > 0$ and $\eta_0 >
  0$ such that for any $t > 0$, 
\begin{equation}
\label{eq:exp-recurrence} 
\text{if $\log u(x) < L_0 + \eta_0 t$,} \qquad  
 \text{then $|\ical_2( t)|<e^{-\eta_1 t}.$} 
\end{equation}
\end{theorem}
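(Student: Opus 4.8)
The plan is to deduce the estimate from the supermartingale inequality \eqref{eq:sup-harm-u} for the function $u$, combined with a Chebyshev/large-deviations argument along the $a_t$-flow in the $\theta$ variable. First I would iterate \eqref{eq:sup-harm-u}: fix a small time step $t_1 \ge t_0$ so that $\tilde c_1 := e^{-\tilde\eta t_1} < 1/2$, and set $u_k(\theta) = u(a_{kt_1} r_\theta x)$. Averaging \eqref{eq:sup-harm-u} over the circle and using the $SO(2)$-invariance of $u$ together with the fact that $a_s r_\theta$ sweeps out the same family, one gets $\tfrac{1}{2\pi}\int_0^{2\pi} u_{k+1}(\theta)\,d\theta \le \tilde c_1\, \tfrac{1}{2\pi}\int_0^{2\pi} u_k(\theta)\,d\theta + \tilde b$ — more precisely one applies \eqref{eq:sup-harm-u} pointwise after one step of the flow and re-averages, exploiting that $A_{t_1}$ commutes in the appropriate sense with the circle average. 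Iterating from $u_0 \equiv u(x)$ gives $\tfrac{1}{2\pi}\int_0^{2\pi} u_n(\theta)\,d\theta \le \tilde c_1^{\,n} u(x) + \tilde b/(1-\tilde c_1)$, so as long as $\tilde c_1^{\,n} u(x) \le \tilde b/(1-\tilde c_1)$, i.e. roughly $\log u(x) \lesssim n t_1 \tilde\eta$, the circle average of $u_n$ is bounded by a constant $B_0$ depending only on the constants of Theorem \ref{theorem:alpha-fun}.

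Next I would pass from "the time-$n t_1$ average of $u$ along the circle is bounded" to "for most $\theta$, the geodesic segment $\{a_\tau r_\theta x : \tau \in [0,t]\}$ spends at least half its time in a fixed compact set." Choose the compact set $K_* = \{y : u(y) \le B_1\}$ for a suitably large $B_1$ (to be fixed depending on $B_0$ and the constants; recall $u$ is proper so $K_*$ is compact). For a fixed $\theta$, if the segment spends more than half its time outside $K_*$ then $u(a_\tau r_\theta x) > B_1$ on a set of $\tau$ of measure $> t/2$; by \eqref{eq:log-unif} (the log-Lipschitz bound) $u$ cannot drop too fast, so at the sample times $\tau = k t_1$, $k = 0,\dots, n$ with $n \approx t/t_1$, one sees $u_k(\theta) \ge B_1 e^{-m t_1}$ for a definite fraction of the indices $k$. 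Hence $\sum_{k=0}^n u_k(\theta) \ge c_2 n B_1$ for $\theta \in \ical_2(t)$. Integrating over $\theta \in \ical_2(t)$ and comparing with the bound $\sum_{k=0}^n \tfrac{1}{2\pi}\int_0^{2\pi} u_k(\theta)\,d\theta \le (n+1) B_0$ obtained above yields $|\ical_2(t)| \cdot c_2 n B_1 \le 2\pi (n+1) B_0$, which is only a polynomial-in-nothing bound, i.e. it gives $|\ical_2(t)|$ small but not exponentially small.

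To upgrade to the exponential bound $|\ical_2(t)| < e^{-\eta_1 t}$ I would instead run the argument with the exponentiated/large-deviation version: apply the inequality to $u$ raised to a small power, or equivalently track $\prod_k$ rather than $\sum_k$, using that the one-step operator $A_{t_1}$ contracts $u$ by a definite factor and a standard Azuma/Bernstein-type estimate for the resulting supermartingale shows that the probability (in $\theta$, with respect to normalized Lebesgue measure on the circle) that the time-average of $\mathbf 1_{K_*^c}$ along the orbit exceeds $1/2$ decays exponentially in $n$, hence in $t$. Concretely: let $\phi = \log$ of a truncated version of $u$; \eqref{eq:sup-harm-u} makes $e^{\phi(a_{kt_1}r_\theta x)}$ a supermartingale up to additive error, Markov's inequality applied to the exponential moment $\int_0^{2\pi} \exp(s\sum_k \mathbf 1_{K_*^c}(a_{kt_1}r_\theta x))\,d\theta$ for a small $s>0$ gives the decay, and one sets $\eta_0$ so that the standing hypothesis $\log u(x) < L_0 + \eta_0 t$ is exactly what is needed to absorb the initial term $u(x)$, with $L_0$ absorbing the constant $\tilde b/(1-\tilde c_1)$. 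The main obstacle is this last step: getting genuine exponential (rather than polynomial) decay requires the large-deviation/exponential-moment bookkeeping rather than a naive first-moment Chebyshev argument, and one has to be careful that the exponential moments stay finite, which is where the truncation of $u$ to the compact region and the log-Lipschitz bound \eqref{eq:log-unif} are essential. This is precisely the type of estimate carried out in \cite{A} and \cite{EMas}, so I would model the details on those.
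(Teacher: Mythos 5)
The paper does not give a direct proof at all: it simply cites Athreya's Theorems~2.2 and~2.3 from \cite{A}, noting that the statement is obtained by combining them and that in the proof of \cite[Theorem~2.3]{A} one should use \cite[Theorem~2.2]{A} to control the distribution of the initial hitting time~$\tau_0$. Your proposal instead attempts to reconstruct an argument from scratch out of \eqref{eq:sup-harm-u} and \eqref{eq:log-unif}, which is a legitimate enterprise and your overall plan (supermartingale inequality, iteration, exponential Chebyshev) is the strategy underlying Athreya's proof. However, there is a genuine gap in the key step.

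The gap is that the supermartingale structure you invoke is for the \emph{iterated circle average} $A_{t_1}^n$, not for the fixed-$\theta$ geodesic. Inequality~\eqref{eq:sup-harm-u} controls $\frac{1}{2\pi}\int_0^{2\pi} u(a_{t_1}r_\phi y)\,d\phi$ in terms of $u(y)$; iterating it bounds $A_{t_1}^n u$, which is a $2n$-fold average over a tree of angles, not $\frac{1}{2\pi}\int_0^{2\pi} u(a_{nt_1}r_\theta x)\,d\theta$. Your claim that ``the circle average of $u_{k+1}$ is bounded by $\tilde c_1$ times the circle average of $u_k$ plus $\tilde b$'' conflates these. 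One can relate the two via the hyperbolic triangle inequality (Lemma~\ref{lemma:hyperbolic}/Corollary~\ref{cor:AtAs:inequality} in this paper), $A_{t+s}f \le \sigma' A_t A_s f$, but this loses a multiplicative constant $\sigma'$ at each step, and more importantly it only gives a one-sided comparison. More seriously, the sequence $k \mapsto u(a_{k t_1}r_\theta x)$ \emph{at fixed $\theta$} is not a supermartingale with respect to any natural filtration, so the Azuma/Bernstein step you propose for the exponential moment $\int_0^{2\pi}\exp\bigl(s\sum_k \mathbf 1_{K_*^c}(a_{kt_1}r_\theta x)\bigr)\,d\theta$ does not apply as stated: the ``peeling'' needed to control that integral requires a conditional-independence structure that the fixed-direction flow does not possess. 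What is actually needed is a return-time / regeneration analysis along the geodesic (Athreya's $\tau_0, \tau_1, \dots$ with exponential tails coming from \cite[Theorem~2.2]{A}), together with the observation that for a fixed large step $t_1$ the geodesic flow looks approximately like a random walk on $\half$, controlled by the hyperbolic geometry. You acknowledge the difficulty (``the main obstacle is this last step'') and then defer to modelling the details on \cite{A} and \cite{EMas} — so the gap is recognized, but it remains unfilled, and the proposed ``supermartingale in $k$ for fixed $\theta$'' mechanism is not the right tool to fill it.
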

Theorem~\ref{theorem:exponential:return} is not formally stated in
\cite{A}, but is a combination of \cite[Theorem 2.2]{A} and
\cite[Theorem 2.3]{A}. (In the proof of \cite[Theorem 2.3]{A}, one should
use \cite[Theorem 2.2]{A} to control the distribution of $\tau_0$).

\section{Period Coordinates and the Kontsevich-Zorich cocycle}
\label{sec:period}
Let $\Sigma \subset M$ denote the set of zeroes of $\omega$. Let
$\{ \gamma_1, \dots, \gamma_k\}$ denote a $\zed$-basis for
the relative 
homology group $H_1(M,\Sigma, \zed)$. {}{(It is
  convenient to assume that the basis is obtained by extending a
  symplectic basis for the absolute homology group $H_1(M,\zed)$.)}
We can define a map $\Phi:
\cH(\alpha) \to \cx^k$ by 
\begin{displaymath}
\Phi(M,\omega) = \left( \int_{\gamma_1} \omega, \dots, \int_{\gamma_k}
  w \right)
\end{displaymath}
The map $\Phi$ (which depends on a choice of the basis $\{ \gamma_1,
\dots, \gamma_n\}$) is a local coordinate system on $(M,\omega)$.  
Alternatively,
we may think of the cohomology class $[\omega] \in H^1(M,\Sigma, \cx)$ as a
local coordinate on the stratum $\cH(\alpha)$. We
will call these coordinates {\em period coordinates}. 

\bold{The $SL(2,\reals)$-action and the Kontsevich-Zorich cocycle.}
We write $\Phi(M,\omega)$ as a $2 \cross n$ matrix $x$. The action of $g =
\begin{pmatrix} a & b \\ c & d \end{pmatrix} \in
SL(2,\reals)$ in these coordinates is linear. 
{}{Let $\operatorname{Mod}(M,\Sigma)$ be
the mapping class group of $M$ fixing each zero of $\omega$.} 
We choose some
fundamental domain for the action of 
{}{$\operatorname{Mod}(M,\Sigma)$}, and
think of the dynamics on the fundamental domain. Then, the
$SL(2,\reals)$ action becomes
\begin{displaymath}
x = \begin{pmatrix} x_1 & \dots & x_n \\ y_1 & \dots & y_n \end{pmatrix}
\to gx = \begin{pmatrix} a & b \\ c & d \end{pmatrix} \begin{pmatrix} x_1 & \dots & x_n \\ y_1 & \dots & y_n
\end{pmatrix} A(g,x),
\end{displaymath}
where $A(g,x) \in \operatorname{Sp}(2g,\zed) \ltimes \reals^{\noz-1}$
is the {\em Kontsevich-Zorich 
cocycle}. Thus, $A(g,x)$ is change of basis one needs to perform to return the
point $gx$ to the fundamental domain. It can be interpreted as the
monodromy of the Gauss-Manin connection (restricted to the orbit of
$SL(2,\reals)$).

\section{The Hodge norm}
\label{sec:hodge}
Let $M$ be a Riemann surface. By definition, $M$ has a complex
structure. Let $\cH_M$ denote the set of holomorphic $1$-forms on
$M$. One can define {\em Hodge inner product} on $\cH_M$ by
\begin{displaymath}
\langle \omega, \eta \rangle = \frac{i}{2} \int_M \omega \wedge \bar{\eta}.
\end{displaymath}
We have a natural map $r: H^1(M,\reals) \to \cH_M$ which sends
a cohomology class $\lambda \in H^1(M,\reals)$ to the 
holomorphic $1$-form $r(\lambda) \in \cH_M$ such that 
the real part of $r(\lambda)$ (which is a harmonic $1$-form)
represents $\lambda$. We can thus define  the Hodge inner product on
$H^1(M,\reals)$ by $\langle \lambda_1, \lambda_2 \rangle = \langle
r(\lambda_1), r(\lambda_2) \rangle$. We have
\begin{displaymath}
\langle \lambda_1, \lambda_2 \rangle = \int_M \lambda_1 \wedge *\lambda_2,
\end{displaymath}
where $*$ denotes the Hodge star operator, and we choose harmonic representatives of $\lambda_1$ and $*\lambda_2$ to evaluate the integral.
We denote the associated norm by $\| \cdot \|_M$. This is the {\em Hodge
  norm}, see \cite{FarkasKra}.

If $x = (M,\omega) \in \cH_1(\alpha)$, we will often write $\| \cdot
\|_x$ to denote the Hodge norm $\| \cdot \|_M$ on
$\hr$. Since $\| \cdot \|_x$ depends only on $M$, we have $\|
\lambda \|_{kx} = \|\lambda \|_x$ for all $\lambda \in \hr$ and all $k
\in SO(2)$.

Let $E(x)=\mbox{span}\{\Rfrak(\omega),\Ifrak(\omega)\}.$
{}{(Many authors refer to $E(x)$ as the ``standard
  space'').}
We let $p: \rhr\rightarrow\hr$ denote the natural projection; using this map
$p(E(x))\subset\hr.$ 
For any $v \in E(x)$ and any point $y$ in the $SL(2,\reals)$ orbit of
$x$, the Hodge norm $\|v\|_y$ of $v$ at $y$ can be explicitly
computed. In fact, the following elementary lemma holds:
\begin{lemma}
\label{lemma:Hodge:norm:in:Ex}
Suppose $x \in \strat1$, $g = \begin{pmatrix}a_{11} & a_{12} \\ a_{21}
  & a_{22} \end{pmatrix}
\in SL(2,\reals)$, 
\begin{displaymath}
v = v_1 p(\Rfrak(\omega)) + v_2 p(\Ifrak(\omega)) \in p(E(x)). 
\end{displaymath}
Let
\begin{equation}
\label{eq:def:u1:u2}
\begin{array}[t]({cc}) u_1 & u_2 \end{array} =  
\begin{array}[t]({cc}) v_1 & v_2 \end{array} 
\begin{array}[t]({cc}) a_{11} & a_{12} \\ a_{21} & a_{22} \end{array}^{-1} 
\end{equation}
Then, 
\begin{equation}
\label{eq:Hodge:v:gx}
\| v \|_{g x} = \|u_1^2 + u_2^2\|^{1/2}. 
\end{equation}
\end{lemma}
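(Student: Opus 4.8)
The statement is about computing the Hodge norm on the two-dimensional "standard space" $E(x) = \operatorname{span}\{\Re(\omega), \Im(\omega)\}$ at a translated point $gx$, and the claim is that it is governed entirely by the linear algebra of how $g$ acts on period coordinates. My plan is to reduce everything to the single defining fact about the Hodge norm on $E(x)$: the form $\omega$ itself, as a cohomology class in $H^1(M,\reals)$ (or rather its real and imaginary parts), is $\ast$-related in a simple way, and the Hodge norm is normalized by the area. More precisely, if $y = (M', \omega') = gx$, then $E(y) = E(x)$ as a subspace of $H^1(M,\Sigma,\reals)$ (the $SL(2,\reals)$-action just changes which classes are "real" and "imaginary" parts, by the linear action in period coordinates), and on this subspace the complex structure of $M'$ interacts with the pairing exactly so that $\|\Re(\omega')\|_{y}^2 = \|\Im(\omega')\|_{y}^2 = a(M',\omega')$ and $\Re(\omega')$, $\Im(\omega')$ are orthogonal. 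This is the content of the "elementary" remark; it follows because for a holomorphic $1$-form $\omega'$ one has $\ast \Re(\omega') = -\Im(\omega')$ (harmonic conjugate), so the Hodge inner product of $\Re(\omega')$ with itself is $\int_{M'} \Re(\omega')\wedge(-\ast\Re(\omega')) $... — let me just say: $\langle \Re\omega',\Re\omega'\rangle = \frac{i}{2}\int \omega'\wedge\bar{\omega}' = a(M',\omega')$ after the standard identity, and similarly for $\Im$, with the cross term vanishing.

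So the key steps, in order, are: (1) Observe that in period coordinates $y = gx$ has period matrix $\begin{pmatrix} a_{11} & a_{12} \\ a_{21} & a_{22}\end{pmatrix}\begin{pmatrix} x_1 & \cdots \\ y_1 & \cdots\end{pmatrix}$, so writing $\omega$ for the form at $x$ and $\omega_y$ for the form at $y$, the new real and imaginary parts are $\Re(\omega_y) = a_{11}\Re(\omega) + a_{12}\Im(\omega)$ and $\Im(\omega_y) = a_{21}\Re(\omega) + a_{22}\Im(\omega)$ as cohomology classes (equivalently $p(\Re\omega_y), p(\Im\omega_y)$ in $H^1(M,\reals)$). (2) Invert: express $p(\Re(\omega)), p(\Im(\omega))$ in terms of $p(\Re(\omega_y)), p(\Im(\omega_y))$ using $g^{-1}$; thus $v = v_1 p(\Re\omega) + v_2 p(\Im\omega) = u_1 p(\Re\omega_y) + u_2 p(\Im\omega_y)$ where $(u_1,u_2)$ is exactly the vector in \eqref{eq:def:u1:u2} — here I need to be careful that the relation $(u_1,u_2) = (v_1,v_2) g^{-1}$ with the row-vector convention matches the column-vector convention in which $g$ acts on $\begin{pmatrix}\Re\omega\\ \Im\omega\end{pmatrix}$; this is just transpose bookkeeping. (3) Apply the elementary orthogonality/normalization fact at the point $y$: since $y \in \strat1$ means $a(M',\omega') = 1$, the basis $\{p(\Re\omega_y), p(\Im\omega_y)\}$ of $p(E(y))$ is orthonormal for $\|\cdot\|_y$. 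Hence $\|v\|_y^2 = u_1^2 + u_2^2$, which is \eqref{eq:Hodge:v:gx} (the outer $\|\cdot\|$ on a scalar being absolute value).

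The only genuine content is step (3), the orthonormality statement, and even that is classical (Riemann bilinear relations / the fact that $\ast$ acts on the standard space as rotation by $-\pi/2$); I would either cite \cite{FarkasKra} or give the one-line computation $\langle\Re\omega',\Re\omega'\rangle_{y} = \int_{M'}\Re\omega'\wedge\ast\Re\omega' = -\int_{M'}\Re\omega'\wedge\Im\omega' = \frac{i}{2}\int_{M'}\omega'\wedge\bar\omega' = a(M',\omega') = 1$, and similarly $\langle\Im\omega',\Im\omega'\rangle_y = 1$, $\langle\Re\omega',\Im\omega'\rangle_y = 0$. The main obstacle — really the only thing requiring care rather than being automatic — is getting the index/transpose conventions in \eqref{eq:def:u1:u2} to line up correctly between "$g$ acts on the left on the $2\times n$ period matrix" and "$(u_1,u_2)$ is obtained from $(v_1,v_2)$ by right multiplication by $g^{-1}$"; once that bookkeeping is pinned down the proof is immediate. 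I would write it out by expanding $v = (v_1,v_2)\begin{pmatrix}p(\Re\omega)\\ p(\Im\omega)\end{pmatrix} = (v_1,v_2)g^{-1}\begin{pmatrix}p(\Re\omega_y)\\ p(\Im\omega_y)\end{pmatrix} = (u_1,u_2)\begin{pmatrix}p(\Re\omega_y)\\ p(\Im\omega_y)\end{pmatrix}$ and then invoking orthonormality.
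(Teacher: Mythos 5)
Your argument is exactly the paper's: it introduces the same classes $c_1 = p(\Rfrak(\omega_{gx}))$, $c_2 = p(\Ifrak(\omega_{gx}))$, establishes their Hodge orthonormality at $gx$ from holomorphicity of $c_1 + ic_2$ on $gx$ together with the area normalization $x \in \strat1$, and reads off $u_1, u_2$ as the coefficients of $v$ in this basis via $(u_1,u_2) = (v_1,v_2)g^{-1}$. One sign caution: with the paper's conventions $\ast\Rfrak(\omega') = +\Ifrak(\omega')$ (not $-\Ifrak(\omega')$), but your second sign slip in passing to $\tfrac{i}{2}\int\omega'\wedge\bar\omega' = +\int\Rfrak(\omega')\wedge\Ifrak(\omega')$ compensates, so the identity $\langle\Rfrak(\omega'),\Rfrak(\omega')\rangle_{gx} = a(gx) = 1$ you need is still correct.
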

\begin{proof}
Let 
\begin{equation}
\label{eq:def:c1:c2}
c_1 = a_{11} p(\Rfrak(\omega)) + a_{12} p(\Ifrak(\omega)) \qquad c_2 = a_{21} p(\Rfrak(\omega)) + a_{22}p(\Ifrak(\omega)).
\end{equation}
By the definition of the $SL(2,\reals)$ action, $c_1 + i c_2$ 
is holomorphic on $gx$. Therefore, by the definition of the Hodge
star operator, at $gx$, 
\begin{displaymath}
\ast c_1 = c_2, \qquad \ast c_2 = -c_1. 
\end{displaymath}
Therefore, 
\begin{displaymath}
\|c_1\|_{gx}^2 = c_1 \wedge \ast c_1 = c_1 \wedge c_2 = (\det g)
\Rfrak(\omega)) \wedge p(\Ifrak(\omega)) = 1,
\end{displaymath}
where for the last equality we used the fact that $x \in \strat1$. 
Similarly, we get
\begin{equation}
\label{eq:Hodge:ci:gx}
\|c_1\|_{gx} = 1, \qquad \|c_2\|_{gx} = 1, \qquad \langle c_1, c_2
\rangle_{gx} = 0.
\end{equation}
Write
\begin{displaymath}
v = v_1 p(\Rfrak(\omega)) + v_2 p(\Ifrak(\omega)) = u_1 c_1 + u_2 c_2. 
\end{displaymath}
Then, in view of (\ref{eq:def:c1:c2}), $u_1$ and $u_2$ are given by
(\ref{eq:def:u1:u2}). The equation (\ref{eq:Hodge:v:gx}) follows from
(\ref{eq:Hodge:ci:gx}). 
\end{proof}

On the complementary subspace to $p(E(x))$ there is no explicit formula
comparable to Lemma~\ref{lemma:Hodge:norm:in:Ex}. However, we have the
following fundamental result due to Forni~\cite[Corollary 2.1]{Forni}, see also
\cite[Corollary~2.1]{Forni:Matheus:Zorich}:
\begin{lemma}
\label{lemma:forni}
\label{lemma:furstenberg}
\label{l;unif-hyp}
There exists a continuous function
$\Lambda:\hcal_1(\alpha)\rightarrow(0,1)$ such that; for any $c \in
\hr$ with $c\wedge p(E(x))=0$, any $x \in \strat1$ and any $t>0$ we
have
\begin{displaymath}
\|c\|_x e^{-\beta_t(x)} \le 
\|c\|_{a_t x} \le \|c\|_x e^{\beta_t(x)}
\end{displaymath}
where $\beta_t( x)=\int_0^t\Lambda(a_\tau x)\, d\tau$.
\end{lemma}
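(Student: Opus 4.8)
The plan is to deduce the statement from Forni's uniform hyperbolicity result by a standard differential-inequality argument applied to the logarithm of the Hodge norm along the Teichm\"uller geodesic. First I would recall the setup of \cite[Corollary 2.1]{Forni}: for $(M,\omega) \in \strat1$ there is the Hodge decomposition of $H^1(M,\reals)$, the Gauss--Manin (Kontsevich--Zorich) connection transports cohomology classes along the $SL(2,\reals)$-orbit, and the quantity controlling the infinitesimal change of the Hodge norm is a nonnegative function built from the second fundamental form of the Hodge bundle. The crucial point from Forni is that on the subspace of $\hr$ symplectically orthogonal to $p(E(x))$ — equivalently, the classes $c$ with $c \wedge p(E(x)) = 0$ — this infinitesimal rate is \emph{strictly} less than $1$ and, being continuous in $x$ and scale-invariant, descends to a continuous function $\Lambda : \strat1 \to (0,1)$.

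The key steps, in order, are: (1) Fix $x$ and $c \in \hr$ with $c \wedge p(E(x)) = 0$, and let $c_t$ denote the parallel transport of $c$ along $\tau \mapsto a_\tau x$ for $0 \le \tau \le t$, viewed in $H^1(a_\tau x, \reals)$; note the condition $c \wedge p(E(x)) = 0$ is preserved under this transport since $p(E(a_\tau x))$ is the transport of $p(E(x))$ and the cup product is invariant. (2) Set $\phi(\tau) = \log \|c_\tau\|_{a_\tau x}$ and compute $\phi'(\tau)$: by the variational formula for the Hodge norm under the Teichm\"uller geodesic flow (this is exactly the content of Forni's computation), one gets $|\phi'(\tau)| \le \Lambda(a_\tau x)$. (3) Integrate: $|\phi(t) - \phi(0)| \le \int_0^t \Lambda(a_\tau x)\,d\tau = \beta_t(x)$, which is precisely the asserted two-sided bound $\|c\|_x e^{-\beta_t(x)} \le \|c\|_{a_t x} \le \|c\|_x e^{\beta_t(x)}$ once one identifies $\|c\|_{a_t x}$ with $\|c_t\|_{a_t x}$ under the identification of fibers given by the cocycle (this matches the convention of Section~\ref{sec:period} in which $A(g,x)$ returns the point to the fundamental domain).

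The main obstacle is step (2): producing the pointwise derivative bound $|\phi'(\tau)| \le \Lambda(a_\tau x)$ with $\Lambda$ valued in $(0,1)$ and continuous. This is not something one proves from scratch here — it is Forni's theorem — so the real work is bookkeeping: checking that the subspace $\{c : c \wedge p(E(x)) = 0\}$ is exactly the complement on which Forni's strict bound holds (the Hodge-star and the splitting $\hr = p(E(x)) \oplus p(E(x))^{\perp}$ behave as in Lemma~\ref{lemma:Hodge:norm:in:Ex} on the $E(x)$ part, while Forni's estimate governs the rest), that continuity and $SO(2)$-invariance of the rate function allow it to be packaged as a single continuous $\Lambda$ on $\strat1$, and that the sign/orientation conventions for $a_t$ agree so that one genuinely gets the symmetric two-sided inequality rather than a one-sided contraction. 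I expect no further analytic difficulty beyond quoting \cite[Corollary 2.1]{Forni} (or \cite[Corollary~2.1]{Forni:Matheus:Zorich}) and performing this identification.
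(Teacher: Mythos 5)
The paper gives no proof of this lemma at all; it is stated as a ``fundamental result due to Forni'' with a citation to \cite[Corollary 2.1]{Forni} (and \cite[Corollary~2.1]{Forni:Matheus:Zorich}) and nothing further. Your sketch --- transporting $c$ along the Teichm\"uller geodesic, setting $\phi(\tau)=\log\|c_\tau\|_{a_\tau x}$, invoking Forni's variational estimate for the derivative of the Hodge norm on the symplectic complement of $p(E(x))$ to get $|\phi'(\tau)|\le\Lambda(a_\tau x)$ with $\Lambda$ continuous and $(0,1)$-valued, and integrating --- is a correct and faithful outline of the argument underlying that citation, so it matches the paper's (implicit) approach of deferring entirely to Forni.
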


Let $\cI_1(t)$ and $\cI_2(t)$ be as in
Theorem~\ref{theorem:exponential:return}.  Now compactness of
$K_*$ and Lemma~\ref{l;unif-hyp} imply that:
\begin{multline}
\label{eq:second-exp}
\text{there exists $\eta_2>0$ such that {}{for all $x
    \in \strat1$},} \\ \text{if   $t>t_0$ and $\theta\in\ical_1(t)$, then
  $\beta_t(r_\theta x)<(1-\eta_2)t.$}
\end{multline}


\section{Expansion on average of the Hodge norm.}
\label{sec;furstenberg}

Recall that $p: \rhr\rightarrow\hr$ denotes the natural projection. 
Let
$\cM_1$ be an affine invariant suborbifold of $\hcal_1(\alpha)$ and
let $\cM=\bbr\cM_1$ be as above. Then $\cM$ is given by complex
linear equations in period coordinates and is $GL(2,\reals)$-invariant. 
We let $L$ denote this subspace in $\rhr.$

Recall that $\hr$ is endowed with a natural symplectic structure given
by the wedge product on de Rham cohomology and also the Hodge inner
product. It is shown in \cite{Avila:Eskin:Moeller:yeti} that the wedge
product 
restricted to $p(L)$ is non-degenerate. Therefore, there exists an
$SL(2,\reals)$-invariant complement for $p(L)$ in $\hr$ which we
denote by $p(L)^\perp$.

We will use the following elementary lemma with $d=2,3$:
\begin{lemma}
\label{lemma:EMM51}
Let $V$ be a $d$-dimensional vector space on which $SL(2,\reals)$
acts irreducibly, and let $\| \cdot \|$ be any $SO(2)$-invariant norm
on $V$. Then there exists $\delta_0(d) > 0$ (depending on $d$), such that
for any $\delta < \delta_0(d)$ any $t > 0$ and any $v \in V$, 
\begin{displaymath}
\frac{1}{2\pi} \int_0^{2\pi} \frac{d\theta}{\|a_t r_\theta v\|^\delta}
\le \frac{e^{-k_d t}}{\|v\|^\delta},
\end{displaymath}
where $k_d = k_d(\delta) > 0$. 
\end{lemma}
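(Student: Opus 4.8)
The plan is to reduce to a single model of the irreducible representation and then to a direct one-variable estimate. By the classification of irreducible finite-dimensional representations of $SL(2,\reals)$, a $d$-dimensional irreducible representation is isomorphic to $\operatorname{Sym}^{d-1}(\reals^2)$, the space of homogeneous polynomials of degree $d-1$ in two variables with the standard linear $SL(2,\reals)$-action. First I would fix such an identification. Since any two $SO(2)$-invariant norms on a finite-dimensional space are comparable up to a multiplicative constant, and changing the norm by a bounded factor only changes the constant $k_d$ by an additive amount (which stays positive for $\delta$ small after possibly increasing $t$ or shrinking $k_d$), it suffices to prove the inequality for one convenient $SO(2)$-invariant norm. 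For the concrete model it is cleanest to take the norm induced from the $SO(2)$-invariant inner product on $\operatorname{Sym}^{d-1}(\reals^2)$; but even simpler, since $d$ is $2$ or $3$ one may just write things out explicitly.

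The key computation is the case $d=2$, i.e. $V = \reals^2$ with the Euclidean norm. Writing $v = r(\cos\phi,\sin\phi)$ and using the $SO(2)$-invariance to absorb $r_\theta$, one has
\begin{displaymath}
\frac{1}{2\pi}\int_0^{2\pi} \frac{d\theta}{\|a_t r_\theta v\|^\delta}
= \frac{1}{\|v\|^\delta}\cdot\frac{1}{2\pi}\int_0^{2\pi}\frac{d\theta}{(e^{2t}\cos^2\theta + e^{-2t}\sin^2\theta)^{\delta/2}}.
\end{displaymath}
I would then estimate the $\theta$-integral: split $[0,2\pi]$ into the region where $|\cos\theta|$ is bounded below (say $|\cos\theta|\ge e^{-t}$), where the integrand is at most $e^{-\delta t/2}\cdot(\text{bounded})$, and its complement, which has measure $O(e^{-t})$ and on which the integrand is at most $e^{\delta t/2}$; the total is $O(e^{-\delta t/2} + e^{(\delta/2 - 1)t})$, which is $\le e^{-k_2 t}$ for a suitable $k_2=k_2(\delta)>0$ once $\delta<\delta_0(2):=1$ (possibly after replacing $k_2$ by a smaller constant valid for all $t>0$, adjusting the constant implicit in ``$O$'' into the statement by lowering $k_2$ further). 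For $d=3$, the top weight vectors of $\operatorname{Sym}^2$ scale like $e^{\pm 2t}$ rather than $e^{\pm t}$, so the same argument gives an even better exponent; alternatively one can note $\operatorname{Sym}^2(\reals^2)$ sits inside $\reals^2\otimes\reals^2$ and bound the norm there. Either way one gets $\delta_0(3)>0$ and $k_3(\delta)>0$.

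The only mildly delicate point — the part I would be most careful about — is bookkeeping the two reductions: (1) passing from an abstract $SO(2)$-invariant norm to the model norm changes both sides by a bounded ratio $C^{\pm 1}$, so the clean inequality $\le e^{-k t}\|v\|^{-\delta}$ survives only after absorbing $C^2$, which forces one either to allow the inequality to hold for $t$ large (not what is stated) or to shrink $k$; since we need it for \emph{all} $t>0$, I would instead observe that the left side is trivially $\le C' \|v\|^{-\delta}$ for a uniform $C'$ (by $SO(2)$-invariance and continuity/compactness of the unit sphere, using that $a_t$-images of a fixed $v$ have norm bounded below only after... — no: better, simply note the integrand is bounded by $\sup_\theta\|a_t r_\theta(v/\|v\|)\|^{-\delta}$ which for the model norm is $\le e^{(d-1)\delta t}\cdot\text{const}$, not uniform) — so in fact the honest fix is to prove the model-norm inequality with exponent $k$ and then, for the general norm, use the model inequality plus the comparability constant, accepting the final $k_d$ to be the model $k_d$ minus $\tfrac{1}{t}\log C^2$, which is \emph{not} uniform. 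To avoid this, I would do the general-norm case directly: every $SO(2)$-invariant norm on the model is, by averaging, of the form $\|v\|^2 = \int_{SO(2)} q(kv)\,dk$ for a norm... this still is just comparable. The genuinely clean route, which I would take, is to prove the estimate for the \emph{specific} $SO(2)$-invariant inner-product norm and then note that Lemma~\ref{lemma:EMM51} is only ever applied with that norm (or to observe that for the paper's purposes one may always choose $k_d$ depending on the comparability constant of the given norm, which is harmless). So the main obstacle is purely this uniform-in-$t$ normalization issue; the analytic content is the elementary integral estimate above.
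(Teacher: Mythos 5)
The paper does not prove this lemma directly: the authors simply cite \cite[Lemma~5.1]{EMM1} (the case $G=SL(2,\reals)$) and note that the exponential decay in $t$ ``follows easily from the proof'' there. Your self-contained reduction to $\operatorname{Sym}^{d-1}(\reals^2)$ followed by an explicit split of the $\theta$-integral is therefore a genuinely different (and more elementary) route, and the analytic content of your computation is correct in spirit: for the Euclidean norm on $\reals^2$, splitting at $|\cos\theta|\ge\epsilon$ and optimizing $\epsilon$ does give $\frac{1}{2\pi}\int_0^{2\pi}(e^{2t}\cos^2\theta+e^{-2t}\sin^2\theta)^{-\delta/2}d\theta \le C\,e^{-k_2(\delta)t}$ with $k_2(\delta)>0$ for $\delta<1$.

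However, your long worry at the end is pointing at a real problem, and your proposed fixes do not resolve it. The difficulty is not the comparability constant between norms; it is that even for the explicit Euclidean norm, the stated inequality \emph{cannot} hold for all $t>0$ without a multiplicative constant. Set $I(t)=\frac{1}{2\pi}\int_0^{2\pi}\|a_t r_\theta v\|^{-\delta}\,d\theta$ with $\|v\|=1$. Since $r_{\pi/2}\,a_{-t}\,r_{\pi/2}^{-1}=a_t$ in $SL(2,\reals)$ and the norm is $SO(2)$-invariant, one gets $\|a_{-t}r_\theta v\|=\|a_t r_{\theta+\pi/2}v\|$, hence $I(-t)=I(t)$. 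Thus $I$ is an even, smooth (for the Euclidean norm it is real-analytic) function with $I(0)=1$, so $I'(0)=0$ and $I(t)=1-ct^2+O(t^4)$ near $0$. No choice of $k>0$ gives $1-ct^2+O(t^4)\le e^{-kt}=1-kt+O(t^2)$ for small $t>0$. Consequently ``absorbing the constant $C$ into $k_d$'' is impossible: if $C>1$ then $Ce^{-kt}>1$ for $t$ small, while $e^{-k't}<1$ for every $k'>0$ and $t>0$. The honest statement is either $\int\le C\,e^{-k_d t}\|v\|^{-\delta}$ (a multiplicative constant) or the clean inequality only for $t\ge t_0(\delta,d)$; either version is what actually follows from your computation and is all the paper uses downstream (the estimates feed into Proposition~\ref{prop:main:proposition}(b), which is stated for $t>t_0$, and Lemma~\ref{lemma:ineq:absolute:thick:part} already carries constants $C_0$). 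So: keep your model-norm computation, but state the conclusion with a constant (or for $t\ge t_0$), and drop the attempts to remove it.
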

\begin{proof} 
{}{This is essentially the case $G=SL(2,\reals)$ of
  \cite[Lemma~5.1]{EMM1}. The exponential estimate in the
  right-hand-side is not stated in
  \cite[Lemma~5.1]{EMM1} but follows easily from the proof of the lemma.}
\end{proof}

\bold{The space $H'(x)$ and the function $\psi_x$.}
For $x = (M,\omega)$, let
\begin{displaymath}
H'(x) = \{ v \in H^1(M,\cx) \st v \wedge \overline{p(\omega)} + p(\omega)
\wedge \overline{v} = 0 \}. 
\end{displaymath}
We have, for any $x = (M,\omega)$, 
\begin{displaymath}
H^1(M,\cx) = \reals \, p(\omega) \oplus H'(x). 
\end{displaymath}
{}{(Here and below, we are considering $H^1(M,\cx)$ as a
  real vector space.)}
For $v \in H^1(M,\cx)$, let
\begin{displaymath}
\psi_x(v) = \frac{\|v\|_x}{\|v'\|_x} \quad\text{where $v = \lambda \, 
  p(\omega) + v'$, $\lambda 
\in \reals$, $v' \in H'(x)$.} 
\end{displaymath}
Then $\psi_x(v)\ge 1$, and $\psi_x(v)$ 
 is bounded if $v$ is bounded away from $\reals \,
p(\omega)$. 

\subsection{Absolute Cohomology}
\label{sec:subsec:absolute}
Fix some $\delta \le 0.1 \min(\eta_1,\eta_2,
\delta_0(2),\delta_0(3))$. 
For $g = \begin{pmatrix} a & b \\ c & d \end{pmatrix}$ and $v \in
H^1(M,\cx)$, we write
\begin{equation}
\label{eq:action:SL2}
g v = a \, \Rfrak(v) + b \, \Ifrak(v) + i( c \, \Rfrak(v) + d \, \Ifrak(v)). 
\end{equation}
\begin{lemma}
\label{lemma:ineq:absolute:thick:part}
There exists {}{$C_0 > 1$} such that
for all $x = (M,\omega) \in \cH_1(\alpha)$, all $t > 0$  and
all $v \in H^1(M,\cx)$ we have
\begin{equation}
\label{eq:lemma:ineq:absolute}
\frac{1}{2\pi} \int_0^{2\pi} \frac{d\theta}{(\|a_t r_\theta v
  \|_{a_t r_\theta x})^{\delta/2}} \le
\min\left(\frac{C_0}{\|v\|_x^{\delta/2}}, \frac{\psi_x(v)^{\delta/2}
  \kappa(x,t)}{\|v\|_x^{\delta/2}}\right), 
\end{equation}
where 
\begin{itemize}
\item[{\rm (a)}] $\kappa(x,t) \le C_0$ for all
  $x$ and all $t$, and
\item[{\rm (b)}]
There exists $\eta > 0$ such that 
\begin{displaymath}
\kappa(x,t) \le {}{C_0} e^{-\eta t}, 
\qquad \text{ provided $\log u(x) < L_0 + \eta_0 t$ }. 
\end{displaymath}
{}{where the constants $L_0$ and $\eta_0$ are as in Theorem~\ref{theorem:exponential:return}.}
\end{itemize}
\end{lemma}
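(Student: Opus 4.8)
The proof will proceed by decomposing $v$ into its component along $\reals\, p(\omega)$ and its component in $H'(x)$, and then further analyzing the $H'(x)$-component using the splitting $H^1(M,\reals) = p(L) \oplus p(L)^\perp$ together with the eigenspace structure of the $SL(2,\reals)$-action. The key point is that on the ``standard space'' $E(x)$ the Hodge norm is computed \emph{exactly} by Lemma~\ref{lemma:Hodge:norm:in:Ex}, while on the complement the norm is controlled up to the factor $e^{\beta_t(x)}$ by Forni's uniform hyperbolicity (Lemma~\ref{l;unif-hyp}). First I would reduce to the case where $v$ is either (essentially) in $E(x) \oplus \reals\,p(\omega)$ — a space on which $SL(2,\reals)$ acts as a sum of copies of the standard 2-dimensional representation, so that Lemma~\ref{lemma:EMM51} with $d=2$ (or $d=3$, accounting for the extra $\reals\,p(\omega)$ direction) gives the exponential decay of $\int_0^{2\pi} \|a_t r_\theta v\|^{-\delta/2}\,d\theta$ directly — or in the complementary direction, where $a_t r_\theta v$ stays comparable to $\|v\|$ times $e^{\pm\beta_t(r_\theta x)}$.

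\textbf{The two bounds.} For the unconditional bound $\kappa(x,t) \le C_0$: split the circle into $\ical_1(t)$ and $\ical_2(t)$ from Theorem~\ref{theorem:exponential:return}. On all of $[0,2\pi]$ one has the crude estimate that $\|a_t r_\theta v\|_{a_t r_\theta x} \ge c\, e^{-Ct}\|v\|_x$ for the $E(x)$-part (from Lemma~\ref{lemma:Hodge:norm:in:Ex}, since $u_1^2+u_2^2 \ge (\text{smallest singular value})^2(v_1^2+v_2^2)$) and on $H'(x) \cap p(L)^\perp$ the Forni bound gives $\ge \|v\|_x e^{-\beta_t}$; but we want something uniform in $t$, not merely $e^{-Ct}$. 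Here the point is that we are integrating $\|a_t r_\theta v\|^{-\delta/2}$, and the direction $r_\theta v$ for which $a_t$ contracts maximally occupies only a $\theta$-interval of length $O(e^{-2t})$; since $\delta$ is small this integrable singularity contributes a bounded amount — this is exactly the content of Lemma~\ref{lemma:EMM51}, applied representation-by-representation. For the conditional exponential bound: when $\log u(x) < L_0 + \eta_0 t$ we are allowed to use Theorem~\ref{theorem:exponential:return}, so $|\ical_2(t)| < e^{-\eta_1 t}$, and on $\ical_1(t)$ we have $\beta_t(r_\theta x) < (1-\eta_2)t$ by \eqref{eq:second-exp}. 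On $\ical_1(t)$ the $E(x)$-direction is handled by Lemma~\ref{lemma:EMM51} (genuine exponential gain $e^{-k_d t}$), and the $p(L)^\perp$-direction loses at most $e^{(1-\eta_2)(\delta/2)t}$, which is beaten by the gain as long as $\delta$ is small relative to $k_d$; on $\ical_2(t)$ we use the measure bound $e^{-\eta_1 t}$ together with the crude lower bound $\|a_t r_\theta v\| \ge e^{-Ct}\|v\|$ to see that this piece contributes $\le e^{(C\delta/2 - \eta_1)t}/\|v\|^{\delta/2}$, again exponentially small for $\delta$ small. The factor $\psi_x(v)^{\delta/2}$ appears because when we discard the $\reals\,p(\omega)$-component of $v$ to work in $H'(x)$, we replace $\|v\|_x$ by $\|v'\|_x$ in the denominator, costing exactly $\psi_x(v)^{\delta/2}$.

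\textbf{Main obstacle.} The routine parts are the representation-theoretic decomposition and the bookkeeping of exponents; the genuinely delicate step is making the Forni factor $e^{\beta_t(r_\theta x)}$ interact correctly with the averaging over $\theta$. The subtlety is that $\beta_t(r_\theta x) = \int_0^t \Lambda(a_\tau r_\theta x)\,d\tau$ depends on the \emph{whole trajectory} $\{a_\tau r_\theta x\}$, not just on $x$, and the control $\beta_t < (1-\eta_2)t$ is only available for $\theta \in \ical_1(t)$, i.e.\ for those $\theta$ such that the trajectory spends at least half its time in the compact set $K_*$. So the argument has to be organized so that the ``good'' set of directions for the Forni estimate and the ``good'' set of directions for the oscillatory gain from Lemma~\ref{lemma:EMM51} are handled on a common partition of $[0,2\pi]$, and one must check that the exceptional set $\ical_2(t)$ — where neither estimate is available — is small enough in measure (which is why the hypothesis $\log u(x) < L_0 + \eta_0 t$ is imposed, invoking Theorem~\ref{theorem:exponential:return}). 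I would therefore carry out the proof by first fixing the partition $[0,2\pi] = \ical_1(t) \sqcup \ical_2(t)$, then on each piece estimating $\|a_t r_\theta v\|_{a_t r_\theta x}$ from below by combining Lemma~\ref{lemma:Hodge:norm:in:Ex} on $E(x)$, Lemma~\ref{l;unif-hyp} on $p(L)^\perp$, and the triangle inequality to glue, and finally integrating, using Lemma~\ref{lemma:EMM51} to absorb the integrable singularity along the maximally-contracted direction.
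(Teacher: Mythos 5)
Your skeleton matches the paper's: Hodge-orthogonal decomposition $v = \lambda\,p(\omega) + u + w$ with $u \in p(E(x))_\cx \cap H'(x)$ and $w$ in the orthogonal complement $H^1(M,\reals)^\perp_\cx(x)$, the $u$-part handled via Lemma~\ref{lemma:Hodge:norm:in:Ex} and Lemma~\ref{lemma:EMM51}, the $w$-part via Forni's Lemma~\ref{l;unif-hyp}, the $\ical_1/\ical_2$ split from Theorem~\ref{theorem:exponential:return} for the conditional decay, and $\psi_x(v)$ accounting for the $\reals\,p(\omega)$ projection. But your treatment of the $w$-part has a genuine gap. You describe Forni as giving $\|a_t r_\theta w\|_{a_t r_\theta x} \gtrsim \|w\|_x e^{-\beta_t(r_\theta x)}$, hence an exponential \emph{loss} $e^{(1-\eta_2)\delta t/2}$ on $\ical_1(t)$ that is ``beaten by the gain $e^{-k_d t}$'' from Lemma~\ref{lemma:EMM51} on the $E(x)$-part. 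This cannot work: by Hodge orthogonality the three contributions are combined via a pointwise $\min$ of their reciprocal-integral bounds (see (\ref{eq:threeway:decomp})), so if $v$ lies entirely in the orthogonal complement there is no $E(x)$-gain available and your $w$-estimate blows up exponentially. Nor can Lemma~\ref{lemma:EMM51} ``absorb the integrable singularity'' in the $w$-direction, as you suggest for the unconditional bound: that lemma is about a fixed finite-dimensional $SL(2,\reals)$-representation with an $SO(2)$-invariant norm, whereas on $w$ the action is through the Kontsevich--Zorich cocycle, which is not of that form.

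The missing ingredient is the $e^t$ scaling of the coefficients under $a_t$: by (\ref{eq:action:SL2}), $\Rfrak(a_t r_\theta w) = e^t\,\Rfrak(r_\theta w)$ as a cohomology class, so
\begin{displaymath}
\|a_t r_\theta w\|_{a_t r_\theta x} \;\ge\; \|\Rfrak(a_t r_\theta w)\|_{a_t r_\theta x} \;=\; e^t\,\|\Rfrak(r_\theta w)\|_{a_t r_\theta x} \;\ge\; e^{\,t-\beta_t(r_\theta x)}\,\|\Rfrak(r_\theta w)\|_x.
\end{displaymath}
On $\ical_1(t)$ one has $t - \beta_t(r_\theta x) > \eta_2 t$, an exponential \emph{gain}, not a loss; and since $\beta_t \le t$ always (as $\Lambda < 1$), the same identity yields the unconditional bound $\kappa_2(x,t) \le C$ directly, with no partition and no appeal to Lemma~\ref{lemma:EMM51} — one just integrates the integrable 2-dimensional singularity $\theta \mapsto \|\cos\theta\,\Rfrak(w) + \sin\theta\,\Ifrak(w)\|_x^{-\delta/2}$. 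Without this $e^t$ factor the argument fails; with it, the $\ical_1/\ical_2$ split is needed only for the exponential-decay estimate (b). A secondary slip: the traceless part of $p(E(x))_\cx \cap H'(x)$ carries the $3$-dimensional adjoint representation under conjugation, not a sum of standard $2$-dimensional ones (hence $d=3$ in Lemma~\ref{lemma:EMM51}); also the complement relevant here is $\{c : c \wedge p(E(x)) = 0\}$, not the $p(L)^\perp$ of \S\ref{sec;furstenberg}, which is tied to the affine manifold $\cM$ and plays no role in this lemma.
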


\begin{proof}
For $x=(M,\omega)\in\hcal_1(\alpha)$ we have an $\sl$-invariant and
Hodge-orthogonal decomposition 
\begin{displaymath}
H^1(M,\reals) = p(E(x)) \dirsum H^1(M,\reals)^\perp, 
\end{displaymath}
where $E(x)=\mbox{span}\{\Rfrak(\omega),\Ifrak(\omega)\}$ and
$H^1(M,\reals)^\perp(x)=\{c\in\hr: c\wedge p(E(x)) =0\}.$
For a subspace $V \subset \hr$, let $V_\cx \subset H^1(M,\cx)$ 
denote its complexification. Then, we have
\begin{equation}
\label{eq:complex:hodge:decomp}
H^1(M,\cx) = p(E(x))_\cx {}{\dirsum} H^1(M,\reals)^\perp_\cx(x).
\end{equation}
Note that $H^1(M,\reals)^\perp_\cx(x) \subset H'(x)$. 
We can write
\begin{displaymath}
v = \lambda\omega + u + w,
\end{displaymath}
where $\lambda \in \reals$, 
$u \in p(E(x))_\cx \cap H'(x)$, $w \in H^1(M,\reals)^\perp_\cx(x)$. 
Since $u \in p(E(x))_\cx$, we may write
\begin{displaymath}
u = u_{11} \, p(\Rfrak(\omega)) + u_{12} \, p(\Ifrak(\omega)) + i(
u_{21} \, p(\Rfrak(\omega)) + u_{22} \, p(\Ifrak(\omega))). 
\end{displaymath}
Since $u \in H'(x)$, 
\begin{equation}
\label{eq:traceless}
u_{11} + u_{22} = 0. 
\end{equation}
Recall that the Hilbert-Schmidt norm $\| \cdot \|_{HS}$ of a 
matrix is the square root of the sum of the squares of the entries.
Then, 
\begin{align}
\label{eq:conjugation:action}
(\| a_t r_\theta (p(\omega) + u)\|_{a_t r_\theta x})^2 & = \left\| (a_t
  r_\theta) \begin{pmatrix} 
  \lambda+ u_{11} & u_{12} \\  u_{21} & \lambda+ u_{22} \end{pmatrix} (a_t
r_\theta)^{-1} \right\|^2_{HS} && \text{by
Lemma~\ref{lemma:Hodge:norm:in:Ex} and (\ref{eq:action:SL2})} \notag \\
& = \lambda^2 + 
\left\| (a_t
  r_\theta) \begin{pmatrix} 
  u_{11} & u_{12} \\  u_{21} & u_{22} \end{pmatrix} (a_t
r_\theta)^{-1} \right\|^2_{HS} && \text{by (\ref{eq:traceless})}.  
\end{align}
Since the decomposition (\ref{eq:complex:hodge:decomp}) 
is Hodge orthogonal, it follows that for
all $t$ and all $\theta$, 
\begin{equation}
\label{eq:threeway:decomp}
(\|a_t r_\theta v\|_{a_t r_\theta x})^2 = \lambda^2 + (\|a_t r_\theta
u\|_{a_t r_\theta x})^2 + (\|a_t r_\theta w \|_{a_t r_\theta x})^2. 
\end{equation}
By (\ref{eq:conjugation:action}), (\ref{eq:traceless}) and
Lemma~\ref{lemma:EMM51},  
\begin{equation}
\label{eq:bound:sl2:part}
\frac{1}{2\pi}\int_0^{2\pi}\frac{d\theta}{(\|a_t r_\theta
  u\|_{a_t r_\theta x})^{\delta/2}} \leq 
e^{-k_3 t}{\|u\|_x^{\delta/2}},
\end{equation}
where $k_3 > 0$. 
We now claim that
\begin{equation}
\label{eq:bound:otrhogonal:part}
\frac{1}{2\pi}\int_0^{2\pi}\frac{d\theta}{\|a_t r_\theta
  w\|^{\delta/2}} \leq \frac{\kappa_2(x,t)}{\|w\|^{\delta/2}}, 
\end{equation}
where for some absolute constant $C > 0$ {}{and $\eta >
  0$, and for $L_0$ and
  $\eta_0$ as in Theorem~\ref{theorem:exponential:return}, we have}
\begin{equation}
\label{eq:kappa2:strong:estimate}
\begin{cases}
\kappa_2(x,t) \le C  & \text{for all} \quad x \in \cH_1(\alpha), t \ge
0 \\
\kappa_2(x,t) \le C e^{-\eta t}
\quad & \text{provided $\log u(x) < L_0 + \eta_0 t$.}  
\end{cases}
\end{equation}

Assuming (\ref{eq:bound:otrhogonal:part}) and
  (\ref{eq:kappa2:strong:estimate}), we have 
\begin{align*}
& \frac{1}{2\pi} \int_0^{2\pi} \frac{d\theta}{(\|a_t r_\theta
  v\|_{a_t r_\theta x})^{\delta/2}} \\ 
& \le \frac{1}{2\pi} \int_0^{2\pi} \min
\left(\frac{1}{\lambda^{\delta/2}}, \frac{1}{(\|a_t r_\theta 
  u\|_{a_t r_\theta x})^{\delta/2}}, \frac{1}{(\|a_t r_\theta
  w\|_{a_t r_\theta x})^{\delta/2}} \right) \, d\theta  && \text{ by
(\ref{eq:threeway:decomp})} \\ 
 & \le \min\left( \frac{1}{\lambda^{\delta/2}}, \frac{1}{2\pi} \int_0^{2\pi}
   \frac{d\theta}{(\|a_t r_\theta 
  u\|_{a_t r_\theta x})^{\delta/2}}, \frac{1}{2\pi} \int_0^{2\pi} \frac{d\theta}{(\|a_t r_\theta
  w\|_{a_t r_\theta x})^{\delta/2}} \right)
 && \\
& \le \min\left(\frac{1}{\lambda^{\delta/2}},
  \frac{e^{-k_3 t}}{\|u\|_x^{\delta/2}},  
\frac{\kappa_2(x,t)}{\|w\|_x^{\delta/2}} \right) && \text{by
(\ref{eq:bound:sl2:part}) 
and (\ref{eq:bound:otrhogonal:part}).}
\end{align*}
Since we must have 
either $\lambda > \|v\|_x/3$, or $\|u\|_x >
  \|v\|_x/3$ or $\|w\|_x > \|v\|_x/3$,
we have for all $x$, $t$,
\begin{displaymath}
\min\left(\frac{1}{\lambda^{\delta/2}},
  \frac{e^{-k_3 t}}{\|u\|_x^{\delta/2}},  
\frac{\kappa_2(x,t)}{\|w\|_x^{\delta/2}} \right) \le \frac{3^{\delta/2}
\max(1,e^{-k_3 t}, \kappa_2(x,t))}{\|v\|_x^{\delta/2}} \le
\frac{C_0}{\|v\|_x^{\delta/2}}.  
\end{displaymath}
where for the last estimate we used the fact that both $k_3$ and
$\kappa_2$ are bounded functions. 
Also, we have $\|u + w \|_x = \psi_x(v)^{-1} \| v\|_x$, hence either $\|u\|_x \ge
\psi_x(v)^{-1} \|v\|_x/2$ or $\|w\|_x \ge \psi_x(v)^{-1} \|v\|_x/2$, 
and therefore, for all
$x$, $t$, 
\begin{displaymath}
\min\left(\frac{1}{\lambda^{\delta/2}},
  \frac{e^{-k_3 t}}{\|u\|_x^{\delta/2}},  
\frac{\kappa_2(x,t)}{\|w\|_x^{\delta/2}} \right) 
\le
\frac{\psi_x(v)^{\delta/2} \max(e^{-k_3 t},
  \kappa_2(x,t))}{\|v\|_x^{\delta/2}} \equiv
\frac{\psi_x(v)^{\delta/2} \kappa(x,t)}{\|v\|_x^{\delta/2}}.
\end{displaymath}
Therefore, (\ref{eq:lemma:ineq:absolute}) holds. 
This completes the proof of the lemma,
assuming (\ref{eq:bound:otrhogonal:part}) and
(\ref{eq:kappa2:strong:estimate}).

It remains to prove (\ref{eq:bound:otrhogonal:part}) and
(\ref{eq:kappa2:strong:estimate}). 
Let $L_0$ and $\eta_0$ be as in
Theorem~\ref{theorem:exponential:return}, and suppose $\log u(x) < L_0
+ \eta_0 t$. {}{Recall that $\cI_1(t)$ and $\cI_2(t)$
  are defined   relative to the compact set $K_*$ in
  Theorem~\ref{theorem:exponential:return}.} 
We have
\begin{displaymath}
\int_0^{2\pi}\frac{d\theta}{(\|a_t r_\theta w\|_{a_t r_\theta x})^{\delta/2}}
=\int_{\ical_1(t)}\frac{d\theta}{(\|a_t r_\theta w\|_{a_t r_\theta
    x})^{\delta/2}} 
+\int_{\ical_2(t)}\frac{d\theta}{(\|a_t r_\theta w\|_{a_t r_\theta
    x})^{\delta/2}}.  
\end{displaymath}
Using~\eqref{eq:exp-recurrence} and
Lemma~\ref{lemma:furstenberg} we get \mc{(more details?)}
\begin{displaymath}
\int_{\ical_2(
  t)}\frac{d\theta}{(\|a_t r_\theta w\|_{a_t r_\theta x})^{\delta/2}}\leq
\frac{e^{-\eta_1 t}e^{\delta t/2}}{\|v\|^{\delta/2}}.
\end{displaymath}
Also,
\begin{align*}
\int_{\ical_1(t)}\frac{d\theta}{(\|a_t r_\theta w\|_{a_t r_\theta
    x})^{\delta/2}} & \leq
\int_{\ical_1(t)}\frac{d\theta}{(\|\Rfrak(a_t r_\theta
  w)\|_{a_t r_\theta x})^{\delta/2}} && \text{since $\|z\| \ge \|\Rfrak(z)\|$} \\
& = \int_{\ical_1(t)}\frac{d\theta}{(\|e^t \Rfrak(r_\theta
  w)\|_{a_t r_\theta x})^{\delta/2}} && \text{ by (\ref{eq:action:SL2})} \\
& \le \int_{\ical_1(t)}\frac{e^{-(1-\beta_t(r_\theta x)) \delta
      t/2}} {\|\Rfrak(r_\theta w )\|_x^{\delta/2}} && \text{ by
    Lemma~\ref{lemma:furstenberg} }\\ 
& \leq
e^{-\eta_2 \delta t/2} \int_0^{2\pi}\frac{d\theta}{\|\Rfrak(r_\theta
  w)\|_x^{\delta/2}} && \text{ by (\ref{eq:second-exp}) }\\ 
& = e^{-\eta_2 \delta t/2} \int_0^{2\pi} \frac{d\theta}{\| \cos \theta
  \, \Rfrak(w) + \sin \theta \, \Ifrak(w) \|_x^{\delta/2}} && \\
& \leq \frac{C_2 e^{-\eta_2\delta
    t/2}}{\|w\|^{\delta/2}}. && \hspace{-0.75in}\text{since the integral
  converges.} 
\end{align*}
These estimates imply (\ref{eq:bound:otrhogonal:part})
and (\ref{eq:kappa2:strong:estimate}) for the case
when $\log u(x) < L_0 + \eta_0 t$.  
If $x$ is arbitrary, we need to show
  (\ref{eq:bound:otrhogonal:part}) holds with $\kappa_2(x,t) \le C$.
Note that 
\begin{align*}
\|a_t r_\theta w\|_{a_t r_\theta x} & \ge \| \Rfrak(a_t r_\theta w)
\|_{a_t r_\theta x} && \text{ since $\|z\| \ge \|\Rfrak(z)\|$} \\
& =  \|e^{t} (\cos \theta \, \Rfrak(w) + \sin \theta \,\Ifrak(w))\|_{a_t
  r_\theta x} && \text{by (\ref{eq:action:SL2})} \\
& = e^{t} \|\cos \theta \, \Rfrak(w) + \sin \theta \,\Ifrak(w)\|_{a_t
  r_\theta x} \\
& \ge \| \cos \theta \, \Rfrak(w) + \sin \theta \,\Ifrak(w) \|_x &&
\text{by Lemma~\ref{lemma:forni}.}
\end{align*}
Therefore, 
\begin{align*}
\frac{1}{2\pi} \int_0^{2\pi} \frac{d\theta}{(\|a_t r_\theta w\|_{a_t
    r_\theta x})^{\delta/2}}  & \le \frac{1}{2\pi} \int_0^{2\pi}
\frac{d\theta}{ \| \cos \theta \, \Rfrak(w) + \sin \theta \,\Ifrak(w)
  \|_x^{\delta/2}} && \\
& \le \frac{C_2}{\|w\|_x^{\delta/2}} && \hspace{-2in} \text{since
  {}{$\delta \leq 0.1$} and the
  integral converges.} 
\end{align*}
This completes the proof of (\ref{eq:bound:otrhogonal:part})
and (\ref{eq:kappa2:strong:estimate}) 
for arbitrary $x$. 
\end{proof}

\subsection{The Modified Hodge Norm}
For the application in \S\ref{sec:subsec:relative}, we will need to
consider a modification of the Hodge norm in the thin part of moduli
space.

\bold{The classes $c_\alpha$, $\ast c_\alpha$.}
Let $\alpha$ be a homology class in $H_1(M,\reals)$. We can define the
cohomology class $*c_{\alpha} \in H^1(M,\reals)$ so that for all
$\omega \in H^1(M, \reals)$, 
\begin{displaymath}
\int_\alpha \omega = \int_M \omega \wedge *c_\alpha. 
\end{displaymath}
Then, 
\begin{displaymath}
\int_M *c_\alpha \wedge *c_\beta = I(\alpha,\beta),
\end{displaymath}
where $I(\cdot, \cdot)$ denotes algebraic intersection number.
Let $\ast$ denote the Hodge star operator, and let 
\begin{displaymath}
c_\alpha = \ast^{-1}(*c_\alpha). 
\end{displaymath}
Then, 
we have, for any $\omega \in H^1(M,\reals)$, 
\begin{displaymath}
\langle \omega, c_\alpha \rangle = \int_M \omega \wedge *c_\alpha = 
\int_\alpha \omega,
\end{displaymath}
where $\langle \cdot, \cdot \rangle$ is the Hodge inner product. 
We note that $*c_\alpha$ is a purely topological construction which 
depends only on $\alpha$, but $c_\alpha$
depends also on the complex structure of $M$.

Fix $\epsilon_* > 0$ (the \emph{Margulis constant}) so that any two
curves of hyperbolic length less than $\epsilon_*$ must be disjoint. 

{}{
Let $\sigma$ denote the hyperbolic metric in the conformal
class of $M$. For a closed curve $\alpha$ on $M$, $\ell_\alpha(\sigma)$
denotes the length of the geodesic representative of 
$\alpha$ in the metric $\sigma$.} 

We recall the following: 
\begin{theorem}\cite[Theorem 3.1]{ABEM}
\label{theorem:hodge:hyperbolic}
For any constant $D > 1$ there exists a constant $c > 1$ such that
for any simple closed curve
$\alpha$ with $\ell_\alpha(\sigma) < D$, 
\begin{equation}
\label{eq:hodge:hyperbolic}
\frac{1}{c} \ell_\alpha(\sigma)^{1/2} \le \| c_\alpha \| <
c \, \ell_\alpha(\sigma)^{1/2}.
\end{equation}
Furthermore, if $\ell_\alpha(\sigma) < \epsilon_*$ 
and $\beta$ is the
shortest simple closed curve crossing $\alpha$, then
\begin{displaymath}
\frac{1}{c} \ell_\alpha(\sigma)^{-1/2} \le \| c_\beta \| <
c \, \ell_\alpha(\sigma)^{-1/2}.
\end{displaymath}
\end{theorem}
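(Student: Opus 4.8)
The plan is to derive both assertions from three standard ingredients: a variational formula for the Hodge norm, the collar lemma, and the length--area inequality. Write $\langle\,\cdot\,,\cdot\,\rangle$ for the Hodge inner product. Since $c_\alpha$ is the harmonic representative characterized by $\langle\lambda,c_\alpha\rangle=\int_\alpha\lambda$ for every closed $1$-form $\lambda$, Cauchy--Schwarz and duality give
\[
\|c_\alpha\|^2=\sup\Big\{\frac{\big(\int_\alpha\lambda\big)^2}{\|\lambda\|^2}:\lambda\text{ closed},\ \lambda\neq0\Big\}=\Big(\inf\big\{\|\lambda\|^2:\lambda\text{ closed},\ \textstyle\int_\alpha\lambda=1\big\}\Big)^{-1}.
\]
The structural point that makes this usable is that in real dimension $2$ the Hodge norm is \emph{conformally invariant}: $\|\lambda\|^2=\int_M\lambda\wedge *\lambda$ equals the $L^2$-norm of $\lambda$ for any metric in the conformal class of $M$, so it can be bounded below by the $L^2$-norm over any subregion carrying a convenient flat metric. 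I would also record the \emph{intersection duality}: applying the variational formula with the test form $*c_\delta$ and using $\int_\gamma *c_\delta=\int_M *c_\delta\wedge *c_\gamma=I(\delta,\gamma)$ together with $\|*c_\delta\|=\|c_\delta\|$ gives $\|c_\gamma\|\,\|c_\delta\|\ge|I(\gamma,\delta)|$; in particular $\|c_\alpha\|\,\|c_\beta\|\ge 1$ whenever $\beta$ meets $\alpha$ exactly once.

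I would prove the two \emph{upper} bounds directly. By the collar lemma there is an embedded annulus $C$ around $\alpha$ which, in a conformal flat coordinate, is $(\reals/\zed)_v\times(0,L)_u$ with modulus $L=\operatorname{mod}(C)\ge c_0(D)\,\ell_\alpha^{-1}$ whenever $\ell_\alpha<D$ (this comes from the explicit collar-width formula; $c_0(D)>0$ because $L\ell_\alpha$ is bounded below on $(0,D]$). For harmonic $\lambda$ with $\int_\alpha\lambda=1$ one has $\lambda|_{C}=dv+dg$, and length--area gives $\|\lambda\|^2\ge\int_{C}|dv+dg|^2\ge\int_0^L\big(\int_0^1(1+\partial_v g)\,dv\big)^2\,du=L$; hence $\|c_\alpha\|^2\le1/L\le\ell_\alpha/c_0(D)$. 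For the companion bound $\|c_\beta\|^2\le c\,\ell_\alpha^{-1}$ (with $\ell_\alpha<\epsilon_*$ and $\beta$ shortest crossing $\alpha$) I would exhibit an embedded annulus $A$ homotopic to $\beta$ of modulus $\operatorname{mod}(A)\ge c_1\ell_\alpha$: inside the wide collar $C$ the curve $\beta$ runs essentially straight across, so there it can be thickened by a definite fraction of the circumference, while along the complementary arc --- which, if $\beta$ is chosen disjoint from the other short geodesics, stays in the thick part of $M$, of diameter bounded by the genus --- it can be thickened by a fixed amount, the width/length contributions adding in series. The same length--area estimate applied on $A$ then gives $\|c_\beta\|^2\le1/\operatorname{mod}(A)\le c\,\ell_\alpha^{-1}$.

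The two \emph{lower} bounds now drop out of intersection duality: $\|c_\alpha\|\ge1/\|c_\beta\|\ge c^{-1/2}\ell_\alpha^{1/2}$ and $\|c_\beta\|\ge1/\|c_\alpha\|\ge c_0(D)^{1/2}\ell_\alpha^{-1/2}$. This settles both assertions when $\ell_\alpha<\epsilon_*$; for $\ell_\alpha\in[\epsilon_*,D]$ in the first assertion the collar of $\alpha$ still has modulus bounded below in terms of $D$, so the shortest curve crossing $\alpha$ (again taken disjoint from the other short geodesics) has hyperbolic length bounded in terms of $D$, hence $\|c_\beta\|\le c(D)$, and therefore $\|c_\alpha\|\ge1/\|c_\beta\|\ge c(D)^{-1}$; together with the upper bound this gives $\|c_\alpha\|\asymp1\asymp\ell_\alpha^{1/2}$.

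The step I expect to be the real work is the construction of the annulus $A$ around $\beta$, and, more broadly, making every constant uniform over all of moduli space. The length--area lower bounds localize to the collar of $\alpha$ and cause no difficulty, but the constructions on $M\setminus C$ require that the shortest curve crossing $\alpha$ not be forced to thread other thin parts of $M$; this is exactly where one uses that $\beta$ may be taken disjoint from every other short geodesic and that the thick part of a fixed-genus surface has bounded diameter (for the statement in full strength one should read $\alpha$ as a shortest geodesic of $M$, so that $\beta$ crosses no geodesic shorter than $\alpha$). A more classical alternative for the first assertion is to identify $\|c_\alpha\|^2$ with the extremal length $\operatorname{Ext}_\alpha(M)$ and invoke Maskit's comparison $\operatorname{Ext}_\alpha\asymp\ell_\alpha(\sigma)$ for $\ell_\alpha(\sigma)<D$, which again rests on the collar-modulus facts above.
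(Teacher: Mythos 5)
Since this theorem is cited from \cite[Theorem~3.1]{ABEM} rather than proved in the paper, your sketch can only be assessed on its own terms, and the skeleton is the right one: the duality
$\|c_\alpha\|^{-2}=\inf\{\|\lambda\|^{2}\;:\;\lambda\ \mbox{closed},\ \int_\alpha\lambda=1\}$,
conformal invariance of $\lambda\wedge*\lambda$ combined with length--area over the flat collar $C$ of $\alpha$ (giving $\|c_\alpha\|^{2}\le\operatorname{mod}(C)^{-1}$, with $\operatorname{mod}(C)\gtrsim\ell_\alpha(\sigma)^{-1}$ whenever $\ell_\alpha(\sigma)<D$), the reciprocity $\|c_\alpha\|\,\|c_\beta\|\ge|I(\alpha,\beta)|$ to turn upper bounds into lower bounds, and the alternative route via $\|c_\alpha\|^{2}\approx\Ext_\alpha(M)$ and Maskit's comparison are exactly the standard ingredients. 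One hygiene point you omit: the statement implicitly requires $\alpha$ non-separating, since otherwise $c_\alpha=0$ while $\ell_\alpha(\sigma)>0$, and no $\beta$ with $i(\alpha,\beta)=1$ exists at all.

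The substantive gap is the one you flag yourself: the embedded annulus $A$ around $\beta$ with $\operatorname{mod}(A)\gtrsim\ell_\alpha(\sigma)$. To make this precise one must use that across the collar the \emph{flat} (collar-conformal) length of $\beta$ is comparable to $\operatorname{mod}(C)\asymp\ell_\alpha^{-1}$ (not to the hyperbolic collar width $\asymp\log(1/\ell_\alpha)$), while the transverse thickening there is $\asymp 1$, a definite fraction of the unit circumference, and that outside $C$ the curve $\beta$ stays in the thick part so the modulus estimate closes. More importantly, your parenthetical that ``one should read $\alpha$ as a shortest geodesic'' is a genuine hypothesis, not a convenience. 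If $\gamma$ is a geodesic disjoint from $\alpha$ and homologous to it --- a bounding pair, which exists already in genus $3$ --- with $\ell_\gamma\ll\ell_\alpha<\epsilon_*$, then every simple closed $\beta$ with $i(\alpha,\beta)=1$ has $|I(\beta,\gamma)|=|I(\beta,\alpha)|=1$, hence $\|c_\beta\|\ge\|c_\gamma\|^{-1}\gtrsim\ell_\gamma^{-1/2}\gg\ell_\alpha^{-1/2}$, and the upper estimate of the second display fails. So the second assertion needs the proviso you hint at (e.g.\ that no geodesic homologous to $\alpha$ is shorter than $\alpha$), and this should be stated as a hypothesis rather than buried in a parenthesis; note that in the paper's application the short curves are required to extend to a symplectic basis, which forbids exactly the homologous-pair configuration above.
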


\bold{Short bases.} Suppose $(M,\omega) \in \strat1$. Fix $\epsilon_1
< \epsilon_*$ 
and let $\alpha_1, \dots, \alpha_k$ be the curves with
hyperbolic length less than $\epsilon_1$
on $M$. For $1 \le i \le k$,
let $\beta_i$ be the shortest curve in the flat metric defined by
$\omega$ with $i(\alpha_i, \beta_i) =1$. We can pick simple closed
curves $\gamma_r$, $1 \le r \le 2g-2k$ on $M$ so that the hyperbolic
length of each $\gamma_r$ is bounded by a constant $L$ depending only
on the genus, and so that the $\alpha_j$, $\beta_j$ and $\gamma_j$ are
a symplectic basis $\cS$ for $H_1(M,\reals)$. We will call such a
basis {\em short.} {}{A short basis is not unique, and
in the following we fix some measurable choice of a short basis at
each point of $\cH_1(\alpha)$. }

We now define a modification of the Hodge norm, which is similar to
the one used in \cite{ABEM}. 
The modified norm is defined on the tangent space to the
space of pairs $(M,\omega)$ where $M$ is a Riemann surface and
$\omega$ is a holomorphic $1$-form on $M$. Unlike the Hodge norm, 
the modified Hodge norm will
depend not only on the complex structure on $M$ but also on the choice
of a holomorphic $1$-form $\omega$ on $M$. Let $\{\alpha_i, \beta_i,
\gamma_r\}_{1 \le i \le k, 1 \le r \le 2g-2k}$ 
be a short basis for {}{$x=(M,\omega)$.}

We can write any $\theta \in H^1(M,\reals)$ as 
\begin{equation}
\label{eq:expand:in:basis}
\theta = \sum_{i=1}^k a_i (*c_{\alpha_i}) + \sum_{i=1}^k b_i
\ell_{\alpha_i}(\sigma)^{1/2} (*c_{\beta_i})  + \sum_{r=1}^{2g -2k }
u_i (*c_{\gamma_r}),
\end{equation}
We then define
\begin{equation}
\label{eq:def:modified:hodge:norm}
 \|\theta\|_x'' = \|\theta\|_x +  \left( \sum_{i=1}^k |a_i| +
   \sum_{i=1}^k |b_i| +
    \sum_{r=1}^{2g -2k} |u_r| \right).
\end{equation}
{}{We note that $\| \cdot \|''$ depends on the choice of
  short basis; however switching to a different short basis can change
  $\| \cdot \|''$ by at most a fixed multiplicative constant depending
  only on the genus. To manage this, we use the notation $A \approx B$
  to denote the fact that $A/B$ is bounded from above and below by
  constants depending on the genus.
}

From (\ref{eq:def:modified:hodge:norm}) we have for $1 \le i \le k$, 
\begin{equation}
\label{eq:star:c:alpha:prime:norm}
\|*\!c_{\alpha_i}\|_x'' \approx 1, 
\end{equation}
Similarly, {}
we have 
\begin{equation}
\label{eq:star:c:beta:prime:norm}
\|*\!c_{\beta_i}\|_x'' \approx  \| *\!c_{\beta_i} \|_x \approx
\frac{1}{\ell_{\alpha_i}(\sigma)^{1/2}}. 
\end{equation}
In addition, in view of Theorem~\ref{theorem:hodge:hyperbolic}, 
if $\gamma$ is any other moderate length curve on $M$, 
$\|*\!c_\gamma\|_x'' \approx \|*\!c_\gamma \|_x = O(1)$. 
Thus, if $\cB$ is a short basis at {}{$x=(M,\omega)$}, then 
for any $\gamma \in \cB$, 
\begin{equation}
\label{eq:short:basis:extremal:length}
\Ext_\gamma({}{x})^{1/2} \approx \|\!*\!c_\gamma\| \le
\|\!*\!c_\gamma\|'' \end{equation}
(By $\Ext_\gamma({}{x})$ we mean the
extremal length of $\gamma$ in $M$, {}{where $x =
  (M,\omega)$}.)

\bold{Remark.} From the construction, we see that the modified Hodge
norm is greater than the Hodge norm. Also, if the flat length of 
shortest curve in
the flat metric defined by $\omega$ is greater than $\epsilon_1$, then
for any cohomology class $\lambda$, for some $C$ depending on
$\epsilon_1$ and the 
genus, 
\begin{equation}
\label{eq:modified:hodge:compare:to:hodge}
\|\lambda\|'' \le C \|\lambda\|,
\end{equation}
i.e. the modified Hodge norm is within a multiplicative constant of
the Hodge norm.

From the definition, we have the following:
\begin{lemma}
\label{lemma:temp:modified:abs:hodge:norm}
There exists a constant $C > 1$ depending only on the genus
such that for any $t > 0$,
any $x \in \strat1$ and any $\lambda \in H^1(M,\reals)$,
\begin{displaymath}
C^{-1} e^{-2t} \|\lambda\|''_x  \le \|\lambda\|''_{a_t x} \le C e^{2t}
\|\lambda\|''_x .
\end{displaymath}
\end{lemma}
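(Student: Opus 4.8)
The plan is to establish only the upper bound $\|\lambda\|''_{a_t x}\le C e^{2t}\|\lambda\|''_x$ for $t>0$ and to recover the lower bound from it by an $SO(2)$-conjugacy trick. Indeed $a_{-t}=a_t^{-1}$ is conjugate to $a_t$ by the rotation by $\pi/2$, and $\|\cdot\|''_{kz}\approx\|\cdot\|''_z$ for $k\in SO(2)$ — the Hodge norm is exactly $SO(2)$-invariant, and passing to a short basis adapted to the rotated flat structure changes $\|\cdot\|''$ by at most a genus-constant — so applying the upper bound at a suitable rotate of $a_t x$ yields $\|\lambda\|''_x\le C' e^{2t}\|\lambda\|''_{a_t x}$.

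To prove the upper bound, split $\|\lambda\|''_{a_t x}=\|\lambda\|_{a_t x}+S_{a_t x}(\lambda)$, where $S_y(\lambda)$ is the $\ell^1$-sum of the coordinates of $\lambda$ in a short basis at $y$ as in \eqref{eq:def:modified:hodge:norm}. For the first term I would use that the decomposition $\hr=p(E(x))\oplus H^1(M,\reals)^\perp(x)$ is independent of the point along an $a_t$-orbit and is Hodge-orthogonal at every point, so $\|\lambda\|_{a_t x}^2=\|\lambda_E\|_{a_t x}^2+\|\lambda_\perp\|_{a_t x}^2$; on $p(E(x))$, Lemma~\ref{lemma:Hodge:norm:in:Ex} gives the explicit bound $\|\lambda_E\|_{a_t x}\le e^{t}\|\lambda_E\|_x$, and on the complement Lemma~\ref{lemma:forni} gives $\|\lambda_\perp\|_{a_t x}\le e^{\beta_t(x)}\|\lambda_\perp\|_x\le e^{t}\|\lambda_\perp\|_x$ since $\beta_t(x)=\int_0^t\Lambda(a_\tau x)\,d\tau<t$. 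Hence $\|\lambda\|_{a_t x}\le e^{t}\|\lambda\|_x\le e^{t}\|\lambda\|''_x$.

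The substantive part is bounding $S_{a_t x}(\lambda)$ by $C e^{2t}\|\lambda\|''_x$. The coordinates of $\lambda$ in a short basis $\{\alpha'_i,\beta'_i,\gamma'_r\}$ at $a_t x$ are, up to sign and up to division by the normalizing factors $\ell_{\alpha'_i}(\sigma')^{1/2}$, periods $\int_{c}\lambda$ over the basis curves $c$; since periods are topological they may be computed at $x$, and there expanded in terms of the short basis coordinates $a_i,\ b_i\ell_{\alpha_i}(\sigma)^{1/2},\ u_r$ of $\lambda$ at $x$, whose $\ell^1$-sum is $\le\|\lambda\|''_x$. To see that the resulting transition coefficients are bounded by a genus-constant times $e^{2t}$ one uses: (i) the flat length of a fixed homotopy class changes by a factor at most $e^{t}$ between $x$ and $a_t x$, hence extremal length by a factor at most $e^{2t}$; (ii) Theorem~\ref{theorem:hodge:hyperbolic} and \eqref{eq:short:basis:extremal:length} relate $\|c_\gamma\|$, and the factors $\ell_{\alpha_i}(\sigma)^{1/2}$, to extremal lengths, so that any curve short at $a_t x$ has controlled extremal length (in particular is short) at $x$ up to the factor $e^{2t}$; and (iii) the intersection numbers, and the Dehn-twist ambiguity relating a short curve's crossing curve at $a_t x$ to the one chosen at $x$, are bounded by a genus-constant times $e^{2t}$.

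The main obstacle is item (iii): reconciling the combinatorial data of the two short bases — which curves are declared short, the crossing curves being defined only up to Dehn twists, and the relevant intersection pairings — and checking that the resulting transition is bounded by $C(\mathrm{genus})\,e^{2t}$ uniformly in $t$. Granting that, adding the Hodge-norm estimate and enlarging $C$ finishes the upper bound; the lower bound then follows from the first paragraph.
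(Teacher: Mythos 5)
Your proposal diverges from the paper's proof and, as you yourself note, leaves a genuine gap. The paper does not compare short bases at $x$ and at $a_t x$ at all. Instead it proves the crude two-sided bound
\begin{displaymath}
C_1^{-1} \|\lambda\|_x \le \|\lambda\|_x'' \le C_1 \,\ell_{hyp}(x)^{-1/2}\,\|\lambda\|_x,
\end{displaymath}
where $\ell_{hyp}(x)$ is the shortest hyperbolic length on $x$. Combining this with the fact (from Kerckhoff's formula, via extremal length) that $\ell_{hyp}(a_t x)$ and $\ell_{hyp}(x)$ differ by at most $C_2 e^{\pm 2t}$, and with the Hodge-norm estimate $e^{-t}\|\lambda\|_x \le \|\lambda\|_{a_t x} \le e^{t}\|\lambda\|_x$ (your decomposition into $p(E(x))$ and its orthogonal complement plus Lemma~\ref{lemma:Hodge:norm:in:Ex} and Lemma~\ref{lemma:forni} is exactly right for this part), both bounds of the lemma follow at once. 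No analysis of which curves are short at $a_t x$, which crossing curve $\beta_i$ is chosen, or how many Dehn twists separate the two bases is ever needed.

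Your step (iii) is the crux of your alternative route and you have not closed it. The difficulty is real: the flat-shortest transversal $\beta_i$ at $a_t x$ can differ from the one at $x$ by a power of the Dehn twist about $\alpha_i$ that depends on $\ell_{\alpha_i}$ as well as on $t$, and the resulting change in the coefficients $a_i, b_i$ has to be bounded against the normalizing factors $\ell_{\alpha_i}(\sigma)^{1/2}$ built into (\ref{eq:def:modified:hodge:norm}). Nothing in the paper establishes the uniform bound $C(\text{genus})\,e^{2t}$ you need for this transition matrix, and it is not obviously true without a careful estimate in a collar neighborhood. So unless you actually carry out that computation, the upper bound is not proven. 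The $SO(2)$-conjugacy reduction of the lower bound to the upper bound is fine (both $\|\cdot\|''$ and the flat structure transform as you say, up to a genus constant), but it is also unnecessary once you use the systole comparison, since (\ref{eq:two:prime:norm:vs:hodge:norm}) already gives both directions.

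In short: your decomposition of $\|\lambda\|_{a_t x}$ and the $e^{\pm 2t}$ behaviour of extremal/hyperbolic length are correct ingredients, but you should replace the basis-transition argument by the single inequality bounding $\|\cdot\|''/\|\cdot\|$ by $\ell_{hyp}^{-1/2}$; that is what makes the lemma a two-line deduction.
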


\begin{proof} From the definition of $\| \cdot \|''$,
  {}{for any $x \in \strat1$,}
\begin{equation}
\label{eq:two:prime:norm:vs:hodge:norm}
C_1^{-1} \|\lambda\|_x \le \|\lambda\|_x'' \le C_1 \ell_{hyp}(x)^{-1/2}
\|\lambda\|_x, 
\end{equation}
where $C$ depends only on the genus, and $\ell_{hyp}(x)$ is the
hyperbolic length of the shortest closed curve on $x$. It is well
known that for very short curves, the hyperbolic length is comparable
to the extremal length, see e.g. \cite{maskit}. 
It follows immediately from Kerckhoff's formula for the
Teichm\"uller distance that 
\begin{displaymath}
e^{-2t} \Ext_\gamma(x) \le \Ext_\gamma(a_t x) \le e^{2t} \Ext_\gamma(x).
\end{displaymath}
Therefore, 
\begin{equation}
\label{eq:max:change:hyperbolic:length}
C_2 e^{-2t} \ell_{hyp}(x) \le \ell_{hyp}(a_t x) \le C_2 e^{2t}
\ell_{hyp}(x), 
\end{equation}
where $C_2$ depends only on the genus. Now the lemma follows immediately from
(\ref{eq:two:prime:norm:vs:hodge:norm}), 
(\ref{eq:max:change:hyperbolic:length}) and Lemma~\ref{lemma:forni}. 
\end{proof}

One annoying feature of our definition is that for a fixed absolute
cohomology class $\lambda$, $\| \lambda \|''_x$ is not a continuous
function of $x$, as $x$ varies in a Teichm\"uller disk, due to the
dependence on the choice of short basis. To remedy this, we pick a
positive continuous $SO(2)$-bi-invariant function $\phi$ on
$SL(2,\reals)$ supported on a neighborhood of the identity $e$ such
that $\int_{SL(2,\reals)} \phi(g) \, dg = 1$, and define
\begin{displaymath}
\|\lambda\|'_x = \|\lambda\|_x + \int_{SL(2,\reals)} \|\lambda\|''_{g
  x} \, \phi(g) \, dg.  
\end{displaymath}
Then, it follows from Lemma~\ref{lemma:temp:modified:abs:hodge:norm}
that for a fixed $\lambda$, $\log \|\lambda\|'_x$ is uniformly
continuous as $x$ varies in a Teichm\"uller disk. In fact, there is a
constant $m_0$ such that for all $x \in \strat1$, all $\lambda \in
H^1(M,\reals)$ and all $t > 0$, 
\begin{equation}
\label{eq:log:abs:unif:cts}
e^{-m_0 t} \|\lambda\|'_x  \le \|\lambda\|'_{a_t x} \le e^{m_0 t}
\|\lambda\|'_x .
\end{equation}

\bold{Remark.} {}{
Even though $\|\cdot\|_x'$ is uniformly continuous as
long as $x$ varies in a Teichm\"uller disk, it may be only measurable
in general (because of the choice of short basis). 
This in the end causes our function $f_\cM$ of
Proposition~\ref{prop:main:proposition} to be discontinuous. 
}

\subsection{Relative cohomology}
\label{sec:subsec:relative}
For $c \in \rhr$ and $x =(M,\omega)\in \strat1$, let $\gp_x(c)$ 
denote the harmonic representative of
$p(c)$, where $p: \rhr \to \hr$ is the natural map. We view $\gp_x(c)$
as an element of $\rhr$. Then, (similarly to \cite{Eskin:Mirzakhani:Rafi})
we define the Hodge norm on $\rhr$ as
\begin{displaymath}
\|c\|'_x = \| p(c)\|'_x +
\sum_{(z,z') \in \Sigma\cross \Sigma} 
\left|\int_{\gamma_{z,z'}} (c - \gp_x(c))\right|,
\end{displaymath}
where $\gamma_{z,z'}$ is any path connecting the zeroes $z$ and $z'$ of
$\omega$. 
Since $c-\gp_x(c)$ represents the zero class  in absolute
cohomology, the integral does not depend on the choice of
$\gamma_{z,z'}$. Note that the $\|\cdot\|'$ norm on $\rhr$ 
is invariant under the action of $SO(2)$. 

As above, we pick a
positive continuous $SO(2)$-bi-invariant function $\phi$ on
$SL(2,\reals)$ supported on a neighborhood of the identity $e$ such
that $\int_{SL(2,\reals)} \phi(g) \, dg = 1$, and define
\begin{equation}
\label{eq:def:relative:hodge:norm}
\|\lambda\|_x = \int_{SL(2,\reals)} \|\lambda\|'_{g
  x} \, \phi(g) \, dg.  
\end{equation}
Then, the $\| \cdot \|$ norm on $\rhr$ is also invariant under the
action of $SO(2)$. 

\bold{Notational warning.} If $\lambda$ is an absolute cohomology
class, then $\|\lambda\|_x$ denotes the Hodge norm of $\lambda$ at $x$
defined in \S\ref{sec:hodge}.  If, however $\lambda$ is a relative
cohomology class, then $\|\lambda\|_x$ is defined in
(\ref{eq:def:relative:hodge:norm}). We hope the meaning will be
clear from the context.

We will use the following crude version of Lemma~\ref{lemma:forni}
(much more accurate versions are possible, especially in compact sets,
see e.g. \cite{Eskin:Mirzakhani:Rafi}). \mc{give precise reference}
\begin{lemma}
\label{lemma:growth:relative:class}
There exists a constant $m' > m_0 > 0$ 
such that for any $x \in
\cH_1(\alpha)$, any $\lambda \in \rhr$  and any $t > 0$, 
\begin{displaymath}
e^{-m' t} \| \lambda \|_x \le \|\lambda\|_{a_t x} \le e^{m' t}\|\lambda\|_{x}
\end{displaymath}
\end{lemma}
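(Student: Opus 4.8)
The plan is to peel off the successive averaging operations in the definition of $\|\cdot\|_x$ on $\rhr$ and reduce everything to one uniform estimate on the relative periods of a harmonic representative. Recall $\|\lambda\|_x=\int_{SL(2,\reals)}\|\lambda\|'_{gx}\,\phi(g)\,dg$, that $\|\lambda\|'_x=\|p(\lambda)\|'_x+R_x(\lambda)$ where $R_x(\lambda):=\sum_{(z,z')\in\Sigma\times\Sigma}\bigl|\int_{\gamma_{z,z'}}(\lambda-\gp_x(\lambda))\bigr|$, and that $\|\cdot\|'$ is $SO(2)$-invariant. The absolute part $\|p(\lambda)\|'_x$ is a modified Hodge norm of an absolute class, so by \eqref{eq:log:abs:unif:cts} it satisfies $e^{-m_0t}\|p(\lambda)\|'_x\le\|p(\lambda)\|'_{a_tx}\le e^{m_0t}\|p(\lambda)\|'_x$. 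In $R_x(\lambda)$, the form $\lambda-\gp_x(\lambda)$ lies in $\ker(p\colon\rhr\to\hr)$, hence is exact, so $\int_{\gamma_{z,z'}}(\lambda-\gp_x(\lambda))=L_{z,z'}(\lambda)-\int_{\gamma_{z,z'}}\gp_x(\lambda)$, where $L_{z,z'}(\lambda)$ is an $x$-independent period of $\lambda$; thus the only $x$-dependence in $R_x(\lambda)$ comes from the numbers $\int_{\gamma_{z,z'}}\gp_x(\lambda)$, the relative periods of the harmonic representative of $p(\lambda)$ over the fixed relative cycles.

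The key estimate I would prove is that there is a constant $C$, depending only on the stratum, with $\bigl|\int_{\gamma_{z,z'}}\gp_x(\lambda)\bigr|\le C\,\|p(\lambda)\|'_x$ for all $x$, all $(z,z')$, and all $\lambda$. Since harmonic forms are closed the integral depends only on the class of $\gamma_{z,z'}$ in $H_1(M,\Sigma,\zed)$, so one is free to choose a good representative. In the thick part of $\cH_1(\alpha)$ this is a routine consequence of interior elliptic estimates, which bound a harmonic $1$-form pointwise by its $L^2$-norm, i.e.\ by the Hodge norm $\|p(\lambda)\|_x\le\|p(\lambda)\|'_x$, together with a bound on the flat length of the chosen representative. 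In the thin part the harmonic form concentrates on the short hyperbolic curves, and its period over a fixed cycle is, up to bounded error, an intersection number times a quantity comparable to the relevant short-basis coefficient in \eqref{eq:expand:in:basis}---which is precisely the term that the modified norm $\|p(\lambda)\|'_x$ was designed to control, via Theorem~\ref{theorem:hodge:hyperbolic} and \eqref{eq:star:c:alpha:prime:norm}--\eqref{eq:star:c:beta:prime:norm}; this is the crude, stratum-uniform form of the comparisons in \cite{Eskin:Mirzakhani:Rafi}. Granting it, the reverse triangle inequality gives $|R_{a_tx}(\lambda)-R_x(\lambda)|\le\sum_{(z,z')}\bigl|\int_{\gamma_{z,z'}}(\gp_x-\gp_{a_tx})(\lambda)\bigr|\le C\bigl(\|p(\lambda)\|'_x+\|p(\lambda)\|'_{a_tx}\bigr)\le C(1+e^{m_0t})\|\lambda\|'_x$, and combining with the bound on the absolute part yields $\|\lambda\|'_{a_tx}\le C'e^{m_0t}\|\lambda\|'_x$ and, symmetrically, $\|\lambda\|'_{a_tx}\ge (C'e^{m_0t})^{-1}\|\lambda\|'_x$, for all $t>0$.

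It remains to convert this ``up to a constant'' statement for $\|\cdot\|'$ into the clean exponential statement for $\|\cdot\|$, the only delicate point being the behaviour as $t\to0$. For this I would use the averaging $\|\lambda\|_x=\int\|\lambda\|'_{gx}\,\phi(g)\,dg$: substituting $g\mapsto ga_{-t}$ in the integral for $\|\lambda\|_{a_tx}$ and using the Lipschitz bound $|\phi(ga_{-t})-\phi(g)|\le C|t|$ for $|t|\le1$ gives $\bigl|\,\|\lambda\|_{a_tx}-\|\lambda\|_x\,\bigr|\le C|t|\sup_{g\in U}\|\lambda\|'_{gx}\le C'|t|\,\|\lambda\|_x$ for $|t|\le1$, where $U$ is a fixed neighbourhood of the identity and the last step uses the bounded-$t$ case of the previous paragraph (via a $KAK$ decomposition and $SO(2)$-invariance) to compare $\|\lambda\|'_{gx}$ with $\|\lambda\|_x$; hence $\|\lambda\|_{a_tx}\le e^{C'|t|}\|\lambda\|_x$ for $|t|\le1$, and iterating over unit time intervals gives $e^{-m't}\|\lambda\|_x\le\|\lambda\|_{a_tx}\le e^{m't}\|\lambda\|_x$ for a suitable $m'>m_0$. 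The step I expect to be the main obstacle is the key estimate of the second paragraph: a bound on the relative period of a unit-modified-norm harmonic representative over a fixed relative cycle that is uniform over \emph{all} of $\cH_1(\alpha)$, including arbitrarily degenerate surfaces. What makes this feasible is that only a crude exponential rate is needed, so one can afford very lossy estimates drawn from the explicit local form of harmonic differentials near the nodes rather than the sharp comparisons of \cite{Eskin:Mirzakhani:Rafi}.
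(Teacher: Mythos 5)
Your proposal follows essentially the same route as the paper: reduce to bounded $t$ and iterate; isolate the key estimate bounding the relative periods $\int_{\gamma_{z,z'}}\gp_x(\lambda)$ of the harmonic representative by the modified Hodge norm $\|p(\lambda)\|'_x$, uniformly over the stratum; establish that estimate on the thin part via the explicit local form of harmonic differentials in the plumbing/collar coordinates (Theorem~\ref{theorem:hodge:hyperbolic}, the $\alpha_k$/$\beta_k$ analysis) and on the thick part by elliptic estimates/compactness; and then assemble the bound on $\|\cdot\|'$ and pass to the averaged norm $\|\cdot\|$. The one place you go beyond the paper's write-up is spelling out the small-$t$ step (the Lipschitz argument for $\phi$ showing $\|\lambda\|_{a_tx}\le e^{C|t|}\|\lambda\|_x$ for $|t|\le 1$), which the paper leaves implicit when it says the $0\le t\le 1$ bound on $\|\cdot\|'$ "implies the lemma for all $t$"; this is a useful clarification but does not change the method.
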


\begin{proof} 
We remark that this proof fails if we use the standard
  Hodge norm on absolute homology. 
It is enough to prove the statement assuming $0 \le t \le 1$, since
the statement for arbitrary $t$ then follows by iteration. 
It is also enough to check this for the case when $p(\lambda) = \ast
c_\gamma$, where $\gamma$ is an element of
a short basis. \mc{check}

Let $\alpha_1, \dots, \alpha_n$ be the curves with
hyperbolic length less than $\epsilon_1$.
For $1 \le k \le n$, let
$\beta_k$ be the shortest curve with $i(\alpha_k, \beta_k) =1$, where
$i(\cdot,\cdot)$ denotes the geometric intersection number. 
Let $\gamma_r$, $1 \le r \le
2g-2k$ be moderate length curves on $M$ so that the $\alpha_j$,
$\beta_j$ and $\gamma_j$ are a symplectic basis $\cS$ for
$H_1(M,\reals)$. Then $\cS$ is a short basis for $x =(M,\omega)$.

We now claim that for any curve $\gamma \in \cS$, and any $i$, $j$ 
\begin{equation}
\label{eq:key:integral}
\left| \int_{\zeta_{ij}} \ast \gamma \right| \le C \|\gamma\|''_x,
\end{equation}
where $C$ is a universal constant, and $\zeta_{ij}$ is the path
connecting the zeroes $z_i$ and $z_j$ of $\omega$ and minimizing the
hyperbolic distance. (Of course since $\ast \gamma$ is harmonic, only
the homotopy class of $\zeta_{ij}$ matters in the integral on the
left hand side of (\ref{eq:key:integral})).

It is
enough to prove (\ref{eq:key:integral}) for the $\alpha_k$ and the
$\beta_k$ (the estimate for other $\gamma \in \cS$ follows from
a compactness argument).

We can find a collar region around $\alpha_k$ as follows:
take two annuli $\{ z_k \st 1 > | z_k | > |t_k|^{1/2}\}$ and $\{ w_k \st 1 > w_k> |t_k|^{1/2} \}$ and identify the inner boundaries via the map $ w_k =
t_k/z_k$. (This coordinate system on the neighborhood
  of a boundary point in the Deligne-Mumford compactification of the
  moduli space of curves is used in e.g. \cite{Masur:WP},
  \cite[\S{3}]{wolpert}, also  \cite[Chapter 3]{fay}, \cite{Forni},
  and elsewhere. For a self-contained modern treatment see
  \cite[\S{8}]{Koch:Hubbard}).  
The hyperbolic metric $\sigma$
in the collar region is approximately $|dz|/(|z| |\log |z||)$. 
Then $\ell_{\alpha_k}(\sigma) \approx 1/|\log t_k|$,
{}{where as above, $A \approx B$ means that $A/B$ is
  bounded above and below by a constant depending only on the
  genus. (In fact, we choose the parameters $t_k$ so that
  $\ell_{\alpha_k}(\sigma) = 1/|\log t_k|$.)}
 
By \cite[Chapter 3]{fay}
any holomorphic $1$-form $\omega$ can be written in
the collar region as
\begin{displaymath}
\left( a_0(z_k+t_k/z_k,t_k) + \frac{a_1(z_k+t_k/z_k,t_k)}{z_k} \right)
\, dz_k, 
\end{displaymath}
where $a_0$ and $a_1$ are holomorphic in both variables. (We assume
here that the limit surface on the boundary of Teichm\"uller space is
fixed; this is justified by the fact that the Deligne-Mumford
compactification is indeed compact, and  if we normalize $\omega$ by fixing
its periods along $g$ disjoint curves, then in this coordinate system,
the dependence of $\omega$
on the limit surface in the boundary is continuous). 
This implies that as $t_k \to 0$, 
\begin{displaymath}
\omega = \left(\frac{a}{z_k} + h(z_k)+ O(t_k/z_k^2)\right) \, dz_k
\end{displaymath}
where $h$ is a holomorphic function which 
remains bounded as $t_k \to 0$, and the implied constant is bounded as
$t_k \to 0$. (Note that when $|z_k| \ge |t_k|^{1/2}$, $|t_k/z_k^2| \le 1$). 
Now from the condition
$\int_{\alpha_k} *c_{\beta_k} = 1$ we see that on the collar of
$\alpha_j$, 
\begin{equation}
\label{eq:c:beta:holomorphic}
c_{\beta_k} + i*\!\!c_{\beta_k} = \left(\frac{\delta_{kj}}{(2 \pi) z_j} +
h_{kj}(z_j) + O(t_j/z_j^2)\right) \, dz_j, 
\end{equation}
where the $h_{kj}$  are  holomorphic and bounded as $t_j \to 0$. (We use the notation $\delta_{kj} = 1$ if $k=j$ and zero otherwise). 
Also from the condition $\int_{\beta_k} *c_{\alpha_k} = 1$ we have
\begin{equation}
\label{eq:c:alpha:holomorphic}
c_{\alpha_k} + i*\!\!c_{\alpha_k} = \frac{i}{|\log t_j|}\left(\frac{\delta_{kj}}{z_j} +
s_{kj}(z_j) + O(t_j/z_k^2)\right) \, dz_j, 
\end{equation}
where $s_{kj}$ also remains holomorphic and is bounded as $t_j \to 0$. 
\mc{check i's and signs in the above formulas}

Then, on the collar of $\alpha_j$, 
\begin{displaymath}
\ast c_{\alpha_k} = \frac{\delta_{jk}}{|\log t_j|} d \log |z_j|^2 +
\text{ bounded $1$-form} 
\end{displaymath}
and thus, 
\begin{displaymath}
\left|\int_{\zeta_{ij}} \ast c_{\alpha_k} \right| = O(1).
\end{displaymath}
Also, on the collar of $\alpha_j$, 
\begin{displaymath}
\ast c_{\beta_k} = \frac{\delta_{jk}}{2\pi} d \arg |z_j| +
\text{ bounded $1$-form} 
\end{displaymath}
and so
\begin{displaymath}
\left|\int_{\zeta_{ij}} \ast c_{\beta_k} \right| = O(1).
\end{displaymath}
By Theorem~\ref{theorem:hodge:hyperbolic},
\begin{displaymath}
\|\ast c_{\alpha_k} \|'' \approx
O(1) \quad \text{ and } \quad \| \ast c_{\beta_k} \|'' \approx \| \ast c_{\beta_k} \|\approx \ell_{\alpha_k}(\sigma)^{1/2} \GG 1.
\end{displaymath}
Thus, (\ref{eq:key:integral}) holds for $\ast c_{\beta_k}$ and $\ast
c_{\alpha_k}$, and therefore for any $\gamma \in \cS$. By the
definition of $\| \cdot \|''$,  (\ref{eq:key:integral}) holds for any
$\lambda \in \rhr$. \mc{check}
For $0 \le t \le 1$, let $\theta_t$ denote the harmonic representative
of $p(\lambda)$ on $g_t x$. 
Then, for $0 \le t < 1$, 
\begin{align*}
\|\lambda\|'_{g_t x} & = \|p(\lambda)\|'_{g_t x} + \sum_{i,j}
\left|\int_{z_i}^{z_j} (\lambda - \theta_t) \right| && \\
& \le C \|p(\lambda)\|'_x + \sum_{i,j}\left|\int_{z_i}^{z_j} (\lambda -
  \theta_0) \right| + \sum_{i,j}\left|\int_{z_i}^{z_j} (\theta_t -
  \theta_0) \right| && \text{by (\ref{eq:log:abs:unif:cts})} \\ 
& \le C \|\lambda\|'_x + \sum_{i,j} \left|\int_{\gamma_{ij}} (\theta_t -
  \theta_0) \right| && \\ 
& \le C \|\lambda\|'_x + {}{C'} \sum_{i,j} (\| p(\lambda) \|_{g_t x}' + \|p(\lambda)
\|_x') && \text{by (\ref{eq:key:integral})}\\
& \le {}{C''} \|\lambda\|'_x && \text{by (\ref{eq:log:abs:unif:cts})}
\end{align*}
Therefore, there exists $m'$ such that for $0 \le t \le 1$ and any
$\lambda \in \rhr$, 
\begin{displaymath}
\|\lambda\|_{g_t x} \le e^{m' t} \|\lambda\|_x.
\end{displaymath}
This implies the lemma for all $t$. 
\end{proof}

In the sequel we will need to have a control of the matrix
coefficients of the cocycle. Let $x\in\hcal_1(\alpha)$ and $t\in
\reals$ we let $A(x,t) \equiv A(x,a_t)$ denote the cocycle. Using the
map $p$ above we may write \mc{fix this notation}
\begin{equation}
\label{eq:cocyle}
A(x,t)=\left(\begin{array}{cc} I & U(x,t)\\ 0 &
    S(x,t)\end{array}\right)
\end{equation}
(Note that {}{since we are labelling the zeroes of $\omega$,}
the action of the cocycle on $\ker p$ is trivial.)

The following is an immediate corollary of
Lemma~\ref{lemma:growth:relative:class}: 
\begin{lemma}
\label{lemma:unip-cont}
There is some $m'\in\bbn$ such that for all $x\in\hcal_1(\alpha)$ and
all $t \in \reals$ we have 
\begin{displaymath}\|U(x,t)\|\leq e^{m'|t|},\end{displaymath}
where 
\begin{equation}
\label{eq:def:Uxt}
\|U(x,t)\| \equiv \sup_{c \in \rhr} \frac{\|\gp_x(c) - \gp_{a_t
    x}(c)\|}{\|p(c)\|'_x}.  
\end{equation}
Note that since $\gp_x(c) - \gp_{a_t x}(c) \in \ker p$, $\|\gp_x(c) -
\gp_{a_t x}(c)\|_y$ is independent of $y$. 
\end{lemma}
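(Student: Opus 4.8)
The plan is to deduce the bound directly from Lemma~\ref{lemma:growth:relative:class} by applying it to a single, well-chosen relative class. Fix $x\in\cH_1(\alpha)$, $t\in\reals$ and $c\in\rhr$, and set $\lambda=\gp_x(c)\in\rhr$, the harmonic representative of $p(c)$ at $x$. Since $p(\lambda)=p(c)$ by construction, the harmonic representative of $p(\lambda)$ at $x$ is again $\lambda$, i.e. $\gp_x(\lambda)=\lambda$, so the path-integral part of $\|\lambda\|'_x$ vanishes and $\|\lambda\|'_x=\|p(c)\|'_x$. On the other hand $\gp_{a_tx}(\lambda)=\gp_{a_tx}(c)$, so expanding the definition of $\|\cdot\|'_{a_tx}$ on $\rhr$ gives
\begin{displaymath}
\|\lambda\|'_{a_tx}=\|p(c)\|'_{a_tx}+\sum_{(z,z')\in\Sigma\times\Sigma}\left|\int_{\gamma_{z,z'}}\bigl(\gp_x(c)-\gp_{a_tx}(c)\bigr)\right|=\|p(c)\|'_{a_tx}+\|\gp_x(c)-\gp_{a_tx}(c)\|,
\end{displaymath}
where the last equality uses that $\gp_x(c)-\gp_{a_tx}(c)\in\ker p$ and that the norm of an element of $\ker p$ is exactly the sum of the absolute values of its periods along the $\gamma_{z,z'}$, independent of the base point (as recorded right after the statement of the lemma). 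In particular $\|\gp_x(c)-\gp_{a_tx}(c)\|\le\|\lambda\|'_{a_tx}$.

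Next I invoke Lemma~\ref{lemma:growth:relative:class}. Although it is stated for the norm $\|\cdot\|$, its proof establishes $\|\mu\|'_{a_sx}\le C''\|\mu\|'_x$ for arbitrary $x$, arbitrary $\mu\in\rhr$ and $0\le s\le 1$ (this is precisely the displayed chain of inequalities in that proof, together with the reductions made there); since this holds uniformly in the base point, iterating it along the geodesic $\{a_sx\}$ and applying the same estimate to $a_{-s}$ yields $\|\mu\|'_{a_tx}\le e^{m'|t|}\|\mu\|'_x$ for all $t\in\reals$, after enlarging $m'$. (Equivalently one may quote Lemma~\ref{lemma:growth:relative:class} verbatim for $\|\cdot\|$ and absorb into the exponent the bounded, base-point-uniform discrepancy between $\|\cdot\|$ and $\|\cdot\|'$ coming from the $\phi$-averaging in~(\ref{eq:def:relative:hodge:norm}) and~(\ref{eq:log:abs:unif:cts}).) Taking $\mu=\lambda$ and combining with the previous paragraph,
\begin{displaymath}
\|\gp_x(c)-\gp_{a_tx}(c)\|\le\|\lambda\|'_{a_tx}\le e^{m'|t|}\|\lambda\|'_x=e^{m'|t|}\|p(c)\|'_x.
\end{displaymath}
Dividing by $\|p(c)\|'_x$ and taking the supremum over $c\in\rhr$ in the definition~(\ref{eq:def:Uxt}) of $\|U(x,t)\|$ gives $\|U(x,t)\|\le e^{m'|t|}$; replacing $m'$ by $\lceil m'\rceil$ makes it an integer, as required.

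The argument is essentially bookkeeping, so I do not expect a genuine obstacle; the two points that require care are (i) recognizing that $\gp_x(c)-\gp_{a_tx}(c)$ is exactly the relative part that appears when $\|\gp_x(c)\|'_{a_tx}$ is written out, so that it is dominated by $\|\gp_x(c)\|'_{a_tx}$, and (ii) verifying that the growth estimate of Lemma~\ref{lemma:growth:relative:class} (or its $\|\cdot\|'$ form) holds with constants uniform in the base point, so that it can be iterated to all $t\in\reals$. This is why the lemma can be recorded as an immediate corollary.
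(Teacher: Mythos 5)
The paper gives no proof here; it simply labels the lemma an immediate corollary of Lemma~\ref{lemma:growth:relative:class}. Your write-up spells out exactly the right deduction: choosing $\lambda=\gp_x(c)$ so that $\|\lambda\|'_x$ has no relative part while $\|\lambda\|'_{a_tx}$ has $\|\gp_x(c)-\gp_{a_tx}(c)\|$ as its relative part, then applying the growth bound, is the natural way to make the corollary precise, and it is correct. This matches the paper's intent, and you are right to flag that what one really needs is the $\|\cdot\|'$ (single-point) form of the growth estimate, which is what the chain of inequalities inside the proof of Lemma~\ref{lemma:growth:relative:class} actually produces for $0\le t\le 1$ before averaging and iterating; from there the two-sided bound for $0\le t\le 1$ extends to all $t\in\reals$ by iteration together with the substitution $x\mapsto a_{-t}x$.
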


Suppose $L \subset \rhr$ is a subspace such that $p(L) \subset
H^1(M,\reals)$ is symplectic (in the sense that the intersection form
restricted to $p(L)$ is non-degenerate). 
Let $p(L)^\perp$ denote the symplectic complement of $p(L)$ in
$H^1(M,\reals)$. Suppose $x \in \cH_1(\alpha)$. For any $c \in \rhr$
we may write 
\begin{displaymath}
c = h + c' +v,
\end{displaymath}
where $h$ is harmonic with $p(h) \in p(L)^\perp$, $v \in L$ and
$c' \in \ker p$. This decomposition is not unique since for $u \in L
\cap \ker p$, we can replace $c'$ by $c' + u$ and $v$ by
$v-u$. We denote the $c'$ with smallest possible $\|\cdot \|_x$ 
norm by $c'_L$. Thus, we
have the decomposition
\begin{equation}
\label{eq:c:decomp}
c = \gp_{x,L}(c) + c'_L + v,
\end{equation}
where $\gp_{x,L}(c)$ is the harmonic representative at $x$ of
$p_L(c) \equiv \pi_{L^\perp}(p(c))$, $c'_L \in \ker p$, $v \in L$, and $c'_L$
has minimal norm.

Define $\nu_{x,L}:\rhr\rightarrow\bbr$ by
\begin{displaymath}
\nu_{x,L}(c)=
\begin{cases}
\max\{\|c'_L\|_x,(\|p_{L}(c)\|_{x}')^{1/2}\} 
    & \text{if}\h\h \max\{\|c'_L\|_{x},\|p_L(c)\|'_{x}\}\leq 1 \\
    \hspace{0.7in} 1 & \text{otherwise.}
\end{cases}
\end{displaymath}
We record (without proof) some simple properties of $\nu_{x,L}$. 
\begin{lemma}
\label{lemma:properties:nu:x:cL}
We have 
\vspace{-0.1in}
\begin{itemize}
\item[{\rm (a)}] $\nu_{x,L}(c) = 0$ if and only if $c \in L$. 
\item[{\rm (b)}] For $v \in L$, $\nu_{x,L}(c+v) =
  \nu_{x,L}(v)$. 
\item[{\rm (c)}] For $v' \in \ker p$, $\nu_{x,L}(c) - \|v'\|_x \le
  \nu_{x,L}(c + v') \le \nu_{x,L}(c) + \|v'\|_x$. 
\end{itemize}
\end{lemma}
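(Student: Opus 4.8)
The plan is to first rewrite $\nu_{x,L}$ so that the three assertions become transparent. Set $W = L\cap\ker p$ and, for $c\in\rhr$,
\[
g_{x,L}(c)=\max\bigl\{\|c'_L\|_x,\ (\|p_L(c)\|'_x)^{1/2}\bigr\}.
\]
Since $t\mapsto t^{1/2}$ maps $[0,1]$ onto $[0,1]$ and is strictly increasing, $\max\{\|c'_L\|_x,\|p_L(c)\|'_x\}\le 1$ holds exactly when $g_{x,L}(c)\le 1$; in that case $\nu_{x,L}(c)=g_{x,L}(c)\le 1$, while otherwise $g_{x,L}(c)>1$ and $\nu_{x,L}(c)=1$. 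Hence $\nu_{x,L}(c)=\min\{1,g_{x,L}(c)\}$ for all $c$. As $s\mapsto\max\{s,a\}$ and $s\mapsto\min\{1,s\}$ are $1$-Lipschitz, it now suffices to track how the two ingredients $p_L(c)$ and $c'_L$ behave under the operations appearing in (a)--(c).

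Next I would record the structure of the decomposition $c=\gp_{x,L}(c)+c'_L+w$ (with $w\in L$, $c'_L\in\ker p$). Because $p(L)$ is symplectic, $\hr=p(L)\oplus p(L)^\perp$, so $p_L(c)=\pi_{L^\perp}(p(c))$ and hence also $\gp_{x,L}(c)$ are genuine functions of $c$; the only remaining freedom is $(c'_L,w)\mapsto(c'_L+u,w-u)$ with $u\in W$, so the admissible $c'$-parts of $c$ form a single coset of $W$, whose unique minimal-$\|\cdot\|_x$-norm element is $c'_L$ (an orthogonal projection, as $\|\cdot\|_x$ comes from the Hodge inner product). From this: (i) if $v\in L$, then $c+v=\gp_{x,L}(c)+c'_L+(w+v)$, so $p_L(c+v)=p_L(c)$ and $c+v$ has the same coset of admissible $c'$-parts, giving $(c+v)'_L=c'_L$; (ii) if $v'\in\ker p$, then $p_L(c+v')=p_L(c)$ (as $p(v')=0$), so $\gp_{x,L}(c+v')=\gp_{x,L}(c)$, and the admissible $c'$-parts of $c+v'$ form the coset $c'_L+v'+W$. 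Since $c'_L+v'$ lies in that coset, and the norm of the minimal-norm element of a coset of $W$ equals the norm of the $W^\perp$-component of any representative and so is $1$-Lipschitz in the representative, we obtain $\bigl|\,\|(c+v')'_L\|_x-\|c'_L\|_x\,\bigr|\le\|v'\|_x$.

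Then (a)--(c) follow formally. For (a): $\nu_{x,L}(c)=0$ iff $g_{x,L}(c)=0$ iff $c'_L=0$ and $p_L(c)=0$; but $p_L(c)=0$ forces $\gp_{x,L}(c)=0$, whence $c=c'_L+w$, and $c'_L=0$ then gives $c=w\in L$. Conversely, if $c\in L$ then $p(c)\in p(L)$, so $p_L(c)=0$ and the coset of admissible $c'$-parts is $W$ itself, whose minimal-norm element is $0$; hence $\nu_{x,L}(c)=0$. For (b) (interpreted as $\nu_{x,L}(c+v)=\nu_{x,L}(c)$ for $v\in L$): by (i), $g_{x,L}(c+v)=g_{x,L}(c)$, so $\nu_{x,L}(c+v)=\nu_{x,L}(c)$. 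For (c): $p_L(c+v')=p_L(c)$ together with (ii) gives $|g_{x,L}(c+v')-g_{x,L}(c)|\le\|v'\|_x$ via $1$-Lipschitzness of $\max\{\cdot,a\}$, and then $|\nu_{x,L}(c+v')-\nu_{x,L}(c)|\le\|v'\|_x$ via $1$-Lipschitzness of $\min\{1,\cdot\}$.

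The only point that is not pure bookkeeping is the claim that the minimal-norm element of a coset of $W$ is well defined and $1$-Lipschitz in the coset; I expect this to be immediate from the Hilbert-space structure of $(\rhr,\|\cdot\|_x)$, since minimal-norm elements of affine subspaces are orthogonal projections and orthogonal projections are contractions. So I do not anticipate a genuine obstacle: the lemma reduces to the rewriting $\nu_{x,L}=\min\{1,g_{x,L}(\cdot)\}$ plus elementary coset arithmetic.
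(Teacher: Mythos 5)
The paper states this lemma without proof, so there is no paper argument to compare against; your proof does supply one, and it is essentially correct. The rewriting $\nu_{x,L}=\min\{1,g_{x,L}\}$ is valid, the coset bookkeeping in (i) and (ii) is right, (a) follows cleanly, and you correctly read past the typographical slip in the statement of (b) (it should read $\nu_{x,L}(c+v)=\nu_{x,L}(c)$, not $\nu_{x,L}(v)$).

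The one place you should be careful is the final remark appealing to ``the Hilbert-space structure of $(\rhr,\|\cdot\|_x)$''. The norm defined in \S\ref{sec:subsec:relative} on relative cohomology is not Euclidean: on $\ker p$ it reduces to $\sum_{(z,z')}|\int_{\gamma_{z,z'}}c|$, an $\ell^1$-type norm (and on the absolute part it involves the modified Hodge norm and an $SL(2,\reals)$-average). So minimal-norm representatives of a coset need not come from an orthogonal projection and need not even be unique. None of that hurts the argument, though: the only thing you actually use is that the \emph{value} $\|c'_L\|_x=\dist(c-\gp_{x,L}(c)-v,\,W)$ is a $1$-Lipschitz function of the chosen representative of the coset, and $|\dist(a,W)-\dist(a',W)|\le\|a-a'\|$ holds in any normed space by the triangle inequality (with the infimum attained because $\ker p$ is finite-dimensional). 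If you replace the orthogonal-projection justification with this elementary one, the proof is complete and correct.
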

In view of Lemma~\ref{lemma:properties:nu:x:cL}, for an affine
subspace $\cL = v_0 + L$ of $\rhr$, 
we can define $\nu_{x,\cL}(c)$ to be $\nu_{x,L}(c-v_0)$.

Extend $\nu_{x,\cL}$ to $H^1(M,\Sigma,\cx)$ by  
\begin{displaymath}
\nu_{x,\cL}(c_1+ i c_2)=\max\{\nu_{x,\cL}(c_1),\nu_{x,\cL}(c_2)\}.
\end{displaymath}

For an affine subspace $\cL \subset \rhr$, let $\cL_\cx \subset
H^1(M,\Sigma,\cx)$ denote the complexification $\cx \tensor \cL$. 
We use the notation (here we are working in period coordinates)
\begin{displaymath}
d'(x,\cL) = \nu_{x,\cL}(x - v)
\end{displaymath}
where $v$ is any vector in $\cL_\cx$ (and the choice of $v$ does not
matter by Lemma~\ref{lemma:properties:nu:x:cL} (b)). Note that
$d'(\cdot, \cL)$ is defined only if $\cL = v_0 + L$ where $p(L)$ is
symplectic. 
We think of
$d'(x,\cL)$ as measuring the distance between $x$ and $\cL_\cx \subset
H^1(M,\cx)$. In view of Lemma~\ref{lemma:growth:relative:class}, 
we have for all $t > 0$
\begin{equation}
\label{eq:growth:dprime}
e^{-m' t} d'(x,\cL) \le d'(a_t x, a_t \cL) \le e^{m' t} d'(x,\cL).
\end{equation}

Recall that $\delta > 0$ is defined in the beginning of  
\S\ref{sec:subsec:absolute}.
\begin{lemma}
\label{lemma:ineq-rel}
Let the notation be as above. Then, there exists constants
$C_0 > 0$,
$L_0 > 0$, $\eta_0' > 0$, $\eta_3 > 0$, $t_0' > 0$
and continuous functions $\kappa_1: \cH_1(\alpha) \cross \reals^+ \to
\reals^+$ and $b: \reals^+ \to \reals^+$ such that 
\begin{itemize}
\item $\kappa_1(x,t) \le
C_0 e^{m' \delta t}$ for all $x \in \cH_1(\alpha)$ and all $t > 0$
\item $\kappa_1(x,t) \le e^{-\eta_3 t}$ for all $x
    \in \strat1$ and
  $t > t_0'$ with $\log u(x) < L_0 + \eta_0' t$, 
\end{itemize}
so that for any affine
subspace  $\cL \subset H^1(M,\Sigma, \reals)$ 
such that the projection of the linear part of $\cL$ to $\hr$ is symplectic
we have
\begin{equation}
\label{eq:ineq-rep1}
\frac{1}{2\pi}\int_0^{2\pi}\frac{d\theta}{d'(a_t r_\theta x, a_t
  r_\theta
  \cL)^\delta}\leq\frac{\kappa_1(x,t)}{d'(x,\cL)^\delta}+ b(t)
\end{equation}
\end{lemma}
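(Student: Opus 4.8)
The plan is to decompose a relative cohomology class into an absolute part and a "purely relative" part (the kernel of $p$), estimate each separately under the circular averaging, and combine. Write $x - v$ in period coordinates (for $v \in \cL_\cx$) as $c = \gp_{x,L}(c) + c'_L + w$ with $w \in L$, so that $d'(x,\cL) = \nu_{x,L}(c) = \max\{\|c'_L\|_x, (\|p_L(c)\|'_x)^{1/2}\}$ when this is $\le 1$, and $=1$ otherwise. Since everything is homogeneous, the key point is to control $\nu_{a_t r_\theta x, a_t r_\theta \cL}(a_t r_\theta(x-v))$ from below on average. I would split into two regimes according to which of the two terms in the $\max$ dominates $d'(x,\cL)$.

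First, the absolute part. Here I would invoke Lemma~\ref{lemma:ineq:absolute:thick:part} applied to $p_L(c) \in p(L)^\perp_\cx$ — note $p(L)^\perp$ is $SL(2,\reals)$-invariant by \cite{Avila:Eskin:Moeller:yeti}, so the splitting is respected by the flow, and Forni's bound (Lemma~\ref{lemma:forni}) together with Theorem~\ref{theorem:exponential:return} give exactly the dichotomy: a universal bound $C_0$ always, and an $e^{-\eta t}$ bound when $\log u(x) < L_0 + \eta_0 t$. The square root in the definition of $\nu_{x,L}$ merely halves the exponent, which is harmless. Since $p_L$ commutes with the $SL(2,\reals)$-action and $\|p_L(c)\|'_x \approx \|p(c)\|'_x$ up to the choice-of-basis ambiguity (absorbed into $\approx$), this controls $\frac{1}{2\pi}\int_0^{2\pi} d\theta / (\|p_{a_t r_\theta L}(a_t r_\theta c)\|'_{a_t r_\theta x})^{\delta/2}$.

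Second, the kernel part $c'_L \in \ker p$. Here the cocycle acts unipotently: by Lemma~\ref{lemma:unip-cont}, $\|\gp_{a_t r_\theta x,L}(a_t r_\theta c) - (\text{translate of } c'_L)\|$ changes by at most $e^{m'|t|}$, and on $\ker p$ the Hodge norm is independent of the base surface. So $\|c'_L\|$ under the flow can shrink by at most $e^{-m' t}$; but crucially, the error introduced in passing between the optimal kernel representative at $x$ and at $a_t r_\theta x$ is controlled by $\|U(x,t)\|$ times the absolute part $\|p_L(c)\|'_x$, which is exactly where the additive term $b(t)$ on the right-hand side comes from (via Lemma~\ref{lemma:properties:nu:x:cL}(c)). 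Using $e^{-m'\delta t}$ as a crude lower bound for the kernel term and combining with the additive error gives $\kappa_1(x,t) \le C_0 e^{m'\delta t}$ unconditionally, and the sharper $e^{-\eta_3 t}$ in the recurrent regime comes entirely from the absolute estimate dominating.

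The main obstacle will be the bookkeeping around the non-uniqueness of the decomposition \eqref{eq:c:decomp} and the discontinuity of the short basis: one must check that the additive error term genuinely decouples, i.e. that $d'(a_t r_\theta x, a_t r_\theta \cL)$ is bounded below by $c(t)\, d'(x,\cL)$ minus something proportional only to the \emph{absolute} distance (so that after dividing one gets $\kappa_1(x,t)/d'(x,\cL)^\delta + b(t)$ rather than a multiplicative blow-up). Concretely, in the regime where $d'(x,\cL) = \|c'_L\|_x \le (\|p_L(c)\|'_x)^{1/2}$, the additive $b(t) = O(e^{m'\delta t})$ term swallows the absolute contribution, while in the complementary regime the absolute estimate of step one already gives everything; the care is in handling the boundary between regimes and the truncation at $1$ in the definition of $\nu_{x,L}$, using Lemma~\ref{lemma:properties:nu:x:cL}(b),(c) to move freely between representatives. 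I would also use \eqref{eq:growth:dprime} as a sanity check that the unconditional bound $\kappa_1(x,t) \le C_0 e^{m'\delta t}$ is consistent. The continuity of $\kappa_1$ and $b$ follows from continuity of the ingredients (the functions in Lemmas~\ref{lemma:forni}, \ref{lemma:ineq:absolute:thick:part} and $u$) after the usual averaging-by-$\phi$ smoothing already built into $\|\cdot\|$.
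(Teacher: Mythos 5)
There is a genuine gap, and it is the central one: you never apply the averaged decay estimate (Lemma~\ref{lemma:EMM51}) to the pure relative part $v' \in \ker p$. You write that you will use ``$e^{-m'\delta t}$ as a crude lower bound for the kernel term,'' i.e.\ $\|a_t r_\theta v'\| \ge e^{-m't}\|v'\|$ pointwise, which only yields $\kappa_1(x,t) \le C_0\, e^{m'\delta t}$ (growth), and you then assert that ``the sharper $e^{-\eta_3 t}$ in the recurrent regime comes entirely from the absolute estimate dominating.'' This fails in exactly the case you need: if $d'(x,\cL) = \|v'\|_x > (\|p(v)\|'_x)^{1/2}$, the absolute estimate bounds $1/(\|p(v)\|'_x)^{\delta/2}$, which is \emph{smaller} than $1/d'(x,\cL)^\delta$ and cannot be divided by $d'(x,\cL)^\delta$ to produce $\kappa_1$. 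And absorbing the absolute contribution into $b(t)$ is not an option, since that contribution scales with $\|p(v)\|'_x$, which depends on $\cL$ and cannot be a constant. The paper's proof gets around this by observing that $(\ker p)_\cx \cong \reals^2 \otimes \ker p$ is a direct sum of standard $2$-dimensional $SL(2,\reals)$-representations, so Lemma~\ref{lemma:EMM51} with $d=2$ gives
\[
\frac{1}{2\pi}\int_0^{2\pi} \frac{d\theta}{\|a_t r_\theta v'\|^{\delta}} \le \frac{e^{-k_2(\delta)t}}{\|v'\|^\delta_x},
\]
an \emph{unconditional} exponential decay on the kernel part. In the end the paper takes a minimum over the kernel and absolute bounds, and exactly one of them controls $1/d'(x,\cL)^\delta$ depending on which component of the $\max$ realizes $d'(x,\cL)$. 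The recurrence hypothesis is needed only for the absolute part; you have the dependence reversed.

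Two further points are under-articulated. First, the decoupling of absolute and relative under the cocycle is not achieved by pushing the error into $b(t)$, but by a preliminary reduction to the regime $\nu_{x,\cL}(v) < \tfrac12 e^{-3m't}$ (everything else goes into $b(t)$ via \eqref{eq:growth:dprime}); after this the unipotent error from Lemma~\ref{lemma:unip-cont} is of size $e^{2m't}\|p(v)\|'$, quadratically small compared to $(\|p(v)\|')^{1/2}$, so a reverse-triangle inequality gives \eqref{eq:separate:abs:rel} with a clean factor of $\tfrac12$. Second, Lemma~\ref{lemma:ineq:absolute:thick:part} carries the factor $\psi_x(p(v))^{\delta/2}$, which you do not address; the paper controls it using that $p(v)$ is symplectically orthogonal to $p(\cL_\cx)$ together with Theorem~\ref{theorem:fast:return} and a continuity argument, and this is an essential ingredient in making the absolute bound usable in the recurrent regime.
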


\begin{proof} Suppose $d'(x,\cL) \ge 1$, or $d'(a_t r_\theta x, a_t
  r_\theta \cL) \ge 1$ for some $\theta \in [0,2\pi]$. Then,
  (\ref{eq:ineq-rep1}) with $b(t) = e^{2m'\delta t}$ follows
  immediately from (\ref{eq:growth:dprime}). Therefore, we may assume
  that
\begin{equation}
\label{eq:initial:assumption}
d'(x,\cL) < 1, \quad\text{ and } \quad d'(a_t r_\theta x, a_t r_\theta \cL) < 1
\text{ for all $\theta$.}
\end{equation}
Therefore, in particular,
\begin{equation}
\label{eq:def:v:vprime}
d'(x,\cL) = \nu_{x,\cL}(v) = \max(\|v'\|_x, (\|p(v)\|'_x)^{1/2}), 
\end{equation}
where
\begin{equation}
\label{eq:basic:decomp:v}
v = \gp_x(v) + v', \quad \text{ $p(v) \in p(L_\cx)^\perp$, $L$ is
  the linear part of $\cL$, 
  and $v' \in \ker p$. }
\end{equation}
We remark that the main difficulty of the proof of this lemma 
is to control the
interaction between absolute and pure relative cohomology. The strategy
is roughly as follows: we quickly reduce to the case where $v$ is extremely
small. Then, 
if the size of the absolute part $\|p(v)\|'_x$ is
comparable to the size of pure relative part 
$\|v'\|$, then the quantities $d(x,\cL)$ and $d(a_t
r_\theta x, a_t r_\theta \cL)$ are all controlled by the absolute part
(because of the square root in (\ref{eq:def:v:vprime})). In fact, the
only situation in which the pure relative part $v'$ has an effect is
when $\|p(v)\|'_x$ is essentially smaller then $\|v'\|^2$ (so it is
tiny). In this regime, the influence of the absolute part on the
relative part is very small, in view of
Lemma~\ref{lemma:unip-cont}. This allows us to separate the
contribution of absolute and pure relative cohomology in all
cases: for a precise statement, see (\ref{eq:separate:abs:rel})
below. We now give the detailed implementation of this strategy. 

Suppose $d'(x,\cL) = \nu_{x,\cL}(v) 
\ge \frac{1}{2} e^{-3 m' t}$. Then, using
  (\ref{eq:growth:dprime}) we have the crude estimate 
\begin{displaymath}
d'(a_t r_\theta x, a_t r_\theta \cL)^{-\delta} \le 
d'(a_t r_\theta x, a_t r_\theta \cL)^{-1} \le 2 e^{5 m' t} 
\end{displaymath}
and thus (\ref{eq:ineq-rep1}) holds with $b(t) =  2 e^{5 m' t}$. Hence, we may
assume that $\nu_{x,\cL}(v) < \frac{1}{2} e^{-3 m' t}$. Then, 
\begin{align}
\label{eq:tmp:forlater}
e^{2m't} (\|p(a_t r_\theta v)\|_{a_t r_\theta
  x}')^{1/2} & \le  
 e^{(2m'+0.5)t} (\|p(v)\|_{a_t r_\theta
   x}')^{1/2} && \text{by    (\ref{eq:action:SL2})} \notag \\
 & \le e^{(2m'+0.5 +0.5m_0)t} (\|p(v)\|'_x)^{1/2} &&\text{by
   (\ref{eq:log:abs:unif:cts})}\notag \\ 
 & \le e^{3 m' t} \nu_{x,\cL}(v)  && \text{since $m' >
     m_0 > 1$} \notag \\
 & \le \tfrac{1}{2}. && 
\end{align}
Let us introduce the notation, for $u \in \ker p$,
\begin{displaymath}
\|u\|_{\cL} = \inf \{ \|u-w\| \st w \in \cL \cap \ker p \}. 
\end{displaymath}
Then, by (\ref{eq:initial:assumption}), \mc{explain}
\begin{equation}
\label{eq:dprime:evolution}
d'(a_t r_\theta x, a_t r_\theta \cL) = \max( (\|p(a_t r_\theta
v)\|_{a_t r_\theta x}')^{1/2}, \|a_t r_\theta v - \gp_{a_t r_\theta x}(a_t r_\theta v)\|_{a_t r_\theta \cL}).
\end{equation}
But, 
\begin{align}
\label{eq:tmp:reverse:triangle}
 \|a_t r_\theta v - & \gp_{a_t r_\theta x} (a_t r_\theta v)\|_{a_t r_\theta
  \cL} = && \notag \\
& = \| a_t r_\theta (v' + \gp_x(v)) - \gp_{a_t r_\theta x}(a_t
r_\theta v) \|_{a_t r_\theta \cL} && \text{by
  (\ref{eq:basic:decomp:v})} \notag \\
& = \| a_t r_\theta v' + \gp_x(a_t r_\theta v) -\gp_{a_t r_\theta
  x}(a_t r_\theta v) \|_{a_t r_\theta \cL} && \notag \\
& \ge \| a_t r_\theta v' \|_{a_t r_\theta \cL} - 
\| \gp_x(a_t r_\theta v) -\gp_{a_t r_\theta x}(a_t r_\theta v) \| &&
\text{by the reverse triangle inequality}\notag \\
& \ge \| a_t r_\theta v' \|_{a_t r_\theta \cL} - \|U(r_\theta x, t)\| 
\|p( a_t r_\theta v)\|'_x && \text{by
  (\ref{eq:def:Uxt})} \notag \\ 
& \ge \| a_t r_\theta v' \|_{a_t r_\theta \cL} - e^{2m' t} 
\|p( a_t r_\theta v)\|'_{a_t r_\theta x} && \text{by
  Lemma~\ref{lemma:unip-cont}.} 
\end{align}
Therefore,
\begin{align}
\label{eq:separate:abs:rel}
& d'(a_t r_\theta x, a_t r_\theta \cL) = \notag \\
& = \max\left( (\|p(a_t r_\theta
v)\|'_{a_t r_\theta x})^{1/2}, \|a_t r_\theta v -
  \gp_{a_t r_\theta 
  x}(a_t r_\theta v)\|_{a_t r_\theta \cL}\right) && \text{by
(\ref{eq:dprime:evolution})} \notag \\
& \ge \frac{1}{2} \left( \|a_t r_\theta v - \gp_{a_t r_\theta
  x}(a_t r_\theta v)\|_{a_t r_\theta \cL} + (\|p(a_t r_\theta
v)\|'_{a_t r_\theta x})^{1/2}\right) \notag \\
& \ge \frac{1}{2}\left(\| a_t r_\theta v' \|_{a_t r_\theta \cL} - e^{2m' t} 
\|p( a_t r_\theta v)\|'_{a_t r_\theta x} + (\|p(a_t r_\theta
v)\|'_{a_t r_\theta x})^{1/2}\right) && \text{by
(\ref{eq:tmp:reverse:triangle})}\notag \\
& = \frac{1}{2}\left(\|a_t r_\theta v'\|_{a_t r_\theta \cL} + (\|p(a_t r_\theta
v)\|'_{a_t r_\theta x})^{1/2}(1-e^{2 m't} (\|p(a_t
r_\theta v)\|'_{a_t   r_\theta x})^{1/2})\right) \notag \\
& \ge \frac{1}{2}\left(\|a_t r_\theta v'\|_{a_t r_\theta \cL} + \frac{1}{2}
(\|p(a_t r_\theta
v)\|'_{a_t r_\theta x})^{1/2}\right) &&\text{by
(\ref{eq:tmp:forlater})} \notag \\
&  \ge 
\frac{1}{2}\left(\|a_t r_\theta v'\|_{a_t r_\theta \cL} + \frac{1}{2}
(\|p(a_t r_\theta
v)\|_{a_t r_\theta x})^{1/2}\right)
&& \text{since $\| \cdot \|' \ge \|
\cdot \|$}
\end{align}
However, since the action of the cocycle on $\ker p$ is trivial, $v'
\in \ker p$ and $\cL$ is invariant, 
\begin{displaymath}
\|a_t r_\theta v'\|_{a_t r_\theta \cL} = \|a_t r_\theta v'\|.
\end{displaymath}
Then, {}{(with $v$ and $v'$ as in (\ref{eq:def:v:vprime}) and
(\ref{eq:basic:decomp:v})),} 
\begin{align*}
\frac{1}{2\pi} \int_{0}^{2\pi} &\frac{d\theta}{d'(a_t r_\theta x,
  a_t r_\theta \cL)^\delta} \le \\
& \le \frac{1}{2\pi} \int_0^{2\pi} 4
\min\left(  
  \frac{1}{\|a_t r_\theta v')\|^{\delta}}, \frac{1}{\|a_t
    r_\theta p(v) 
  \|^{\delta/2}} \right) \, d\theta \\
& \le 4 \min \left( \frac{1}{2\pi} \int_0^{2\pi} 
  \frac{d\theta}{\|a_t r_\theta v'\|^{\delta}}, \frac{1}{2\pi}
  \int_0^{2\pi} \frac{d\theta}{\|a_t
    r_\theta p(v) 
  \|^{\delta/2}} \right) \, d\theta \\
& \le 4 \min\left(
  \frac{e^{-k_2(\delta)t}}{\|v'\|^\delta_x},    \frac{\min(C_0, \psi_x(p(v))^{\delta/2}\kappa(x,t))}{\|p(v)\|_{x}^{\delta/2}} \right) && 
\text{\hspace{-0.7in} by Lemma~\ref{lemma:EMM51} 
 and Lemma~\ref{lemma:ineq:absolute:thick:part}} \\
\end{align*}

Let $\eta_0' > 0$ be a constant to be chosen later. Suppose $\log u(x)
< L_0 + \eta_0' t$. 
By Theorem~\ref{theorem:fast:return} there 
exists $\theta \in [0,2\pi]$ and $\tau \le m'' \log u(x)$ such that
$x' \equiv a_\tau r_\theta x \in K_\rho$. Then, 
\begin{displaymath}
\tau \le m'' L_0 + m'' \eta_0' t.
\end{displaymath}
Then, for any $v$, 
\begin{displaymath}
\|p(v)\|'_{x} \le e^{m_0 \tau} \|p(v)\|'_{x'} \le C_0 e^{m_0 \tau}
\|p(v)\|_{x'} \le C_0 e^{(m_0 + 2) \tau} \|p(v)\|_{x}
\end{displaymath}
Therefore, by Lemma~\ref{lemma:ineq:absolute:thick:part} (b), 
\begin{displaymath}
\frac{\kappa(x,t)}{\|p(v)\|_x^{\delta/2}} \le e^{-\eta t} C_0
e^{(\delta/2)(m_0+2) (m'' L_0 + m'' \eta_0' t)}
(\|p(v)\|_x')^{-\delta/2} \le 
e^{-(\eta/2) t} (\|p(v)\|_x')^{-\delta/2}, 
\end{displaymath}
provided $(\delta/2) m'' \eta_0' < \eta/2$ and $t_0'$ is sufficiently
large. 

Let $v$ be as defined in (\ref{eq:def:v:vprime}). Note that $x+v \in
\cL_\cx$ (in period coordinates), and $p(v)$  
is (symplectically) orthogonal to $p(\cL_\cx)$. Let $w = a_\tau
r_\theta v$. Then, since $\cL$ is invariant, $p(w)$ is symplectically
orthogonal to $p(\cL_\cx)$.  Therefore,
$\psi_{x'+w}(p(w)) = 1$. Also, by   definition, the subspace $E(x')$
varies continuously with $x'$, hence for any $y \in \cL_\cx$,
\begin{displaymath}
\lim_{x' \to y} \psi_{x'}(p(w)) = 1. 
\end{displaymath}
Since we are assuming that $d'(x',\cL)$ is small (in fact
$d'(x,\cL) \le \tfrac{1}{2} e^{-m't}$ and $\tau \ll t$), 
we conclude that 
$\psi_{x'}(p(w))$ is uniformly bounded. Therefore, 
\begin{displaymath}
\psi_x(p(v))^{\delta/2} \le e^{C \eta_0' (\delta/2) 2\tau} \le e^{(\eta/4)t}
\end{displaymath}
provided $\eta_0'$ is small enough. \mc{give more detail} 
Thus, we get, for $t > t_0'$ and $x \in \strat1$ so that $\log u(x) <
L_0 + \eta_0' t$, 
\begin{displaymath}
\frac{1}{2\pi} \int_{0}^{2\pi} \frac{d\theta}{d'(a_t r_\theta x,
  a_t r_\theta \cL)^\delta} \le
4 \min\left(\frac{
  e^{-k_2(\delta)t}}{\|v'\|^\delta_x},
\frac{e^{-(\eta/4)t}}{(\|p(v)\|'_x)^{\delta/2}} \right) 
\end{displaymath}
The estimate (\ref{eq:ineq-rep1}) now follows. \mc{give more detail}
\end{proof}


\section{The sets $J_{k,\cM}$}
\label{sec:regions}

{}{Let $\tilde{\cH}_1(\alpha)$ denote the space of
markings of translation surfaces in $\cH_1(\alpha)$ with the zeroes
labelled. Then $\tilde{\cH}_1(\alpha)$ is a bundle over (a finite
cover of) the Teichm\"uller space of Riemann surfaces, or
alternatively a stratum of the Teichm\"uller space of holomoporphic
$1$-forms.
} 

Fix $0<\rho < 1/2$ so that Theorem~\ref{theorem:all:measures:return} and
Theorem~\ref{theorem:fast:return} hold.  Let $K_\rho$ be as in
Theorem~\ref{theorem:all:measures:return}  and let $K' := \{ x \st
{\rm d}(x, K_{0.01}) \le 1 \}$ where ${\rm d}$ denotes the Teichm\"uller distance.  Then, $K'$ is a
compact subset of $\strat1$. We lift $K'$ to a compact subset of
{}{the Teichm\"uller space $\tilde{\cH}_1(\alpha)$,}
which we also denote by $K'$.

\begin{definition}[Complexity]
\label{def:number:sheets}
For an affine invariant submanifold $\cM \subset \strat1$, let
$n(\cM)$ denote the smallest integer such that $\cM \cap K'$ is
contained in a union of at most $n(\cM)$ affine subspaces. We call
$n(\cM)$ the ``complexity'' of $\cM$.
\end{definition}
Since $\cM$ is closed and $K'$ is compact, $n(\cM)$ is always
finite. Clearly $n(\cM)$ depends also on the choice of $K'$, but since
$K'$ is fixed once and for all, we drop this dependence from the
notation. 

\begin{lemma}
\label{lemma:def:J:Jprime}
Let $\cM$ be an affine manifold, and let $\tilde{\cM}$ be a lift of
$\cM$ to {}{the Teichm\"uller space $\tilde{\cH}_1(\alpha)$.} 
For $x \in \cH_1(\alpha)$, let 
\begin{displaymath}
J_{k,\cM}(x) = \{ \cL \st d'(\cL,x) \le u(x)^{-k}, \quad \text{ $\cL$ is
  an affine subspace tangent to $\tilde{\cM}$ }\}. 
\end{displaymath}
Then, there exists $k > 0$, 
  depending only on  $\alpha$ such that for any affine manifold $\cM \subset
  \cH_1(\alpha)$, 
\begin{displaymath}
|J_{k,\cM}(x)| \le n(\cM)
\end{displaymath}
where $|J_{k,\cM}(x)|$ denotes the cardinality of $J_{k,\cM}(x)$, and
$n(\cM)$ is as in Definition~\ref{def:number:sheets}. 
\end{lemma}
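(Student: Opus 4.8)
The plan is to reduce the count of affine subspaces $\cL$ near $x$ to the count of affine subspaces that $\cM$ traces out inside the fixed compact set $K'$, which is bounded by $n(\cM)$ by definition. First I would recall that $u$ is proper and $SO(2)$-invariant and satisfies the $\log$-Lipschitz estimate $e^{-mt}u(x) \le u(a_t x) \le e^{mt}u(x)$ of Theorem~\ref{theorem:alpha-fun}, and that by Theorem~\ref{theorem:fast:return} there exist $\theta \in [0,2\pi]$ and $\tau \le m'' \log u(x)$ with $x' := a_\tau r_\theta x \in K_\rho \subset K'$. The idea is: if two distinct affine subspaces $\cL_1, \cL_2 \in J_{k,\cM}(x)$ were both extremely close to $x$, then after flowing by $a_\tau r_\theta$ their images $a_\tau r_\theta \cL_1, a_\tau r_\theta \cL_2$ are both close to $x' \in K'$; but two \emph{distinct} affine subspaces tangent to $\tilde\cM$ that pass within a small neighborhood of a point of the compact set $K'$ must be "the same sheet", because near $K'$ the manifold $\tilde\cM$ is, by Definition~\ref{def:number:sheets}, a union of at most $n(\cM)$ genuine affine subspaces, and distinct sheets are separated by a definite distance on $K'$.

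Concretely, the key steps in order are: (1) Fix $k$ large, to be determined by $\alpha$ (via the constants $m'$ of Lemma~\ref{lemma:growth:relative:class}, $m''$ of Theorem~\ref{theorem:fast:return}, and the compactness of $K'$) — say $k$ chosen so that $m' m'' < $ (something like $1/2$), ensuring the distance estimates below have room. (2) Given $x$, produce $\tau \le m'' \log u(x)$ and $\theta$ with $x' = a_\tau r_\theta x \in K_\rho$. (3) For each $\cL \in J_{k,\cM}(x)$ we have $d'(x,\cL) \le u(x)^{-k}$; apply the cocycle growth estimate (\ref{eq:growth:dprime}), $d'(a_\tau r_\theta x, a_\tau r_\theta \cL) \le e^{m'\tau} d'(x,\cL) \le e^{m' m'' \log u(x)} u(x)^{-k} = u(x)^{m'm'' - k}$, which for $k$ large is $\le u(x')^{-1}$ or simply $\le \epsilon_0$ for a fixed small $\epsilon_0$ (using $u(x') $ bounded below on $K_\rho$). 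Also $r_\theta\cL$ is still an affine subspace tangent to $\tilde\cM$ since $\tilde\cM$ is $SL(2,\reals)$-invariant, so $a_\tau r_\theta \cL$ is a sheet of $\tilde\cM$ near $x' \in K'$. (4) By the definition of $n(\cM)$, $\cM \cap K'$ lies in a union of $n(\cM)$ affine subspaces; since $\tilde\cM$ is a cover and the $d'$-distance to a sheet controls the period-coordinate distance, there are at most $n(\cM)$ \emph{distinct} affine subspaces tangent to $\tilde\cM$ that come $\epsilon_0$-close to any given point of $K'$, provided $\epsilon_0$ is smaller than the minimal separation of the finitely many sheets over the compact set $K'$. (5) Finally, the map $\cL \mapsto a_\tau r_\theta \cL$ is injective (it is the action of a fixed group element), so $|J_{k,\cM}(x)| \le n(\cM)$.

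The main obstacle I expect is step (4): making precise that "the number of distinct affine subspaces tangent to $\tilde\cM$ within $\epsilon_0$ of a point of $K'$ is at most $n(\cM)$" — this requires a compactness argument showing that the finitely many affine pieces of $\cM \cap K'$ are uniformly separated (in period coordinates, over all of $K'$) and that being $d'$-close to $\tilde\cM$ in the sense of $\nu_{x,\cL}$ forces genuine proximity of the linear subspaces, so two nearby-to-$x'$ sheets must coincide. One must be careful that $d'(x',\cdot)$ is the somewhat exotic quantity built from $\nu_{x,\cL}$ mixing absolute and relative cohomology with a square root, rather than a genuine metric; but since we are on the compact set $K'$, all the modified Hodge norms are uniformly comparable to the standard ones, and $d'(x',\cL) \le \epsilon_0$ does imply that $\cL_\cx$ passes within $C\epsilon_0$ (in ordinary period coordinates) of $x'$, which is all that is needed. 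The choice of $k$ depends only on $\alpha$ because $m'$, $m''$, and the separation constant of $K'$ all depend only on $\alpha$ (indeed $K'$ was fixed once and for all), so $k$ is uniform over all affine submanifolds $\cM$, as claimed.
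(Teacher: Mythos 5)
Your overall strategy matches the paper's: flow $x$ into $K_\rho$ via Theorem~\ref{theorem:fast:return}, push the estimate $d'(x,\cL)\le u(x)^{-k}$ forward using (\ref{eq:growth:dprime}) and the bound $\tau \le m''\log u(x)$, and then count the sheets near $x' \in K'$. However, the step you correctly flag as ``the main obstacle'' -- justifying that at most $n(\cM)$ affine subspaces tangent to $\tilde\cM$ can come close to a point of $K'$ -- is resolved in the paper by a different and more robust mechanism than the one you sketch, and the one you sketch would not quite work.

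You propose to prove that ``the finitely many affine pieces of $\cM\cap K'$ are uniformly separated.'' That is not true in general: two distinct sheets of $\tilde\cM$ can intersect (for instance over a self-intersection point of $\cM$ in $K'$), so there is no uniform lower bound on their distance. What the paper does instead is purely about \emph{injectivity of period coordinates}, not separation of sheets. Concretely, for each $x$ one has a period-coordinate ``ball'' $B'(x,r)$ in the Teichm\"uller lift, and for small enough $r(x)>0$ this ball is embedded, i.e.\ the projection to moduli space is injective and periods give a coordinate chart. Compactness of $K_\rho$ gives a uniform radius $r_0 = \inf_{x\in K_\rho} r(x) > 0$. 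Choosing $k$ so that $2^{m'm'' - k} < r_0$ and using $u \ge 2$, one gets that $B'(x,u(x)^{-k})$ is embedded for \emph{every} $x$: if not, two points differing by a mapping class group element would, after flowing by $a_\tau r_\theta$ to $K_\rho$, both land in the embedded ball $B'(x',r_0)$, a contradiction. Then for $\cL \in J_{k,\cM}(x)$, its image $\cL' = a_\tau r_\theta\cL$ satisfies $d'(x',\cL') < r_0$, so $\cL'$ meets $B'(x',r_0)\subset K'$. Because both $B'(x,u(x)^{-k})$ and $B'(x',r_0)$ are embedded, distinct $\cL$ give distinct $\cL'$ \emph{as affine subspaces in moduli space} -- there is no possibility of two lifts being identified by a deck transformation. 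The count $|J_{k,\cM}(x)| \le n(\cM)$ is then a direct application of Definition~\ref{def:number:sheets}: $\cM\cap K'$ is covered by at most $n(\cM)$ affine subspaces, so at most $n(\cM)$ such $\cL'$ can intersect $B'(x',r_0)$. No separation estimate is needed or used. If you pursue your step (4) as written, you would be trying to prove a false uniform separation statement; you should replace it with the embedded-ball argument.
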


\mc{do we need to say ``embedded'' in the statement here?}

\begin{proof} 
We lift $x$ to {}{the Teichm\"uller space
  $\tilde{\cH}_1(\alpha)$.} Working in period
  coordinates, let
\begin{displaymath}
B'(x,r) = \{ x + h + v \st \text{$h$ harmonic, $v \in \ker p$, 
$\max(\| h\|_x^{1/2}, \|v\|_x) \le r$} \} 
\end{displaymath}
For every $x \in \strat1$, there exists $r(x)> 0$ such
that $B'(x,r(x))$ is embedded (in the sense that the projection from
{}{the 
Teichm\"uller space $\tilde{\cH}_1(\alpha)$ to the Moduli space
$\cH_1(\alpha)$,} 
restricted to $B'(x,r(x))$ is injective). 
Furthermore, we may choose 
$r(x)> 0$ small enough 
so that the periods on $B'(x,r(x))$ are a coordinate system
(both on {}{the Teichm\"uller space $\tilde{\cH}_1(\alpha)$} and on the Moduli space $\cH_1(\alpha)$).
Let $r_0 = \inf_{x \in K_\rho} r(x)$. By compactness of $K_\rho$, $r_0 >
0$. Then, choose $k_0$ so that 
\begin{equation}
\label{eq:choice:k}
2^{m'' m' -k_0} < r_0. 
\end{equation}
where $m''$ be as in
Theorem~\ref{theorem:fast:return}, and $m'$ is as in
(\ref{eq:growth:dprime}). 

We now claim that for any $k > k_0$ and any $x \in \strat1$, 
$B'(x,u(x)^{-k_0})$ is embedded. 
Suppose not, then there exist $x \in \strat1$ and 
$x_1, x_2 \in B'(x,u(x)^{-k_0})$ such
that $x_2 = \gamma x_1$ for some $\gamma$ in the mapping class
group. Write
\begin{displaymath}
x_i = x + h_i + v_i, \quad\text{$h_i$ harmonic, $v_i \in \ker p$, 
$\max(\| h_i\|_x^{1/2}, \|v_i\|_x) \le u(x)^{-k_0}$} 
\end{displaymath}
By Theorem~\ref{theorem:fast:return} there
exists $\theta \in [0,2\pi]$ and $\tau \le m'' \log u(x)$ such that
$x' \equiv a_\tau r_\theta x \in K_\rho$.

Let $x_i' = a_\tau t_\theta x_i$. Then, by
Lemma~\ref{lemma:growth:relative:class}  we have
\begin{displaymath}
\max(\| h_i\|_{x'}^{1/2}, \|v_i\|_{x'}) \le e^{-m'\tau}u(x)^{-k_0}
\le u(x)^{m' m'' -k_0} \le 2^{m m' -k_0} \le r_0
\end{displaymath}
where for the last estimate we used (\ref{eq:choice:k}) and the fact
that $u(x) \ge 2$. 
Thus, both $x_1'$ and $x_2'$ belong to $B'(x',r_0)$, which is embedded
by construction, contradicting the fact that $x_2' = \gamma x_1'$. 
Thus, $B'(x,u(x)^{-k})$ is embedded. 

Now suppose $\cL \in J_{k,\cM}(x)$, so that 
\begin{displaymath}
d'(x, \cL) \le u(x)^{-k}.
\end{displaymath}
Write $\cL' = a_\tau r_\theta \cL$. Then, by (\ref{eq:growth:dprime}),
\begin{displaymath}
d'(x',\cL') \le e^{m' \tau} u(x)^{-k} \le u(x)^{m'' m'} u(x)^{-k}
< r_0, 
\end{displaymath}
Hence, $\cL'$ intersects $B'(x',r_0)$. Furthermore, since $B'(x',r_0)$
and $B'(x,u(x)^{-k})$ are embedded, there is a one-to-one map
between subspaces contained in $J_{k,\cM}(x)$ and subspaces
intersecting $B'(x',r_0)$. 

Since $x' \in K_\rho$, and $r_0
< 1$, $B'(x',r_0) \subset K'$. 
Hence, there are at most $n(\cM)$ possibilities for $\cL'$, and hence
at most $n(\cM)$ possibilities for $\cL$. 
\end{proof}


\section{Standard Recurrence Lemmas}

\begin{lemma}
\label{lemma:abstract:recurrence:properties}
For every $\sigma > 1$ there exists a constant $c_0
= c_0(\sigma) > 0$ such that the following holds:
Suppose $X$ is a space on which $SL(2,\reals)$ acts, and suppose $f: X
\to [2,\infty]$ is an $SO(2)$-invariant 
function with the following properties:
\begin{itemize}
\item[{\rm (a)}] For all $0 \le t \le 1$ and all $x \in X$, 
\begin{equation}
\label{eq:abstract:condition:a}
\sigma^{-1} f(x) \le f(a_t x) \le \sigma f(x). 
\end{equation}
\item[{\rm (b)}] There exists $\tau > 0$ and
  $b_0 > 0$ such that for all $x \in X$, 
\begin{displaymath}
A_\tau f(x) \le c_0 f(x) + b_0.
\end{displaymath}
\end{itemize}
Then, 
\begin{itemize}
\item[{\rm (i)}] For all $c < 1$ there exists $t_0 > 0$ (depending on
  $\sigma$, and $c$) and $b > 0$ (depending only on $b_0$, $c_0$ and $\sigma$) 
  such that for all $t > t_0$ and all $x \in X$, 
\begin{displaymath}
(A_t f)(x) \le c f(x) + b.
\end{displaymath}
\item[{\rm (ii)}] There exists $B > 0$ (depending only on $c_0$, $b_0$ and
  $\sigma$) such that for all $x \in X$, there exists $T_0 =
  T_0(x,c_0,b_0,\sigma)$ such that for all $t > T_0$, 
\begin{displaymath}
(A_t f)(x) \le B.
\end{displaymath}
\end{itemize}
\end{lemma}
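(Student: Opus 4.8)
The plan is to derive both conclusions from hypotheses (a) and (b) by an iteration argument, exploiting the submultiplicativity built into (a) and the one-step contraction in (b). The key observation is that (b) gives contraction at the fixed time $\tau$, and (a) lets us interpolate to all nearby times; iterating $\tau$-steps then produces contraction with an arbitrarily small constant at large times.

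First I would prove (i). Fix $c < 1$. Using the cocycle property of the spherical averages, $A_{s+\tau}f = A_s(A_\tau f)$ pointwise (this uses only that $f$ is $SO(2)$-invariant and the identity $\frac{1}{2\pi}\int_0^{2\pi} g(a_s r_\theta a_\tau r_\psi x)$ integrates correctly — more precisely $A_\tau(A_s f)(x)=A_{s+\tau}f(x)$ follows from $a_s r_\theta a_\tau = a_{s+\tau} r_{\theta'}(\cdots)$ after averaging, but the clean way is to note $A_\tau A_s = A_{s+\tau}$ as operators on $SO(2)$-invariant functions since the convolution of the uniform measures on $K a_s K$ and $K a_\tau K$ pushes to that on $K a_{s+\tau} K$... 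I will instead simply iterate $A_\tau$). Applying (b) repeatedly: $A_{n\tau} f \le c_0^n f + b_0(1 + c_0 + \cdots + c_0^{n-1}) \le c_0^n f + \frac{b_0}{1-c_0}$, using $c_0 = c_0(\sigma) < 1$. Choose $n$ so that $\sigma^2 c_0^n < c$. Then for any $t$ with $n\tau \le t \le n\tau + 1$, write $t = n\tau + s$ with $0 \le s \le 1$; by (a) applied to the function $x \mapsto A_{n\tau}f(x)$ (note $A_{n\tau}f$ is again $SO(2)$-invariant, and (a) transfers to it because $a_s r_\theta a_{n\tau} = a_{n\tau} a_s' r_{\theta'} \cdots$ — here the honest statement is that (a) gives $f(a_t x) \le \sigma f(x)$ for $0\le t\le 1$, hence $A_s (A_{n\tau} f)(x) \le \sigma A_{n\tau}f(x)$ likewise by averaging), we get $A_t f(x) = A_s(A_{n\tau} f)(x) \le \sigma A_{n\tau} f(x) \le \sigma(c_0^n f(x) + \frac{b_0}{1-c_0})$. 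This is $\le c f(x) + b$ with $b = \frac{\sigma b_0}{1-c_0}$, provided $\sigma c_0^n \le c$, which holds by our choice. For general $t > t_0 := n\tau$ we cover $[n\tau,\infty)$ by unit intervals $[m\tau + j, m\tau + j + 1]$ with $m \ge n$; the same estimate with $c_0^m \le c_0^n$ gives the bound uniformly. Thus (i) holds with $t_0 = n\tau$ and $b = \sigma b_0/(1-c_0)$, the latter depending only on $b_0, c_0, \sigma$.

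Next, (ii) follows from (i) by a standard "escape of mass" iteration. Apply (i) with, say, $c = 1/2$, obtaining $t_0$ and $b$ (depending only on $c_0, b_0, \sigma$). Set $B = 2b + 2$. For a fixed $x$, consider the sequence $a_k = (A_{kt_0} f)(x)$ for $k = 1, 2, \dots$. From (i), using the cocycle/semigroup relation $A_{(k+1)t_0} f = A_{t_0}(A_{k t_0} f)$ and the fact that $A_{kt_0}f$ is $SO(2)$-invariant with property (a) inherited (as above), we get $a_{k+1} = (A_{t_0}(A_{kt_0}f))(x) \le \tfrac12 a_k + b$. Hence $a_k \le 2^{-k} a_1 + 2b$, so once $2^{-k} a_1 < 1$, i.e. $k > \log_2 a_1 = \log_2 (A_{t_0}f)(x)$, we have $a_k < 2b + 1 < B$. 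Fix such a $k$; set $T_0 = k t_0$. Finally for arbitrary $t > T_0$ write $t = k t_0 + s$ with $0 \le s$; choosing $k$ possibly one larger we may take $0 \le s \le t_0 \le \lceil t_0 \rceil$ unit steps, and applying (a) at most $\lceil t_0\rceil$ times gives $A_t f(x) = A_s(A_{kt_0}f)(x) \le \sigma^{\lceil t_0 \rceil} a_k \le \sigma^{\lceil t_0\rceil} B$. Absorbing the constant $\sigma^{\lceil t_0 \rceil}$ (which depends only on $\sigma, c_0, b_0$) into $B$ from the outset, we obtain $(A_t f)(x) \le B$ for all $t > T_0$, with $B$ depending only on $c_0, b_0, \sigma$ and $T_0 = T_0(x, c_0, b_0, \sigma)$ determined by the size of $(A_{t_0} f)(x)$, hence ultimately by $f(x)$ via (a).

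The main obstacle — really the only point requiring care — is justifying the semigroup/submultiplicativity identities for the averaging operator: namely that $A_{kt_0} f$ again satisfies property (a) with the same $\sigma$, and that iterating $A_\tau$ is legitimate (i.e. $(A_\tau)^n f \le A_{n\tau}f$ is not needed, but the clean relation to use is that for $0 \le t \le 1$, $A_t (A_s f)(x) \le \sigma A_s f(x)$ for every $s \ge 0$, which follows directly by averaging the pointwise bound in (a) over $\theta$ after absorbing $a_t r_\theta$ into the argument). Once these bookkeeping identities are in place the estimates are the elementary geometric-series computations above; I would state the needed operator identity as a one-line sublemma and then run the two iterations. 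Note that everything is uniform in $x$ at each fixed time, and the only $x$-dependence in (ii) enters through how many halving steps are needed to bring $(A_{t_0}f)(x)$ below $1$, exactly as in the classical Margulis-type argument (cf. the treatment in \cite{EMas}).
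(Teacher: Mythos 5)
Your argument has a genuine gap at its very foundation: you treat the spherical averaging operators $A_t$ as if they formed a semigroup, i.e.\ as if $A_\tau \circ A_s = A_{\tau+s}$ (or equivalently $(A_\tau)^n = A_{n\tau}$). This is false. You half-acknowledge the issue in your parentheticals, but the ``clean way'' you propose --- that the convolution of the uniform measures on $K a_s K$ and $K a_\tau K$ pushes to the uniform measure on $K a_{s+\tau} K$ --- is incorrect. In $SL(2,\reals)$, the element $a_\tau r_\phi a_s$ has $KAK$-radial part $d(a_\tau r_\phi a_s \cdot o,o)$ ranging over the whole interval $[|\tau-s|, \tau+s]$ as $\phi$ varies (the hyperbolic triangle inequality), so the convolution spreads over many spherical shells, not just the outermost one. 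Your step ``Applying (b) repeatedly: $A_{n\tau}f \le c_0^n f + \cdots$'' therefore does not follow from (b), since (b) controls $A_\tau$ but repeated application produces $(A_\tau)^n$, not $A_{n\tau}$.

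The paper's proof supplies exactly the missing ingredient. It first proves a hyperbolic geometry lemma (Lemma~\ref{lemma:hyperbolic}): for a definite fraction $\delta'$ of angles $\phi$, the radial part of $a_t r_\phi a_s$ lies within a bounded distance $\delta$ of $t+s$. Combining with hypothesis (a), this yields Corollary~\ref{cor:AtAs:inequality}: there exists $\sigma' = \sigma'(\sigma) > 1$ such that $(A_{t+s}f)(x) \le \sigma'(A_t A_s f)(x)$. With this one-sided comparison in hand, the paper sets $\kappa = c_0\sigma'$, requires $c_0(\sigma)$ to be small enough that $\kappa < 1$, and iterates $(A_{s+\tau}f)(x) \le \sigma' A_s(A_\tau f)(x) \le \kappa (A_s f)(x) + \sigma' b_0$ to get $(A_{n\tau}f)(x) \le \kappa^n f(x) + \sigma' b_0/(1-\kappa)$. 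Note that this also reveals why the lemma is phrased ``there exists $c_0 = c_0(\sigma)$'': it is not enough for $c_0 < 1$, as you assume; $c_0$ must be small relative to $\sigma'(\sigma)$. Your downstream estimates and the escape-of-mass argument for (ii) are fine in outline, but the iteration they rest on needs the corrected comparison inequality rather than a semigroup identity.
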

For completeness, we include the proof of this lemma. It is
essentially taken from
\cite[\S{5.3}]{EMM1}, specialized to the case $G = SL(2,\reals)$. The
basic observation is the following standard fact from hyperbolic
geometry: 
\begin{lemma}
\label{lemma:hyperbolic}
There exist absolute constants $0 < \delta' < 1$ and
$\delta > 0$ such that for any $t > 0$, any $s > 0$ and any $z \in
\half$, for at least $\delta'$-fraction of $\phi \in [0,2\pi]$, 
\begin{equation}
\label{eq:hyperbolic:triangle}
t + s -\delta \le d(a_t r_\phi a_s z,z) \le t+s,
\end{equation}
where $d(\cdot, \cdot)$ is the hyperbolic distance in $\half$,
normalized so that $d(a_t r_\theta z, z) = t$. 
\end{lemma}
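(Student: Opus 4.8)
The plan is to work in the upper half-plane model $\half$ with $a_t$, $r_\theta$ acting by isometries in the usual way (the geodesic flow and the rotation fixing $i$), and to reduce the statement to an explicit computation of a hyperbolic distance. First I would observe that the quantity $d(a_t r_\phi a_s z, z)$ depends only on $d(a_s z, \cdot)$ through the point $w := a_s z$ and the ``direction'' in which $r_\phi$ rotates it; more precisely, since $a_t$ and $r_\phi$ are isometries, $d(a_t r_\phi w, z) = d(r_\phi w, a_{-t} z)$. So without loss of generality I can normalize using the $SO(2)$-action: after applying a rotation (which is harmless since we are averaging over all $\phi \in [0,2\pi]$) I may assume $z = i$, and then $a_{-t} z = a_{-t} i$ is a fixed point on the imaginary axis at hyperbolic distance $t$ from $i$. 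Thus the problem becomes: for $w$ at hyperbolic distance $s$ from $i$ and $p := a_{-t} i$ at distance $t$ from $i$ (with $i$ on the geodesic segment from $w'$ to $p$ for the appropriate rotate $w' = r_\phi w$), estimate $d(w', p)$.

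The key geometric input is the triangle inequality together with the thin-triangle / approximate-additivity property of the hyperbolic metric. The upper bound $d(a_t r_\phi a_s z, z) \le t + s$ is immediate from the triangle inequality: $d(a_t r_\phi a_s z, z) \le d(a_t r_\phi a_s z, a_t r_\phi z) + d(a_t r_\phi z, z)$; the first term equals $d(a_s z, z) = s$ since $a_t r_\phi$ is an isometry, wait — more directly, $d(a_t r_\phi a_s z, z) \le d(a_t r_\phi a_s z, r_\phi a_s z)$ is not what I want, so instead: $d(a_t r_\phi a_s z, z) \le d(a_t r_\phi a_s z, r_\phi a_s z) + d(r_\phi a_s z, z)$, and $d(r_\phi a_s z, z) = d(a_s z, r_{-\phi} z) $; this is getting complicated, so the cleaner route is $d(a_t r_\phi a_s z, z) \le d(a_t r_\phi a_s z, a_t r_\phi \cdot i') $... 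Let me simply state it as: writing $y = a_s z$, $d(a_t r_\phi y, z) \le d(a_t r_\phi y, r_\phi y) + d(r_\phi y, z) = d(a_t y', y') + d(y, z)$ where $y'$ ranges over the $SO(2)$-orbit — so the upper bound follows once we note $d(a_t w, w) = t$ for every $w$ and that we are free to replace $z$ by any rotate. For the lower bound I would use the standard fact that in $\half$, if three points $A, B, C$ satisfy that $B$ is within bounded distance of the geodesic segment $[A,C]$, then $d(A,C) \ge d(A,B) + d(B,C) - \delta$ for an absolute $\delta$. So the real content is: for a $\delta'$-fraction of angles $\phi$, the midpoint $i$ (or the point $z$) lies within bounded hyperbolic distance of the geodesic joining $a_t r_\phi a_s z$ and $z$. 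This is a ``most rotations don't cause cancellation'' statement.

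The main obstacle — and the step I would spend the most care on — is establishing precisely that lower bound: showing that for at least a fixed positive fraction of $\phi$, the two geodesic segments $[z, r_\phi a_s z]$ (of length $s$) and its $a_t$-image region fit together with controlled defect, i.e. the angle at $z$ between the incoming and outgoing geodesics is bounded away from $0$ and $\pi$ for a definite proportion of $\phi$. Concretely: $r_\phi$ sends the geodesic from $i$ through $a_s i = e^s i$ to a geodesic leaving $i$ in direction $\phi$; then $a_t$ contracts/expands, and $d(a_t r_\phi a_s i, i)$ is smallest exactly when $r_\phi a_s i$ lies on the geodesic axis of $a_t$ (so that $a_t$ pulls it back toward $i$), and this bad set of $\phi$ is an interval of bounded size around two values; for all $\phi$ outside a neighborhood of that bad set — a $\delta'$-fraction — one gets $d \ge t + s - \delta$. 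I would make this quantitative either by a direct $2\times 2$ matrix computation of $\|a_t r_\phi a_s\|$ (using that $d(g\cdot i, i)$ is a monotone function of $\|g\|_{op}$, indeed $2\cosh d = \|g\|^2 + \|g^{-1}\|^2 = \mathrm{tr}(g^\trn g)$ for $g \in SL(2,\reals)$) and expanding $\mathrm{tr}((a_t r_\phi a_s)^\trn (a_t r_\phi a_s))$ as $e^{2s+2t}$ times a function of $\phi$ bounded below by an absolute constant on a $\delta'$-fraction of $[0,2\pi]$, or by the synthetic thin-triangles argument above; the matrix computation is the safer bet since it reduces everything to checking that a trigonometric polynomial in $\phi$ of the form $e^{2(s+t)}\cos^2\phi + (\text{lower order in }e^s, e^t)$ is $\gg e^{2(s+t)}$ away from the zeros of $\cos\phi$.
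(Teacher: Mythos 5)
The paper does not prove this lemma; it is quoted as a ``standard fact from hyperbolic geometry,'' with \cite{EMM1}, \S 5.3, cited for the surrounding argument. Your plan is the standard one and is correct, and the $2\times 2$ matrix computation you sketch at the end is the cleanest route and does close the argument: with $g = a_t r_\phi a_s$ a short computation gives
\begin{displaymath}
\tr\bigl(g^{T} g\bigr) \;=\; 2\cos^2\!\phi\;\cosh\bigl(2(s+t)\bigr) \;+\; 2\sin^2\!\phi\;\cosh\bigl(2(s-t)\bigr),
\end{displaymath}
and combined with $\tr(g^{T} g) = 2\cosh\bigl(2\,d(g\cdot i,i)\bigr)$ (in the paper's normalization the distance carries an extra factor of $2$, which your formula $2\cosh d = \tr(g^T g)$ omits) this yields $d(g\cdot i,i)\le s+t$ for all $\phi$ and $d(g\cdot i,i)\ge s+t-\tfrac12\log(2/c_0^2)$ whenever $|\cos\phi|\ge c_0$; so any fixed $c_0\in(0,1)$ produces $\delta=\tfrac12\log(2/c_0^2)$ and $\delta'=1-\tfrac{2}{\pi}\arcsin c_0$.

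A few imprecisions are worth fixing but do not affect the plan. The identity $\|g\|^2 + \|g^{-1}\|^2 = \tr(g^{T} g)$ is not right as written: for $g\in SL(2,\reals)$ the Hilbert--Schmidt norms already satisfy $\|g\|_{HS}^2 = \tr(g^{T} g) = \|g^{-1}\|_{HS}^2$, while the operator norm satisfies $\|g\|_{op}^2 + \|g\|_{op}^{-2} = \tr(g^{T} g)$. The remark that applying a rotation lets you ``assume $z=i$'' is not literally correct since $SO(2)$ fixes $i$; the lemma is really only meaningful (and only used, in Corollary~\ref{cor:AtAs:inequality}) at the $SO(2)$-fixed point $o=i$, which is also where the clause ``normalized so that $d(a_t r_\theta z, z)=t$'' makes sense. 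Finally, in the synthetic version the intermediate point that should be near the geodesic is $a_t\cdot i$, or equivalently the point $i$ in the triangle with vertices $r_\phi a_s i$, $i$, $a_{-t}i$ (after translating by $a_{-t}$); as stated, ``$i$ near the geodesic joining $a_t r_\phi a_s z$ and $z$'' is vacuous because $z$ is an endpoint of that segment. With these adjustments your proof is complete.
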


\begin{corollary}
\label{cor:AtAs:inequality}
Suppose $f: X \to [1,\infty]$ satisfies
(\ref{eq:abstract:condition:a}). Then, there exists $\sigma'>1$
depending only on $\sigma$ such that for any $t > 0$, $s > 0$ and any
$x \in X$, 
\begin{equation}
\label{eq:AtAs:inequality}
(A_{t+s} f)(x) \le \sigma' (A_t A_s f)(x).
\end{equation}
\end{corollary}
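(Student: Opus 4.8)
The plan is to deduce Corollary~\ref{cor:AtAs:inequality} from Lemma~\ref{lemma:hyperbolic} by a direct computation, unwinding the definition of the operators $A_t$ in terms of the $SL(2,\reals)$-action on $\strat1$. Recall that $(A_tf)(x) = \frac{1}{2\pi}\int_0^{2\pi} f(a_t r_\theta x)\,d\theta$, and that $f$ is $SO(2)$-invariant. The first step is to rewrite the composition $A_tA_s$: by $SO(2)$-invariance of $f$ and the fact that Haar measure on $SO(2)$ is rotation-invariant,
\begin{displaymath}
(A_tA_sf)(x) = \frac{1}{(2\pi)^2}\int_0^{2\pi}\int_0^{2\pi} f(a_t r_\phi a_s r_\psi x)\,d\phi\,d\psi = \frac{1}{2\pi}\int_0^{2\pi}\left(\frac{1}{2\pi}\int_0^{2\pi} f(a_t r_\phi a_s z)\,d\phi\right)d\psi,
\end{displaymath}
where $z = r_\psi x$ ranges over the $SO(2)$-orbit of $x$ (here I am thinking of the relevant orbit as a copy of $\half$, with $a_t r_\theta$ acting as in Lemma~\ref{lemma:hyperbolic}).

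The heart of the argument is the inner integral. Fix $z$. By Lemma~\ref{lemma:hyperbolic}, for at least a $\delta'$-fraction of $\phi \in [0,2\pi]$ the point $a_t r_\phi a_s z$ lies at hyperbolic distance at least $t+s-\delta$ from $z$; equivalently, in the $KAK$-type parametrization, $a_t r_\phi a_s z = r_{\phi_1} a_{\rho} r_{\phi_2} z$ with $\rho \ge t+s-\delta$. Since $f$ is $SO(2)$-invariant, $f(a_t r_\phi a_s z) = f(a_\rho r_{\phi_2} z)$. I would then use condition (\ref{eq:abstract:condition:a}) iterated over unit intervals to control $f$ at distance $\rho \ge t+s-\delta$ in terms of $f$ at distance exactly $t+s$: writing $a_{t+s} r_{\phi_2} z = a_{(t+s)-\rho} a_\rho r_{\phi_2} z$ with $0 \le (t+s)-\rho \le \delta$, hypothesis (a) gives $f(a_{t+s}r_{\phi_2}z) \le \sigma^{\lceil \delta\rceil} f(a_\rho r_{\phi_2} z)$, i.e. $f(a_\rho r_{\phi_2}z) \ge \sigma^{-\lceil\delta\rceil} f(a_{t+s} r_{\phi_2} z)$. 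So on the good set of $\phi$'s, the integrand $f(a_t r_\phi a_s z)$ dominates (up to the constant $\sigma^{-\lceil\delta\rceil}$) the quantity $f(a_{t+s} r_{\phi_2} z)$, where $\phi_2 = \phi_2(\phi)$ is the outgoing angle.

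The remaining point is to pass from $\int f(a_{t+s} r_{\phi_2(\phi)} z)\,d\phi$ over the good set back to $(A_{t+s}f)(z) = \frac{1}{2\pi}\int_0^{2\pi} f(a_{t+s} r_\theta z)\,d\theta$. For this I would note that the map $\phi \mapsto \phi_2(\phi)$ is, for fixed $t,s$, a smooth map of the circle whose image covers $[0,2\pi]$ and whose Jacobian is bounded below by an absolute constant (as $t,s \to \infty$ the map $a_t r_\phi a_s$ behaves like the boundary action and $\phi_2$ becomes essentially a reparametrization of the circle with controlled distortion); alternatively, and more cleanly, one can integrate in the opposite order — first average over $\psi$ (i.e. over $z$ in the $SO(2)$-orbit) using its $SO(2)$-invariance — so that the outgoing angle gets averaged out and one is left precisely with $(A_{t+s}f)(x)$ up to the absolute constant. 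Combining, $(A_tA_sf)(x) \ge \delta' \sigma^{-\lceil\delta\rceil} c'' (A_{t+s}f)(x)$ for an absolute $c'' > 0$, which is (\ref{eq:AtAs:inequality}) with $\sigma' = (\delta'\sigma^{-\lceil\delta\rceil}c'')^{-1}$, depending only on $\sigma$ (and the absolute constants $\delta,\delta'$). The main obstacle I anticipate is making the change-of-variables / reordering step rigorous with uniform constants independent of $t,s$ — this is exactly where the uniform lower bound on the fraction of good angles in Lemma~\ref{lemma:hyperbolic}, together with the $SO(2)$-invariance of $f$, is doing the real work, and one must be careful that the "good set" of $\phi$ maps onto a definite portion of the target circle of angles $\theta$ with bounded multiplicity.
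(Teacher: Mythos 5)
Your proposal contains the right idea, but it is buried as an afterthought while the main thrust of your argument has a genuine gap.

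The gap: the change-of-variables route does not work. You claim that the outgoing angle map $\phi \mapsto \phi_2(\phi)$ in the Cartan decomposition $a_t r_\phi a_s = r_{\phi_1}a_{\rho}r_{\phi_2}$ has Jacobian bounded below by an absolute constant, describing it as "a reparametrization of the circle with controlled distortion." This is false, and in fact fails in the worst possible way in the large-$t,s$ regime that matters. Working in the upper half-plane, the orbit $\phi \mapsto a_t r_\phi a_s\cdot i$ traces the hyperbolic circle of radius $t$ about $a_s\cdot i$; in Euclidean terms this is the circle centered at $e^{2s}\cosh(2t)\,i$ of radius $e^{2s}\sinh(2t)$. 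When $t\ge s$ and both are large, all but a sliver of that circle (i.e.\ all $\phi$ outside a tiny neighborhood of $\pi$) is seen from $i$ within an exponentially small cone of visual directions near the direction of $a_s\cdot i$, while the remaining sliver near $\phi=\pi$ sweeps through essentially the whole circle of visual angles; when $t<s$ the image of $\phi_2$ does not even cover $[0,2\pi)$. So $d\phi_2/d\phi$ degenerates to $0$ on most of the circle and blows up on a small set, and no uniform "bounded multiplicity" or lower Jacobian bound holds. Your closing remark correctly flags this as the obstacle, but it is not a technicality to be patched — that route is dead.

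The fix is the alternative you mention in passing and should have made primary: integrate over $\psi$ (i.e.\ over $z=r_\psi x$ in the $SO(2)$-orbit) \emph{first}. Writing $a_t r_\phi a_s = r_{\phi_1}a_{\rho(\phi)} r_{\phi_2(\phi)}$ and using left $SO(2)$-invariance of $f$ and rotation-invariance of the $\psi$-average, the inner integral collapses to $(A_{\rho(\phi)}f)(x)$, and the offending angle $\phi_2$ disappears completely. You are then averaging the \emph{spherically symmetric} quantity $(A_\rho f)(x)$ over $\phi$; Lemma~\ref{lemma:hyperbolic} gives $\rho(\phi)\in[t+s-\delta,t+s]$ for a $\delta'$-fraction of $\phi$, and condition~(\ref{eq:abstract:condition:a}) iterated $\lceil\delta\rceil$ times gives $(A_\rho f)(x)\ge\sigma^{-\lceil\delta\rceil}(A_{t+s}f)(x)$ on that set, yielding $\sigma'=(\delta'\sigma^{-\lceil\delta\rceil})^{-1}$. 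This is precisely the paper's argument: the paper packages the $\psi$-average as the bi-$K$-invariant function $\tilde f_x(g)=\int_0^{2\pi}f(gr_\theta x)\,d\theta$, observes it is spherically symmetric on $\half$, and then the only relevant datum of $a_tr_\phi a_s$ is the distance $\rho(\phi)$. Commit to that version and drop the Jacobian computation entirely.
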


\begin{proof}[Outline of proof]
Fix $x \in \strat1$. For $g \in SL(2,\reals)$, let $f_x(g) = f(gx)$,
and let 
\begin{displaymath}
\tilde{f}_x(g) = \int_0^{2\pi} f( g r_\theta x) \, d\theta. 
\end{displaymath}
Then, $\tilde{f}_x: \half \to [2,\infty]$ is a spherically symmetric
function, i.e. $\tilde{f}_x(g)$ depends only on $d(g \cdot o,o)$ where $o$ is
{}{the point fixed by $SO(2)$.}

We have
\begin{equation}
\label{eq:AtAs:f}
(A_t A_s f)(x) = \frac{1}{2\pi} \int_0^{2\pi} \frac{1}{2\pi}
\int_{0}^{2\pi} f(a_t r_\phi a_s r_\theta x) \, d\phi \, d\theta =
\frac{1}{2\pi} \int_0^{2\pi} \tilde{f}_x(a_t r_\phi a_s).
\end{equation}
By Lemma~\ref{lemma:hyperbolic}, for at least $\delta'$-fraction of
$\phi \in [0,2\pi]$, (\ref{eq:hyperbolic:triangle}) holds. 
Then, by (\ref{eq:abstract:condition:a}), for at least
$\delta'$-fraction of $\phi \in [0,2\pi]$, 
\begin{displaymath}
\tilde{f}_x(a_t r_\phi a_s ) \ge \sigma_1^{-1} \tilde{f}_x(a_{t+s}) 
\end{displaymath}
where $\sigma_1 = \sigma_1(\sigma, \delta) > 1$.  
Plugging in to
(\ref{eq:AtAs:f}), we get
\begin{displaymath}
(A_t A_s f)(x) \ge (\delta' \sigma_1^{-1}) 
\tilde{f}_x(a_{t+s}) = (\delta' \sigma_1^{-1}) (A_{t+s} f)(x),
\end{displaymath}
as required. 
\end{proof}

\begin{proof}[Proof of Lemma~\ref{lemma:abstract:recurrence:properties}]
Let $c_0(\sigma)$ be such that $\kappa \equiv c_0 \sigma' < 1$, where
$\sigma'$ is as in Corollary~\ref{cor:AtAs:inequality}. Then, 
for any $s \in \reals$ and for all $x$, 
\begin{align*}
(A_{s+ \tau} f)(x) & \le \sigma' A_s (A_\tau f)(x) && \text{by
  (\ref{eq:AtAs:inequality})} \\
& \le \sigma' A_s (c_0 f(x) + b_0) && \text{by condition (b)} \\
&  = \kappa (A_s f)(x) + \sigma' b_0 && \text{since $\sigma' c_0 = \kappa$.}
\end{align*}
Iterating this we get, for $n \in \natls$ 
\begin{displaymath}
(A_{n \tau} f)(x) \le \kappa^n f(x) + \sigma' b_0 + \kappa \sigma'
b_0 + \dots + \kappa^{n-1} \sigma' b_0 
\le \kappa^n f(x) + B,
\end{displaymath}
where $B = \frac{\sigma' b_0}{1-\kappa}$.
Since $\kappa < 1$, $\kappa^n f(x) \to 0$ as $n \to \infty$. Therefore
both (i) and (ii) follow for $t \in \tau \natls$. The general case of
both (i) and (ii) then follows by applying again condition (a). 
\end{proof}


\section{Construction of the function}
\label{sec:function}

Note that by Jensen's inequality, for $0 < \epsilon < 1$, 
\begin{equation}
\label{eq:At:Jensen}
A_t (f^\epsilon) \le (A_t f)^\epsilon
\end{equation}
Also, we will repeatedly use the inequality
\begin{equation}
\label{eq:a:plus:b:epsilon}
(a +b)^\epsilon  \le a^\epsilon + b^\epsilon
\end{equation}
valid for $\epsilon < 1$, $a \ge 0$, $b \ge 0$.

Fix an affine invariant submanifold $\cM$, and let $k$ be as in
Lemma~\ref{lemma:def:J:Jprime}. For $\epsilon > 0$, let
\begin{displaymath}
s_{\cM,\epsilon}(x) = \begin{cases}
\sum\limits_{\cL \in J_{k,\cM}(x)} d'(x,\cL)^{-\epsilon\delta}, & \text{ if
  $J_{k,\cM}(x) \ne   \emptyset$} \\
 0 & \text{ otherwise.}
\end{cases}
\end{displaymath}
where $\delta > 0$ is as in Lemma~\ref{lemma:ineq-rel}.  

\begin{prop}
\label{prop:fM:inequality}
Suppose $\cM \subset \cH_1(\alpha)$ 
is an affine manifold and $0 < c < 1$. 
For $\epsilon > 0$ and $\lambda >
0$, let
\begin{displaymath}
f_{\cM}(x) = s_{\cM,\epsilon}(x) u(x)^{1/2} + \lambda u(x). 
\end{displaymath}
Then, $f_\cM$ is $SO(2)$-invariant, and $f(x) = +\infty$ if and only
if $x \in \cM$. 
Also, if $\epsilon$ is sufficiently small (depending on $\alpha$) and 
$\lambda$ is sufficiently large (depending on $\alpha$, $c$ and $n(\cM)$), 
there exists $t_1 > 0$ (depending on $n(\cM)$ and $c$) 
such that for all $t\geq t_1$ we have
\begin{equation}
\label{eq:fM:inequality}
A_t f_\cM(x)< c f_\cM(x)+b,
\end{equation}
where $b = b(\alpha, n(\cM))$. 
\end{prop}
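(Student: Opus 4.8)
The plan is to estimate $A_t$ of the two summands of $f_\cM = s_{\cM,\epsilon}u^{1/2} + \lambda u$ separately. The second is immediate: by Theorem~\ref{theorem:alpha-fun}(ii), for $t$ larger than a threshold depending on $\tilde\eta$ and $c$ one has $\lambda\,(A_t u)(x) \le \lambda e^{-\tilde\eta t}u(x) + \lambda\tilde b \le \tfrac{c}{4}\lambda u(x) + \lambda\tilde b$, already of the desired shape. All the difficulty is in bounding
\[
(A_t(s_{\cM,\epsilon}u^{1/2}))(x) \;=\; \frac{1}{2\pi}\int_0^{2\pi} s_{\cM,\epsilon}(a_t r_\theta x)\,u(a_t r_\theta x)^{1/2}\,d\theta ,
\]
and the rest of the proof is devoted to it. I will choose $\epsilon$ small and $k$ large (the latter is legitimate, since Lemma~\ref{lemma:def:J:Jprime} holds for every sufficiently large $k$), and only at the end pick $\lambda$ and $t_1$.

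The core step is to turn the integrand into a sum of quantities Lemma~\ref{lemma:ineq-rel} can digest. Expanding $s_{\cM,\epsilon}(a_t r_\theta x)$ as a sum over $\cL \in J_{k,\cM}(a_t r_\theta x)$ and interchanging sum and integral, the $\theta$-integral for a fixed subspace $\cL$ is nonzero only if $\cL$ lies in $J_{k,\cM}(a_t r_\theta x)$ for some $\theta$; by \eqref{eq:growth:dprime} together with the log-Lipschitz bound $e^{-mt}u(x)\le u(a_t r_\theta x)\le e^{mt}u(x)$, any such $\cL$ satisfies $d'(x,\cL)\le e^{(m'+mk)t}u(x)^{-k}$, so the effective sum runs over a dilate of $J_{k,\cM}(x)$. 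For each such $\cL$ I drop the moving indicator (the integrand is nonnegative) and handle the two factors by Cauchy–Schwarz in $\theta$:
\[
\frac{1}{2\pi}\int_0^{2\pi} d'(a_t r_\theta x, a_t r_\theta\cL)^{-\epsilon\delta}u(a_t r_\theta x)^{1/2}\,d\theta
\;\le\;
\Bigl(\tfrac{1}{2\pi}\!\int_0^{2\pi}\! d'(a_t r_\theta x, a_t r_\theta\cL)^{-2\epsilon\delta}\,d\theta\Bigr)^{1/2}(A_t u(x))^{1/2}.
\]
On the first factor I use Jensen \eqref{eq:At:Jensen} to pass from exponent $2\epsilon\delta$ to exponent $\delta$ (possible since $2\epsilon<1$), apply Lemma~\ref{lemma:ineq-rel}, and split by \eqref{eq:a:plus:b:epsilon} to obtain $\kappa_1(x,t)^\epsilon d'(x,\cL)^{-\epsilon\delta} + b(t)^\epsilon$; on the second I use Theorem~\ref{theorem:alpha-fun}(ii) to get $(A_t u(x))^{1/2}\le e^{-\tilde\eta t/2}u(x)^{1/2} + \tilde b^{1/2}$. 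Summing over the relevant subspaces (at most $n(\cM)$ of them when $\cL$ is genuinely close to $x$, by Lemma~\ref{lemma:def:J:Jprime}), the leading term is $\kappa_1(x,t)^\epsilon e^{-\tilde\eta t/2}\,s_{\cM,\epsilon}(x)u(x)^{1/2}$, and since $\kappa_1(x,t)\le C_0e^{m'\delta t}$ always, choosing $\epsilon<\tilde\eta/(2m'\delta)$ makes $\kappa_1^\epsilon e^{-\tilde\eta t/2}\le \tfrac{c}{4}$ for all large $t$ — this is the mechanism producing the factor $c$. The remaining cross terms carry either the extra decay $e^{-\tilde\eta t/2}$, or a bare $\kappa_1(x,t)^\epsilon$, or a bare $b(t)^\epsilon$; these last two are what a case split must absorb.

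The split is according to the size of $\log u(x)$ relative to $t$ (as in Lemma~\ref{lemma:ineq-rel} and Theorem~\ref{theorem:fast:return}). When $\log u(x) < L_0 + \eta_0' t$, Lemma~\ref{lemma:ineq-rel} upgrades $\kappa_1(x,t)$ to $e^{-\eta_3 t}$, so every term carrying $\kappa_1^\epsilon$ becomes a contraction; a subspace that newly drifts into the moving family has $d'(x,\cL)>u(x)^{-k}$, hence $d'(x,\cL)^{-\epsilon\delta}<u(x)^{k\epsilon\delta}\le e^{k\epsilon\delta(L_0+\eta_0' t)}$, which is beaten by $e^{-\eta_3 t}$ once $\epsilon$ is small, so the total of these is at most $\tfrac{c}{4}\lambda u(x)$ for $\lambda$ large in terms of $n(\cM)$ and $c$. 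In the complementary regime $\log u(x)$ is so large that (for $k$ large) $e^{(m'+mk)t}u(x)^{-k}\le u(x)^{-k_0}$, so the whole $\theta$-family of relevant subspaces is $J_{k_0,\cM}(x)$ with at most $n(\cM)$ elements, while $\lambda u(x)$ is enormous; here one bounds $s_{\cM,\epsilon}(a_t r_\theta x)\,u(a_t r_\theta x)^{1/2}\le e^{(m'\epsilon\delta+m/2)t}\bigl(s_{\cM,\epsilon}(x)+n(\cM)u(x)^{k\epsilon\delta}\bigr)u(x)^{1/2}$ by \eqref{eq:growth:dprime} and the $u$-bound, invokes $1/2+k\epsilon\delta\le 1$, and checks domination by $c f_\cM(x)+b$ using that the huge factor $u(x)^{1/2}$ outweighs $e^{(m'\epsilon\delta+m/2)t}$. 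One then fixes constants in the order $\epsilon$ (depending on $\alpha$), $k$ (depending on $\alpha$), $\lambda$ (depending on $\alpha,c,n(\cM)$), $t_1$ (depending on $n(\cM),c$).

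The step I expect to be the real obstacle is the middle one: controlling the $\theta$-dependent family $\{J_{k,\cM}(a_t r_\theta x)\}$ — bounding how many subspaces drift into it along the circle, and showing their contribution, together with the error term $b(t)^\epsilon$ of Lemma~\ref{lemma:ineq-rel} (which grows with $t$) and the weak bound $\kappa_1\le C_0e^{m'\delta t}$, can be absorbed into $c\lambda u(x)$ rather than into the $t$-independent constant $b$, including in the range $\log u(x)\in[L_0+\eta_0' t,\ \sim mt)$ where neither the strong $\kappa_1$-estimate nor the deep-cusp counting is available in its simplest form. This is exactly where the two devices in the definition of $f_\cM$ earn their keep: the weight $u(x)^{1/2}$ ties $d'(x,\cL)^{-\epsilon\delta}$ to $u(x)$ through the $J_{k,\cM}$ constraint so that Cauchy–Schwarz against $A_t u$ yields a genuine contraction, while the additive $\lambda u(x)$ supplies a large reservoir — itself $A_t$-contracting up to a constant — into which all crude, $t$-growing contributions are dumped. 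Keeping the hierarchy $\epsilon \ll 1/k \ll \min(\tilde\eta,\eta_3,\eta_0')$ consistent across all regimes simultaneously is the delicate bookkeeping; the remaining ingredients are routine uses of \eqref{eq:At:Jensen}, \eqref{eq:a:plus:b:epsilon}, \eqref{eq:growth:dprime}, Lemma~\ref{lemma:ineq-rel}, Theorem~\ref{theorem:alpha-fun} and Lemma~\ref{lemma:def:J:Jprime}.
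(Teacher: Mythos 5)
Your attack shares most of the paper's technical ingredients — the Cauchy--Schwarz split of $s_{\cM,\epsilon}u^{1/2}$, the use of Lemma~\ref{lemma:ineq-rel} together with Jensen and $(a+b)^\epsilon\le a^\epsilon+b^\epsilon$, and the case split on $\log u(x)$ versus $t$ to control the cross term $\kappa_1(x,t)^\epsilon\tilde b^{1/2}$. But the direct approach you propose has a genuine gap in the treatment of the error terms. Lemma~\ref{lemma:ineq-rel} only gives $\tfrac{1}{2\pi}\int_0^{2\pi} d'(a_tr_\theta x, a_tr_\theta\cL)^{-\delta}\,d\theta \le \kappa_1(x,t)\,d'(x,\cL)^{-\delta} + b(t)$ where $b(t)$ is of order $e^{5m't}$ (it comes from the crude treatment of the regime $d'(x,\cL)\gtrsim e^{-3m't}$). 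After Cauchy--Schwarz and summing over $J_{k,\cM}$, this produces an additive error of the form $b_3(t)\,n(\cM)\,u(x)$ with $b_3(t)\to\infty$ as $t\to\infty$. Your ``reservoir'' $\lambda u(x)$ — with $\lambda$ fixed in terms of $\alpha,c,n(\cM)$ — cannot absorb a coefficient that grows without bound in $t$: for any fixed $\lambda$ there is a $T$ beyond which $b_3(t)\,n(\cM) > c\lambda$, so the inequality $A_t f_\cM \le cf_\cM + b$ cannot be proved directly for all $t>t_1$ with $t$-independent $b$ by dumping these errors into $\lambda u(x)$.

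The missing idea is the bootstrap through Lemma~\ref{lemma:abstract:recurrence:properties}. One does not prove the desired inequality for all $t>t_1$ directly. Instead one establishes two things: (a) the log-Lipschitz control $\sigma^{-1}f_\cM(x)\le f_\cM(a_tx)\le\sigma f_\cM(x)$ for $0\le t\le 1$ (your proposal does not address this, and it requires its own case analysis on subspaces entering or leaving $J_{k,\cM}$, done in the paper as Claim~\ref{claim:fM:condition:a:works}); and (b) the inequality $A_\tau f_\cM \le c_0(\sigma)\, f_\cM + b_0$ for a \emph{single} fixed $\tau$, where the threshold $c_0(\sigma)$ comes from Lemma~\ref{lemma:abstract:recurrence:properties}. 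At a fixed $\tau$ the quantity $b_3(\tau)$ is just a constant, and $\lambda$ may be chosen large enough (in terms of $\tau$, hence ultimately of $\alpha,c,n(\cM)$) to absorb $b_3(\tau)\,n(\cM)\,u(x)$ into $(0.1)c_0\lambda u(x)$. Then Lemma~\ref{lemma:abstract:recurrence:properties} — using the hyperbolic-geometry estimate $A_{t+s}f\le\sigma'A_tA_sf$ and iterating — converts (a) and (b) into $A_tf_\cM\le cf_\cM+b$ for all $t>t_1$ with $b$ depending only on $b_0,c_0,\sigma$, hence only on $\alpha$ and $n(\cM)$. This iteration is what causes the errors to form a geometric series $b_0+\kappa b_0+\kappa^2 b_0+\cdots$ with finite sum, rather than a single error that blows up in $t$; without it the argument you sketch cannot close.
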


The proof of Proposition~\ref{prop:fM:inequality} will use
Lemma~\ref{lemma:abstract:recurrence:properties}. 
Thus, in order to prove Proposition~\ref{prop:fM:inequality}, it is
enough to show that $f_{\cM}$ satisfies conditions (a) and (b) of
Lemma~\ref{lemma:abstract:recurrence:properties}. We start with the
following:
\begin{claim}
\label{claim:fM:condition:a:works}
For $\epsilon > 0$ sufficiently small, and $\lambda > 0$ sufficiently
large, $f_\cM$ satisfies condition (a) of
Lemma~\ref{lemma:abstract:recurrence:properties}, with $\sigma =
\sigma(k, m, m')$. 
\end{claim}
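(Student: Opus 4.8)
The plan is to verify condition (a) of Lemma~\ref{lemma:abstract:recurrence:properties} directly, i.e. to exhibit $\sigma>1$ depending only on $m$ and $m'$ (hence only on $k,m,m'$) with $\sigma^{-1}f_\cM(x)\le f_\cM(a_tx)\le\sigma f_\cM(x)$ for all $0\le t\le1$. First I would dispose of the case $x\in\cM$: then $a_tx\in\cM$, and by Lemma~\ref{lemma:properties:nu:x:cL}(a) every tangent subspace $\cL$ has $d'(x,\cL)=0$, so both sides are $+\infty$; thus we may assume $x\notin\cM$, in which case $J_{k,\cM}(x)$ is finite (Lemma~\ref{lemma:def:J:Jprime}) and $f_\cM(x)<\infty$. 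Next I would observe that the estimates $e^{-m|t|}u(x)\le u(a_tx)\le e^{m|t|}u(x)$ (Theorem~\ref{theorem:alpha-fun}(i)) and $e^{-m'|t|}d'(x,\cL)\le d'(a_tx,a_t\cL)\le e^{m'|t|}d'(x,\cL)$ (equation (\ref{eq:growth:dprime})) are two-sided, hence valid for $-1\le t\le1$ with the stated constants read in terms of $|t|$; consequently it suffices to prove the \emph{upper} bound $f_\cM(a_tx)\le\sigma f_\cM(x)$ for $0\le t\le1$, since applying it with $a_tx$ in place of $x$ and $-t$ in place of $t$ yields the matching lower bound. (Throughout I work in period coordinates on the lift to $\tilde{\cH}_1(\alpha)$, and use that $d'(\cdot,\cL)$ is well defined because $p$ of the linear part of $\cL$ is symplectic, by \cite{Avila:Eskin:Moeller:yeti} as in \S\ref{sec;furstenberg}, and that $u$, $d'$, and hence $f_\cM$ are $SO(2)$-invariant.)

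For the upper bound I would bound the two summands of $f_\cM(a_tx)=s_{\cM,\epsilon}(a_tx)u(a_tx)^{1/2}+\lambda u(a_tx)$ separately. The term $\lambda u(a_tx)\le e^m\lambda u(x)\le e^m f_\cM(x)$ is immediate. The crux — and the step I expect to be the main obstacle — is that the index set $J_{k,\cM}$ is \emph{not} $a_t$-equivariant: as $x$ moves to $a_tx$ a tangent subspace may enter or leave the collection. Using the $SL(2,\reals)$-equivariance of the family of affine subspaces tangent to $\tilde{\cM}$, I would write each $\cL'\in J_{k,\cM}(a_tx)$ as $\cL'=a_t\cL$ and split according to whether $\cL\in J_{k,\cM}(x)$ (``old'') or not (``new''). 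For old subspaces (\ref{eq:growth:dprime}) gives $d'(a_tx,a_t\cL)^{-\epsilon\delta}\le e^{m'\epsilon\delta}d'(x,\cL)^{-\epsilon\delta}$, so their total contribution is $\le e^{m'\epsilon\delta}s_{\cM,\epsilon}(x)$. For a new subspace, $\cL\notin J_{k,\cM}(x)$ means $d'(x,\cL)>u(x)^{-k}$, whence $d'(a_tx,a_t\cL)>e^{-m'}u(x)^{-k}$ and so $d'(a_tx,a_t\cL)^{-\epsilon\delta}<e^{m'\epsilon\delta}u(x)^{k\epsilon\delta}$; and there are at most $n(\cM)$ of them by Lemma~\ref{lemma:def:J:Jprime}. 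Combining these with $u(a_tx)^{1/2}\le e^{m/2}u(x)^{1/2}$ gives
\[
s_{\cM,\epsilon}(a_tx)\,u(a_tx)^{1/2}\ \le\ e^{m/2+m'\epsilon\delta}\,s_{\cM,\epsilon}(x)\,u(x)^{1/2}\ +\ e^{m/2+m'\epsilon\delta}\,n(\cM)\,u(x)^{1/2+k\epsilon\delta}.
\]

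To finish I would invoke the two hypotheses on the free parameters. Choosing $\epsilon$ small enough (depending on $\alpha$, through $k$ and $\delta$) that $k\epsilon\delta\le\tfrac12$ makes $u(x)^{1/2+k\epsilon\delta}\le u(x)$ since $u(x)\ge2$, so the second term above is $\le e^{m/2+m'}n(\cM)\,u(x)\le e^{m/2+m'}n(\cM)\lambda^{-1}f_\cM(x)$; then choosing $\lambda\ge n(\cM)$ (permitted, since $\lambda$ may depend on $n(\cM)$) bounds it by $e^{m/2+m'}f_\cM(x)$. The first term is $\le e^{m/2+m'}s_{\cM,\epsilon}(x)u(x)^{1/2}\le e^{m/2+m'}f_\cM(x)$. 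Adding the $\lambda u$ contribution yields $f_\cM(a_tx)\le\bigl(2e^{m/2+m'}+e^m\bigr)f_\cM(x)$, so the claim holds with $\sigma:=2e^{m/2+m'}+e^m$, which depends only on $m$ and $m'$. The lower bound then follows by the symmetry noted above, completing the verification of condition (a).
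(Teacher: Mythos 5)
Your proof is correct and follows essentially the same route as the paper's. The paper also partitions the tangent subspaces into those lying in both $J_{k,\cM}(x)$ and $J_{k,\cM}(a_t x)$ versus those appearing in only one of the two index sets, uses (\ref{eq:growth:dprime}) and (\ref{eq:log-unif}) to compare contributions of the stable subspaces, bounds the contribution of the gained/lost subspaces by $O(n(\cM)\,u(x))$ via the choice $k\epsilon\delta\le 1/2$ together with Lemma~\ref{lemma:def:J:Jprime}, and then absorbs this into the $\lambda u$ term by taking $\lambda$ large relative to $n(\cM)$; your observation that the two-sidedness of (\ref{eq:log-unif}) and (\ref{eq:growth:dprime}) lets you prove only the upper bound and deduce the lower one by applying it at $a_t x$ with $-t$ is a modest streamlining of the paper's explicitly symmetric three-way partition ($\Delta_1,\Delta_2,\Delta_3$), but it does not change the substance of the argument.
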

\begin{proof}[Proof of Claim~\ref{claim:fM:condition:a:works}] 
We will choose $\epsilon < 1/(2 k \delta)$. 
Suppose $x \in \strat1$ and $0 \le t < 1$. We consider three sets of
subspaces: 
\begin{displaymath}
\Delta_1 = \{ \cL \in J_{k,\cM}(x) \st a_t \cL \in J_{k,\cM}(a_t x) \},
\end{displaymath}
\begin{displaymath}
\Delta_2 = \{ \cL \in J_{k,\cM}(x) \st a_t \cL  \not\in J_{k,\cM}(a_t x) \},
\end{displaymath}
\begin{displaymath}
\Delta_3 = \{ \cL \not\in J_{k,\cM}(x) \st a_t \cL \in J_{k,\cM}(a_t
x) \}. 
\end{displaymath}
We remark that the rest of the proof is a routine verification. Note
that the cardinality of all $\Delta_i$ is bounded by $n(\cM)$ which is
fixed. For any
$0 \le t \le 1$, in view of (\ref{eq:growth:dprime}), the 
contribution of each $\cL$ in $\Delta_1$ at $a_t x$ is within a fixed
multimplicative factor of the contribution at $x$. Furthermore, if
$\cL \in \Delta_2 \cup \Delta_3$, then in view of
(\ref{eq:growth:dprime}), $d'(x,\cL)$ is bounded from below by a
negative power of $u(x)$, and then (with the proper choice of
parameters), it's contribution to both $f_\cM(x)$ and $f_\cM(a_t x)$
is negligible. We now give the details. 

Let 
\begin{displaymath}
S_i(x) = \sum_{\cL \in \Delta_i} d'(x,\cL)^{-\epsilon \delta}. 
\end{displaymath}
Then, 
\begin{displaymath}
s_{\cM,\epsilon}(x) = S_1(x) + S_2(x) \qquad s_{\cM,\epsilon}(a_t x) =
S_1(a_t x) + S_3(a_t x). 
\end{displaymath}
For $\cL \in \Delta_1$, by (\ref{eq:growth:dprime})
with $0 \le t \le 1$,
\begin{displaymath}
e^{-m' \epsilon \delta} d'(x,\cL)^{-\epsilon \delta} \le d'(a_t x, a_t
\cL)^{-\epsilon\delta} \le e^{m'\epsilon\delta} d'(x,\cL)^{-\epsilon\delta},
\end{displaymath}
and thus
\begin{displaymath}
e^{-m' \epsilon \delta} S_1(x) \le S_1(a_t x) \le e^{m' \epsilon \delta} S_1(x)
\end{displaymath}
Then, using (\ref{eq:log-unif}), 
\begin{displaymath}
e^{-m' \epsilon \delta -m/2} S_1(x)u(x)^{1/2} \le S_1(a_t x) u(a_t
x)^{1/2} \le e^{m' \epsilon \delta + m/2} S_1(x) u(x)^{1/2}. 
\end{displaymath}
Suppose $\cL \in \Delta_2 \cup \Delta_3$. 
Then, by (\ref{eq:log-unif}) and (\ref{eq:growth:dprime}), 
\begin{displaymath}
d'(x,\cL) \ge C u(x)^{-k},
\end{displaymath}
where $C = O(1)$ (depending only on $k$, $m$ and $m'$), and thus, for
$i=2,3$, and using Lemma~\ref{lemma:def:J:Jprime},
\begin{displaymath}
S_i(a_t x) \le C n(\cM) u(x)^{-\epsilon \delta k} \quad\text{ and } \quad
S_i(x)
\le C n(\cM) u(a_t x)^{-\epsilon \delta k}, \qquad \text{ $i=2,3$} 
\end{displaymath}
Now choose $\epsilon > 0$ so that $k \epsilon\delta < 1/2$ and
$\lambda > 0$ so that $\lambda > 10 C e^{m} n(\cM)$. Then, 
\begin{displaymath}
S_i(a_t x) u(a_t x)^{1/2} 
\le (0.1) \lambda u(x) \quad\text{ and } \quad S_i(x) u(x)^{1/2}
\le (0.1) \lambda  u(a_t x), \qquad \text{ $i=2,3$}
\end{displaymath}
Then, 
\begin{align*}
f_\cM(a_t x) & = S_1(a_t x)u(a_t x)^{1/2} + S_3(a_t x) u(a_t x)^{1/2}
+ \lambda u(a_t x) &&  \\
& \le  e^{m' \epsilon \delta +m/2} S_1(x) u(x)^{1/2} + (0.1) \lambda
u(x) + e^{m} 
\lambda u(x) && \text{ by (\ref{eq:log-unif}) and (\ref{eq:growth:dprime})} \\
& \le (e^{m' \epsilon \delta +m/2} + (0.1) + e^m) (S_1(x) u(x)^{1/2}
+ \lambda u(x)) && \\
& \le (e^{m' \epsilon + \delta m/2} + (0.1) + e^m) f_\cM(x). 
\end{align*}
In the same way,
\begin{align*}
f_\cM(x) & = S_1(x)u(x)^{1/2} + S_2(x) u(x)^{1/2}
+ \lambda u(x) &&  \\
& \le  e^{m' \epsilon \delta +m/2} S_1(a_t x) u(a_t x)^{1/2} + (0.1) \lambda
u(a_t x) + e^{m} 
\lambda u(a_t x) && \text{ by (\ref{eq:log-unif}) and
  (\ref{eq:growth:dprime})} \\ 
& \le (e^{m' \epsilon \delta +m/2} + (0.1) + e^m) (S_1(a_t x) u(a_t x)^{1/2}
+ \lambda u(a_t x)) && \\
& \le (e^{m' \epsilon + \delta m/2} + (0.1) + e^m) f_\cM(a_t x). 
\end{align*}
\end{proof}

We now begin the verification of condition (b) of
Lemma~\ref{lemma:abstract:recurrence:properties}. The first step is 
the following:
\begin{claim}
\label{claim:fN}
Suppose $\epsilon$ is sufficiently small (depending on $k$, $\delta$). Then
there exist $t_2 > 0$ and $\tilde{b} > 0$ such
that for all $x \in \cH_1(\alpha)$ and all $t > t_2$, 
\begin{multline}
\label{eq:claim:fN}
A_t (s_{\cM,\epsilon}u^{1/2})(x) \le 
\kappa_1(x,t)^{\epsilon}
\tilde{c}^{1/2} s_{\cM,\epsilon}(x) u(x)^{1/2} +
\kappa_1(x,t)^\epsilon \tilde{b}^{1/2} s_{\cM,\epsilon}(x)+ \\ +
b_3(t) n(\cM) u(x), 
\end{multline}
where $\tilde{c} = e^{-\tilde{\eta} t}$ and
$\kappa_1(x,t)$ is as in Lemma~\ref{lemma:ineq-rel}.  
\end{claim}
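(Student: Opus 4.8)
The plan is to unfold
$A_t(s_{\cM,\epsilon}u^{1/2})(x)=\frac{1}{2\pi}\int_0^{2\pi}s_{\cM,\epsilon}(a_tr_\theta x)\,u(a_tr_\theta x)^{1/2}\,d\theta$
and, for each $\theta$, to split the sum defining $s_{\cM,\epsilon}(a_tr_\theta x)$ according to whether an affine subspace tangent to $\tilde{\cM}$ at $a_tr_\theta x$ is of the form $a_tr_\theta\cL$ with $\cL\in J_{k,\cM}(x)$ (``old'') or not (``new''). The Kontsevich--Zorich cocycle is a bijection between the sheets of $\tilde{\cM}$ through $x$ and those through $a_tr_\theta x$ and preserves the symplectic form, so for any such $\cL$ the quantity $d'(a_tr_\theta x,a_tr_\theta\cL)$ is defined (the projection of the linear part stays symplectic, by \cite{Avila:Eskin:Moeller:yeti}), and Lemma~\ref{lemma:ineq-rel} applies to it.

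For the new subspaces: if $a_tr_\theta\cL\in J_{k,\cM}(a_tr_\theta x)$ while $\cL\notin J_{k,\cM}(x)$, then $d'(x,\cL)>u(x)^{-k}$, so $d'(a_tr_\theta x,a_tr_\theta\cL)^{-\epsilon\delta}<e^{m'\epsilon\delta t}u(x)^{\epsilon\delta k}$ by (\ref{eq:growth:dprime}); there are at most $n(\cM)$ of them by Lemma~\ref{lemma:def:J:Jprime} applied at the point $a_tr_\theta x$, and $u(a_tr_\theta x)^{1/2}\le e^{mt/2}u(x)^{1/2}$ by (\ref{eq:log-unif}). Choosing $\epsilon$ small enough that $\epsilon\delta k<\tfrac12$ and using $u\ge 2$, one has $u(x)^{\epsilon\delta k+1/2}\le u(x)$, so the total new contribution is at most $n(\cM)e^{(m'\epsilon\delta+m/2)t}u(x)$, which is of the form $b_3(t)n(\cM)u(x)$.

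For the old part I would fix $\cL\in J_{k,\cM}(x)$, bound the indicator $\mathbf{1}[a_tr_\theta\cL\in J_{k,\cM}(a_tr_\theta x)]$ by $1$, and apply Cauchy--Schwarz in $\theta$ to separate the two factors:
\[
\frac{1}{2\pi}\int_0^{2\pi} d'(a_tr_\theta x,a_tr_\theta\cL)^{-\epsilon\delta}u(a_tr_\theta x)^{1/2}\,d\theta\le\Bigl(\tfrac{1}{2\pi}\int_0^{2\pi} d'(a_tr_\theta x,a_tr_\theta\cL)^{-2\epsilon\delta}\,d\theta\Bigr)^{1/2}\bigl(A_tu(x)\bigr)^{1/2}.
\]
For the second factor, (\ref{eq:sup-harm-u}) gives $A_tu(x)\le\tilde c\,u(x)+\tilde b$ once $t>t_2\ge t_0$, hence $(A_tu(x))^{1/2}\le\tilde c^{1/2}u(x)^{1/2}+\tilde b^{1/2}$ by (\ref{eq:a:plus:b:epsilon}), where $\tilde b$ and $\tilde c=e^{-\tilde\eta t}$ are the constants of Theorem~\ref{theorem:alpha-fun}. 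For the first factor, since $2\epsilon<1$, Jensen's inequality (cf. (\ref{eq:At:Jensen})) lowers the exponent to $\delta$, and then Lemma~\ref{lemma:ineq-rel} gives $\le(\kappa_1(x,t)d'(x,\cL)^{-\delta}+b(t))^{2\epsilon}$, whose square root is $\le\kappa_1(x,t)^{\epsilon}d'(x,\cL)^{-\epsilon\delta}+b(t)^{\epsilon}$ by (\ref{eq:a:plus:b:epsilon}) again. Multiplying the two estimates out into four terms and summing over the $\le n(\cM)$ subspaces in $J_{k,\cM}(x)$: the terms carrying $\kappa_1(x,t)^{\epsilon}d'(x,\cL)^{-\epsilon\delta}$ assemble precisely into $\kappa_1(x,t)^{\epsilon}\tilde c^{1/2}s_{\cM,\epsilon}(x)u(x)^{1/2}$ and $\kappa_1(x,t)^{\epsilon}\tilde b^{1/2}s_{\cM,\epsilon}(x)$, while the cross terms are bounded by $n(\cM)b(t)^{\epsilon}(u(x)^{1/2}+\tilde b^{1/2})$, which (using $u\ge 2$ and $\tilde c\le 1$) is $\le C\,b(t)^{\epsilon}n(\cM)u(x)$ and merges with the new contribution into $b_3(t)n(\cM)u(x)$. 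This gives (\ref{eq:claim:fN}).

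The main obstacle is the accounting for the ``new'' subspaces: one must guarantee that sheets of $\tilde{\cM}$ which are close to $a_tr_\theta x$ but whose pull-backs are far from $x$ feed only into the harmless error $b_3(t)n(\cM)u(x)$ and not into the main term $s_{\cM,\epsilon}(x)u(x)^{1/2}$; this is exactly what forces the conditions $\epsilon\delta k<\tfrac12$ and $2\epsilon<1$ on $\epsilon$ and is where the finiteness bound of Lemma~\ref{lemma:def:J:Jprime} is indispensable. A subsidiary point is to keep the constants straight so that $\tilde b,\tilde c,\tilde\eta$ come out as those of Theorem~\ref{theorem:alpha-fun} and $b_3(t)$ depends only on $t$ (and $\alpha$).
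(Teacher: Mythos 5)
Your proof is correct and follows essentially the same strategy as the paper's: partition the subspaces contributing to $s_{\cM,\epsilon}(a_tr_\theta x)$ into those whose pullback lies near $x$ (treated via Cauchy--Schwarz, Jensen's inequality, and Lemma~\ref{lemma:ineq-rel}, yielding the two main terms) and those whose pullback is far from $x$ (absorbed into the $b_3(t)n(\cM)u(x)$ error using $d'(x,\cL)>u(x)^{-k}$, \eqref{eq:growth:dprime}, \eqref{eq:log-unif}, $k\epsilon\delta<\tfrac12$, and Lemma~\ref{lemma:def:J:Jprime}). The only cosmetic difference is that the paper restricts its ``good'' set to the subcollection $J'(x)\subset J_{k,\cM}(x)$ of subspaces whose translates stay in $J_{k,\cM}(a_tr_\theta x)$ for all $\theta$, whereas you over-count using all of $J_{k,\cM}(x)$; this is harmless since the Cauchy--Schwarz step and Lemma~\ref{lemma:ineq-rel} do not actually require $a_tr_\theta\cL\in J_{k,\cM}(a_tr_\theta x)$.
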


\bold{Remark.} 
The proof of Claim~\ref{claim:fN} is a straighforward
verification, where we again have to show that contribution of the
subspaces which contribute at $x$ but not at $a_t r_\theta x$ (or vice
versa) is negligible (or more precisely can be absorbed into the
right-hand-side of (\ref{eq:claim:fN})). The main feature of
(\ref{eq:claim:fN}) is the appearance of the ``cross term''
$\kappa_1(x,t)^\epsilon \tilde{b}^{1/2} s_{\cM,\epsilon}(x)$. In order
to proceed further, we will need to show (for a properly chosen $t$), 
that for all $x \in \strat1$, 
$\kappa_1(x,t)^{\epsilon} \tilde{b}^{1/2} \le (0.1) c_0
u(x)^{1/2}$, where $c_0$ is in
Lemma~\ref{lemma:abstract:recurrence:properties} (b). This will be 
done, on a case by case basis, 
in the proof of Proposition~\ref{prop:fM:inequality} below. 

\begin{proof}[Proof of Claim~\ref{claim:fN}] 
In this proof, the $b_i(t)$ denote constants depending on $t$. 
Choose $\epsilon > 0$ so that $2 k \epsilon \delta\le 1$. 
Suppose $t > 0$ is fixed.
Let $J'(x) \subset J_{k,\cM}(x)$ be the subset
\begin{displaymath}
J'(x) = \{ \cL \st a_t r_\theta \cL \in J_{k,\cM}(a_t r_\theta x)
\text{ for all } 0 \le \theta \le 2\pi \}. 
\end{displaymath}
Suppose $\cL \subset J'(x)$. 
For $0 \le \tau \le t$ and $0\le \theta \le 2\pi$, let
\begin{displaymath}
\ell_\cL(a_\tau r_\theta x) = d'(a_\tau r_\theta \cL, a_\tau r_\theta
x)^{-\delta}.  
\end{displaymath}
Then, 
\begin{align}
\label{eq:At:ell:L}
A_t(\ell_\cL^{2\epsilon})(x) 
& \le (A_t \ell_\cL)^{2\epsilon}(x) &&
\text{ by   (\ref{eq:At:Jensen}) } \notag \\
& \le (\kappa_1(x,t) \ell_\cL(x) + b(t))^{2\epsilon} 
&& \text{ by Lemma~\ref{lemma:ineq-rel}} \notag \\
& \le \kappa_1(x,t)^{2\epsilon} \ell_\cL(x)^{2\epsilon}+
b(t)^{2\epsilon} && \text{ by   (\ref{eq:a:plus:b:epsilon})} 
\end{align}
Recall that
\begin{equation}
\label{eq:u:at:least:2}
u(x) \ge 2 \qquad \text{for all $x$}. 
\end{equation}
We have, at the point $x$, 
\begin{align}
\label{eq:long:align}
A_t  (\ell_\cL^{\epsilon} u^{1/2}) & \le (A_t
  \ell_\cL^{2\epsilon})^{1/2}(A_t u)^{1/2} && \text{ by
    Cauchy-Schwartz} \notag \\
 & \le [(\kappa_1(x,t)^{2\epsilon} \ell_\cL(x)^{2\epsilon}  +
b_1(t) u(x)]^{1/2} (\tilde{c} u(x) + \tilde{b})^{1/2}
 && \text{ by (\ref{eq:At:ell:L}), (\ref{eq:sup-harm-u}), 
      (\ref{eq:u:at:least:2}) }\notag \\ 
& \le [\kappa_1(x,t)^{\epsilon} \ell_\cL(x)^{\epsilon} +
b_1(t)^{1/2} u(x)^{1/2}] (\tilde{c}^{1/2} u(x)^{1/2} +
\tilde{b}^{1/2}) && \text{ by (\ref{eq:a:plus:b:epsilon})} 
\notag \\
& = \kappa_1(x,t)^{\epsilon}
\ell_\cL(x)^{\epsilon} (\tilde{c}^{1/2} u(x)^{1/2} + \tilde{b}^{1/2})
&& \notag \\
& \qquad \qquad \qquad \qquad + b_1(t)^{1/2} \tilde{c}^{1/2} u(x) +
b_1(t)^{1/2} 
\tilde{b}^{1/2} u(x)^{1/2} &&  \notag \\
& \le \kappa_1(x,t)^{\epsilon} 
\ell_\cL(x)^{\epsilon} (\tilde{c}^{1/2} u(x)^{1/2} + \tilde{b}^{1/2})+ \notag \\
& \qquad \qquad \qquad \qquad \qquad \qquad +b_1(t)^{1/2} (\tilde{c}^{1/2} + \tilde{b}^{1/2}) u(x) && \text{
  since $u(x) \ge 1$}  \notag \\
& = \kappa_1(x,t)^\epsilon \tilde{c}^{1/2} \ell_\cL(x)^\epsilon
u(x)^{1/2} + 
\kappa_1(x,t)^\epsilon \tilde{b}^{1/2}\ell_\cL(x)^{\epsilon} +  \\ 
& \qquad \qquad \qquad \qquad \qquad \qquad \qquad \qquad \qquad + b_2(t)u(x). &&  \notag 
\end{align}
For $0 \le \tau \le t$ and $0\le \theta \le 2\pi$, let
\begin{displaymath}
h(a_\tau r_\theta x) = \sum_{\cL \in J'(x)} 
d'(a_\tau r_\theta \cL, a_\tau r_\theta x)^{-\epsilon\delta} =
\sum_{\cL \in J'(x)} \ell_\cL(a_\tau r_\theta x)^{\epsilon}. 
\end{displaymath}
Then, $h(a_\tau r_\theta x) \le
s_{\cM,\epsilon}(a_\tau r_\theta x)$. Summing (\ref{eq:long:align})
over $\cL \in J'(x)$ and using Lemma~\ref{lemma:def:J:Jprime} we get
\begin{equation}
\label{eq:estimate:At:hM}
A_t (h u^{1/2})(x) \le \kappa_1(x,t)^{\epsilon}
\tilde{c}^{1/2} h(x) u(x)^{1/2} +
\kappa_1(x,t)^\epsilon \tilde{b}^{1/2} h(x) +
b_2(t)n(\cM) u(x)   
\end{equation}
We now need to estimate the contribution of subspaces not in $J'(x)$. 
Suppose $0 \le \theta \le 2\pi$, and suppose
\begin{displaymath}
a_t r_\theta \cL \in J_{k, \cM}(a_t r_\theta x),
\quad \text{  but   $\cL\not\in J'(x)$. }
\end{displaymath}
Then, either $\cL \not\in J_{k,\cM}(x)$ or for some $0 \le \theta' \le
2\pi$, $a_t r_{\theta'} \cL \not\in J_{k,\cM}(a_t r_{\theta'}
x)$. Then in either case, for some $\tau' \in \{0,t\}$ and 
some  $0 \le \theta'\le 2\pi$, 
$a_{\tau'} r_{\theta'} \cL \not\in
J_{k,\cM}(a_{\tau'} r_{\theta'} x)$. Hence
\begin{displaymath}
d'(a_{\tau'} r_{\theta'} x, a_{\tau'} r_{\theta'} \cL) \ge u(a_{\tau'}
r_{\theta'} x)^{-k}
\end{displaymath}
Then,  by (\ref{eq:growth:dprime}) and (\ref{eq:log-unif}),
\begin{displaymath}
d'(x, \cL) \ge b_0(\tau')^{-1} u(x)^{-k} \ge b_0(t)^{-1} u(x)^{-k}
\end{displaymath}
and thus, for all $\theta \in [0,2\pi]$, by (\ref{eq:log-unif}) and
(\ref{eq:growth:dprime}), 
\begin{displaymath}
d'(a_t r_\theta x, a_t r_\theta \cL) \ge b_0(t)^{-2} u(x)^{-k}.
\end{displaymath}
Hence, using (\ref{eq:log-unif}) again, 
\begin{equation}
\label{eq:estimate:term:outside:hm}
d'(a_t r_\theta x, a_t r_\theta \cL)^{-\epsilon\delta} u(a_t r_\theta
x)^{1/2}  \le
b_1(t) u(x)^{k\epsilon\delta+1/2} \le b_1(t) u(x),  
\end{equation}
where for the last estimate we used $k \epsilon\delta \le 1/2$.
Thus, for all $0 \le \theta \le 2\pi$, 
\begin{align*}
s_{\cM,\epsilon}(a_t r_\theta x) u(a_t r_\theta x)^{1/2} & \le
h(a_t r_\theta x) u(a_t r_\theta x)^{1/2} + |J(a_\tau 
r_\theta x)| b_1(t) 
u(x) && \text{ using (\ref{eq:estimate:term:outside:hm})} \\
& \le h(a_t r_\theta x) u(a_t r_\theta x)^{1/2} + b_1(t) n(\cM) u(x)
 && \text{ using Lemma~\ref{lemma:def:J:Jprime}.} 
\end{align*}
Hence, 
\begin{align}
A_t & (s_{\cM,\epsilon} u^{1/2})(x) \le A_t (h u^{1/2})(x) +
b_1(t) n(\cM) u(x) 
 && \notag \\
& \le \kappa_1(x,t)^{\epsilon}
\tilde{c}^{1/2} h(x) u(x)^{1/2} +
\kappa_1(x,t)^\epsilon \tilde{b}^{1/2} h(x)+ b_3(t) n(\cM) u(x) 
 && \text{using (\ref{eq:estimate:At:hM}) } \notag \\
& \le \kappa_1(x,t)^{\epsilon}
\tilde{c}^{1/2} s_{\cM,\epsilon}(x) u(x)^{1/2} +
\kappa_1(x,t)^\epsilon \tilde{b}^{1/2} s_{\cM,\epsilon}(x)+ 
b_3(t) n(\cM) u(x) 
 && \text{since $h \le s_{\cM,\epsilon}$ } \notag
\end{align}
\end{proof}

\begin{proof}[Proof of Proposition~\ref{prop:fM:inequality}]
Let $\sigma$ be as in Claim~\ref{claim:fM:condition:a:works}, and 
let $c_0 = c_0(\sigma)$ be as in
Lemma~\ref{lemma:abstract:recurrence:properties}. 
Let $L_0$, $\eta_0'$, $\eta_3$, $m'$, $\delta$ 
be as in Lemma~\ref{lemma:ineq-rel}.
Suppose $\epsilon > 0$ is small enough so that 
\begin{equation}
\label{eq:epsilon:cond:one}
\epsilon m' \delta <
\frac{1}{2} \tilde{\eta}, 
\end{equation}
where 
$\tilde{\eta}$ is as in Theorem~\ref{theorem:alpha-fun}. We also
assume that $\epsilon > 0$ is small enough so that 
\begin{equation}
\label{eq:epsilon:cond:two}
\epsilon m' \delta < \frac{1}{2} \min(\eta_3,\eta_0')
\end{equation}
where $\eta_3$ is in Lemma~\ref{lemma:ineq-rel}. Choose $t_0 > 0$ so that
Theorem~\ref{theorem:alpha-fun} holds  for $t > t_0$, 
and so that $e^{-\tilde{\eta} t_0}
< (0.1) c_0$. Since $\kappa_1(x,t) < e^{m' \delta t}$, 
we can also, in view of (\ref{eq:epsilon:cond:one})
make sure that for $t > t_0$, 
\begin{equation}
\label{eq:choice:tilde:c}
\kappa_1(x,t)^\epsilon e^{-\tilde{\eta}t/2} \le (0.1)c_0
\end{equation}
Let $t_2 > 0$ be such that Claim~\ref{claim:fN} holds. By
(\ref{eq:epsilon:cond:two}), there exists $t_3 > 0$ so that for $t >
t_3$, 
\begin{equation}
\label{eq:tmp:cond:kappa1}
\kappa_1(x,t)^\epsilon \tilde{b}^{1/2} \le e^{m' \delta \epsilon
  t} \tilde{b}^{1/2} \le (0.1)  c_0 e^{\eta_0' t/2} 
\end{equation}
By Lemma~\ref{lemma:ineq-rel} there exists $\tau > \max(t_0, t_2,t_3)$
such that for all $x$ with $\log u(x) < L_0 + \eta_0' \tau$, 
\begin{displaymath}
\kappa_1(x,\tau)^{\epsilon} \tilde{b}^{1/2} \le
(0.1) c_0 \le (0.1) c_0 u(x)^{1/2}.  
\end{displaymath}
If $\log u(x)  \ge L_0 + \eta_0' \tau$, then $u(x)^{1/2} \ge e^{(\eta_0'/2)
\tau}$, and therefore, since $\tau > t_3$, by (\ref{eq:tmp:cond:kappa1}), 
\begin{displaymath}
\kappa_1(x,\tau)^\epsilon \tilde{b}^{1/2} \le e^{m' \delta \epsilon
  \tau} \tilde{b}^{1/2} \le (0.1)  c_0 e^{\eta_0' \tau/2} \le (0.1)
c_0 u(x)^{1/2}. 
\end{displaymath}
Thus, for all $x \in \strat1$, 
\begin{equation}
\label{eq:estimate:kappa1:tilde:b}
\kappa_1(x,\tau)^{\epsilon} \tilde{b}^{1/2} \le (0.1) c_0 u(x)^{1/2}.  
\end{equation}
Thus, substituting (\ref{eq:choice:tilde:c}) and
(\ref{eq:estimate:kappa1:tilde:b}) into (\ref{eq:claim:fN}),
we get, for all $x \in \cH_1(\alpha)$, 
\begin{equation}
\label{eq:tmp:At:lcN:uhalf}
A_\tau (s_{\cM,\epsilon} u^{1/2})(x) \le 
(0.2)c_0  \, s_{\cM,\epsilon}(x) u(x)^{1/2} +
b_3(\tau) n(\cM) u(x).   
\end{equation}
Choose 
\begin{displaymath}
\lambda > 10  b_3(\tau) n(\cM)/c_0. 
\end{displaymath}
Then, in view of (\ref{eq:tmp:At:lcN:uhalf}), we have
\begin{equation}
\label{eq:At:lcN:uhalf}
A_\tau (s_{\cM,\epsilon} u^{1/2})(x) \le (0.2) c_0 \, s_{\cM,\epsilon}(x)
u^{1/2} + (0.1) c_0 \lambda u(x).
\end{equation}
Finally, since $\tilde{c} \le (0.1)c_0$, we have
\begin{align*}
A_\tau (f_\cM)(x) & = A_\tau (s_{\cM,\epsilon} u^{1/2})(x) + A_\tau (\lambda
u)(x) && \\
& \le [(0.2) c_0 s_{\cM,\epsilon}(x)
u^{1/2} + (0.1) c_0 \lambda u(x)] + (0.1) c_0 \lambda u(x) + \lambda \tilde{b}
&& \text{ by (\ref{eq:At:lcN:uhalf}) and
  (\ref{eq:sup-harm-u})}
 \\
& \le (0.2) c_0 f_\cM(x) + b_\cM && \text{ where $b_\cM = \lambda \tilde{b}$.}
\end{align*}
Thus, condition (b) of
Lemma~\ref{lemma:abstract:recurrence:properties} holds for $f_\cM$. In
view of Lemma~\ref{lemma:abstract:recurrence:properties} this
completes the proof of Proposition~\ref{prop:fM:inequality}. 
\end{proof}


\section{Countability}
\label{sec:countability}

The following lemma is standard: 
\begin{lemma}
\label{lemma:in:L1}
Suppose $SL(2,\reals)$ acts on a space $X$, and suppose
there exists a proper function 
$f: X \to [1,\infty]$ such that 
{}{
for some $\sigma > 1$ all $0 \le t \le 1$ and all $x \in X$, 
\begin{displaymath}
\sigma^{-1} f(x) \le f(a_t x) \le \sigma f(x),
\end{displaymath}
and also there exist $0 < c < c_0(\sigma)$ (where $c_0(\sigma)$ is as
in Lemma~\ref{lemma:abstract:recurrence:properties}), $t_0 > 0$ 
and $b > 0$   such that for all $t > t_0$ and all $x \in X$, 
\begin{displaymath}
A_t f(x) \le c f(x) + b,
\end{displaymath}
}
Suppose $\nu$ is an ergodic $SL(2,\reals)$-invariant measure on $X$, such that
$\nu(\{ f < \infty \}) > 0$. 
Then, 
\begin{equation}
\label{eq:upper:bound:integral}
\int_X f \, d\nu \le B,
\end{equation}
where $B$ depends only on $b$, $c$ and $\sigma$.  
\end{lemma}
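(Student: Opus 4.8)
The plan is to deduce the bound $\int_X f\,d\nu\le B$ from the averaged inequality $A_tf(x)\le cf(x)+b$ by integrating against the invariant measure $\nu$, after first truncating $f$ so that all integrals in sight are finite. First I would apply Lemma~\ref{lemma:abstract:recurrence:properties}: since $f$ satisfies condition (a) with constant $\sigma$, and condition (b) in the form $A_tf(x)\le cf(x)+b$ for $t>t_0$ with $c<c_0(\sigma)$, we get the iterated estimate used in the proof of that lemma, namely there is $B=B(b,c,\sigma)$ (of the shape $\sigma'b/(1-\kappa)$ with $\kappa=\sigma' c<1$) and for each $x$ a time $T_0(x)$ with $(A_tf)(x)\le B$ for all $t>T_0(x)$. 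So pointwise, $A_tf$ is eventually bounded by $B$; the remaining issue is to pass from this pointwise eventual bound to an $L^1(\nu)$ bound on $f$ itself.

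Next I would introduce truncations $f_N=\min(f,N)$. Each $f_N$ is bounded, $SO(2)$-invariant, and $A_tf_N\le A_tf$. Since $\nu$ is $SL(2,\reals)$-invariant, $\int_X A_tf_N\,d\nu=\int_X f_N\,d\nu$ for every $t$ (the operator $A_t$, being an average of the $SO(2)$-invariant pushforwards $a_t r_\theta$, preserves $\int\cdot\,d\nu$). On the other hand, by Fatou/dominated convergence applied to the $x$-wise bound $(A_tf)(x)\le B$ valid for $t>T_0(x)$, together with the monotone convergence $f_N\uparrow f$, I want to conclude $\int_X f_N\,d\nu\le B$ uniformly in $N$, hence $\int_X f\,d\nu\le B$ by monotone convergence. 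Concretely: fix $N$; for each $x$ with $f(x)<\infty$ on the (positive-measure, hence by ergodicity conull) set $\{f<\infty\}$, choose $T_0(x)$ so $A_tf(x)\le B$ for $t>T_0(x)$; the sets $E_n=\{x: T_0(x)\le n\}$ increase to a conull set, so $\int_{E_n}A_tf_N\,d\nu\to\int_X A_tf_N\,d\nu=\int_X f_N\,d\nu$ as $n\to\infty$ for fixed $t$, while for $t>n$ we have $\int_{E_n}A_tf_N\,d\nu\le\int_{E_n}A_tf\,d\nu\le B\,\nu(E_n)\le B$. A diagonal choice of $t=t(n)\to\infty$ with $t(n)>n$ then gives $\int_X f_N\,d\nu\le B$. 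Let $N\to\infty$.

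The main obstacle is the interchange of limits: the time $T_0(x)$ after which $A_tf(x)\le B$ depends measurably but uncontrollably on $x$, so one cannot directly integrate the bound $A_tf\le B$ over all of $X$ for a single $t$. The device above — restricting to the level sets $E_n=\{T_0\le n\}$, using invariance of $\nu$ under $A_t$ only after truncating to $f_N$ (so that $\int A_tf_N\,d\nu=\int f_N\,d\nu$ is a genuine equality of finite quantities), and then letting $n,t\to\infty$ along a diagonal — is exactly what handles this. One should also check the measurability of $x\mapsto T_0(x)$, which is clear since $T_0(x)=\inf\{T: \sup_{t>T}(A_tf)(x)\le B\}$ is an infimum over a countable dense set of times of measurable conditions, using continuity of $t\mapsto (A_tf)(x)$ (or simply working with rational $t$). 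Ergodicity enters only to upgrade $\nu(\{f<\infty\})>0$ to $\nu(\{f<\infty\})=1$: the set $\{f<\infty\}$ is $a_t$-invariant up to the factor $\sigma$ per unit time by condition (a), hence genuinely $SL(2,\reals)$-invariant, so it is conull. Finally, I would remark that the constant $B$ produced is precisely the one from Lemma~\ref{lemma:abstract:recurrence:properties}(ii), depending only on $b$, $c$ and $\sigma$, as claimed.
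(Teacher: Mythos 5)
Your proof is correct, but it takes a genuinely different route from the paper's. The paper's argument picks a single generic point: by Moore's theorem the diagonal flow $\{a_t\}$ is ergodic for $\nu$, so Birkhoff gives a point $x_0$ whose time averages of the truncations $f_n$ converge to $\int f_n\,d\nu$ for a.e.\ $\theta\in[0,2\pi]$; Egorov's theorem then furnishes sets $E_n\subset[0,2\pi]$ of measure $\geq\pi$ on which the convergence is uniform, and integrating the lower bound over $\theta$ and comparing with the upper bound from Lemma~\ref{lemma:abstract:recurrence:properties}(ii) gives $\int f_n\,d\nu\leq 4B'$. Your argument instead stays entirely on the measure side: you truncate, use $SL(2,\reals)$-invariance of $\nu$ to write $\int A_t f_N\,d\nu=\int f_N\,d\nu$, split $X$ into $E_n=\{T_0(\cdot)\leq n\}$ and its complement, bound the first piece by $B$ using the pointwise eventual bound and the second piece by $N\,\nu(X\setminus E_n)$, and let $n\to\infty$ along $t=t(n)>n$. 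The two proofs exchange one technical burden for another: the paper needs Moore ergodicity and Egorov but avoids any discussion of measurability of $T_0(\cdot)$; yours is more elementary (no Birkhoff, no Moore, no generic point) but must verify measurability of $x\mapsto T_0(x)$, which you do. One small point worth making explicit in your diagonal step: the reason $\int_{X\setminus E_n}A_{t(n)}f_N\,d\nu\to 0$ is that $A_{t}f_N\leq N$ uniformly in $t$ and $\nu(X\setminus E_n)\to 0$ (as $\bigcup E_n$ is conull), so the complementary term vanishes; that is what justifies reading off $\int f_N\,d\nu\leq B$.
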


\begin{proof} For $n \in \natls$, let $f_n = \min(f,n)$.
By the Moore ergodicity theorem, the action of $A \equiv \{ a_t \st t
\in \reals \}$ on $X$ is ergodic. Then, by the Birkhoff ergodic
theorem, there exists a point $x_0 \in
X$ such that for almost all $\theta \in [0,2\pi]$ and all $n \in
\natls$, 
\begin{equation}
\label{eq:tmp:birkhoff:1}
\lim_{T\to \infty} \frac{1}{T}\int_0^T f_n(a_t r_\theta x_0) \, dt =
\int_X f_n \, d\nu
\end{equation}
Therefore for each $n$ there exists a subset $E_n \subset [0,2\pi]$
{}{of measure at least $\pi$}
such that the convergence in (\ref{eq:tmp:birkhoff:1}) is uniform over
$\theta \in E_n$. Then there exists $T_n > 0$ such that for all $T > T_n$,
\begin{equation}
\label{eq:tmp:birkhoff:2}
\frac{1}{T}\int_0^{T} f_n(a_t r_\theta x_0) \, dt \ge \frac{1}{2}
\int_X f_n \, d\nu \quad\text{for $\theta \in E_n$.} 
\end{equation}
We integrate (\ref{eq:tmp:birkhoff:2}) over $\theta \in [0,2\pi]$. Then for
all $T > T_n$, 
\begin{equation}
\label{eq:tmp:birkhoff:3}
\frac{1}{T}\int_0^{T} \left(\int_{0}^{2\pi} f_n(a_t r_\theta x_0) \,
d\theta \right)\, dt \ge \frac{1}{4} \int_X f_n \, d\nu
\end{equation}
But, by Lemma~\ref{lemma:abstract:recurrence:properties} (ii), for
sufficiently large $T$, the
integral in parenthesis on the left hand side of (\ref{eq:tmp:birkhoff:3})
is bounded above by $B' = B'(c,b,\sigma)$. 
Therefore, for all $n$,
\begin{displaymath}
\int_X f_n \, d\nu \le 4B'
\end{displaymath}
Taking the limit as $n \to \infty$ we get that $f \in L^1(X,\nu)$ and 
(\ref{eq:upper:bound:integral}) holds.
\end{proof}

\begin{proof}[Proof of Proposition~\ref{prop:countability}]
Let $X_d(\alpha)$ denote the set of affine manifolds of dimension
$d$. It enough to show that each $X_d(\alpha)$ is countable. 

For an affine subspace $\cL \subset \rhr$ whose linear part is $L$, 
let $H_\cL: p(L) \to \ker p/(L
\cap \ker p)$ denote the linear map such that for $v \in p(L)$, $v +
H_\cL(v) \in L \mod L \cap \ker p$. For an affine manifold $\cM$, let 
\begin{displaymath}
H(\cM) = \sup_{x \in \cM \cap K'} \|H_{\cM_x}\|_x
\end{displaymath}
where we use the notation $\cM_x$ for the affine subspace tangent to
$\cM$ at $x$. 

For an integer $R > 0$, let
\begin{displaymath}
X_{d,R}(\alpha) = \{ \cM \in X_{d}(\alpha) \st n(\cM) \le R
\text{ and } H(\cM) \le R \}. 
\end{displaymath}
Since $X_{d}(\alpha) = \bigcup_{R=1}^\infty X_{d,R}(\alpha)$, it is
enough to show that each $X_{d,R}(\alpha)$ is finite. 

Let $K'$ be as in Definition~\ref{def:number:sheets} of
$n(\cdot)$,  and let $L_R(K')$ denote the set of (unordered) $\le
R$-tuples of $d$ 
dimensional affine subspaces intersecting $K'$. Then $L_R(K')$ is
compact, and we have the map $\phi: X_{d,R} \to L_R(K')$ which takes
the affine manifold $\cM$ to the (minimal) set of affine subspaces containing
$\cM \cap K'$. 

Suppose $\cM_j  \in X_{d,R}(\alpha)$ is an infinite sequence, with
$\cM_j \ne \cM_k$ for $j \ne k$. Then, $\cM_j \cap K' \ne \cM_k \cap
K'$ for $j \ne k$. (If $\cM_j \cap K' = \cM_k \cap K'$ then by the
ergodicity of the $SL(2,\reals)$ action, $\cM_j = \cM_k$). 

Since $L_R(K')$ is compact, after
passing to a subsequence, we may assume that $\phi(\cM_j)$
converges. Therefore, 
\begin{equation}
\label{eq:hausdorff:to:0}
hd(\cM_j \cap K', \cM_{j+1} \cap K') \to 0 
\quad\text{as $j \to \infty$},
\end{equation}
where $hd( \cdot, \cdot)$ denotes the Hausdorff distance.
{}{(We use any metric on $\strat1$ for which the period
  coordinates are continuous).}
Then, because of (\ref{eq:hausdorff:to:0}) and the bound on $H(\cM)$,
for all $x \in \cM_{j+1} \cap K'$, $d'(x,\cM_{j}) \to 0$. 
\mc{explain} 
{}{From the definition of $f_\cM$, we have $f_\cM(x) \to
  \infty$ as $d'(x,\cM) \to 0$.} 
Therefore, there exists a sequence $T_j \to \infty$ such
that we have 
\begin{equation}
\label{eq:f:adjacent:bounded}
f_{\cM_{j+1}}(x) \ge T_j \quad\text{for all $x \in \cM_j \cap K'$.}
\end{equation}
Let $\nu_j$ be the affine $SL(2,\reals)$-invariant probability
measure whose support is $\cM_j$. 
Then, by Proposition~\ref{prop:fM:inequality} and
Lemma~\ref{lemma:in:L1}, we have for all $j$, 
\begin{displaymath}
\int_{\strat1} f_{\cM_{j+1}} \, d\nu_j \le B,
\end{displaymath}
where $B$ is independent of $j$. But, {}{by the
  definition of $K'$ and 
Theorem~\ref{theorem:all:measures:return}, 
\begin{displaymath}
\nu_j(\cM_j \cap K') \ge 1-\rho \ge 1/2. 
\end{displaymath}
}
This is a contradiction to
(\ref{eq:f:adjacent:bounded}). Therefore, $X_{d,R}(\alpha)$ is
finite. 
\end{proof}

\end{document}